\newcommand{\id}{id}
\newcommand{\N}{\mathbbm{N}}                     
\newcommand{\Z}{\mathbbm{Z}}                     
\newcommand{\Q}{\mathbbm{Q}}                     
\newcommand{\R}{\mathbbm{R}}                     
\newcommand{\D}{\mathbb{D}}
\newcommand{\Hy}{\mathbbm{H}}
\newcommand{\Cyl}{\mathrm{Cyl}}
\newcommand{\supp}{\mathrm{supp\,}}             
\newcommand{\Ham}{\mathrm{Ham}}    
\newcommand{\topo}{\mathrm{top}}
\newcommand{\hofer}{\mathrm{Hofer}}
\newtheorem{mainthm}{\sc Theorem}
\newenvironment{mainthma}[1]{%
  \mainthmainner
}{\endmainthmainner}
\newtheorem{thm}{Theorem}[section]               
\newtheorem*{thm*}{Theorem}               
\newtheorem{cor}[thm]{Corollary}        
\newtheorem*{cor*}{Corollary}        
\newtheorem{lem}[thm]{Lemma}  
\newtheorem*{lem*}{Lemma}
\newtheorem{prop}[thm]{Proposition}     
\theoremstyle{definition}
\newtheorem{defn}[thm]{Definition}      
\newtheorem{fact}{Fact}[section]
\newtheorem{rem}[thm]{Remark}           
\newtheorem{question}{Question}
 \newtheorem*{acknowledgement*}{\protect\acknowledgementname}
\newcounter{claim}
\newenvironment{claim}[1][]{\refstepcounter{claim}\par\noindent\underline{Claim~\theclaim:}\space#1}{}
 \providecommand{\acknowledgementname}{Acknowledgement}
\author{Marcelo R.R. Alves}
\thanks{M.R.R. Alves was supported by the ERC consolidator grant 646649  ``SymplecticEinstein'' and by the Senior Postdoctoral fellowship of the Research Foundation - Flanders (FWO) in fundamental research 1286921N}
\address{Marcelo R.R. Alves, Faculty of Science,\\
University of Antwerp,
 Campus Middelheim,
 Middelheimlaan 1,
BE-2020 Antwerpen,
Belgium.}
\email{\texttt{marcelorralves@gmail.com}}
\author{Matthias Meiwes}
\thanks{M. Meiwes was supported by the Chair for Geometry and Analysis, RWTH Aachen}
\address{Matthias Meiwes,
Chair for Geometry and Analysis, RWTH Aachen University, Jakobstrasse 2,
DE-52064 Aachen, Germany.}
\email{\texttt{meiwes@mathga.rwth-aachen.de}}
\title[Braid stability and the Hofer metric]{Braid stability and the Hofer metric}
\begin{document}

\begin{abstract}
In this article we show that the braid type of a set of $1$-periodic orbits of a non-degenerate Hamiltonian diffeomorphism on a surface is stable under perturbations which are sufficiently small with respect to the Hofer metric $d_{\rm Hofer}$.
We call this new phenomenon braid stability for the Hofer metric. 

We apply braid stability to study the stability of the topological entropy $h_{\rm top}$ of Hamiltonian diffeomorphisms on surfaces with respect to small perturbations with respect to $d_{\rm Hofer}$. We show that $h_{\rm top}$ is lower semicontinuous on the space of Hamiltonian diffeomorphisms of a closed surface endowed with the Hofer metric, and on the space of compactly supported diffeormophisms of the two-dimensional disk $\D$ endowed with the Hofer metric. This answers the two-dimensional case of a question of Polterovich.

En route to proving the lower semicontinuity of $h_{\rm top}$ with respect to $d_{\rm Hofer}$, we prove that the topological entropy of a diffeomorphism $\phi$ on a compact surface can be recovered from the topological entropy of the braid types realised by the periodic orbits of $\phi$.
\end{abstract}

\maketitle
\tableofcontents

\section{Introduction}

The objective of this article is to study a new type of dynamical stability with respect to the Hofer metric on the space of Hamiltonian diffeomorphisms of a surface. In this section we present the necessary background and context for our results, and give a mostly intuitive and geometric discussion of the results. The precise statements of our main theorems are presented in section \ref{sec:mainresults}.

In order to explain our results we must first recall some notions.

\subsection{Preliminary notions}
Let $\Sigma$ be a closed surface and $\omega$ a symplectic form on $\Sigma$. If $\Sigma = S^2$ then we assume, for reasons that will become clear later, that $\int_{S^2} \omega = 8$.  The symplectomorphisms of $(\Sigma,\omega)$ are the diffeomorphisms of $\Sigma$ which preserve the symplectic form $\omega$.  A time-dependent Hamiltonian $H:S^1 \times \Sigma \to \R$ gives rise to a time-dependent vector field $X_H$ on $\Sigma$, called the Hamiltonian vector-field of $H$, given by the formula 
\begin{equation}
  \iota_{X_{H(t,\cdot)}} \omega = d_\Sigma H(t,\cdot),
\end{equation}
 where $d_\Sigma H(t,\cdot)$ is the differential of $H(t, \cdot) : \Sigma \to \R$ in $\Sigma$. The flow $\phi^t_H$ of $X_H$ is called the Hamiltonian flow of $H$.

A Hamiltonian diffeomorphism $\phi$ is a diffeomorphism of $\Sigma$ which is the time $1$-map of the Hamiltonian flow of some Hamiltonian in $H$. 
We denote the group of Hamiltonian diffeomorphisms of $(\Sigma, \omega)$ by $\mathrm{Ham}(\Sigma,\omega)$. Hamiltonian diffeomorphisms are symplectomorphisms, and we refer the reader to \cite{leonidsbook} for a proof that $\mathrm{Ham}(\Sigma,\omega)$ is indeed a group with respect to the composition of diffeomorphisms. 

For simplicity we introduce some terminology. If $\phi$ is a Hamiltonian diffeomorphism and $H$ is a Hamiltonian such that $\phi$ is the time $1$-map of $\phi_H$, we say that $H$ generates $\phi$.

Recall that a Hamiltonian $H:S^1 \times \Sigma \to \R$ is called normalized if $\int_\Sigma H_t\omega=0$ for each $t\in S^1$, where $H_t(\cdot):= H(t,\cdot)$. Given $\phi\in \mathrm{Ham}(\Sigma,\omega)$ it is always possible to find a normalized Hamiltonian $H$ which generates $\phi$.


From now on all time-dependent Hamiltonian functions on closed surfaces considered in this paper are assumed to be normalized.
 
Recall that the Hamiltonian action $\mathcal{A}_{H}(y, w_y)$ of a pair $(y,w_y)$ of a contractible loop $y:S^1 \to \Sigma$ and some disk capping $w_y: \overline{\D} \to \Sigma$ of $y$ is defined as
\begin{equation} \label{eq:defaction}
    \mathcal{A}_{H}(y,w_y) := -\int_{\overline \D} (w_y)^*\omega + \int_0^1 H(t,y(t)) dt. 
\end{equation}
 If $\Sigma$ is distinct from $S^2$ the action $\mathcal{A}_{H}(y) = \mathcal{A}_{H}(y,w_y)$ does not depend on the choice of $w_y$ but only on $y$. The case of $(S^2,\omega)$ will be considered in section \ref{sec:Hamonsphere}. 

If $\Sigma \neq S^2$, we fix for each free homotopy class of loops $\alpha\in [S^1, M]$ a representative $\eta_{\alpha}:S^1 \to \Sigma$ of $\alpha$. 
If $y$ is a smooth non-contractible loop and $\alpha$ its free homotopy class, we say that a smooth mapping $w_y:[0,1] \times S^1 \to \Sigma$ with $w_y(0,t) = \eta_{\alpha}(t)$ and $w_y(1,t) = y(t)$ is a cylindrical capping of $y$. 
The Hamiltonian action $\mathcal{A}_H(y, w_y)$ of a pair $(y,w_y)$ of a non-contractible loop $y: S^1 \to \Sigma$ and a cylindrical capping $w_y$ of $y$ is defined as 
\begin{equation} \label{eq:defaction2}
    \mathcal{A}_{H}(y,w_y) := -\int_{\overline \D} (w_y)^*\omega + \int_0^1 H(t,y(t)) dt. \end{equation}
     If $\Sigma \neq T^2$, then the action does not depend on the choice of $w_y$, whereas if $\Sigma = T^2$, the choice of cylinder has to be considered as well, see section \ref{sec:Hamontorus}.

We may define for any two freely homotopic loops $y$ and $y'$ in $\Sigma$ the quantity 
\begin{equation}\label{def_Delta}
\Delta_H(y,y') := \inf_{w_y, w_{y'}}\left|\mathcal{A}_H(y,w_y) - \mathcal{A}_H(y',w_{y'})\right|, 
\end{equation}
where the infimum is taken over all cappings $w_y$ and $w_{y'}$ of $y$ and $y'$; disk cappings if $y,y'$ are contractible, and cylindrical capping if $y,y'$ are non-contractible. 

The Hofer metric $d_{\rm Hofer}$ is a Finsler metric on $\mathrm{Ham}(\Sigma,\omega)$ which is bi-invariant with respect to the group structure of $\mathrm{Ham}(\Sigma,\omega)$. If $\phi_1$ and $\phi_2$ are elements of $\mathrm{Ham}(\Sigma,\omega)$ we define
 \begin{equation}
     d_{\rm Hofer}(\phi_1,\phi_2) =  \inf_{H \in  \mathcal{I}(\phi_1, \phi_2)} \int_{0}^1 || H_t ||dt,
 \end{equation}
 where $H:S^1 \times \Sigma \to \R$ is in $  \mathcal{I}(\phi_1, \phi_2)$ if it is normalized and generates $\phi_1^{-1}\circ \phi_2$, and where  $|| H_t ||:= \max_{p \in \Sigma} H_t  - \min_{p \in \Sigma} H_t$. It is a highly non-trivial fact that $d_{\rm Hofer}$ is a non-degenerate metric, see \cite{Hofer1990}. 
 
 We also consider in this paper Hamiltonian diffeomorphisms of the disk $\D$ endowed with the symplectic structure $\omega_0 = dx\wedge dy$. For this situation we consider for $c\in \R \setminus 2\pi \Q$ the set $\mathrm{Ham}_c(\D)$ of Hamiltonian diffeomorphisms which coincide with the irrational rotation by angle $c$ in a neighbourhood of $\partial \D$, or the set $\mathrm{Ham}_0(\D)$ of compactly supported Hamiltonian diffeomorphisms on $(\D,\omega_0)$. The set $\mathrm{Ham}_0(\D)$ is a group, and  although $\mathrm{Ham}_c(\D)$ is not a group, we can still define the Hofer metric on it and study its dynamical significance. We refer the reader to \ref{sec:Hamsondisk} for the precise definitions we adopt in this setting.

Lastly, if $(\Sigma,\omega)$ is a closed surface and $\phi$ is a Hamiltonian diffeomorphism, we will say that $\phi$ is non-degenerate if all $1$-periodic orbits of $\phi$ are non-degenerate. We say that $\phi$ is strongly non-degenerate if for every $n>0$ all $n$-periodic orbits of $\phi$ are non-degenerate. The same definition works for Hamiltonian diffeomorphisms in $\mathrm{Ham}_c(\D)$ where $c\in \R \setminus 2\pi \Q$.
For $\mathrm{Ham}_0(\D)$ we must adapt the definition, since every Hamiltonian diffeomorphism in $\mathrm{Ham}_0(\D)$ has degenerate $1$-periodic orbits; see \ref{sec:Hamsondisk}.
 
 
 \subsection{Hofer metric and dynamics}
 
 The properties of the Hofer metric are the subject of intense research in the field of symplectic topology.
One direction of investigation to better understand $d_{\rm Hofer}$ is to investigate how dynamical properties of Hamiltonian diffeomorphisms vary under perturbations with respect to $d_{\rm Hofer}$. This has been pursued by several authors, see \cite{chor2020,kislev2018bounds,leonidpersistence,PolterovichShelukhin2016,Usher}. Floer homology together with its action filtration behaves in a very stable way under perturbations with respect to $d_{\rm Hofer}$.
This culminated in the dynamical stability result of Polterovich and Shelukhin in \cite{PolterovichShelukhin2016}, which says that in a path of Hamiltonian diffeomorphisms that is continuous with respect to $d_{\rm Hofer}$ the spectrum of the diffeomorphisms changes continuously in a certain sense; see also \cite{leonidpersistence,Usher}. 
With the introduction of the theory of persistence modules to Floer theory in \cite{PolterovichShelukhin2016}, the authors obtained moreover good quantitative estimates on the size of perturbations with respect to $d_{\rm Hofer}$ under which spectral stability holds. 
More recently, the second author and Chor \cite{chor-meiwes} have studied stability properties of the topological entropy $h_{\rm top}$ under perturbations with respect to $d_{\rm Hofer}$. In this paper, we also investigate this question but with techniques and in a setting which are different from that of \cite{chor-meiwes}.

For the discussion that follows we assume that $\Sigma$ is either the disk or a closed surface different from $S^2$, so that we can define the action of contractible $1$-periodic orbits of Hamiltonians on $(\Sigma,\omega)$.
Let $p$ be a periodic point of a Hamiltonian diffeomorphism $\phi$ of (not necessarily minimal) period $n>0$. Assume that $H$ is a normalized Hamiltonian that generates $\phi$ and that the $n$-periodic orbit of the Hamiltonian flow of $H$ which starts at $p$ is contractible. 
One topological quantity that we can associate to the pair $(p,n)$ is its action with respect to the Hamiltonian $H$. To define it, we denote by $\gamma$ the $n$-periodic orbit of the flow $\phi_H$ which satisfies $\gamma(0)=p$. The action of $(p,n)$ is defined to be $\mathcal{A}_{nH}(\gamma)$.
It was proved by Schwarz in \cite{schwarz-action} that $\mathcal{A}_H(\gamma)$ does not depend on the choice of the normalized Hamiltonian $H$ generating $\phi$, so that it is indeed a topological property of the pair $(p,n)$.

As observed by Polterovich-Shelukhin and Usher (see \cite{PolterovichShelukhin2016,Usher}), if $p$ is a periodic point of (not necessarily minimal) period $n$ of a non-degenerate Hamiltonian diffeomorphism $\phi \in \mathrm{Ham}(\Sigma,\omega)$, then given $\varepsilon>0$ there exists an open neighbourhood $\mathcal{U}_\varepsilon$ of $\phi$ in $\mathrm{Ham}(\Sigma,\omega)$  endowed with the Hofer geometry, such that every element $\phi'$ in $\mathcal{U}_\varepsilon$ has a periodic point $p'$ of period $n$ whose action is $\varepsilon$-close to the action of $(p,n)$. One can think of this as saying that the periodic point $p$ of $\phi$ gives rise to periodic points of the same (not necessarily minimal) period and similar action for Hamiltonian diffeomorphisms that are sufficiently close to $\phi$. A natural question is the following: do these periodic orbits, which arise from $p$, inherit other topological properties of $p$?
The main results  of the present paper show that under certain conditions the answer to this question is yes.

To explain why this question is non-trivial we notice that small perturbations in the sense of the Hofer metric are not necessarily small in the $C^0$-sense. So, no matter how close a Hamiltonian diffeomorphism $\phi'$ is to $\phi$ in the sense of $d_{\rm Hofer}$, one cannot guarantee that the periodic points of $\phi'$ which arise from the periodic point $p$ are close to periodic point $p$.

\subsection{Braid type of a collection of $1$-periodic orbits}

To present our braid stability results we recall how one associates to a collection $\mathcal{P}=\{p_1,...,p_k\}$ of fixed points of a Hamiltonian diffeomorphism $\phi$ on $(\Sigma,\omega)$ its braid type. 

For this we first consider a Hamiltonian $H:S^1 \times \Sigma \to \R$
and let $\mathcal{Y}$ be a finite collection of distinct $1$-periodic orbits of $X_H$.  We associate to  $\mathcal{Y}$ a braid $\mathcal{B}(\mathcal{Y})$ in $S^1 \times \Sigma$. 
\begin{defn} \label{def:braid}
The braid $\mathcal{B}(\mathcal{Y})$ in $S^1 \times \Sigma$ associated to a finite set $\mathcal{Y}:= \{\gamma_1,\gamma_2,...,\gamma_k\}$ of distinct $1$-periodic orbits of $X_H$ is defined as
\begin{equation} \label{eq:braid}
    \mathcal{B}(\mathcal{Y}) :=  \bigcup_{i=1}^k \xi_i,
\end{equation}
where $\xi_i$ are given by
\begin{equation} \label{eq:braidcomponents}
  \xi_i := \{(t,\gamma_i(t)) \ | \ t \in S^1\}.
\end{equation}
We call $\mathcal{B}(\mathcal{Y})$ the braid associated to the set $\mathcal{Y}$ of $1$-periodic orbits of $H$.
\end{defn}

\begin{rem}
Notice that $\mathcal{B}(\mathcal{Y})$ is a link and not braid, but we nevertheless abuse notation and call it braid. There are two reasons for doing this. The first is that it is easier to define the braid type of a set of $1$-periodic orbits using this link rather than the braid in $[0,1] \times \Sigma$ which is obtained from $\mathcal{B}(\mathcal{Y})$ by cutting $S^1 \times \Sigma$ along the surface $\{0\} \times \Sigma$. The second is that in our arguments it is always the link $\mathcal{B}(\mathcal{Y})$ which will appear.

From now on, a braid on $\Sigma$ will always mean a smooth link in $S^1 \times \Sigma$ which is transverse to all the surfaces $\{t\} \times \Sigma$.
\end{rem}

\begin{rem}
Although in the present paper we will only be considering the situation where $\mathcal{P}$ is a collection of fixed points of a Hamiltonian diffeomorphism $\phi$, it is straightforward to see that one can also associate a braid in $S^1 \times \Sigma$ to a collection $\mathcal{Q}$ of periodic points of $\phi$ such that $\phi(\mathcal{Q})= \mathcal{Q}$.
\end{rem}



{  
For a Hamiltonian diffeomorphism $\phi$  and a choice of Hamiltonian $H$ generating $\phi$, one can use the procedure given in Definition \ref{def:braid} to associate to a collection $\mathcal{P}=\{p_1,...,p_k\}$ of fixed points, a braid $\mathcal{B}({\mathcal{Y}}_{\mathcal{P}})$.

Notice that there is a certain ambiguity in the construction of $\mathcal{B}({\mathcal{Y}}_{\mathcal{P}})$ since it depends on the choice of $H$. If we perform this construction with another Hamiltonian $H'$ which generates $\phi$ we obtain a different braid $\mathcal{B}({\mathcal{Y'}}_{\mathcal{P}})$: in case $\Sigma:= \D$ or $\Sigma$ has genus $\geq 2$ the braids $\mathcal{B}({\mathcal{Y}}_{\mathcal{P}})$ and $\mathcal{B}({\mathcal{Y'}}_{\mathcal{P}})$ are conjugated to $\overline{\mathcal{Y}}_{\mathcal{P}}$, but in general the relationship between the two braids is more complicated. Since there is no preferred choice of Hamiltonian generating $\phi$ the natural object to be associated to $\mathcal{P}$ is not a braid but an equivalence class of braids, which was introduced by Boyland and is called the braid type of $\mathcal{P}$.}

Before presenting the definition of braid types, we explain a geometric condition that implies that two braids have the same braid type. For this we need the following terminology. If $\xi$ and $\xi'$ are smooth links in $ S^1 \times \Sigma$ which are transverse to the surfaces  $\{t\} \times \Sigma \subset S^1 \times \Sigma$, and are isotopic among links transverse these surfaces, we say that \textit{ $\xi$ and $\xi'$ are freely isotopic as braids}. 

\begin{fact} \label{fact:braidtype1}
Two braids in $S^1 \times \Sigma$ which are freely isotopic as braids have the same braid type. 
\end{fact}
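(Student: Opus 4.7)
My plan is to reduce the statement to the definition of braid type. The braid type is defined to be an intrinsic invariant of the pair $(\phi,\mathcal{P})$ — it should not depend on the auxiliary choice of generating Hamiltonian $H$. Since varying $H$ amongst normalized Hamiltonians generating $\phi$ produces various braids $\mathcal{B}(\mathcal{Y}_{\mathcal{P}})$ that are not identical, the equivalence relation defining ``braid type'' must be at least as coarse as the equivalence generated by the moves that relate all such representatives. The goal is then to check that free isotopy (in the sense of this excerpt — an isotopy through smooth links transverse to every fiber $\{t\}\times \Sigma$) is one of the basic moves included in this equivalence.

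The natural way to do this is as follows. First, I would lift a free isotopy between two braids $\xi_0$ and $\xi_1$ to an ambient isotopy $(\Phi_s)_{s\in[0,1]}$ of $S^1 \times \Sigma$ that is fiber-preserving, i.e., commutes with the projection to $S^1$; this is possible because transversality to the fibers means the isotopy moves points only within the fiber $\{t\}\times\Sigma$ up to reparametrization in $t$, and standard isotopy-extension gives such a fiber-preserving extension. Next, I would reinterpret $(\Phi_s)$ as a smooth family of identifications $\Sigma \to \Sigma$ depending on $(s,t)$, which in turn corresponds to deforming the generating Hamiltonian path through a family of Hamiltonian isotopies between $\textrm{id}$ and $\phi$ (possibly after a small perturbation to ensure the family remains within Hamiltonian isotopies). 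This shows that $\xi_0$ and $\xi_1$ arise from the same pair $(\phi, \mathcal{P})$ for two different generating Hamiltonians, and so by the intrinsic nature of the braid type they define the same class.

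The cleanest formulation is to make the definition of braid type match precisely with the free isotopy equivalence in the first place. Concretely, one defines the braid type of $\mathcal{P}$ as the class of $\mathcal{B}(\mathcal{Y}_\mathcal{P})$ under free isotopy together with the further identification coming from the ambiguity in the choice of $H$. In that formulation Fact \ref{fact:braidtype1} becomes tautological: free isotopy is by design one of the moves generating the equivalence relation.

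I expect the only subtle point to be the fiber-preserving isotopy extension and its compatibility with the Hamiltonian structure — strictly speaking free isotopy of the braid does not automatically lift to a path of \emph{Hamiltonian} diffeomorphisms. If braid type is defined purely in the topological category (which is the standard choice following Boyland), this subtlety disappears and the statement follows directly. Hence the main content of the proof is organizational: fixing a precise definition of braid type in which free isotopy is manifestly one of the allowed equivalences, after which the fact is a one-line consequence.
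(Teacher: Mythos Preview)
Your proposal never engages with the definition of braid type actually in force here (Definition \ref{def:braidtype2}): the conjugacy class of $[h\circ f\circ h^{-1}]$ in the mapping class group $\mathcal{M}(\Sigma,X_m)$. Your first route---lifting the free isotopy to a fiber-preserving ambient isotopy $\Phi_s(t,p)=(t,\psi^s_t(p))$ and reading it as a change of generating Hamiltonian for a \emph{fixed} pair $(\phi,\mathcal{P})$---is incorrect as stated. Applying $\Phi_1$ sends the braid of $(\phi,\mathcal{P})$ to the braid of the \emph{conjugate} pair $(\psi^1_0\,\phi\,(\psi^1_0)^{-1},\,\psi^1_0(\mathcal{P}))$, and in general $\psi^1_0\neq\mathrm{id}$ (indeed, the strands of $\xi_1$ may meet $\{0\}\times\Sigma$ in a set of points entirely different from $\mathcal{P}$). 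So you cannot conclude without invoking conjugacy in $\mathcal{M}(\Sigma,X_m)$, and the Hamiltonian dressing adds nothing except the symplectic subtlety you yourself flag. Your second route---redefining braid type so that free isotopy is one of the generating moves---is circular: Fact \ref{fact:braidtype1} becomes a tautology, but the content has merely been displaced to proving that your new definition agrees with Definition \ref{def:braidtype2}, which is exactly the statement in question.

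What is missing is the mechanism the paper actually uses. A braid $\mathcal{B}\subset S^1\times\Sigma$ determines a mapping class on the punctured surface by cutting along $\{0\}\times\Sigma$ and ``sliding along the strands'' (the point-pushing construction underlying the Birman exact sequence); one then checks that a free isotopy of braids changes the resulting mapping class by a conjugation in $\mathcal{M}(\Sigma,X_m)$. That conjugacy statement is the substantive content of Fact \ref{fact:braidtype1}; the paper sketches it and refers to Birman, Matsuoka and Boyland for the details. Your proposal never names this construction, and in particular never explains why free isotopy yields conjugate mapping classes, so the core of the argument is absent.
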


Using Fact \ref{fact:braidtype1} we prove below that if  ${\mathcal{P}}$ is a collection of fixed points of a non-degenerate Hamiltonian diffeomorphism $\phi$, then there is a collection of fixed points of the Hofer-close Hamiltonian diffeomorphisms $\phi'$  that arise from ${\mathcal{P}}$ and have the same braid type as ${\mathcal{P}}$. 
The reason why such information is useful is that, many dynamical invariants of braids such as the topological entropy of a braid, are actually invariants of the braid type. To explain why this is the case we need the dynamical definition of braid type which is explained in the next section.


\subsection{Braids and surface dynamics} \label{sec:braids}

In the following we shortly recall relevant standard notions of surface dynamics, see for example \cite{Birman, Boyland, FranksMisiurewicz2002} for more details. 
Let $\Sigma$ be a compact oriented surface (with or without boundary), and let, for each $m\in \N$,  $X_m \subset \Sigma\setminus \partial \Sigma$ be a set of $m$ points in $\Sigma$. We denote the mapping class group on the $X_m$-punctured surface  $\Sigma$ by $\mathcal{M}(\Sigma, X_m)$. It is defined as the group of isotopy classes of orientation preserving homeomorphisms on $\Sigma$ that preserve $X_m$ setwise, and such that, if $\partial \Sigma \neq 0$, the allowed isotopies fix each boundary component setwise. We will denote by $[\phi]$ the element in $\mathcal{M}(\Sigma, X_m)$ that $\phi:\Sigma \to \Sigma$ represents. We are ready to give the definition of the braid type of a braid. 

\begin{defn} \label{def:braidtype2}
Let $f:\Sigma \to \Sigma$  be a homeomorphism that is isotopic to the identity, and let $\mathcal{Q} = \{q_0, \ldots , q_{m-1}\}$ be a set of periodic points of $f$ such that $f(\mathcal{Q})= \mathcal{Q}$.
The \textit{braid type $[f,\mathcal{Q}]$} of the pair $(f,\mathcal{Q})$ is  the conjugacy class in $\mathcal{M}(\Sigma,X_m)$ of elements $[h \circ f \circ h^{-1}]\in \mathcal{M}(\Sigma, X_m)$, where $h:\Sigma \to \Sigma$ is a homeomorphism (preserving the boundary components setwise) such that $h({\mathcal{Q}}) = X_m$. 
If $\mathcal{Q} = (q_0, \ldots, q_m)$ is a periodic orbit and $\overline{\mathcal{Q}} = \{q_0, \ldots, q_m\}$ the set associated to it, we also will write $[f,\mathcal{Q}]$ instead of  $[f,\overline{\mathcal{Q}}]$. 
\end{defn}

{
We now sketch the proof of Fact \ref{fact:braidtype1}. We first explain how a braid $\mathcal{B}$ of $m$-strands in $S^1 \times \Sigma$ defines a braid type $[f,\mathcal{Q}]$. Informally this is seen as follows: One first cuts $S^1 \times \Sigma$ along $\{0\} \times \Sigma$ to obtain from $\mathcal{B}$ a proper braid in $[0,1] \times \Sigma$.
One creates an isotopy of homeomorphisms starting at the identity by sliding along the braid, such that $\mathcal{B}= \mathcal{B}(\mathcal{Q})$ is the braid associated to a collection of periodic orbits $\mathcal{Q}$ of the homeomorphism $f$ that is created. We  say that $\mathcal{B}$ represents $[f,\mathcal{Q}]$. This construction is part of the proof of the Birman exact sequence where it is also shown that different representatives of an element of the braid group induce the same element of the mapping class group; see chapter 4 of \cite{Birman} or section 2 of \cite{Matsuoka}. 
Finally, one shows that braids $\mathcal{B},\mathcal{B'} \subset S^1 \times \Sigma$ are freely isotopic braids if, and only if, they give rise to conjugated pairs $(f,\mathcal{Q})$ and $(f',\mathcal{Q}')$ which therefore represent the same braid type in the sense of Definition \ref{def:braidtype2}. For details, we refer the reader to section 2 of \cite{Matsuoka} and section 4 of \cite{Boyland}.}

A braid type invariant of $[f,\mathcal{Q}]$ measuring its complexity is the growth rate of the induced action of $f$ on the fundamental group of $\Sigma \setminus {\mathcal{Q}}$. It is defined as
\begin{align}\label{fundgrowth}
\Gamma_{\pi_1}([f,\mathcal{Q}]) := \sup_{g \in \pi_1(\Sigma\setminus {{\mathcal{Q}}},x_0)} \limsup_{n\to \infty} \frac{\log\left(l_S(f^n_*(g))\right)}{n},
\end{align}
where $f_*$ is the automorphism on $\pi_1(\Sigma\setminus {{\mathcal{Q}}},x_0)$ with respect to some basepoint $x_0$ and a path $\sigma$
from $x_0$ to $f(x_0)$, $S$ is a set of generators, and $l_S(h)$ is the minimal length of a word in $S$ and $S^{-1}$ that is needed to represent $h$. The right hand side of \eqref{fundgrowth} is, for fixed $f$ and $\mathcal{Q}$, independent of all the choices made, and moreover invariant under conjugation and isotopy, see e.g. \cite{Bowen}. Hence $\Gamma_{\pi_1}([f,\mathcal{Q}])$ is  well-defined. 
Furthermore it follows from elementary properties of $f_*$, that for all $k\in \N$
\begin{align} 
\Gamma_{\pi_1}([f^k,\mathcal{Q}]) = k\Gamma_{\pi_1}([f,\mathcal{Q}]). 
\end{align}

By an inequality of Manning (see \cite{Bowen} for a proof in the present setting of a punctured surface and $f$ differentiable),
\begin{align}\label{fund_ent}
\Gamma_{\pi_1}([f,\mathcal{Q}]) \leq h_{\topo}(f).
\end{align} 
In fact,  $h_{\topo}([f,\mathcal{Q}]):= \inf_g h_{\topo}(g)$, the infimum of $h_{\topo}(g)$ over all $g$  with $[g,\mathcal{Q}'] = [f,\mathcal{Q}]$ for some $\mathcal{Q}'$,  is realized by the maximal topological entropy of a pseudo-Anosov component of a map in the Thurston-Nielsen canonical form, and moreover   $\Gamma_{\pi_1}([f,\mathcal{Q}]) = h_{\topo}([f,\mathcal{Q}])$. 
We will not use this fact, while \eqref{fund_ent} is important for this article. 

If $\mathcal{B}(\mathcal{Q})$ is the braid associated to the set $\mathcal{Q}$ of periodic orbits of $f$, we think of it as a representative of the braid type $[f,\mathcal{Q}]$ and define
\begin{equation}
    h_{\topo}(\mathcal{B}(\mathcal{Q})):=\Gamma_{\pi_1}([f,\mathcal{Q}]).
\end{equation}
From the discussion above, it is clear that any diffeomorphism $\phi$ of $\Sigma$  which has a set of periodic orbits which realizes the braid $\mathcal{B}(\mathcal{Q})$ for some choice of isotopy between $id$ and $\phi$ satisfies 
\begin{equation}
    h_{\topo}(\phi) \geq h_{\topo}(\mathcal{B}(\mathcal{Q})).
\end{equation}

\subsubsection{Topological entropy, braids and lower semicontinuity of $h_{\rm top}$ with respect to $d_{\rm Hofer}$}

\




From the discussion above, one can ask if the topological entropy of a surface diffeomorphism $\phi$ can be recovered from the topological entropy of the braid types which are realized by sets of periodic orbits of $\phi$. More precisely, given $\phi$ a surface diffeomorphism and $k>0$ a positive integer, we let $\mathrm{Braid}(k,\phi)$ be the set of braid types of sets of periodic orbits of $\phi$ of period $k$. We then define  $\mathrm{Braid}(\phi):= \cup_{k=1}^{+\infty} \mathrm{Braid}(k,\phi) $. Given a surface diffeomorphism is it true that \begin{equation}
    h_{\rm top}(\phi) = \sup_{\mathcal{Y} \in \mathrm{Braid}(\phi)} h_{\rm top}(\mathcal{Y}) \mbox{ ? }
\end{equation}
Hall proved in \cite{Hall_horseshoe} that this is the case for the horseshoe map, as a consequence of his study of horseshoe braid types. Using a different approach that combines the methods of Franks-Handel \cite{FranksHandel1988} and Katok-Mendoza \cite{Hasselblat-Katok} we 
obtain the following result, which states that this equality is valid for any surface diffeomorphism: this is essentially a restatement of Theorem \ref{thm:approximation}.
\begin{thm*}
Let $\Sigma$ be a compact surface, and let $\varphi: \Sigma\to \Sigma$ be a diffeomorphism such that $h_{\topo}(\varphi)>0$. Then $$h_{\rm top}(\phi) = \sup_{\mathcal{Y} \in \mathrm{Braid}(\phi)} h_{\rm top}(\mathcal{Y}).$$
\end{thm*}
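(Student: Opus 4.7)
The easy direction $\sup_{\mathcal{Y}\in\mathrm{Braid}(\varphi)} h_{\topo}(\mathcal{Y}) \leq h_{\topo}(\varphi)$ is already essentially contained in the excerpt. Given $\mathcal{Y}\in \mathrm{Braid}(k,\varphi)$, realize it as $\mathcal{B}(\mathcal{Q})$ for some $\varphi$-invariant set $\mathcal{Q}$ of periodic points of period $k$, and apply Manning's inequality \eqref{fund_ent} to $\varphi^k$ together with the multiplicativity formula $\Gamma_{\pi_1}([\varphi^k,\mathcal{Q}])=k\,\Gamma_{\pi_1}([\varphi,\mathcal{Q}])$. One obtains
\[
h_{\topo}(\mathcal{Y}) = \Gamma_{\pi_1}([\varphi,\mathcal{Q}]) = \tfrac{1}{k}\Gamma_{\pi_1}([\varphi^k,\mathcal{Q}]) \leq \tfrac{1}{k}h_{\topo}(\varphi^k) = h_{\topo}(\varphi).
\]

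For the reverse inequality the plan is to approximate $\varphi$ from inside by horseshoes whose entropy can be realised by braid types of periodic orbits. First I would invoke Katok's horseshoe theorem: since $\varphi$ is a $C^{1+\alpha}$ surface diffeomorphism with positive topological entropy, for every $\varepsilon>0$ there exist an integer $N\geq 1$ and a compact $\varphi^N$-invariant hyperbolic basic set $\Lambda \subset \Sigma$, topologically conjugate to a subshift of finite type with transition matrix $A$, such that $\tfrac{1}{N}h_{\topo}(\varphi^N|_\Lambda) = \tfrac{1}{N}\log \rho(A) > h_{\topo}(\varphi) - \varepsilon$. Next, combining the combinatorial horseshoe analysis of Katok-Mendoza \cite{Hasselblat-Katok} with the braid-forcing techniques of Franks-Handel \cite{FranksHandel1988}, I would construct, for each sufficiently long $m$, a $\varphi^N$-periodic orbit $\mathcal{Q}_m \subset \Lambda$ of period $m$ whose symbolic code spreads evenly over the Markov rectangles, and argue that the Thurston-Nielsen canonical form of $[\varphi^N,\mathcal{Q}_m]$ contains a pseudo-Anosov component whose dilatation is at least $\rho(A)^{1-\delta_m}$ with $\delta_m \to 0$. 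Translating via the identity $\Gamma_{\pi_1}([\varphi^N,\mathcal{Q}_m]) = h_{\topo}([\varphi^N,\mathcal{Q}_m])$ gives $\Gamma_{\pi_1}([\varphi^N,\mathcal{Q}_m]) \geq h_{\topo}(\varphi^N|_\Lambda) - \varepsilon$ for $m$ large. Dividing by $N$, applying the multiplicativity formula once more, and letting $\varepsilon\to 0$ yields $h_{\topo}(\varphi) \leq \sup_{\mathcal{Y}}h_{\topo}(\mathcal{Y})$.

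The main obstacle I anticipate is the quantitative control of the pseudo-Anosov component of $[\varphi^N,\mathcal{Q}_m]$. A long periodic orbit in a horseshoe does not \emph{a priori} produce a pseudo-Anosov isotopy class rel $\mathcal{Q}_m$: the Thurston-Nielsen decomposition may exhibit reducing curves that cut off invariant annuli and concentrate entropy into trivially acting components that are undetected by $\Gamma_{\pi_1}$. Overcoming this requires selecting $\mathcal{Q}_m$ with a geometry rich enough to force pseudo-Anosov behaviour on a component carrying most of the entropy of $\varphi^N|_\Lambda$; concretely, $\mathcal{Q}_m$ should intersect every Markov rectangle in a controlled way and shadow long pieces of every basic word in the subshift. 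This is precisely where the combinatorics of Katok-Mendoza meets the isotopy machinery of Franks-Handel, and this is the step I expect to carry the bulk of the technical weight in the argument.
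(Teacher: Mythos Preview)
Your strategy is correct and matches the paper's approach: the paper reduces the theorem to its Theorem~\ref{thm:approximation}, which is proved in the appendix precisely by combining Katok--Mendoza's horseshoe construction with a Franks--Handel style argument.

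One technical difference is worth flagging. You propose to route through the Thurston--Nielsen decomposition and the identity $\Gamma_{\pi_1}=h_{\topo}$ on braid types, whereas the paper explicitly avoids this (``We will not use this fact'') and instead bounds $\Gamma_{\pi_1}([\varphi^n,\mathcal{Q}])$ directly by counting geometric intersections of iterated arcs with a fixed family of proper arcs in $\Sigma\setminus\overline{\mathcal{Q}}$, lifted to the hyperbolic universal cover. This is closer to the original Franks--Handel argument and sidesteps your anticipated obstacle about controlling the pseudo-Anosov component: no Thurston--Nielsen decomposition is needed, because the combinatorics of the symbolic orbit---which the paper writes down explicitly as $\mathcal{Q}=\bigl(\prod_{j=2}^{m-1}(mj1j1jmj)\bigr)^\infty$ inside a full $m$-shift horseshoe---directly forces exponential intersection growth at rate $\log(m-2)$. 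Your intuition that the orbit must ``intersect every Markov rectangle in a controlled way'' is exactly right and is what this explicit word accomplishes; the paper's route just makes this quantitative without appealing to dilatations of pseudo-Anosov pieces.
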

\proof 
Let $\varphi:\Sigma \to \Sigma$ be diffeomorphism. If $h_{\topo}(\varphi) = 0$ there is nothing to prove. So assume $h_{\topo}(\varphi) >0$. Let $\varepsilon>0$. By Theorem \ref{thm:approximation} there is a hyperbolic $k$-periodic orbit $\mathcal{P} = (p_0, \ldots, p_{k-1})$ of $\varphi$, for some $k\in \N$, such that $$\Gamma_{\pi_1}([\varphi, \overline{\mathcal{P}}])>h_{\topo}(\varphi)-\epsilon,$$ where  $\overline{\mathcal{P}} = \{p_0,\ldots, p_{k-1}\}$ and $\Gamma_{\pi_1}([\varphi, \overline{\mathcal{P}}])$ is the growth rate of the action induced by  $\varphi$ on the fundamental group of $\Sigma \setminus \overline{\mathcal{P}}$, see Section \ref{sec:braids} for the definition of $\Gamma_{\pi_1}$ and some of its properties. But as explained in Section \ref{sec:braids}, if $\mathcal{B}(\mathcal{P})$ denotes the braid type of $\mathcal{P}$, then $$ h_{\topo}(\mathcal{B}(\mathcal{P}))= \Gamma_{\pi_1}([\varphi, \overline{\mathcal{P}}]). $$ The theorem then follows directly. \qed

The combination of this theorem with our braid stability results  allows us to obtain lower semicontinuity of $h_{\rm top}$ with respect to the Hofer metric.  This gives an answer to the two-dimensional version of the following question of Leonid Polterovich.

\begin{question} [Polterovich]
What continuity or stability properties does the topological entropy $h_{\rm top}$ have with respect to $d_{\rm Hofer}$?
\end{question}

\color{black}

\begin{rem}
Our methods and results in the present paper differ from those in \cite{chor-meiwes}, where the problem of stability of $h_{\topo}$ with respect to $d_{\rm Hofer}$ was first investigated. 
While in \cite{chor-meiwes} the stable lower bounds on $h_{\topo}$ stem from the properties of the braids projected back to the surface, such as geometric intersection numbers, the methods here allow us to deal with lower bounds that hold for more general braid types. For example, the results in \cite{chor-meiwes} only deal with surfaces $\Sigma$ of genus $\geq 2$, while our results also deal with $S^2$, $T^2$, and $D^2$. 
In \cite{chor-meiwes} it was shown that there are balls of any radius in Hofer's metric on which $h_{\topo}$ is positive. In the current paper we will only deal with stability properties under small perturbations. 
\end{rem} 

\begin{rem}
The continuity properties of $h_{\rm top}$ with respect to different topologies on spaces of dynamical systems has been much studied. The topological entropy is a measure of the complexity of a dynamical system, and it is interesting to investigate if dynamical complexity is stable under perturbation of a system. For example, the combined results of Yomdin \cite{Yomdin1987} and Newhouse \cite{Newhouse} imply that $h_{\rm top}$ is continuous with respect to the $C^\infty$-topology on the space of $C^\infty$-smooth diffeomorphisms of a closed surface. The analogous result does not hold for higher dimensional manifolds, as showed in \cite{Misiurewicz}. Using the fundamental work of Katok \cite{Katok}, Nitecki showed in \cite{nitecki} that $h_{\topo}$ is lower semicontinuous with respect to the $C^0$-topology on the space of $C^{1+\epsilon}$ diffeomorphisms of a closed surface, for any $\epsilon>0$. 
\end{rem}

The relationship between contact topology and topological entropy of families of contactomorphisms has been studied extensively and fruitfully in recent years by various methods. A large class of contactomorphisms are those that arise via Reeb flows and there
is an abundance of contact manifolds for which the topological entropy or the exponential orbit growth rate is positive for all Reeb flows. Examples and dynamical properties of those manifolds are investigated in \cite{AASS,Alves-Cylindrical,Alves-Anosov,A2,AlvesColinHonda2017,AlvesMeiwes2018,FrauenfelderSchlenk2006,MacariniSchlenk2011}. Some of these results generalise to positive contactomorphisms \cite{Dahinden2,Dahinden}, and results on the dependence of some lower bounds on topological entropy with respect to their positive contact Hamiltonians have been obtained in \cite{DahindenC0}. A related discussion and results on questions of $C^0$-stability of the topological entropy of geodesic flows can be found in \cite{ADMM}.
Aspects of the relationship between the topological entropy of Hamiltonian diffeomorphisms and Floer homology are also studied in \cite{chor-meiwes,CGG,FrauenfelderSchlenk2006}.

In a joint work in progress \cite{ADMP} of the authors, Abror Pirnapasov and Lucas Dahinden, we study robustness and stability properties of $h_{\topo}$ of $3$-dimensional Reeb flows using methods inspired by the ones of the present paper, and using the forcing theory for $h_{\topo}$ of Reeb flows developed in \cite{AlvesPirnapasov}.

\textbf{Acknowledgements:} This work benefited from discussions with Fr\'ed\'eric Bourgeois, Felix Schlenk and Umberto Hryniewicz. Our special thanks to Leonid Polterovich who first proposed to us the study of continuity properties of $h_{\topo}$ with respect to the Hofer metric.

\section{Main results} \label{sec:mainresults}

In this section we state precisely our main results. For this, we need the following definition. 
Let $H_{\oplus}$ be a path of Hamiltonians whose time $1$-map is $\phi_{\oplus}$. 

\begin{defn} \label{def:isolated}
Let $\mathcal{Y}_\oplus=\{\gamma_1,...,\gamma_k\}$ be a finite collection of distinct  $1$-periodic orbits of $X_{H_\oplus}$ that represent all the same free homotopy class of loops $\alpha$ and $\epsilon>0$ be a positive real number. We say that the collection $\mathcal{Y}_\oplus$ is \textit{$\epsilon$-isolated for the action $\mathcal{A}_{H_\oplus}$} if 
    \begin{itemize}
\item for all $1$-periodic orbits $\gamma,\gamma'$ of $X_{H_{\oplus}}$ representing $\alpha$ we have that $\Delta_{H_{\oplus}}(\gamma,\gamma')$ is either $0$ or $\geq\epsilon$.
\item for all $\gamma \in \mathcal{Y}_{\oplus}$ and all $1$-periodic orbits $\gamma'$ of $X_{H_{\oplus}}$ representing $\alpha$ we have that $\Delta_{H_{\oplus}}(\gamma, \gamma') = 0$ implies $\gamma'\in \mathcal{Y}_{\oplus}$. 
\end{itemize}
\begin{rem}
Note that $\Delta_{H_{\oplus}}(\gamma,\gamma')$ is defined in \eqref{def_Delta}, and that if the action does not depend on the capping, then  $\Delta_{H_{\oplus}}(\gamma,\gamma')$ is just the positive action difference, e.g. if $\gamma, \gamma'$ are contractible and $\Sigma \neq S^2$, then $\Delta_{H_{\oplus}}(y,y')=|\mathcal{A}_{H_{\oplus}}(\gamma) -\mathcal{A}_{H_{\oplus}}(\gamma')|$.
\end{rem}

\end{defn}

\begin{rem}
Notice that in this definition we do not require that for two different $1$-periodic orbits $\gamma_i \neq \gamma_j$ in $\mathcal{Y}_\oplus$ we have $\Delta_{H_{\oplus}}(\gamma_i,\gamma_j) \neq 0$. Indeed, in some of our most important dynamical results all the elements  $\mathcal{Y}_\oplus$ have the same action.
\end{rem}

Theorems \ref{thm:main}, \ref{thm:disk} and \ref{thm:disk0} given conditions under which we can guarantee that a braid type of periodic orbits of a Hamiltonian diffemorphism on a surface will persist under small perturbations with respect to $d_{\rm Hofer}$. Theorem \ref{thm:main} deals with the case of closed surfaces.

\begin{mainthm} \label{thm:main}
Let $\Sigma$ be a closed surface and $\omega$ be a symplectic form on $\Sigma$. If $\Sigma=S^2$ we assume that $\int_{S^2}\omega =8$. 
Let $\phi_\oplus$ be a non-degenerate Hamiltonian diffeomorphism of $(\Sigma,\omega)$ and $H_\oplus$ be a path of normalized Hamiltonians whose time $1$-map is $\phi_\oplus$. Assume that there exists a finite collection $\mathcal{Y}_\oplus=\{\gamma_1,...,\gamma_k\}$  of distinct, pairwise freely homotopic $1$-periodic orbits of $H_\oplus$ and a number $\epsilon>0$ such that $\mathcal{Y}_\oplus$ is $100\epsilon$-isolated for $\mathcal{A}_{H_\oplus}$, and let $\mathcal{B}(\mathcal{Y}_\oplus)$ be the braid in $S^1 \times \Sigma$ associated to $Y_\oplus$ as in definition \ref{def:braid}. 
If $\Sigma = S^2 $ we assume moreover that $\epsilon < \frac{1}{400}$.

Then, for any non-degenerate Hamiltonian diffeomorphism $\phi_\ominus$ whose Hofer distance to $\phi_\oplus$ is $< \epsilon$, there exist a path $H_\ominus$ of normalized Hamiltonians whose time $1$-map is $\phi_\ominus$ and a finite set $\mathcal{Y}_\ominus$ of $1$-periodic orbits of $H_\ominus$ such that $$\mathcal{B}(\mathcal{Y}_\ominus) \mbox{ is freely isotopic as a braid to } \mathcal{B}(\mathcal{Y}_\oplus),$$
where $\mathcal{B}(\mathcal{Y}_\ominus)$ is the braid associated to $\mathcal{Y}_\ominus$.
\end{mainthm}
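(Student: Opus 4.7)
The plan is to link each orbit in $\mathcal{Y}_\oplus$ to a corresponding orbit of a suitably chosen Hamiltonian $H_\ominus$ generating $\phi_\ominus$ via a Floer continuation along a Hofer-short path, and then to promote this orbit-by-orbit correspondence into a free isotopy of braids in $S^1 \times \Sigma$ via a one-parameter family argument. First, I fix cappings for the common free homotopy class $\alpha$ of the $\gamma_i$ and write $a_1, \ldots, a_k$ for their actions. The $100\epsilon$-isolation hypothesis then says that every $1$-periodic orbit of $X_{H_\oplus}$ in class $\alpha$ lying outside $\mathcal{Y}_\oplus$ is separated from each $a_i$ by at least $100\epsilon$ in the sense of $\Delta_{H_\oplus}$. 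Since $d_{\rm Hofer}(\phi_\oplus, \phi_\ominus) < \epsilon$, I choose a normalized $H_\ominus$ (generic within the relevant Hofer ball, consistent with the non-degeneracy in the hypothesis) and a normalized path $\{H_s\}_{s \in [0,1]}$ with $H_0 = H_\ominus$, $H_1 = H_\oplus$, and $\int_0^1 \|H_s\|\, ds < \epsilon$, together with a generic $s$-dependent family $\{J_s\}$ of $\omega$-compatible almost complex structures.

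The second step is a parametric Floer continuation. For each $\gamma_i$ I examine the moduli space of finite-energy cylinders $u \colon \R \times S^1 \to \Sigma$ solving the $s$-dependent Floer equation with $u(s, \cdot) \to \gamma_i$ as $s \to +\infty$. The standard action--energy identity combined with the Hofer bound on $\{H_s\}$ shows that $\gamma'_i := \lim_{s \to -\infty} u(s, \cdot)$ is a $1$-periodic orbit of $X_{H_\ominus}$ in class $\alpha$ whose action, with the capping inherited from $u$, lies within $\epsilon$ of $a_i$. Invoking the quantitative spectral stability of Polterovich--Shelukhin together with the $100\epsilon$-isolation, the set of orbits of $H_\ominus$ in class $\alpha$ with action $\epsilon$-close to $a_i$ forms a well-defined cluster $C_i$ separated from the rest of the spectrum by at least $99\epsilon$. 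Choosing one representative $\gamma'_i \in C_i$ per $i$, and using a counting argument for continuation maps to ensure distinctness, I obtain the candidate $\mathcal{Y}_\ominus := \{\gamma'_1, \ldots, \gamma'_k\}$.

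The crux is upgrading this correspondence to a \emph{free isotopy of braids} in $S^1 \times \Sigma$. My plan is to track the persistent orbit cluster $\mathcal{Y}_s$ of $H_s$ throughout the path $\{H_s\}$. At generic $s$ the set $\mathcal{Y}_s$ consists of $k$ non-degenerate orbits moving $C^\infty$-smoothly by the implicit function theorem, and because the full path has Hofer length $< \epsilon$ while the isolation gap is $100\epsilon$, no orbit from outside the cluster can enter it along the path. At the finitely many bifurcation values of $s$, pairs of orbits within $\mathcal{Y}_s$ may collide in generic Morse birth--death events whose local braid model preserves the free isotopy class of $\mathcal{B}(\mathcal{Y}_s)$ in $S^1 \times \Sigma$. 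The main obstacle is precisely this bifurcation analysis: one must verify both that the loops in $\mathcal{Y}_s$ remain pairwise disjoint in $\Sigma$ at each fixed $s$ (so that $\mathcal{B}(\mathcal{Y}_s)$ is an honest braid), and that each local bifurcation matches a valid braid isotopy move rather than a crossing change. I expect this step to require parametric transversality of Floer moduli spaces together with a local normal-form analysis of degenerate periodic points on surfaces, exploiting that $\dim \Sigma = 2$ so braid crossings in $S^1 \times \Sigma$ form a codimension-one locus which a generic path avoids.
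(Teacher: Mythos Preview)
Your codimension count in the final paragraph is the fatal gap. For two strands $\gamma_i^s,\gamma_j^s:S^1\to\Sigma$ varying in a one-parameter family $s\in[0,1]$, the coincidence condition $\gamma_i^s(t)=\gamma_j^s(t)$ is two equations in the two parameters $(s,t)$, so a \emph{generic} path of braids \emph{does} meet isolated crossings---exactly the opposite of what you claim. Hence bifurcation analysis along a Hofer-short path cannot rule out crossing changes; the Hofer metric gives no $C^0$ control on how the orbits of $H_s$ move in $\Sigma$, so there is nothing preventing two strands of $\mathcal{B}(\mathcal{Y}_s)$ from passing through each other even while the action cluster stays isolated. (Also note that the slices $u(s,\cdot)$ of a Floer continuation cylinder are \emph{not} periodic orbits of the intermediate $H_s$, so your two paragraphs are describing different objects.)

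The paper's mechanism is entirely different and does not track periodic orbits along the path at all. After producing, via continuation maps and a combinatorial selection lemma, Floer cylinders $u^1_{\kappa,i}$ from each $\gamma^\kappa_i$ to a chosen $\sigma^\kappa_{\mathfrak f_\kappa(i)}$ and cylinders $u^2_{\kappa,i}$ back (to some $\gamma^\kappa_{\mathfrak g_\kappa(i)}$), it lifts them via Gromov's graph construction to $\widetilde J$-holomorphic cylinders in $\R\times S^1\times\Sigma$. The braid isotopy is then the family $s\mapsto\widetilde u^1_{\kappa,i}(s,\cdot)$, and the disjointness of strands is forced by \emph{positivity of intersections}: one concatenates the forward and backward cylinders $M$ times (where $\mathfrak g_\kappa^M=\mathrm{id}$) into a long cylinder $\overline U^\kappa_i$ with both ends on $\gamma^\kappa_i$, shows (by case analysis on $\Sigma$ and the homotopy class) that $\overline U^\kappa_i$ is homotopic rel boundary to the trivial cylinder over $\gamma^\kappa_i$, so the algebraic intersection number of $\overline U^\kappa_i$ with $\overline U^{\kappa'}_j$ is zero; since every intersection of the holomorphic lifts counts positively, there are none. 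This holomorphic-curve argument is what replaces your bifurcation analysis, and it is the genuine content of the theorem.
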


We now state our main results for Hamiltonian diffeomorphisms of the disk.

\begin{mainthm} \label{thm:disk}
Let $\phi_\oplus \in \mathrm{Ham}_c(\D)$ be a non-degenerate Hamiltonian diffeomorphism of $(\D,dx \wedge dy)$ for some $c\in \R \setminus \Q$ and $H_\oplus$ be a path of normalized Hamiltonians whose time $1$-map is $\phi_\oplus$. Assume that there exists a finite collection $\mathcal{Y}_\oplus=\{\gamma_1,...,\gamma_k\}$   of distinct $1$-periodic orbits of $H_\oplus$ and a number $\epsilon>0$ such that $\mathcal{Y}_\oplus$ is $100\epsilon$-isolated for $\mathcal{A}_{H_\oplus}$, and let $\mathcal{B}(\mathcal{Y}_\oplus)$ be the braid in $S^1 \times \Sigma$ associated to $Y_\oplus$ as in definition \ref{def:braid}.

Then, for any non-degenerate Hamiltonian diffeomorphism $\phi_\ominus\in \mathrm{Ham}_c(\D)$ whose Hofer distance to $\phi_\oplus$ is $< \epsilon$, there exist a path $H_\ominus$ of normalized Hamiltonians whose time $1$-map is $\phi_\ominus$ and a finite set $\mathcal{Y}_\ominus$ of $1$-periodic orbits of $H_\ominus$ such that $$\mathcal{B}(\mathcal{Y}_\ominus) \mbox{ is freely isotopic as a braid to } \mathcal{B}(\mathcal{Y}_\oplus),$$
where $\mathcal{B}(\mathcal{Y}_\ominus)$ is the braid associated to $\mathcal{Y}_\ominus$.
\end{mainthm}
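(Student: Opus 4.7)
The strategy is to transfer the proof scheme of Theorem \ref{thm:main} to the disk setting. Since $c \in \R \setminus 2\pi\Q$, all $1$-periodic orbits of any Hamiltonian generating an element of $\mathrm{Ham}_c(\D)$ are confined to a fixed compact subset of the interior of $\D$, so the relevant Floer moduli spaces have $C^0$-compactness exactly as in the closed case. Moreover, $\D$ is simply connected and aspherical: the Hamiltonian action depends only on the loop (no capping ambiguity), and there is no sphere bubbling. This is why no genus-dependent restriction on $\epsilon$ is required here, in contrast with $S^2$.

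First, I would choose a normalized path $H_\ominus$ generating $\phi_\ominus$ with $\int_0^1 \|(H_\ominus)_t - (H_\oplus)_t\|\, dt < \epsilon$, and set up the Floer continuation equation for the linear homotopy $H_s = (1-s)H_\oplus + s H_\ominus$ with a monotone $s$-cutoff. The standard energy identity then yields, for any continuation cylinder $u: \R \times S^1 \to \D$ asymptotic to $\gamma \in \mathrm{Fix}(\phi_\oplus)$ at $-\infty$ and to $\tilde\gamma \in \mathrm{Fix}(\phi_\ominus)$ at $+\infty$, both $E(u) < \epsilon$ and $|\mathcal{A}_{H_\oplus}(\gamma) - \mathcal{A}_{H_\ominus}(\tilde\gamma)| < \epsilon$.

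Second, following the persistence module approach of \cite{PolterovichShelukhin2016, Usher}, for each $\gamma_i \in \mathcal{Y}_\oplus$ I would produce a $1$-periodic orbit $\tilde\gamma_i$ of $H_\ominus$ and a continuation cylinder $u_i$ of energy $< \epsilon$ from $\gamma_i$ to $\tilde\gamma_i$. The $100\epsilon$-isolation of $\mathcal{Y}_\oplus$ confines $\mathcal{A}_{H_\ominus}(\tilde\gamma_i)$ to a narrow window around $\mathcal{A}_{H_\oplus}(\gamma_i)$ that is action-isolated from orbits outside the cluster; running continuation back from $H_\ominus$ to $H_\oplus$ combined with this isolation shows that the map $i \mapsto \tilde\gamma_i$ is injective. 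Set $\mathcal{Y}_\ominus := \{\tilde\gamma_1, \ldots, \tilde\gamma_k\}$.

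Third, to promote this correspondence into a free braid isotopy, I would view the graphs $\Gamma(u_i) := \{(s,t,u_i(s,t)) : (s,t) \in \R \times S^1\}$ as immersed surfaces inside the $4$-manifold $\R \times S^1 \times \D$. The continuation equation is, up to an $s$-dependent gauge transformation, a perturbed Cauchy-Riemann equation in this $4$-manifold, so positivity of intersections applies to the $\Gamma(u_i)$. The energy bound $E(u_i) < \epsilon$ together with $100\epsilon$-isolation rules out any intersection point or bubbling: any such intersection would, via standard gluing/dimension counting, force the existence of an extra $1$-periodic orbit of $H_\oplus$ or $H_\ominus$ with action within $\epsilon$ of the cluster, contradicting isolation. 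Hence the $\Gamma(u_i)$ are pairwise disjoint embedded surfaces, and their $s$-slices yield a continuous family of braids in $S^1 \times \D$ interpolating between $\mathcal{B}(\mathcal{Y}_\oplus)$ and $\mathcal{B}(\mathcal{Y}_\ominus)$, which is the desired free isotopy. The main obstacle will be this last step: rigorously implementing positivity of intersections for continuation cylinders in $\R \times S^1 \times \D$ and converting the action/isolation bookkeeping into a non-intersection statement. This is the same analytic core that drives Theorem \ref{thm:main}, and in the disk it is significantly simplified by the absence of non-trivial sphere classes and of recapping ambiguity.
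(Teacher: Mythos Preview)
Your outline captures the right ingredients---continuation cylinders, lifts to $\R\times S^1\times\D$, positivity of intersections---but there is a genuine gap in the third step, and a secondary gap in the second.

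\textbf{The disjointness argument does not work as stated.} The claim that an intersection of two graph-cylinders $\Gamma(u_i)$ and $\Gamma(u_j)$ would, ``via standard gluing/dimension counting, force the existence of an extra $1$-periodic orbit'' is not correct: interior intersections of pseudoholomorphic curves do not produce periodic orbits, and there is no gluing theorem of this kind. Positivity of intersections only tells you that any intersection counts positively; to conclude \emph{non-intersection} you need an independent computation showing the algebraic intersection number vanishes. For a single continuation cylinder from $\gamma_i$ to $\tilde\gamma_i$, that number is governed by the difference of crossing numbers of the asymptotic braids, and since you do not yet know the crossing number of $(\tilde\gamma_i,\tilde\gamma_j)$---that is precisely what you want to prove equals that of $(\gamma_i,\gamma_j)$---the argument is circular. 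The paper's actual mechanism is different: one runs continuation \emph{both} ways, obtaining cylinders $u^1_{\kappa,i}$ for $(G,J^s_t)$ and $u^2_{\kappa,i}$ for $(\widehat G,\widehat J^s_t)$, and shows via a combinatorial lemma that the composite takes $\gamma_i^\kappa$ to $\gamma^\kappa_{\mathfrak g_\kappa(i)}$ for some \emph{permutation} $\mathfrak g_\kappa$. Concatenating along the orbit of $\mathfrak g_\kappa$ yields a long cylinder $\overline U^\kappa_i$ whose two ends are the \emph{same} orbit $\gamma^\kappa_i$. Since $\D$ is contractible, $\overline U^\kappa_i$ is homotopic rel boundary to the trivial cylinder over $\gamma^\kappa_i$, so its algebraic intersection with any other $\overline U^{\kappa'}_j$ is zero; positivity then forces geometric disjointness, and hence disjointness of the original $\widetilde u^1_{\kappa,i}$.

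\textbf{The injectivity of $i\mapsto\tilde\gamma_i$ is also not automatic.} The image $\Psi_G(\gamma_i)$ is in general a sum of several orbits, and ``running continuation back'' only gives $\Psi_{\widehat G}\circ\Psi_G=\mathrm{id}$ on the chain level (since the differential vanishes by isolation). Extracting from this an \emph{injective} choice $i\mapsto \sigma_{\mathfrak f(i)}$ together with a \emph{bijective} return map $\mathfrak g$ is a nontrivial combinatorial statement (Lemma~\ref{lem:combinatorial} in the paper), and the permutation structure of $\mathfrak g$ is exactly what makes the concatenation trick above possible.
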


For compactly supported Hamiltonian diffeomorphisms of the disk we need to impose further conditions on the periodic orbits that form the braid.

\begin{mainthm} \label{thm:disk0}
Let $\phi_\oplus \in \mathrm{Ham}_0(\D)$ be a compactly supported non-degenerate Hamiltonian diffeomorphism of $(\D,dx \wedge dy)$ and $H_\oplus$ be a path of normalized Hamiltonians whose time $1$-map is $\phi_\oplus$. Assume that there exists a finite collection $\mathcal{Y}_\oplus=\{\gamma_1,...,\gamma_k\}$   of distinct $1$-periodic orbits of $H_\oplus$ and a number $\epsilon>0$ such that $\mathcal{Y}_\oplus$ is $100\epsilon$-isolated for $\mathcal{A}_{H_\oplus}$ and such that $\mathcal{A}_{H_\oplus}(\gamma_i) \neq 0$ for all $i \in \{1,...,k\}$. Let $\mathcal{B}(\mathcal{Y}_\oplus)$ be the braid in $S^1 \times \Sigma$ associated to $Y_\oplus$ as in definition \ref{def:braid}.

Then, for any compactly supported non-degenerate Hamiltonian diffeomorphism $\phi_\ominus\in \Ham_0(\D)$ whose Hofer distance to $\phi_\oplus$ is $< \epsilon$, there exist a path $H_\ominus$ of normalized Hamiltonians whose time $1$-map is $\phi_\ominus$ and a finite set $\mathcal{Y}_\ominus$ of $1$-periodic orbits of $H_\ominus$ such that $$\mathcal{B}(\mathcal{Y}_\ominus) \mbox{ is freely isotopic as a braid to } \mathcal{B}(\mathcal{Y}_\oplus),$$
where $\mathcal{B}(\mathcal{Y}_\ominus)$ is the braid associated to $\mathcal{Y}_\ominus$.
\end{mainthm}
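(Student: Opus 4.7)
The plan is to deduce Theorem \ref{thm:disk0} from Theorem \ref{thm:disk} by composing $\phi_\oplus$ and $\phi_\ominus$ with a single small element of $\mathrm{Ham}_c(\D)$, for an irrational $c$ with $|c|$ much smaller than $\epsilon$, chosen so that (i) the composed diffeomorphism belongs to $\mathrm{Ham}_c(\D)$, (ii) no additional $1$-periodic orbits are created outside the original support, and (iii) the Hofer distance is preserved.

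Since $\phi_\oplus,\phi_\ominus\in\mathrm{Ham}_0(\D)$, fix $r\in(0,1)$ such that both are the identity on $\D\setminus \D_r$. Choose an irrational $c$ with $|c|$ much smaller than $\epsilon$ and construct $\rho\in\mathrm{Ham}_c(\D)$, generated by a Hamiltonian $K$ supported in $\D\setminus\D_r$, such that $\rho$ coincides with the rotation by $c$ near $\partial\D$, has no fixed points in $\D\setminus \D_r$, is non-degenerate, and has Hofer norm much smaller than $\epsilon$. Such a $\rho$ is obtained by composing a suitable radial Hamiltonian generating the boundary rotation with a small shear supported in a thin annulus $\D_{(1+r)/2}\setminus\D_r$ that destroys the degenerate circle of fixed points at the inner edge of the radial Hamiltonian's support. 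For $\star\in\{\oplus,\ominus\}$ set $\tilde\phi_\star := \rho\circ\phi_\star\in\mathrm{Ham}_c(\D)$, generated by the concatenated Hamiltonian $\tilde H_\star := H_\star\# K$ (followed, if necessary, by a small generic perturbation on $\D\setminus\D_r$ guaranteeing non-degeneracy of both $\tilde\phi_\oplus,\tilde\phi_\ominus$).

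Because $\rho$ has no fixed points in $\D\setminus \D_r$ and $\phi_\star$ is the identity there, every fixed point of $\tilde\phi_\star$ lies in $\D_r$ and coincides with a fixed point of $\phi_\star$. Consequently the elements of $\mathcal{Y}_\oplus$ remain $1$-periodic orbits of $\tilde H_\oplus$, their actions under $\tilde H_\oplus$ differ from those under $H_\oplus$ only by a global constant coming from the different normalization conventions of $\mathrm{Ham}_0(\D)$ and $\mathrm{Ham}_c(\D)$ (irrelevant for $\Delta_{\tilde H_\oplus}$), and $\mathcal{Y}_\oplus$ is still $100\epsilon$-isolated for $\tilde H_\oplus$. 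By bi-invariance of the Hofer metric and the smallness of $\rho$, $d_{\rm Hofer}(\tilde\phi_\oplus,\tilde\phi_\ominus) < \epsilon$. Applying Theorem \ref{thm:disk} yields a normalized path $\tilde H'_\ominus$ generating $\tilde\phi_\ominus$ and a set $\tilde{\mathcal{Y}}_\ominus$ of $1$-periodic orbits of $\tilde H'_\ominus$ such that $\mathcal{B}(\tilde{\mathcal{Y}}_\ominus)$ is freely isotopic to $\mathcal{B}(\mathcal{Y}_\oplus)$ in $S^1\times \D$.

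To conclude, use that $\mathrm{Ham}_c(\D)$ is simply connected, so any two paths between the identity and $\tilde\phi_\ominus$ are homotopic and hence produce freely isotopic braids of the same fixed-point set. Replacing $\tilde H'_\ominus$ by $H_\ominus\# K$ for a normalized $H_\ominus$ generating $\phi_\ominus$ gives a freely isotopic braid whose strands, restricted to $S^1\times \D_r$, are exactly the orbits of $H_\ominus$ through the points of $\tilde{\mathcal{Y}}_\ominus\subset \D_r$; setting $\mathcal{Y}_\ominus := \tilde{\mathcal{Y}}_\ominus$ viewed as $1$-periodic orbits of $H_\ominus$ then completes the proof. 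The main obstacle I expect is the construction of $\rho$ with all the required properties simultaneously, since the topological constraint that an interpolation from the identity to the rotation by $c$ must pass through zero rotation somewhere in the annulus creates a priori degenerate fixed points that must be removed by the shear while remaining Hofer-small. The hypothesis $\mathcal{A}_{H_\oplus}(\gamma_i)\neq 0$ is essential to keep $\mathcal{Y}_\oplus$ isolated after the constant action shift produced by the change of normalization: without it, an orbit of $\mathcal{Y}_\oplus$ with vanishing $H_\oplus$-action could end up with $\tilde H_\oplus$-action close to the actions of spurious boundary orbits, breaking the $100\epsilon$-isolation required to invoke Theorem \ref{thm:disk}.
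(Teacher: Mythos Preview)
Your reduction strategy is genuinely different from the paper's approach. The paper proves Theorem~\ref{thm:disk0} \emph{directly}: it sets up Floer theory for compactly supported Hamiltonians in action windows that avoid $0$ (Proposition~\ref{prop:chainisodisk0}), builds continuation maps $\Psi_G,\Psi_{\widehat G}$ with $\Psi_{\widehat G}\circ\Psi_G$ chain homotopic to the identity on $CF^{(\kappa-\epsilon,\kappa+\epsilon)}(H_\oplus)$, extracts specific Floer cylinders via a combinatorial lemma, lifts them to holomorphic cylinders in $\R\times S^1\times\D$ by the Gromov trick, and uses positivity of intersections together with a concatenation/permutation argument to produce the braid isotopy. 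The hypothesis $\mathcal A_{H_\oplus}(\gamma_i)\neq 0$ enters precisely so that the action windows $(\kappa-\epsilon,\kappa+\epsilon)$ avoid $0$, where the continuum of degenerate orbits of $H_\oplus$ lives.

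Your proposal has a genuine gap, and it is exactly the obstacle you flag but underestimate. By construction $\rho$ (radial part plus shear) is the identity on $\overline{\D_r}$: any smooth Hamiltonian supported in $\D\setminus\D_r$ vanishes together with its gradient on $\partial\D_r$, and the same holds for the shear supported in $\D_{(1+r)/2}\setminus\D_r$. Since $\phi_\oplus$ is already the identity on a neighbourhood of $\partial\D_r$ from the outside, $\tilde\phi_\oplus=\rho\circ\phi_\oplus$ is the identity on an entire annular neighbourhood of $\partial\D_r$, so it carries a continuum of degenerate fixed points that no shear supported away from $\overline{\D_r}$ can remove. Your generic perturbation then necessarily produces finitely many non-degenerate $1$-periodic orbits in the annulus whose actions are close to $0$ but generically pairwise distinct. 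Now recall that Definition~\ref{def:isolated} requires \emph{every} pair of $1$-periodic orbits of $\tilde H_\oplus$ (not just pairs involving $\mathcal Y_\oplus$) to have action difference either $0$ or at least $100\epsilon$. Two of the new boundary orbits will have action difference in $(0,100\epsilon)$, so $\mathcal Y_\oplus$ fails to be $100\epsilon$-isolated for the perturbed $\tilde H_\oplus$, and Theorem~\ref{thm:disk} cannot be invoked as stated. Repairing this would force you either to reopen the proof of Theorem~\ref{thm:disk} with a localized isolation hypothesis, or to pass through the quasi-isolated variant Theorem~\ref{thm:disk}* and control low-energy Floer cylinders near $\mathcal Y_\oplus$---at which point you are essentially redoing the paper's direct argument rather than reducing to it.
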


\begin{rem}\label{main_weaker_assumption}
In the above theorems the isolation condition on the orbits can be replaced by the more technical condition that we will call quasi-isolation,  where a set of orbits is $\epsilon$-quasi-isolated if the set of their action values are $\epsilon$-isolated and if there are no non-constant Floer cylinder with energy $< \epsilon$ which are asymptotic to one of those orbits. See section \ref{sec:quasi-iso} for the precise definition and some consequences, and section \ref{sec:replacing_assumption} for the argument why Theorems \ref{thm:main} and \ref{thm:disk} remain true with this assumption. 
\end{rem}

\subsection{Applications}
\subsubsection{Braid stability for a set of non-degenerate orbits}
We first formulate a theorem that will follow from the results in section  \ref{sec:mainresults}. It is a stability statement for the braid that  that is associated to a set of non-degenerate, pairwise freely homotopic $1$-periodic orbits of a (not necessarily non-degenerate) Hamiltonian diffeomorphism on $\Sigma$.  

In the following we say that a compactly supported Hamiltonian diffeomorphism on $\D$ is non-degenerate in its support if all its $1$-periodic orbits in the interior of its support are non-degenerate. 

\begin{thm}\label{thm:nondeg_stable}
Let $\Sigma$ be a closed surface or the two-disc $\D$ equipped with a symplectic form $\omega$ (where $\omega = \omega_0$ if $\Sigma = \D$).  
Let $\phi_{\oplus}$ be a compactly supported Hamiltonian diffeomorphism  and $H_{\oplus}$ a path of normalized Hamiltonians with $\phi_{\oplus} = \phi^1_{H_{\oplus}}$. Let $\mathcal{Y}_{\oplus} = \{\gamma_1, \ldots,\gamma_k\}$ be a collection of distinct non-degenerate $1$-periodic orbits of $H_{\oplus}$ that are pairwise freely homotopic.  Let $\mathcal{B}(\mathcal{Y}_{\oplus})$ be the braid in $S^1 \times \Sigma$ associated to $\mathcal{Y}_{\oplus}$ as in Definition \ref{def:braid}.  
Then there is $\epsilon'>0$ such that for any compactly supported Hamiltonian diffeomorphism $\phi_{\ominus}$ that is non-degenerate (in its support if $\Sigma = \D^2$) and that satisfies  $d_{\hofer}(\phi_\ominus, \phi_{\oplus}) < \epsilon'$,  there is a path $H_{\ominus}$ of normalized Hamiltonians with $\phi_{\ominus} = \phi^1_{H_{\ominus}}$ and a set of $1$-periodic orbits $\mathcal{Y}_{\ominus}$ for $H_{\ominus}$ such that 
$$ \mathcal{B}(\mathcal{Y}_{\ominus}) \text{ is freely isotopic as a braid to } \mathcal{B}(\mathcal{Y}_{\oplus}),$$
where $\mathcal{B}(\mathcal{Y}_\ominus)$ is the braid associated to $\mathcal{Y}_\ominus$. 
\end{thm}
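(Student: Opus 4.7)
The strategy is to reduce Theorem \ref{thm:nondeg_stable} to Theorems \ref{thm:main}, \ref{thm:disk}, and \ref{thm:disk0} by means of a preliminary $C^\infty$-small, and hence Hofer-small, perturbation of $H_\oplus$ that renders $\phi_\oplus$ non-degenerate while keeping the given orbits and placing them into a $100\epsilon$-isolated family of the sort demanded by those theorems.

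First I would produce a $C^\infty$-small perturbation $\tilde H_\oplus$ of $H_\oplus$ so that $\tilde \phi_\oplus := \phi^1_{\tilde H_\oplus}$ is non-degenerate; this is a standard transversality result. Since each $\gamma_i \in \mathcal{Y}_\oplus$ is non-degenerate by hypothesis, the implicit function theorem applied to the fixed-point equation yields, for sufficiently small perturbations, a unique nearby non-degenerate $1$-periodic orbit $\tilde\gamma_i$ of $\tilde H_\oplus$ that is $C^0$-close to $\gamma_i$. Setting $\tilde{\mathcal{Y}}_\oplus := \{\tilde\gamma_1, \ldots, \tilde\gamma_k\}$, $C^0$-proximity guarantees that the $\tilde\gamma_i$'s are distinct and pairwise freely homotopic in the free homotopy class $\alpha$ of the $\gamma_i$'s, and that $\mathcal{B}(\tilde{\mathcal{Y}}_\oplus)$ is freely isotopic as a braid to $\mathcal{B}(\mathcal{Y}_\oplus)$.

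Next I would refine the perturbation further to make the set of action values of $1$-periodic orbits of $\tilde H_\oplus$ in class $\alpha$ simple (all pairwise distinct). Since $\tilde\phi_\oplus$ is non-degenerate there are only finitely many such orbits, so this is a generic and $C^\infty$-open condition that can be achieved by a further small perturbation. In the $\Ham_0(\D)$-case where Theorem \ref{thm:disk0} applies, I additionally arrange that $\mathcal{A}_{\tilde H_\oplus}(\tilde\gamma_i) \neq 0$ for every $i$; this can be done by adding to $\tilde H_\oplus$ a term of the form $h(t)\chi(x)$, where $\chi \in C^\infty_c(\D)$ is identically $1$ on a small neighbourhood of each $\tilde\gamma_i$. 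This leaves the $\tilde\gamma_i$'s unchanged (because $d\chi = 0$ there) while shifting their actions by $\int_0^1 h(t)\,dt$, and introduces no new orbits in class $\alpha$ if $h$ is small enough, by stability of the finite set of non-degenerate orbits. Now let $d$ be the minimum gap between distinct action values of $1$-periodic orbits of $\tilde H_\oplus$ in class $\alpha$, and choose $\epsilon > 0$ smaller than $d/100$ (and, in the $S^2$-case, also smaller than $1/400$). Then $\tilde{\mathcal{Y}}_\oplus$ is $100\epsilon$-isolated for $\mathcal{A}_{\tilde H_\oplus}$: condition (i) of Definition \ref{def:isolated} holds by choice of $\epsilon$, and condition (ii) holds because all actions in class $\alpha$ are distinct, so any orbit with the same action as some $\tilde\gamma_i$ must equal it.

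Finally, I arrange the compound perturbation to satisfy $d_{\rm Hofer}(\phi_\oplus, \tilde\phi_\oplus) < \epsilon/2$ and set $\epsilon' := \epsilon/2$. Then for any non-degenerate $\phi_\ominus$ with $d_{\rm Hofer}(\phi_\oplus, \phi_\ominus) < \epsilon'$, the triangle inequality gives $d_{\rm Hofer}(\tilde\phi_\oplus, \phi_\ominus) < \epsilon$, and the appropriate one of Theorems \ref{thm:main}, \ref{thm:disk}, or \ref{thm:disk0}, applied to the pair $(\tilde\phi_\oplus, \phi_\ominus)$ and to the $100\epsilon$-isolated collection $\tilde{\mathcal{Y}}_\oplus$, yields a normalized Hamiltonian $H_\ominus$ generating $\phi_\ominus$ and a collection $\mathcal{Y}_\ominus$ of $1$-periodic orbits of $H_\ominus$ with $\mathcal{B}(\mathcal{Y}_\ominus)$ freely isotopic to $\mathcal{B}(\tilde{\mathcal{Y}}_\oplus)$, and hence to $\mathcal{B}(\mathcal{Y}_\oplus)$. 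The principal difficulty of the argument is the simultaneous realization of the three desired properties (non-degeneracy of $\tilde\phi_\oplus$, persistence of the $\gamma_i$'s with unchanged braid type, and a simple action spectrum in class $\alpha$) by a single compound perturbation, together with handling the $\Ham_0(\D)$ nonzero-action requirement without introducing unwanted orbits; these are genericity/finiteness arguments, but they need some care in bookkeeping.
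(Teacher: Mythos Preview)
Your reduction to Theorems~\ref{thm:main}--\ref{thm:disk0} has a genuine circularity. You first perturb $H_\oplus$ to a non-degenerate $\tilde H_\oplus$ with simple action spectrum in class $\alpha$, then set $\epsilon < d/100$ with $d$ the minimal action gap, and finally ``arrange the compound perturbation to satisfy $d_{\rm Hofer}(\phi_\oplus, \tilde\phi_\oplus) < \epsilon/2$''. But $\epsilon$ depends on the perturbation already chosen; shrinking the perturbation may shrink $d$ and hence $\epsilon$ at the same rate. Concretely, if $H_\oplus$ has a Morse--Bott family of degenerate orbits in class $\alpha$, a perturbation of $C^2$-size $\delta$ (hence Hofer size of order $\delta$) resolves it into non-degenerate orbits whose pairwise action differences are of order $\delta$. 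Since the first condition in Definition~\ref{def:isolated} concerns \emph{all} pairs of orbits in class $\alpha$, this forces $d\lesssim\delta$ and so $\epsilon\lesssim\delta/100$; you would then need the Hofer size of the perturbation, which is of order $\delta$, to be $<\epsilon/2\lesssim\delta/200$, which fails for every $\delta>0$. This is not a bookkeeping issue but a genuine obstruction.

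The paper circumvents it by weakening isolation to quasi-isolation (Definition~\ref{defn:quasi-isolation}), whose action-gap clause involves only the $\gamma_i$ themselves --- which are kept fixed under a perturbation supported away from them, so their actions do not move --- together with a ``no low-energy Floer cylinder'' condition. The latter is established by a Gromov--Floer compactness argument (Propositions~\ref{prop:first_perturb} and~\ref{prop:first_perturbD}): were it to fail along a sequence of ever-smaller perturbations, one would extract a limiting zero-energy cylinder through a point of $\partial U_{i_0}$ that is not fixed by $\phi_\oplus$, a contradiction. One then invokes the starred variants, Theorems~\ref{thm:maina} and~\ref{thm:diska}, in place of Theorems~\ref{thm:main}--\ref{thm:disk0}. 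For the disk the paper also passes from $\Ham_0(\D)$ to $\Ham_c(\D)$ for a suitable irrational $c$, which sidesteps the nonzero-action requirement of Theorem~\ref{thm:disk0} that you handle by an ad hoc action shift.
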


A proof of this result is given in section \ref{sec:nondeg_stable}. 

\subsubsection{Lower semicontinuity of the $h_{\topo}$ with respect to Hofer's metric}


A consequence of Theorem \ref{thm:nondeg_stable} and the  approximation results on $h_{\topo}$ in Appendix \ref{appendix:approx} is the lower semicontinuity of the topological entropy $h_{\topo}$ with respect to $d_{\hofer}$ in dimension two. 
 
\begin{thm}\label{thm:htop_lowersemi}
Let $\Sigma$ be a closed surface equipped with a symplectic form $\omega$, then $h_{\topo}:(\Ham(\Sigma, \omega), d_\hofer) \to [0,\infty)$ is lower semicontinuous.
\end{thm}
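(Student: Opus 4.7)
Let $\phi_\oplus \in \Ham(\Sigma,\omega)$, set $c := h_{\topo}(\phi_\oplus)$, and fix $\eta > 0$. The case $c = 0$ is trivial, so assume $c > 0$. The plan is to find $\epsilon > 0$ such that $h_{\topo}(\phi_\ominus) > c - \eta$ whenever $d_\hofer(\phi_\oplus, \phi_\ominus) < \epsilon$, by combining the approximation theorem proved just above (which realises $h_{\topo}(\phi_\oplus)$ via braid types of hyperbolic periodic orbits, using Theorem~\ref{thm:approximation}) with the braid stability result Theorem~\ref{thm:nondeg_stable}, and bridging from non-degenerate approximations to arbitrary $\phi_\ominus$ via $C^\infty$-density together with the Yomdin--Newhouse continuity of $h_{\topo}$.

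First, by the approximation theorem, there exist $k \in \N$ and a hyperbolic $k$-periodic orbit $\mathcal{P} = \{p_0,\ldots,p_{k-1}\}$ of $\phi_\oplus$ with $\Gamma_{\pi_1}([\phi_\oplus,\mathcal{P}]) > c - \eta/2$. Passing to $\phi_\oplus^k$ and choosing any normalized Hamiltonian $H^{(k)}_\oplus$ generating it, each $p_i$ becomes a non-degenerate fixed point (by hyperbolicity) and defines a $1$-periodic orbit $\gamma_i$ of $H^{(k)}_\oplus$; all the $\gamma_i$ trace the same closed curve $p_0\to p_1\to\cdots\to p_{k-1}\to p_0$ with different starting points, so $\mathcal{Y}_\oplus := \{\gamma_0,\ldots,\gamma_{k-1}\}$ is a collection of distinct, pairwise freely homotopic, non-degenerate $1$-periodic orbits of $H^{(k)}_\oplus$. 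The braid $\mathcal{B}(\mathcal{Y}_\oplus)$ represents the braid type $[\phi_\oplus^k, \mathcal{P}]$, and by the multiplicativity $\Gamma_{\pi_1}([f^k,\mathcal{Q}]) = k\,\Gamma_{\pi_1}([f,\mathcal{Q}])$ from section~\ref{sec:braids},
\[
h_{\topo}(\mathcal{B}(\mathcal{Y}_\oplus)) = \Gamma_{\pi_1}([\phi_\oplus^k, \mathcal{P}]) > k(c - \eta/2).
\]
Theorem~\ref{thm:nondeg_stable} applied to $\phi_\oplus^k$ with $\mathcal{Y}_\oplus$ then yields $\epsilon' > 0$ such that every non-degenerate Hamiltonian diffeomorphism $\psi$ with $d_\hofer(\phi_\oplus^k, \psi) < \epsilon'$ admits $1$-periodic orbits forming a braid freely isotopic to $\mathcal{B}(\mathcal{Y}_\oplus)$, hence of the same braid type. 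Set $\epsilon := \epsilon'/k$.

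Given $\phi_\ominus \in \Ham(\Sigma,\omega)$ with $d_\hofer(\phi_\oplus,\phi_\ominus) < \epsilon$, I approximate $\phi_\ominus$ in the $C^\infty$-topology by strongly non-degenerate Hamiltonian diffeomorphisms $\psi_n$ (such a sequence exists by a standard Kupka--Smale-type transversality argument); since $C^\infty$-convergence implies Hofer convergence, $d_\hofer(\phi_\oplus, \psi_n) < \epsilon$ for $n$ large. Bi-invariance and the triangle inequality along the chain $\phi_\oplus^k \to \phi_\oplus^{k-1}\psi_n \to \cdots \to \psi_n^k$ give $d_\hofer(\phi_\oplus^k, \psi_n^k) \leq k\,d_\hofer(\phi_\oplus, \psi_n) < \epsilon'$, and the strong non-degeneracy of $\psi_n$ ensures $\psi_n^k$ is non-degenerate, so Theorem~\ref{thm:nondeg_stable} produces $1$-periodic orbits of $\psi_n^k$ whose braid has type $[\phi_\oplus^k,\mathcal{P}]$. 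Manning's inequality \eqref{fund_ent} combined with $h_{\topo}(\psi_n^k) = k\,h_{\topo}(\psi_n)$ yields $h_{\topo}(\psi_n) > c - \eta/2$, and the Yomdin--Newhouse theorem \cite{Yomdin1987, Newhouse} recalled in the introduction (continuity of $h_{\topo}$ in the $C^\infty$-topology on surface diffeomorphisms) gives $h_{\topo}(\phi_\ominus) = \lim_n h_{\topo}(\psi_n) \geq c - \eta/2 > c - \eta$. The main obstacle is essentially packaged inside Theorem~\ref{thm:nondeg_stable}: once braid stability under small Hofer perturbations is granted, the remaining work is the iteration trick, the periodic-to-fixed-point bookkeeping identifying $[\phi_\oplus,\mathcal{P}]$ with $\mathcal{B}(\mathcal{Y}_\oplus)$, and the standard $C^\infty$-density/continuity bridge.
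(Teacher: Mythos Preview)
Your proposal is correct and follows essentially the same route as the paper's own proof: locate a hyperbolic $k$-periodic orbit via Theorem~\ref{thm:approximation}, pass to the $k$-th iterate to obtain a collection $\mathcal{Y}_\oplus$ of non-degenerate, pairwise freely homotopic $1$-periodic orbits, apply Theorem~\ref{thm:nondeg_stable} to get $\epsilon'$, set the Hofer radius to $\epsilon'/k$, use bi-invariance to pass from $d_\hofer(\phi_\oplus,\psi)<\epsilon$ to $d_\hofer(\phi_\oplus^k,\psi^k)<\epsilon'$, and finally bridge from strongly non-degenerate $\psi$ to arbitrary $\phi_\ominus$ via $C^\infty$-density. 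The only cosmetic difference is that in the last step you invoke full $C^\infty$-continuity of $h_{\topo}$ (Yomdin plus Newhouse), whereas the paper cites only Newhouse; either suffices here.
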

\begin{proof}
Let $\varphi:\Sigma \to \Sigma$ be a Hamiltonian diffeomorphism. If $h_{\topo}(\varphi) = 0$ there is nothing to prove. So assume $h_{\topo}(\varphi) >0$. Let $\varepsilon>0$.
By Theorem \ref{thm:approximation} there is a hyperbolic $k$-periodic orbit $\mathcal{P} = (p_0, \ldots, p_{k-1})$ of $\varphi$, for some $k\in \N$, such that $$\Gamma_{\pi_1}([\varphi, \overline{\mathcal{P}}])>h_{\topo}(\varphi)-\epsilon,$$ where  $\overline{\mathcal{P}} = \{p_0,\ldots, p_{k-1}\}$ and $\Gamma_{\pi_1}([\varphi, \overline{\mathcal{P}}])$ is the growth rate by the action induced by  $\varphi$ on the fundamental group of $\Sigma \setminus \overline{\mathcal{P}}$, see section \ref{sec:braids} for the definition of $\Gamma_{\pi_1}$ and some of its properties.
Let $G:\Sigma \times S^1$ be a Hamiltonian with $\phi^1_G = \varphi$. The diffeomorphism $\phi_{\oplus}:=\varphi^k$ is generated by the Hamiltonian  $H_{\oplus}:\Sigma \times S^1 \to \R$ defined by $H_{\oplus}(x,t) := kG(x,kt)$. 
Let $\mathcal{Y}_{\oplus} = \{\gamma_0, \ldots, \gamma_{k-1}\}$ be the set of $1$-periodic orbits for $H_{\oplus}$ given by $\gamma_i(t) = \phi^t_{\oplus}(p_i) = \phi^{k(t+i/k)}_G(p_0)$, $i=0, \ldots, k-1$. These orbits have the same image and are in particular pairwise freely homotopic. These orbits are hyperbolic, in particular non-degenerate.
Choose $\epsilon'>0$ as in Theorem \ref{thm:nondeg_stable} with respect to  $\phi_{\oplus}, H_{\oplus}$ and $\mathcal{Y}_{\oplus}$. 
Set $\delta = \frac{\epsilon'}{k}$.
Now let $\psi$ be any strongly non-degenerate Hamiltonian diffeomorphism with $d_{\hofer}(\psi, \varphi) < \delta$. By the bi-invariance of the metric $d_{\hofer}$ it follows for $\phi_{\ominus} :=\psi^k$ that 
$d_{\hofer}(\phi_{\ominus}, \phi_{\oplus}) \leq k d_{\hofer}(\psi,\varphi) < k\delta =\epsilon'$, and hence there is a Hamiltonian  $H_{\ominus}:\Sigma \times S^1 \to \R$ with $\phi^1_{H_{\ominus}} = \phi_{\ominus}$ and a set of $1$-periodic orbits $\mathcal{Y}_{\ominus}= \{\gamma'_0, \ldots, \gamma'_{k-1}\}$ for $H_{\ominus}$ such that $\mathcal{B}(\mathcal{Y}_{\ominus})$ is isotopic as a braid to $\mathcal{B}(\mathcal{Y}_{\oplus})$. 
In particular it follows that the braid types $[\varphi^k,\overline{\mathcal{P}}]$ and $[\psi^k, \overline{\mathcal{Y}_{\ominus}}]$ coincide, where $\overline{\mathcal{Y}}_{\ominus} = \{\gamma'_0(0), \ldots, \gamma'_{k-1}(0)\}$.

We conclude that 
\begin{align*}
h_{\topo}(\psi) &= \frac{1}{k}h_{\topo}(\psi^k) \\&\geq \frac{1}{k} \Gamma_{\pi_1}([\psi^k, \overline{\mathcal{Y}_\ominus}]) \\&= \frac{1}{k}\Gamma_{\pi_1}([\varphi^k, \overline{\mathcal{P}})\\&= \frac{1}{k}\left(k\Gamma_{\pi_1}([\varphi, \overline{\mathcal{P}}])\right) \\&\geq h_{\topo}(\varphi)-\varepsilon.
\end{align*}
Since strongly non-degenerate $\psi$ are $C^{\infty}$-dense in $\Ham(\Sigma, \omega)$ and since $h_{\topo}$ is $C^{\infty}$ lower semicontinuous \cite{Newhouse}, this finishes the proof.  
\end{proof}

\begin{thm}\label{thm:htop_lowersemi_disc}
Let $\Sigma = \D$ be equipped with the standard symplectic form $\omega_0$, then $h_{\topo}:(\Ham_c(\D, \omega_0), d_\hofer) \to [0,\infty)$ is lower semicontinuous.
\end{thm}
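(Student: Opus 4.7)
The plan is to mirror the proof of Theorem \ref{thm:htop_lowersemi} almost verbatim, now invoking the disk case of Theorem \ref{thm:nondeg_stable} and the fact that Theorem \ref{thm:approximation} applies to any compact surface, in particular to $\D$. Fix $\varphi \in \Ham_c(\D, \omega_0)$ with $h_{\topo}(\varphi) > 0$ (there is nothing to show otherwise) and $\varepsilon > 0$.

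By Theorem \ref{thm:approximation} applied to $\varphi$, there is a hyperbolic $k$-periodic orbit $\mathcal{P} = (p_0, \ldots, p_{k-1})$ with $\Gamma_{\pi_1}([\varphi, \overline{\mathcal{P}}]) > h_{\topo}(\varphi) - \varepsilon$. Let $G$ be a normalized Hamiltonian generating $\varphi$ and set $H_{\oplus}(x,t) := kG(x,kt)$, so that $\phi_{\oplus} := \varphi^k$ is generated by $H_{\oplus}$. The loops $\gamma_i(t) := \phi^{k(t+i/k)}_G(p_0)$, $i = 0, \ldots, k-1$, share the same image and therefore form a collection $\mathcal{Y}_{\oplus}$ of pairwise freely homotopic, distinct, non-degenerate $1$-periodic orbits of $H_{\oplus}$. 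Theorem \ref{thm:nondeg_stable} then provides an $\epsilon' > 0$ such that every non-degenerate $\phi_{\ominus}$ at Hofer distance less than $\epsilon'$ from $\phi_{\oplus}$ admits a generating Hamiltonian $H_{\ominus}$ and a set $\mathcal{Y}_{\ominus}$ of $1$-periodic orbits with $\mathcal{B}(\mathcal{Y}_{\ominus})$ freely isotopic as a braid to $\mathcal{B}(\mathcal{Y}_{\oplus})$.

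Setting $\delta := \epsilon'/k$, for any strongly non-degenerate $\psi$ with $d_{\hofer}(\psi, \varphi) < \delta$, a standard telescoping argument combined with the bi-invariance of the Hofer metric yields $d_{\hofer}(\psi^k, \varphi^k) \leq k\, d_{\hofer}(\psi, \varphi) < \epsilon'$. Hence Theorem \ref{thm:nondeg_stable} applied to $\phi_{\ominus} := \psi^k$ produces $\mathcal{Y}_{\ominus}$ with $[\psi^k, \overline{\mathcal{Y}_{\ominus}}] = [\varphi^k, \overline{\mathcal{P}}]$, and Manning's inequality together with the iteration formula $\Gamma_{\pi_1}([\varphi^k, \overline{\mathcal{P}}]) = k\,\Gamma_{\pi_1}([\varphi, \overline{\mathcal{P}}])$ gives
\begin{equation*}
h_{\topo}(\psi) = \tfrac{1}{k} h_{\topo}(\psi^k) \geq \tfrac{1}{k}\Gamma_{\pi_1}([\psi^k, \overline{\mathcal{Y}_{\ominus}}]) = \Gamma_{\pi_1}([\varphi, \overline{\mathcal{P}}]) > h_{\topo}(\varphi) - \varepsilon.
\end{equation*}

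To remove the strong non-degeneracy assumption, the plan is to use that strongly non-degenerate diffeomorphisms are $C^{\infty}$-dense in $\Ham_c(\D, \omega_0)$, combined with Newhouse's $C^{\infty}$-lower semicontinuity of $h_{\topo}$. Any $\psi$ within Hofer distance $\delta$ of $\varphi$ can be $C^{\infty}$-approximated by strongly non-degenerate $\tilde\psi_n$ that still lie within Hofer distance $\delta$ of $\varphi$, and passing to $\liminf$ yields $h_{\topo}(\psi) \geq h_{\topo}(\varphi) - \varepsilon$. The main subtle point, and the principal obstacle in transplanting the closed-surface argument, is ensuring that iteration $\psi \mapsto \psi^k$ fits into the framework of Theorem \ref{thm:nondeg_stable} on the disk and that the smooth approximations can be chosen to remain within $\Ham_c(\D, \omega_0)$ with controlled Hofer distance; both are routine once one works with the fixed rotation-by-$c$ boundary behaviour and uses that the braid-stability statement is insensitive to the particular collar model near $\partial \D$.
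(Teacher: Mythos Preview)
Your proposal is correct and follows essentially the same approach as the paper, whose own proof is a single sentence indicating that the argument of Theorem \ref{thm:htop_lowersemi} carries over verbatim to compactly supported Hamiltonian diffeomorphisms of the disk. One small expository point: your final paragraph invokes ``rotation-by-$c$ boundary behaviour'', but the theorem concerns the compactly supported group $\Ham_0(\D)$ (despite the subscript $c$ in the notation), so iteration stays inside $\Ham_0(\D)$ and the relevant non-degeneracy notion is ``non-degenerate in its support''; with that adjustment your argument is exactly the intended one.
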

The proof works along the lines of Theorem \ref{thm:htop_lowersemi} if one considers only compactly supported Hamiltonian diffeomorphisms.  

{ 
In the recent work \cite{CGG} \c{C}ineli-Ginzburg-G\"{u}rel showed that for a Hamiltonian diffeomorphism $\phi$ on a surface $h_{\topo}$ coincides with the barcode entropy $\hbar(\phi)$ of $\phi$ which they introduce. The barcode entropy is a measure of the complexity of the Floer barcodes of $\phi$ in the spirit of \cite{leonidpersistence}. Combining Theorems \ref{thm:htop_lowersemi} and \ref{thm:htop_lowersemi_disc} with Theorem C of \cite{CGG} we obtain the following
\begin{cor}
The barcode entropy $\hbar$ is lower semicontinuous with respect to $d_{\rm Hofer}$ on surfaces.
\end{cor}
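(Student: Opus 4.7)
The plan is to observe that the corollary is a direct consequence of two already-available inputs: the equality $\hbar(\phi) = h_{\topo}(\phi)$ for surface Hamiltonian diffeomorphisms from Theorem C of \cite{CGG}, and the Hofer lower semicontinuity of $h_{\topo}$ established in Theorems \ref{thm:htop_lowersemi} and \ref{thm:htop_lowersemi_disc} above. Given these, essentially no new work is required beyond carefully identifying the function spaces involved and checking that the identification is compatible on both sides.

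First I would fix $\phi \in \Ham(\Sigma,\omega)$ (respectively $\phi \in \Ham_c(\D,\omega_0)$) and a sequence $\phi_n \to \phi$ in the Hofer metric. By Theorem C of \cite{CGG}, applied pointwise, one has $\hbar(\phi_n) = h_{\topo}(\phi_n)$ for every $n$ and $\hbar(\phi) = h_{\topo}(\phi)$. Then Theorem \ref{thm:htop_lowersemi} (or \ref{thm:htop_lowersemi_disc} in the disc case) yields
\[
\liminf_{n \to \infty} \hbar(\phi_n) = \liminf_{n \to \infty} h_{\topo}(\phi_n) \geq h_{\topo}(\phi) = \hbar(\phi),
\]
which is exactly the lower semicontinuity statement.

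The only point that requires a moment of attention, rather than being a pure formality, is to make sure that Theorem C of \cite{CGG} is stated in the generality needed here, namely for arbitrary Hamiltonian diffeomorphisms on the closed surfaces $\Sigma$ covered by Theorem \ref{thm:htop_lowersemi} as well as for the compactly supported case on $\D$ covered by Theorem \ref{thm:htop_lowersemi_disc}. Assuming this (which is indeed how Theorem C is formulated in \cite{CGG}), the corollary follows immediately and no further obstacle arises; the proof can be written in a single displayed line, as above.
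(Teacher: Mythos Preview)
Your proof is correct and matches the paper's approach exactly: the paper presents the corollary as an immediate consequence of combining Theorems \ref{thm:htop_lowersemi} and \ref{thm:htop_lowersemi_disc} with Theorem C of \cite{CGG}, without giving any further argument, and your write-up simply spells out this one-line deduction.
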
}

\section{Background}

\subsection{Recollections on Hamiltonian dynamics and Floer homology for closed surfaces different from $S^2$} \label{sec:Hamonclosedsurfaces}

\subsubsection{Surfaces different from  $S^2$ and $T^2$}\label{sec:Hamonhighergenus} 
In the following we assume that $\Sigma$ is a closed surface with $\Sigma \neq S^2$ and $\neq T^2$. The constructions in the case of non-contractible loops in $T^2$ differ slightly, so we consider the case of the torus below in \ref{sec:Hamontorus}. Let $\omega$ be a symplectic form on $\Sigma$. 
We consider a normalized time-dependent Hamiltonian $H:S^1 \times \Sigma \to \R$.
By normalized we mean that $\int_\Sigma H_t\omega=0$ for each $t\in S^1$, where $H_t(\cdot):= H(t,\cdot)$.


 
Recall that the group $\mathrm{Ham}(\Sigma,\omega)$ of Hamiltonian diffeomorphisms of $(\Sigma,\omega)$ is formed by the area-preserving diffeomorphisms of $(\Sigma,\omega)$ which are the time $1$-map of the Hamiltonian flow of some $H: S^1 \times \Sigma \to \R$. A reference for the study of $\mathrm{Ham}(\Sigma,\omega)$ is \cite{leonidsbook}.


All time-dependent Hamiltonian functions on closed surfaces considered in this paper are assumed to be normalized, as stated in the introduction.
 
 
As mentioned in the introduction, for the definition of the Hamiltonian action of non-contractible loops we fix for each free homotopy class $\alpha \in [S^1, \Sigma]$ of loops in $\Sigma$ a representative $\eta_{\alpha}:S^1 \to \Sigma$ of $\alpha$.  We define the Hamiltonian action as in the introduction.
The set of all $1$-periodic orbits of $H$ is denoted by $\mathcal{P}(H)$.
The action spectrum $\mathrm{Spec}^1(H)$ is defined as
\begin{equation}
    \mathrm{Spec}^1(H):= \{ \mathcal{A}_H(\gamma) \ | \ \gamma \mbox{ is a } \mbox{1-periodic orbit of } H \}. 
\end{equation}
 It is not hard to see that $\mathrm{Spec}^1(H)$ is a compact subset of $\R$. If $k$ is a positive integer we let 
 \begin{equation}
    \mathrm{Spec}^k(H):= \{ \mathcal{A}_H(\gamma) \ | \ \gamma \mbox{ is a } \mbox{k-periodic orbit of } H \}. 
\end{equation}
It is not hard to see that $\mathrm{Spec}^k(H):= \mathrm{Spec}^1(kH)$.
If a Hamiltonian $H$ is non-degenerate, then $\mathrm{Spec}^1(H)$ is a finite set. If $H$ is strongly non-degenerate, then $\mathrm{Spec}^k(H)$ is a finite set for every positive integer $k$.

If $\gamma$ is a $1$-periodic orbit of $\phi^t_H$ we let $\mu_{\rm CZ}(\gamma)$ be the Conley-Zehnder index of $\gamma$; see for example \cite{AD} for the definition of $\mu_{\rm CZ}$. We note that in order that $\mu_{\rm CZ}(\gamma)$ is actually well-defined for non-contractible loops on $\Sigma \neq T^2$, we fix for each $\eta_{\alpha}$ a symplectic trivialization $\Phi_{\alpha}$ of $\eta_{\alpha}^*T\Sigma$. This defines a homotopically canonical symplectic trivialization on $\gamma^*T\Sigma$. 
 
An almost complex structure $J$ on $(\Sigma, \omega)$ is called compatible if $\omega(\cdot,J\cdot)$ is a Riemannian metric on $\Sigma$.

To use Floer theory for Hamiltonian diffeomorphisms on $(\Sigma,\omega)$ we use the following setup.
We consider a $C^\infty$-smooth $S^1$-family $J_t$ of compatible almost complex structures on $\Sigma$.
The Floer operator for $(H,J_t)$ applied to a cylinder $u: \R \times S^1 \to \Sigma$ is
\begin{equation} \label{eq:defFloereq}
    \mathcal{F}_{{H},J}(u) = \partial_s u(s,t) +  J_t(u(s,t))\big(\partial_t u (s,t) - X_{{H}}(t,u(s,t)\big).
\end{equation} 

A solution of the Floer equation for $(H,J_t)$ is a cylinder $u$ such that $ \mathcal{F}_{{H},J}(u)=0$. We call such cylinders Floer cylinders. 

We assume from now on that $H$ is non-degenerate. 
The energy ${E}(u)$ of a Floer cylinder is defined by the formula $$E(u):= \int_{\R \times S^1}|\partial_s u |^2 dt ds,$$ where $ |\partial_s u(s,t) |^2 = \omega(\partial_s u(s,t), J^t (\partial_s u(s,t) ) )$. Floer showed \cite{Floer} that if a Floer cylinder has finite energy then there exist $1$-periodic orbits $\gamma$ and $\gamma'$ of $\phi^t_H$ such that $
\lim_{s\to -\infty}u(s,\cdot) = \gamma(\cdot)$, and $\lim_{s\to +\infty}u(s,\cdot) = \gamma'(\cdot)$. A well-known computation shows that $E(u) = \mathcal{A}_H(\gamma) - \mathcal{A}_H(\gamma') $.

The energy of a Floer cylinder is by definition non-negative. The only Floer cylinders with energy equal to $0$, are those which are of the form $u(s,t)= \gamma(t)$, where $\gamma$ is a $1$-periodic of $\phi^t_H$. The Floer cylinder $u_\gamma(s,t)= \gamma(t)$ is called the trivial cylinder over $\gamma$.

For two $1$-periodic orbits $\gamma$ and $\gamma'$ of ${H}$ 
we let $\mathcal{M}(\gamma, \gamma', {H},J_t)$ be the moduli space of Floer cylinders $u: \R \times S^1 \to \Sigma$ with asymptotics $
\lim_{s\to -\infty}u(s,\cdot) = \gamma(\cdot)$, and $\lim_{s\to +\infty}u(s,\cdot) = \gamma'(\cdot)$. Two Floer cylinders $u$ and $v$ represent the same element in $\mathcal{M}(\gamma, \gamma', {H},J_t)$ if there exists $s_0 \in \R $ such that $u(s+s_0, t) = v(s,t)$ for all $(s,t) \in \R \times S^1$.

As shown in \cite{FHS}, for a $C^\infty$-generic choice of $J_t$, for any choice of  $1$-periodic orbits $\gamma$ and $\gamma'$ of $\phi^t_H$ the moduli spaces $\mathcal{M}(\gamma, \gamma', {H},J_t)$ are smooth manifolds whose dimension is $\mu_{\rm CZ}(\gamma) - \mu_{\rm CZ}(\gamma') - 1$. 
We assume from now on that $J_t$ is $C^\infty$-generic in this sense: such $J_t$ will be referred to as regular.
The compactification $\overline{\mathcal{M}}(\gamma, \gamma', {H},J_t)$
as defined by Floer is the union of $\mathcal{M}(\gamma, \gamma', {H},J_t)$ with the set of broken Floer cylinders negatively asymptotic to $\gamma$ and positively asymptotic to $\gamma'$; we refer the reader to \cite{AD} and \cite{Floer}. 

Fix $1$-periodic orbits $\gamma$ and $\gamma'$ of $\phi^t_H$ and suppose that $\mu_{\rm CZ}(\gamma') = \mu_{\rm CZ}(\gamma) - 1$. In this case $\mathcal{M}(\gamma, \gamma', {H},J_t)$ is a $0$-dimensional manifold. As shown in \cite{AD,Floer}, in this case $\overline{\mathcal{M}}(\gamma, \gamma', {H},J_t)$ cannot contain broken Floer cylinders, and it follows that $\mathcal{M}(\gamma, \gamma', {H},J_t)= \overline{\mathcal{M}}(\gamma, \gamma', {H},J_t)$. We conclude that in this case $\mathcal{M}(\gamma, \gamma', {H},J_t)$ is composed of a finite set of elements. We then define 
 $$ C(\gamma,\gamma')= (\#  \mathcal{M}(\gamma, \gamma', {H},J_t)) \mod 2, $$
for any pair $\gamma$ and $\gamma'$ of $1$-periodic orbits  of $\phi^t_H$ which satisfies $\mu_{\rm CZ}(\gamma') = \mu_{\rm CZ}(\gamma) - 1$. If $\mu_{\rm CZ}(\gamma') \neq \mu_{\rm CZ}(\gamma) - 1$ we let $C(\gamma,\gamma')=0$.  

For any real number $b$ which is not in $\mathrm{Spec}^1(H)$ we let $\mathcal{P}^1_{b}(H)$ be the set of $1$-periodic orbits of $H$ which have action $<b$. If $b=+\infty$ we write just $\mathcal{P}^1(H)$. For  real  numbers $a<b$ which are not in $\mathrm{Spec}^1(H)$ we let $\mathcal{P}^1_{(a,b)}(H)$ be the set of $1$-periodic orbits with action in the interval $(a,b)$. 

Given a real number $b$ which is not in $\mathrm{Spec}^1(H)$ we define the Floer chain complex $$CF^b(H) := \bigoplus_{\gamma \in \mathcal{P}^1_{b}(H)} \Z_2 \cdot \gamma.$$
The differential $d^{J_t}: CF^b(H) \to CF^b(H)$ is defined for each $\gamma \in \mathcal{P}^1_{b}(H)$ by
\begin{equation*}
d^{J_t} (\gamma) := \sum_{\gamma' \in \mathcal{P}^1_{b}(H) } C(\gamma,\gamma')\gamma' = \sum_{\gamma' \in \mathcal{P}^1(H) } C(\gamma,\gamma')\gamma'.
\end{equation*}
The second equality is due to the fact that $C(\gamma,\gamma')\neq 0$ implies that $\mathcal{A}_H(\gamma') < \mathcal{A}_H(\gamma) < b $, since $\gamma \in \mathcal{P}^1_{b}(H)$. The differential $d^{J_t}$ is extended to all of $CF^{b}(H)$ linearly. As showed by Floer \cite{Floer} (see also \cite{AD}) $(d^{J_t})^2=0$, and we let  $HF^{b}(H)$ denote the homology of the pair $(CF^{b}(H),d^{J^t})$. The differential $d^{J_t}$ is extended to all of $CF^{(a,b)}(H)$ linearly. When the choice of $J_t$ is clear from the context we will drop $J_t$ from the notation of the differential and denote only by $d$. 

Given real numbers $a<b$ which are not in $\mathrm{Spec}^1(H)$, we let $CF^{(a,b)}(H):= \frac{CF^b(H)}{{CF^a(H)}} $. Because the differential $d^{J_t}$ maps $CF^a(H)$ to itself, we have that $(CF^a(H),d^{J_t} )$ is a sub-complex of $(CF^{b}(H),d^{J^t})$. As a consequence $d^{J^t}$ induces a differential on the quotient complex $CF^{(a,b)}(H)$, which we continue to denote by $d^{J^t}$. We then let $HF^{(a,b)}(H)$ be the homology of the pair $(CF^{(a,b)}(H),d^{J^t})$.

It is easy to see that if $\gamma \in \mathcal{P}^1_{(a,b)}(H) $ is seen as an element of $CF^{b}(H)$, then its differential $d^{J_t} (\gamma) $ is given by the formula
\begin{equation*}
d^{J_t} (\gamma) = \sum_{\gamma' \in \mathcal{P}^1_{(a,b)}(H) } C(\gamma,\gamma')\gamma'. 
\end{equation*}

We observe that the chain complexes $CF^b(H)$ considered above can be written as a direct sum over the complexes $CF_{\alpha}^b(H)$, where $\alpha$ runs over all free homotopy classes of loops in $\Sigma$ (denoted by $[S^1,\Sigma]$) and the generators for  $CF_{\alpha}^b(H)$ are the loops generating $CF^b(H)$ that also lie in the class $\alpha$. The same holds for $CF^{(a,b)}(H)$ and $HF^{(a,b)}(H)$, and we will add a lower index $\alpha$ to the chain complexes and homology groups if we want to specify the free homotopy class $\alpha$.



{\it Continuation maps and Hofer distance.} \, \, 
Let $H_\oplus$ and $H_\ominus$ be normalized Hamiltonians whose time $1$-maps are non-degenerate. A homotopy between $H_\oplus$ and $H_\ominus$, is a function $Q:\R \times S^1 \times \Sigma \to \R$ such that for some $R>0$
\begin{align} \label{eq:G properties}
 Q(s,t,p) = H_\oplus(t,p) \mbox{ for } s\leq -R, \\
    Q(s,t,p) = H_\ominus(t,p) \mbox{ for } s\geq R.
\end{align}
We let $J^\oplus_t$ and $J^\ominus_t$ be smooth $S^1$-families of compatible almost complex structures on $(\Sigma,\omega)$ which are regular for  $H_\oplus$ and $H_\ominus$, respectively. In this case we can define the homologies $HF(H_\oplus)$ and $HF(H_\ominus)$. A smooth homotopy $J^s_t$ of compatible almost complex structures on $(\Sigma, \omega)$ between $J^\oplus_t$ and $J^\ominus_t$, is a smooth $\R \times S^1$-family of compatible almost complex structures on $(\Sigma, \omega)$ such that $J^s_t=J_t^\oplus$ for sufficiently small $s$ and $J^s_t=J_t^\ominus$ for sufficiently large $s$. We then consider the Floer operator of $(Q,J^s_t)$ which applied to a cylinder $u: \R \times S^1 \to \Sigma$ is
\begin{equation}
    \mathcal{F}_{Q,J^s_t}(u) = \partial_s u(s,t) + J^s_t(\partial_t u(s,t) - X_Q(s,t,u(s,t)).
\end{equation}
Solutions of the Floer equation $(Q,J^s_t)$ are cylinders $u$ such that $\mathcal{F}_{Q,J^s_t}(u)=0$, and these are called Floer cylinders. The energy of these cylinders is defined as in the previous section, see \cite{AD}. 

If $u$ is a finite energy Floer cylinder of $(Q,J^s_t)$ then there exists $1$-periodic orbits $\gamma_\oplus$ of $\phi^t_{H_\oplus}$ and $\gamma_\ominus$ of $\phi^t_{H_\ominus}$ such that $\lim_{s \to -\infty} u(s, \cdot) = \gamma_\oplus$ and $\lim_{s \to +\infty} u(s, \cdot) = \gamma_\ominus$. For $C^\infty$-generic choices of homotopies $J^s_t$ and $Q$ the moduli spaces $\mathcal{M}(\gamma_\oplus, \gamma_\ominus, Q,J^s_t)$ of Floer cylinder of $(Q,J^s_t)$ which are negatively asymptotic to $\gamma_\oplus$ and positively asymptotic to $\gamma_\ominus $ are manifolds of dimension $\mu_{\rm CZ}(\gamma_\oplus) - \mu_{\rm CZ}(\gamma_\ominus)$, for all $1$-periodic orbits $\gamma_\oplus$ of  $\phi^t_{H_\oplus}$ and $\gamma_\ominus$ of $\phi^t_{H_\ominus}$. 

In case $\mu_{\rm CZ}(\gamma_\oplus) = \mu_{\rm CZ}(\gamma_\ominus)$, the  space $\mathcal{M}(\gamma_\oplus, \gamma_\ominus, Q,J^s_t)$ is $0$-dimensional, and using the regularity of $J^s_t$ and $Q$ and Floer compactness one obtains that $\mathcal{M}(\gamma_\oplus, \gamma_\ominus, Q,J^s_t)$ is compact and therefore a finite set of points. 
We define $K_{Q,J^s_t}(\gamma_\oplus, \gamma_\ominus):= (\# \mathcal{M}(\gamma_\oplus, \gamma_\ominus, Q,J^s_t))\mod 2$, if  $\mu_{\rm CZ}(\gamma_\oplus) = \mu_{\rm CZ}(\gamma_\ominus)$ and $K_{Q,J^s_t}(\gamma_\oplus, \gamma_\ominus)=0$ otherwise. We define the continuation map \linebreak $\Psi_{Q,J^s_t}: CF(H_\oplus) \to CF(H_\ominus)$ by 
\begin{equation} \label{eq:continuation}
    \Psi_{Q,J^s_t}(\gamma_\oplus) := \sum_{\gamma_\ominus \in \mathcal{P}(H_\ominus)} K_{Q,J^s_t}(\gamma_\oplus,\gamma_{\ominus}) \gamma_{\ominus}.
\end{equation}
As shown in \cite{AD} the $\Psi_{Q,J^s_t}$ induces a homology map which we denote by $\Psi_{Q,J^s_t}: HF(H_\oplus) \to HF(H_\ominus)$.

We will need to consider continuation maps between Floer homologies in certain action windows. This is possible under certain conditions as explained in the next proposition. 
\begin{prop} \label{prop:actionwindowsclosedsurface}
Let $\phi_\oplus$ be a non-degenerate Hamiltonian diffeomorphism in $\mathcal{H}(\Sigma,\omega)$ and $H_\oplus:S^1 \times \Sigma \to \R$ be normalized Hamiltonian generating $\phi_\oplus$. We take real numbers $a<b$ which do not belong to $\mathrm{Spec}^1(H)$ and let $\epsilon>0$ be such that all elements of $\mathrm{Spec}^1(H)$  in the interval $(a-2\epsilon, b+2\epsilon)$ are contained in $(a,b)$. Let $\phi_\ominus$ be a Hamiltonian diffeomorphism with $d_{\rm Hofer}(\phi_\oplus, \phi_\ominus)< \epsilon$. Then, there exist a normalized Hamiltonian $H_\ominus :S^1 \times \Sigma \to \R $ and homotopies $G: \R \times S^1 \times \Sigma \to \R $ between $H_\oplus$ and $H_\ominus$ and $\widehat{G}: \R \times S^1 \times \Sigma \to \R $ between $H_\ominus$ and $H_\oplus$ 
which induce continuation maps 
\begin{align*} 
& \Psi_G^b: CF^{b}(H_\oplus) \to CF^{b+\epsilon}(H_\ominus), \\ & \Psi_G^a: CF^{a - 2\epsilon}(H_\oplus) \simeq CF^{a }(H_\oplus) \to CF^{a-\epsilon}(H_\ominus),
\end{align*}
and 
\begin{align*}
& \Psi_{\widehat{G}}^b: CF^{b+ \epsilon}(H_\ominus) \to CF^{b+2\epsilon}(H_\oplus) \simeq CF^{b}(H_\oplus), \\ & \Psi_{\widehat{G}}^a: CF^{a-\epsilon  }(H_\ominus) \to CF^{a}(H_\oplus),
\end{align*}
whose compositions $$\Psi_{\widehat{G}}^b\circ \Psi_G^b: CF^{b}(H_\oplus)   \to CF^{b+ 2\epsilon}(H_\oplus) \simeq CF^{b }(H_\oplus) ,$$ and $$\Psi_{\widehat{G}}^a\circ \Psi_G^a: CF^{a}(H_\oplus) \simeq CF^{a - 2\epsilon}(H_\oplus)  \to CF^{a}(H_\oplus)$$
are both chain homotopic to the identities ${\rm id}: CF^{a}(H_\oplus)   \to CF^{a}(H_\oplus) $ and ${\rm id}: CF^{b}(H_\oplus)   \to CF^{b}(H_\oplus) $, respectively.

It follows that $G$ induces a map $$ \Psi_G:  CF^{(a,b)}(H_\oplus) \to CF^{(a-\epsilon, b+\epsilon)}(H_\ominus) $$ and $\widehat{G}$ induces a map $$\Psi_{\widehat{G}}:  CF^{(a-\epsilon, b+\epsilon)}(H_\ominus) \to CF^{(a-2\epsilon,b+2\epsilon)}(H_\oplus) \simeq CF^{(a,b)}(H_\oplus),$$ such that the composition $\Psi_{\widehat{G}}\circ \Psi_G$ is chain homotopic to the identity map ${\rm id}:  CF^{(a,b)}(H_\oplus) \to  CF^{(a,b)}(H_\oplus) $.

\end{prop}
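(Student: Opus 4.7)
The plan is to construct $H_\ominus$, $G$, $\widehat G$ explicitly from the Hofer-metric hypothesis and then invoke the standard filtered continuation machinery of Floer theory. First, by the definition of $d_{\rm Hofer}$, choose a normalized Hamiltonian $F:S^1 \times \Sigma \to \R$ generating $\phi_\oplus^{-1}\circ \phi_\ominus$ with $\int_0^1 \|F_t\|\,dt < \epsilon$, and set
\begin{equation*}
H_\ominus(t,x) := H_\oplus(t,x) + F\bigl(t, (\phi_{H_\oplus}^t)^{-1}(x)\bigr).
\end{equation*}
A direct calculation shows that $H_\ominus$ is normalized (since $\phi_{H_\oplus}^t$ is area-preserving and $F$ is normalized) and generates $\phi_\ominus$. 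Moreover, $\max_x (H_\ominus - H_\oplus)(t,x) = \max_y F(t,y)$, and the analogous identity holds with $F$ replaced by $-F$; the normalization of $F$ forces $\min F_t \leq 0 \leq \max F_t$, so both $\int_0^1 \max_x (H_\ominus - H_\oplus)_t\,dt$ and $\int_0^1 \max_x (H_\oplus - H_\ominus)_t\,dt$ are bounded by $\int_0^1 \|F_t\|\,dt < \epsilon$.

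Second, pick a smooth non-decreasing cut-off $\chi:\R\to[0,1]$ equal to $0$ for $s\leq -R$ and to $1$ for $s\geq R$, and define the linear homotopies
\begin{equation*}
G(s,t,x) := H_\oplus(t,x) + \chi(s)(H_\ominus - H_\oplus)(t,x), \qquad \widehat G(s,t,x) := H_\ominus(t,x) + \chi(s)(H_\oplus - H_\ominus)(t,x).
\end{equation*}
Choose generic homotopies $J^s_t$ of compatible almost complex structures, regular for $G$ and $\widehat G$ in the sense of \cite{FHS, AD}. For a Floer cylinder $u$ of $(G, J^s_t)$ asymptotic to $\gamma_\oplus$ at $-\infty$ and to $\gamma_\ominus$ at $+\infty$, the standard energy identity reads
\begin{equation*}
0 \leq E(u) = \mathcal{A}_{H_\oplus}(\gamma_\oplus) - \mathcal{A}_{H_\ominus}(\gamma_\ominus) + \int_\R \chi'(s)\int_0^1 (H_\ominus - H_\oplus)(t,u(s,t))\,dt\,ds,
\end{equation*}
and bounding the last term above by $\int_0^1 \max_x(H_\ominus - H_\oplus)_t\,dt < \epsilon$ yields $\mathcal{A}_{H_\ominus}(\gamma_\ominus) \leq \mathcal{A}_{H_\oplus}(\gamma_\oplus) + \epsilon$. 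The analogous estimate for $\widehat G$ gives $\mathcal{A}_{H_\oplus}(\gamma_\oplus) \leq \mathcal{A}_{H_\ominus}(\gamma_\ominus) + \epsilon$ on every rigid $\widehat G$-cylinder.

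Third, these action bounds and the spectral-gap hypothesis yield the claimed filtered continuation maps. The assumption $\mathrm{Spec}^1(H_\oplus) \cap \bigl((a-2\epsilon, a]\cup[b, b+2\epsilon)\bigr) = \emptyset$ is exactly what makes the inclusions $CF^{a-2\epsilon}(H_\oplus)\hookrightarrow CF^a(H_\oplus)$ and $CF^{b}(H_\oplus)\hookrightarrow CF^{b+2\epsilon}(H_\oplus)$ equalities of generators, hence isomorphisms of chain complexes. Combined with the $\pm\epsilon$ action shifts from the previous paragraph, $G$ induces $\Psi_G^a$ and $\Psi_G^b$ and $\widehat G$ induces $\Psi_{\widehat G}^a$ and $\Psi_{\widehat G}^b$, all chain maps with respect to the Floer differentials of $H_\oplus$ and $H_\ominus$ (by the standard analysis of the boundary of one-dimensional $G$- and $\widehat G$-moduli spaces).

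Finally, to see that the compositions $\Psi_{\widehat G}^\bullet \circ \Psi_G^\bullet$ are chain homotopic to the identity, concatenate $G$ and $\widehat G$ into a homotopy $G\#\widehat G$ starting and ending at $H_\oplus$, and interpolate $G\#\widehat G$ through a one-parameter family of homotopies to the constant homotopy $H_\oplus$. The resulting parametrized moduli spaces, again transversely cut out for generic data as in \cite[Chapter 11]{AD}, furnish a chain-homotopy operator $K$ with $dK + Kd = \Psi_{\widehat G}\circ\Psi_G - \mathrm{id}$; applying the same energy identity to each homotopy in the interpolating family shows that $K$ shifts action by at most $2\epsilon$, so it respects the relevant filtrations under the spectral-gap hypothesis. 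Passage to the quotient complexes $CF^{(a,b)} = CF^b / CF^a$ is then automatic. The main technical step is the last one, controlling the chain homotopy at the filtered level; but it is a routine consequence of the $2\epsilon$ gap guaranteed by the assumption on $\mathrm{Spec}^1(H_\oplus)$.
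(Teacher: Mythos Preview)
Your proof is correct and follows essentially the same approach as the paper: the same construction of $H_\ominus$ via $H_\oplus + F\circ(\phi_{H_\oplus}^t)^{-1}$, the same linear cut-off homotopies for $G$ and $\widehat G$, the same energy identity to control the action shift, and the same homotopy-of-homotopies argument for the chain homotopy. The only cosmetic difference is that the paper makes the interpolating family $Q_a$ explicit and obtains the slightly sharper bound $\epsilon$ (rather than your $2\epsilon$) for the action shift of the chain-homotopy operator, since $\int_0^1\bigl(\max_x(H_\ominus-H_\oplus)_t+\max_x(H_\oplus-H_\ominus)_t\bigr)\,dt=\int_0^1\|F_t\|\,dt<\epsilon$; either bound suffices under the spectral-gap hypothesis.
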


\proof 

We start by explaining the construction of $H_\ominus$, $G$ and $\widehat{G}$. 
Because $d_{\rm Hofer}(\phi_\ominus,\phi_\oplus)<\epsilon $ there exists a normalized Hamiltonian 
\begin{equation} \label{eq:Hminus}
  F:S^1\times \Sigma \to \R   
\end{equation}
 whose time $1$-map is $\phi^{-1}_\oplus \circ \phi_\ominus$ and that satisfies 
\begin{equation}\label{eq:Hoferdistance}
    \int_0^1 (\max F_t - \min F_t) dt< \epsilon,
\end{equation}
 where for $t\in S^1$ we define $F_t:=F(t,\cdot): \Sigma \to \R$.
 
 It follows that the time $1$-map of the Hamiltonian $H_\ominus: S^1 \times \Sigma \to \R$ defined by
\begin{equation} \label{eq:F}
H_\ominus(t,p) := H_\oplus(t,p) + F_t((\phi_\oplus^t)^{-1}(p))
\end{equation}
is $\phi_\ominus$. Because $H_\oplus$ and $F$ are normalized, it follows that $H_\ominus$ is also normalized.

We define a $C^\infty$-smooth function $\beta: \R \to [0,1]$ which is non-decreasing and satisfies
\begin{align} \label{eq:beta}
&    \beta(s) = 1 \mbox{ for } s \geq -1, \\
  \nonumber & \beta(s)= 0 \mbox{ for } s \leq -2, \\
  \nonumber & \beta'(s) \leq 2.
\end{align}

We define $G : \R \times S^1  \times \Sigma \to \R$ by the formula
\begin{align} \label{eq:G}
    G(s,t,p) := \beta(s) H_\ominus(t,p) + (1-\beta(s))H_\oplus(t,p) = \\
   \nonumber  = H_\oplus(t,p) + \beta(s) F_t((\phi_\oplus^t)^{-1}(p))
\end{align}
Notice that $G$ only depends on $s$ for $-2 \leq s \leq -1$, and that $G$ is a homotopy between $H_\oplus$ and $H_\ominus$ as 
\begin{align} \label{eq:Gproperties}
    G(s,t,p) = H_\oplus(t,p) \mbox{ for } s\leq -2, \\
    G(s,t,p) = H_\ominus(t,p) \mbox{ for } s\geq -1.
\end{align}

Likewise we construct a homotopy $\widehat G : \R \times S^1  \times \Sigma \to \R$  between $H_\ominus$ and $H_\oplus$ defined by
\begin{equation} \label{eq:G-hat}
    \widehat G (s,t,p) := G(-s,t,p).
\end{equation}

We choose homotopies $J^s_t$ between $J^\oplus_t$ and $J^\ominus_t$,  and  $\widehat{J}^s_t$ between $J^\ominus_t$ and $J^\oplus_t$.  

Let $u: \R \times S^1 \to \Sigma$ be a Floer cylinder of $(G,J^s_t)$ negatively asymptotic to $\gamma_\oplus \in \mathcal{P}^1(H_\oplus)$ and positively asymptotic to $\gamma_\ominus \in \mathcal{P}^1(H_\ominus)$.
A direct computation shows that the energy $E(u)$ defined by 
\begin{equation}\label{eq:energy}
    E(u) := \int_{\R \times S^1} |\partial_s u|^2 dt ds,
\end{equation}
where $|\partial_s u(s,t)|^2 = \omega_{u(s,t)}(\partial_s u(s,t),J^{s}_{t}(\partial_s u(s,t) ))$ satisfies
\begin{equation} \label{eq:energy'}
    E(u)= \mathcal{A}_{H_\oplus}(\gamma_\oplus) - \mathcal{A}_{H_\ominus}(\gamma_\ominus) + \int_{\R \times S^1} \frac{\partial G}{\partial s}(s,t,u_a(s,t)) ds dt;
\end{equation}
see for example \cite[Section 2]{Usher}.
We conclude that 
\begin{equation} \label{eq:estimate}
    \left| \mathcal{A}_{H_\oplus}(\gamma_\oplus) - \mathcal{A}_{H_\ominus}(\gamma_\ominus) - E(u) \right| \leq \int_{\R \times S^1} \left| \frac{\partial G}{\partial s}(s,t,u_a(s,t)) \right| ds dt <  \epsilon,
\end{equation}
where the last inequality is a direct computation using the definition of $G$. 
Since $E(u)$ is positive, it follows that $\mathcal{A}_{H_\ominus}(\gamma_\ominus) \leq \mathcal{A}_{H_\oplus}(\gamma_\oplus) +\epsilon $. To define the continuation map $\Psi_{G,J^s_t}$ associated to $(G,J^s_t)$ we must take $C^\infty$-small perturbations of $(G,J^s_t)$ supported in $s \in [-2,-1]$ such that the relevant moduli spaces are regular. We assume that these perturbations are taken so that \eqref{eq:estimate} is still valid.

From this, if $\gamma_\oplus \in \mathcal{P}^1_{a}(H)= \mathcal{P}^1_{a- 2\epsilon}(H)$, then all $1$-periodic orbits appearing in the expression of   $\Psi_{G,J^s_t}(\gamma_\oplus)$ have action $< a$. It follows that $\Psi_{G,J^s_t}(CF^a(H_\oplus)) \subset CF^{a-\epsilon}(H_\ominus) $, and we we obtain a map $$ \Psi_G^a: CF^{a - 2\epsilon}(H_\oplus) \simeq CF^{a-\epsilon }(H_\oplus) \to CF^{a}(H_\ominus)$$ which descends to a map on the homologies. A similar argument implies that  $\Psi_{G,J^s_t}(CF^b(H_\oplus)) \subset CF^{b+\epsilon}(H_\ominus)$ and we obtain a map $$\Psi_{G}^b: CF^{b}(H_\oplus)  \to CF^{b+\epsilon}(H_\ominus)$$ which also descends to a map on the homologies. The fact that \linebreak $\Psi_{G,J^s_t}(CF^a(H_\oplus)) \subset CF^{a-\epsilon}(H_\ominus) $ and $\Psi_{G,J^s_t}(CF^b(H_\oplus)) \subset CF^{b+\epsilon}(H_\ominus)$ implies that  $\Psi_{G}$ induces a map $$ \Psi_G:  CF^{(a,b)}(H_\oplus) \to CF^{(a-\epsilon, b+\epsilon)}(H_\ominus).$$

The construction of the maps 
\begin{align}
& \Psi_{\widehat{G}}^b: CF^{b+ \epsilon}(H_\ominus) \to CF^{b+2\epsilon}(H_\oplus) \simeq CF^{b}(H_\oplus), \\   
& \Psi_{\widehat{G}}^a: CF^{a-\epsilon  }(H_\ominus) \to CF^{a}(H_\oplus) \simeq CF^{a-2\epsilon}(H_\oplus) , \\
& \Psi_{\widehat{G}}:  CF^{(a-\epsilon, b+\epsilon)}(H_\ominus) \to CF^{(a-2\epsilon,b+2\epsilon)}(H_\oplus) \simeq  CF^{(a,b)}(H_\oplus),
\end{align}
 follow the same strategy using an estimate similar to \eqref{eq:estimate} for the homotopy~$\widehat{G}$.

We now explain how to show that  $\Psi_{\widehat{G}}^b\circ \Psi_G^b: CF^{b}(H_\oplus)   \to CF^{b+ 2\epsilon}(H_\oplus) \simeq CF^{b }(H_\oplus) $ and $\Psi_{\widehat{G}}^a\circ \Psi_G^a: CF^{a}(H_\oplus) \simeq CF^{a - 2\epsilon}(H_\oplus)  \to CF^{a}(H_\oplus)$ are chain homotopic to the identity. 
The idea is to construct a homotopy of homotopies from the concatenation of $G$ and $\widehat{G}$ to the trivial homotopy between $H_\oplus$ and itself, and obtain the chain homotopy studying moduli spaces for the homotopy of homotopies: this is the usual method of proving the invariance of Floer homology, and it was devised by Floer. 
In our situation we want the chain homotopy to respect certain action windows, and this requires the construction of a special homotopy of homotopies $Q_a: \R \times S^1 \times \Sigma \to \R$ where  $a\in [0,+\infty)$.

To define, for for each $a \in [0,1]$,  the function $Q_a: \R \times S^1 \times \Sigma \to \R$ we first introduce a smooth auxiliary function 
\begin{equation} \label{eq:sigma}
    \sigma:[0,1] \to [0,1]
\end{equation}
such that 
\begin{eqnarray*}
    \sigma \equiv 0 \mbox{ on a neighbourhood of } 0, \\
    \sigma \equiv 1 \mbox{ on a neighbourhood of } 1.
\end{eqnarray*}
We then define
\begin{equation} \label{eq:Qasmall}
    Q_a(s,t,p) := H_\oplus(t,p) + \sigma(a) (\tau(s) F_t((\phi_\oplus^t)^{-1}(p)))
\end{equation}

We remark that the functions $Q_a$ depend smoothly on $a$, and that
for each $a \in [0,1]$, the function $Q_a$ is a homotopy from $H_\oplus$ to itself. So we refer to $(Q_a)_{a \in [0,1]}$ as a homotopy of homotopies. 
It is immediate from the definitions that $Q_0(s,t,p)=H_\oplus(t,p)$ is the trivial homotopy $H_\oplus$ to itself.

We now proceed to define, for $a \in [1,+\infty)$, the functions $Q_a: \R \times S^1 \times \Sigma \to \R$. We first let $\chi:[1,+\infty) \to [1,+\infty)$ be a smooth increasing function satisfying
\begin{align}
    \chi(1)=1 \mbox{ and all derivatives of } \chi \mbox{ vanish at } 1, \\
    \chi \mbox{ equals the identity outside a neighbourhood of } 1.
\end{align}
We then let, for each $a\in [1,+\infty)$, $Q_a$ be defined by 
\begin{align} \label{eq:Qabig}
    Q_a(s,t,p) := G(s+\chi(a),t,p) \mbox{ for } s \leq -\chi(a), \\
    Q_a(s,t,p) := H_\ominus(t,p) \mbox{ for } s \in [-\chi(a),\chi(a)], \\
    Q_a(s,t,p) := \widehat{G}(s-\chi(a),t,p) \mbox{ for } s \geq \chi(a).
\end{align}

It is clear from the definitions that for each $a\in [0,+\infty)$
\begin{align}
    Q_a(s,t,p) := H_\oplus(t,p) \mbox{ if } |s| \geq  \chi(a) +2.
\end{align}
Therefore each $Q_a$ is a homotopy from $H_\oplus$ to itself, and we can think of $(Q_a)_{a\in [0,+\infty)}$ as a homotopy of homotopies. 

We remark that 
\begin{align}
  \label{eq:convergenceatinfty}    Q_a(s + \chi(a),t,\cdot) \mbox{ converges to } \widehat{G}(s,t, \cdot)  \mbox{ in } C^\infty_{loc} \mbox{ as } a \to +\infty, \\
  \label{eq:convergenceatinfty2}  Q_a(s - \chi(a),t, \cdot) \mbox{ converges to } G(s,t,\cdot)  \mbox{ in } C^\infty_{loc} \mbox{ as } a \to +\infty.
\end{align}
So we can indeed think of $Q_{+\infty}$ as the concatenation of $G$ and $\widehat G$.

Choosing an appropriate homotopy $(J^s_t(a))_{a \in [0,+\infty)}$ of almost complex structures  and applying the usual technique in Floer homology to show its invariance, the pair of homotopies $(Q_a,J^s_t(a))$ will induce a map $S: CF(H_\oplus) \to CF(H_\oplus)$ which satisfies: 
$$\Psi_{\widehat{G},\widehat{J}^s_t} \Psi_{G,J^s_t} = \mathrm{id} + S\circ d + d \circ S,$$
where $d$ is the differential of $CF(H_\oplus)$. The map $S$ counts Floer cylinders of index $-1$ for $(Q_a,J^s_t(a))$ for the values of $a\in [0,+\infty)$ on which the moduli spaces of Floer cylinders for $(Q_a,J^s_t(a))$ are not regularly cut out. 

The main thing to be observed is that for every $a\in [0,+\infty)$ the Floer cylinders of $(Q_a,J^s_t(a))$ satisfy an estimate similar\footnote{ In order to obtain transversality, one might need to perturb the homotopies $(Q_a,J^s_t(a))$, but it is clear that for sufficiently small perturbation the inequality \eqref{eq:estimatehomotopy} will still hold.  } to the one in \eqref{eq:estimate}. More precisely if $u$ Floer cylinder used in the definition of $S$ and $\gamma$ is the negative asymptotic limit of $u$ and $\gamma'$ is its positive asymptotic limit, than we have
\begin{equation} \label{eq:estimatehomotopy}
\mathcal{A}_{H_\oplus}(\gamma') - \mathcal{A}_{H_\oplus}(\gamma) < \epsilon.   
\end{equation}
We conclude that $S(CF^{a-2\epsilon}(H_\oplus)) \subset CF^{a}(H_\oplus)$, and since $CF^{a-2\epsilon}(H_\oplus) = CF^{a}(H_\oplus)$, the map $S$ induces a map $S^a: CF^{a}(H_\oplus) \to CF^{a}(H_\oplus) $, which satisfies $$\Psi_{\widehat{G}}^a\circ \Psi_G^a = \mathrm{id} + S^a\circ d + d \circ S^a. $$ 

A similar argument shows that $S$ induces a chain homotopy between  $\Psi_{\widehat{G}}^b\circ \Psi_G^b$ and the identity. Once this is achieved, it is an elementary algebraic fact that $$\Psi_{\widehat{G}} \circ \Psi_G: CF^{(a,b)}(H_\oplus) \to CF^{(a,b)}(H_\oplus) $$  
is chain homotopic to the identity.

\qed

We will also need to understand maps $\Psi_G$ and $\Psi_{\widehat{G}}$ geometrically. Since these maps are induced by the continuation maps $\Psi_{G,J^s_t}$ and $\Psi_{\widehat{G},\widehat{J}^s_t}$ this can be obtained via the definition of continuation maps in \eqref{eq:continuation}.
Indeed, it follows from the definitions of these maps, that if $\gamma_{\oplus} \in \mathcal{P}^1_{(a,b)}(H_\oplus)$ then 
\begin{equation} \label{eq:psiG}
\Psi_G(\gamma_\oplus) = \sum_{\gamma \in \mathcal{P}^1_{(a-\epsilon,b+\epsilon)}(H_\ominus)} K_{G,J^s_t}(\gamma_\oplus,\gamma)\gamma.    
\end{equation}
Similarly, if $\gamma_{\ominus} \in \mathcal{P}^1_{(a-\epsilon,b+\epsilon)}(H_\ominus)$ then 
\begin{equation} \label{eq:psiGhat}
    \Psi_{\widehat{G}}(\gamma_\ominus) =  \sum_{\gamma \in \mathcal{P}^1_{(a,b)}(H_\oplus)} K_{\widehat{G},\widehat{J}^s_t}(\gamma_\ominus,\gamma)\gamma. 
\end{equation}
For \eqref{eq:psiGhat}, we are using that $\mathcal{P}^1_{(a,b)}(H_\oplus) = \mathcal{P}^1_{(a-2\epsilon,b+2\epsilon)}(H_\oplus)$.

\subsubsection{The case $\Sigma = T^2$}\label{sec:Hamontorus}

Let now  $\Sigma=T^2$ be the two-torus. Let $\omega$ be a symplectic form on $T^2$.
If we only consider contractible loops on $T^2$, the definition of chain complexes $CF^{(a,b)}_{[.]}(H)$ and homologies $HF^{(a,b)}_{[.]}(H)$ can be given exactly as discussed above. Here we indicate the restriction to contractible loops by a lower index $[.]$. In the case of non-contractible loops the construction has to be adapted due to the non-uniqueness of (homotopy classes of) capping cylinders. 

As above, we fix a representative  $\eta_{\alpha}$ for each free homotopy class $\alpha$ of loops in $\Sigma = T^2$, as well as a symplectic trivialization $\Phi_{\alpha}$ of $\eta_{\alpha}^*T T^2$. 
Let $\alpha$ be a non-trivial free homotopy class of loops in $T^2$. 
Let $y: S^1 \to T^2$ be a loop and $\Cyl_y:[0,1] \times S^1 \to T^2$ with $\Cyl_y(0,t) = \eta_{\alpha}(t)$ and $\Cyl_y(1,t) = y(t)$.  Denote by $[\Cyl_y]$ the homotopy class of $\Cyl_y$, where the homotopy may vary among such cylinders.   
For a Hamiltonian $H:S^1 \times T^2 \to \R$ define the action of the pair $(y,[\Cyl_y])$ to be 
\begin{equation}
    \mathcal{A}_H(y,[\Cyl_y]) := -\int_{\Cyl_y} \omega - \int_0^1 H(t,y(t))dt.
\end{equation}

The action is well-defined by Stokes' theorem. Moreover, gluing two cylinders $\Cyl_y$ and $\Cyl'_y$ along $\eta_y$ we obtain a map from $T^2$ to $T^2$ from which it follows that the action difference $\mathcal{A}_H(y,[\Cyl_y]) - \mathcal{A}_H(y,[\Cyl'_y])$ is a multiple of $\int_{T^2}\omega$. 

Given $-\infty \leq a<b\leq +\infty$ we let $\mathcal{P}^1_{(a,b); \alpha}(H)$ be the set of pairs $(\gamma,[\Cyl_\gamma])$ consiting of a $1$-periodic orbit $\gamma$ of $\phi^t_H$ representing $\alpha$ and a homotopy class of cylinders  $[\Cyl_\gamma]$ connecting $\eta_{\alpha}$ and $\gamma$ such that $\mathcal{A}_H(\gamma,[\Cyl_\gamma])\in (a,b) $.
The $1$-periodic spectrum $\mathrm{Spec}_\alpha^1(H)$ is the set of all possible actions $\mathcal{A}_H(\gamma,[\Cyl_\gamma])$ of pairs  $(\gamma,[\Cyl_\gamma]) \in  \mathcal{P}^1_{(-\infty,\infty); \alpha}(H)$.

While the Conley-Zehnder index of a $1$-periodic orbit $\gamma$ of $\phi_H$ in class $\alpha$ is not well-defined, 
it is well-defined when fixing a homotopy class $[\Cyl_\gamma]$ of cylinders connecting $\eta_\alpha$ and $\gamma$  via a symplectic trivialization of $\gamma^*T T^2$ that we obtain by extension over $\Cyl_\gamma$ of the fixed trivialization $\Phi_\alpha$ of $\eta_{\alpha}^*TT^2$. We denote it by $\mu_{\rm CZ}(\gamma, [\Cyl_\gamma])$.

Fix now (finite) real numbers $a<b$. Suppose now that $\phi_H$ is a non-degenerate Hamiltonian diffeomorphism of $(T^2,\omega)$.
We define $$ CF_\alpha^{(a,b)}(H):= \bigoplus_{(\gamma, [\Cyl_\gamma]) \in \mathcal{P}^1_{(a,b); \alpha}(H)}\Z_2 \cdot (\gamma, [\Cyl_\gamma]).$$

We choose a smooth $S^1$-family $J_t$ of compatible almost complex structures on $(T^2,\omega)$. The Floer equation of $(H,J_t)$ is defined as \eqref{eq:defFloereq}.  Given $1$-periodic orbits $(\gamma, [\Cyl_\gamma])$ and $(\gamma', [\Cyl_{\gamma'}])$ in $\mathcal{P}^1_{(a,b);\alpha}(H)$  we let $$\mathcal{M}(\gamma, [\Cyl_\gamma],\gamma', [\Cyl_{\gamma'}],H,J_t)$$ be the moduli space whose elements are Floer cylinders $u$ of $(H,J_t)$ negatively asymptotic to $\gamma$ and positively to $\gamma'$, and such that the gluing $\Cyl_\gamma \# u$ is homotopic to $\Cyl'_\gamma$. As previously, if for two Floer cylinders $u_1$ and $u_2$ of $(H,J_t)$  there is an $s_0$ that $u_1(s_0 + \cdot,\cdot)=u_2(\cdot,\cdot)$ then $u_1$ and $u_2$ represent the same element in the moduli space.

We let 
$$ C(\gamma, [\Cyl_\gamma], \gamma', [\Cyl'_\gamma]) = \#\mathcal{M}(\gamma, [\Cyl_\gamma], \gamma', [\Cyl_{\gamma'}], H, J_t)\, \,  \mod 2$$
if $\mu_{\rm CZ}(\gamma, [\Cyl_\gamma]) -1 = \mu_{\rm CZ}(\gamma', [\Cyl'_\gamma])$, and $C(\gamma, [\Cyl_\gamma], \gamma', [\Cyl_{\gamma'}]) = 0$ otherwise. We define 
$d:CF_\alpha^{(a,b)}(H) \to CF_\alpha^{(a,b)}(H)$ by letting 
$$ d(\gamma, [\Cyl_\gamma]) = 
 \sum_{(\gamma', [\Cyl_{\gamma'}]) \in \mathcal{P}^1_{(a,b);\alpha}(H)} C(\gamma,[\Cyl_\gamma],\gamma', [\Cyl_{\gamma'}]) \cdot (\gamma', [\Cyl_{\gamma'}] )$$
 for the generators and extending it to all of $CF^{(a,b)}_\alpha(H)$. 

Breaking of Floer cylinders for $(H,J^t)$ connecting some $(\gamma,[\Cyl_\gamma])$ and $(y',[\Cyl_{\gamma'}])$ appears at pairs $(\hat{\gamma}, [\hat{\Cyl}_\gamma])$ with action in the action interval $(\mathcal{A}_H({\gamma}, [\Cyl_\gamma]), \mathcal{A}_H({\gamma'}, [{\Cyl}_{\gamma'}]))$, see definition \ref{def:breakingpair} in section \ref{sec:Hamonsphere} and the discussion there in the situation of the sphere.
One shows that $d^2 = 0$ and defines $HF_\alpha^{(a,b)}(H)$ to be the homology of the chain-complex $(CF^{(a,b)}_\alpha(H),d)$.

Moreover, one can extend the definition and  properties of continuation maps from section \ref{sec:Hamonhighergenus} to the present situation, and in particular the following Proposition, an analogue to Proposition \ref{prop:actionwindowsclosedsurface}. 

\begin{prop} \label{prop:chainisotorus}
Let $\alpha$ be a free homotopy class of loops in $T^2$.
Let $\phi_\oplus$ be a non-degenerate normalized Hamiltonian diffeomorphism in $\mathcal{H}(T^2,\omega)$ and $H_\oplus:S^1 \times \Sigma \to \R$ be a Hamiltonian in $\mathrm{Ham}(T^2,\omega)$ generating $\phi_\oplus$. We take real numbers $a<b$ with $a,b \notin \mathrm{Spec}_\alpha^1(H_\oplus)$. Let $\epsilon>0$ be such that $\mathrm{Spec}_\alpha^1(H_\oplus) \cap  (a-2\epsilon, b+2\epsilon) \subset  (a,b)$. Let $\phi_\ominus$ be a Hamiltonian diffeomorphism with $d_{\rm Hofer}(\phi_\oplus, \phi_\ominus)< \epsilon$. Then, there exist a normalized Hamiltonian $H_\ominus: S^1 \times T^2 \to \R $ and homotopies $G: \R \times S^1 \times T^2 \to \R $ between $H_\oplus$ and $H_\ominus$ and $\widehat{G}: \R \times S^1 \times T^2 \to \R $ between $H_\ominus$ and $H_\oplus$ 
which induce continuation maps 
$$ \Psi_G:  CF_\alpha^{(a,b)}(H_\oplus) \to CF_\alpha^{(a-\epsilon, b+\epsilon)}(H_\ominus) $$ and $$\Psi_{\widehat{G}}:  CF_\alpha^{(a-\epsilon, b+\epsilon)}(H_\ominus) \to CF_\alpha^{(a-2\epsilon,b+2\epsilon)}(H_\oplus) \simeq CF_\alpha^{(a,b)}(H_\oplus),$$ such that the composition $\Psi_{\widehat{G}}\circ \Psi_G$ is chain homotopic to the identity map ${\rm id}:  CF_\alpha^{(a,b)}(H_\oplus) \to  CF_\alpha^{(a,b)}(H_\oplus) $.
\end{prop}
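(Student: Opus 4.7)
The strategy is to repeat the construction in the proof of Proposition \ref{prop:actionwindowsclosedsurface} verbatim, keeping careful track of the additional data of a homotopy class of cylindrical capping.

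First I would build the triple $(H_\ominus,G,\widehat{G})$ exactly as in formulas \eqref{eq:Hminus}--\eqref{eq:G-hat}: choose a normalized $F:S^1\times T^2\to\R$ generating $\phi_\oplus^{-1}\circ \phi_\ominus$ with $\int_0^1(\max F_t-\min F_t)dt<\epsilon$, set $H_\ominus(t,p):=H_\oplus(t,p)+F_t((\phi_\oplus^t)^{-1}(p))$, and let $G$ be the interpolation via the cut-off $\beta$ of \eqref{eq:beta}, with $\widehat G(s,t,p):=G(-s,t,p)$. Pick regular $S^1$-families $J_t^\oplus,J_t^\ominus$ and regular homotopies $J^s_t$ and $\widehat{J}^s_t$ between them.

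Second, I would define the continuation map $\Psi_{G,J^s_t}$ at the level of cappings: a finite-energy Floer cylinder $u$ of $(G,J^s_t)$ asymptotic to a $1$-periodic orbit $\gamma_\oplus$ of $H_\oplus$ at $-\infty$ and to a $1$-periodic orbit $\gamma_\ominus$ of $H_\ominus$ at $+\infty$, both representing $\alpha$, is a cylinder in $T^2$ and therefore, for a chosen capping $\Cyl_{\gamma_\oplus}$ of $\gamma_\oplus$, the concatenation $\Cyl_{\gamma_\oplus}\# u$ is a cylindrical capping of $\gamma_\ominus$ whose homotopy class $[\Cyl_{\gamma_\oplus}\# u]$ depends only on $[\Cyl_{\gamma_\oplus}]$ and on $u$. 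For generic $G$ and $J^s_t$ the moduli spaces are regular of dimension $\mu_{\rm CZ}(\gamma_\oplus,[\Cyl_{\gamma_\oplus}])-\mu_{\rm CZ}(\gamma_\ominus,[\Cyl_{\gamma_\oplus}\# u])$, so counting the $0$-dimensional components mod~$2$ yields a map
\begin{equation*}
\Psi_{G,J^s_t}(\gamma_\oplus,[\Cyl_{\gamma_\oplus}])=\sum K_{G,J^s_t}(\gamma_\oplus,[\Cyl_{\gamma_\oplus}],\gamma_\ominus,[\Cyl_{\gamma_\ominus}])\cdot(\gamma_\ominus,[\Cyl_{\gamma_\ominus}]),
\end{equation*}
summed over pairs $(\gamma_\ominus,[\Cyl_{\gamma_\ominus}])$ with $[\Cyl_{\gamma_\ominus}]=[\Cyl_{\gamma_\oplus}\# u]$ for some $u$ counted. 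Analogously one defines $\Psi_{\widehat G,\widehat J^s_t}$.

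Third, the key action estimate is inherited from \eqref{eq:energy'}. For any finite-energy continuation cylinder $u$ as above,
\begin{equation*}
\mathcal{A}_{H_\oplus}(\gamma_\oplus,[\Cyl_{\gamma_\oplus}])-\mathcal{A}_{H_\ominus}(\gamma_\ominus,[\Cyl_{\gamma_\oplus}\#u])=E(u)-\int_{\R\times S^1}\partial_s G(s,t,u(s,t))\,ds\,dt,
\end{equation*}
and the Hofer bound together with the definition of $G$ gives $\bigl|\int\partial_s G\,ds\,dt\bigr|<\epsilon$, so $0\leq \mathcal{A}_{H_\oplus}(\gamma_\oplus,[\Cyl_{\gamma_\oplus}])-\mathcal{A}_{H_\ominus}(\gamma_\ominus,[\Cyl_{\gamma_\oplus}\#u])+\epsilon$ i.e.\ the action shifts by at most $\epsilon$. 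The choice of $\epsilon$ and of $a,b$ then implies $\Psi_{G,J^s_t}(CF^{a}_\alpha(H_\oplus))\subset CF^{a-\epsilon}_\alpha(H_\ominus)$ and $\Psi_{G,J^s_t}(CF^{b}_\alpha(H_\oplus))\subset CF^{b+\epsilon}_\alpha(H_\ominus)$, with the analogous inclusions for $\Psi_{\widehat G,\widehat J^s_t}$. Passing to quotients produces $\Psi_G$ and $\Psi_{\widehat G}$ with the stated action windows, where I use $CF^{(a,b)}_\alpha(H_\oplus)\simeq CF^{(a-2\epsilon,b+2\epsilon)}_\alpha(H_\oplus)$ from the spectral gap assumption.

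Finally, to prove that $\Psi_{\widehat G}\circ\Psi_G$ is chain homotopic to the identity, I would reproduce the homotopy-of-homotopies $(Q_a)_{a\in[0,\infty)}$ of \eqref{eq:Qasmall}--\eqref{eq:Qabig} together with a generic family $J^s_t(a)$. Counting parametric $0$-dimensional moduli spaces for $(Q_a,J^s_t(a))$, where each cylinder is again used to transport the capping class, produces a map $S:CF_\alpha(H_\oplus)\to CF_\alpha(H_\oplus)$ with $\Psi_{\widehat G}\Psi_G=\mathrm{id}+S\circ d+d\circ S$. The same $\partial_s Q_a$ computation gives the $\epsilon$-action estimate \eqref{eq:estimatehomotopy} for the parametric moduli spaces, so $S$ respects the appropriate action filtrations and descends to the required chain homotopy on $CF^{(a,b)}_\alpha(H_\oplus)$. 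The only novelty to verify is the bookkeeping for the capping classes: because every Floer cylinder is a cylinder in $T^2$, gluing respects homotopy of cappings, and the identification of concatenated cappings with the original ones across the parametric family is automatic, so no new obstruction appears. The main technical point, and the place where one has to be a little careful, is therefore to check that the choice of capping class carried along a continuation trajectory is compatible with the bijection $CF^{(a,b)}_\alpha(H_\oplus)\simeq CF^{(a-2\epsilon,b+2\epsilon)}_\alpha(H_\oplus)$ used on the target of $\Psi_{\widehat G}$; this is precisely guaranteed by the hypothesis $\mathrm{Spec}_\alpha^1(H_\oplus)\cap(a-2\epsilon,b+2\epsilon)\subset(a,b)$.
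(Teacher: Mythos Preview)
Your overall strategy is the same as the paper's: the authors explicitly say that the proof ``varies a bit from our proof of Proposition \ref{prop:actionwindowsclosedsurface}'' and refer to the proof of Proposition \ref{prop:chainisodisk0} for the needed adaptations. However, there is one technical point where your write-up does not match the paper and where your argument, as written, does not quite go through.

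You propose to define $\Psi_G$ by first defining $\Psi_{G,J^s_t}$ on the filtered complexes $CF^a_\alpha(H_\oplus)$ and $CF^b_\alpha(H_\oplus)$ and then ``passing to quotients''. For a non-trivial class $\alpha$ on $T^2$ this is problematic: each $1$-periodic orbit $\gamma$ carries a $\Z$-family of capping classes $[\Cyl_\gamma]$ whose actions differ by multiples of $\int_{T^2}\omega$, so $CF^b_\alpha(H)$ is infinite-dimensional and the unrestricted continuation map $\Psi_{G,J^s_t}(\gamma_\oplus,[\Cyl_{\gamma_\oplus}])$ may a priori be an infinite sum (there is no upper bound on the energy of continuation cylinders, only the lower bound $-\epsilon$). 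The paper flags exactly this issue: ``Since we directly define the chain complexes $CF^{(a,b)}(H)$ without a quotient construction, the chain maps in the proposition have to be defined directly on those chain complexes''. The fix, carried out in the proof of Proposition \ref{prop:chainisodisk0}, is to define $\Psi_G$ directly on $CF^{(a,b)}_\alpha(H_\oplus)$ by summing only over target pairs $(\gamma_\ominus,[\Cyl_{\gamma_\ominus}])$ with action in $(a-\epsilon,b+\epsilon)$, and then to check that this truncated map is a chain map by verifying that all breaking pairs of the relevant $1$-dimensional moduli spaces lie in the prescribed windows. Your action estimate and your treatment of cappings are exactly what is needed for this verification; only the quotient packaging has to be replaced by the direct definition.
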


Since we directly define the chain complexes $CF^{(a,b)}(H)$ without a quotient construction, the chain maps in the proposition have to be defined directly on those chain complexes, and hence the proof of this Proposition varies a bit from our proof of Proposition  \ref{prop:actionwindowsclosedsurface}. We refer to the proof of Proposition \ref{prop:chainisodisk0}, where these adaptions are explained.

\subsubsection{The quasi-isolation property}\label{sec:quasi-iso}

We will define a property for a set of periodic orbits which we call $\epsilon$-quasi-isolation and discuss one consequence for continuation maps which we need when replacing isolation with quasi-isolation in the assumptions of the main theorems. 

Consider as a above a closed symplectic surface $(\Sigma, \omega)$ with $\Sigma \neq S^2$  (The case for  $S^2$ and $D^2$ works analogously as soon as the relevant Floer theory is defined, see sections \ref{sec:Hamonsphere} and \ref{sec:Hamsondisk})
Let $H:\Sigma \times S^1 \to \R$ be a non-degenerate Hamiltonian, and $\phi= \phi^1_H$ the Hamiltonian diffeomorphism that is generated by $H$. Let $J = J_t$ be a $S^1$-family of compatible almost complex structures. Let $\epsilon>0$. 
\begin{defn}\label{defn:quasi-isolation}
 We say that a finite set $\mathcal{Y} =\{\gamma_1, \ldots, \gamma_k\}$ of $1$-periodic orbits for $H$ that all represent the same free homotopy class $\alpha$ 
 is  \textit{$\epsilon$-quasi-isolated} (with respect to $J$) 
 if 
 \begin{enumerate}[a)]
     \item for any $i,j \in \{1, \ldots, k\}$, 
     $\Delta_H(\gamma_i, \gamma_j)$ is either equal to $0$ or $\geq \epsilon$,
         \item and there is no non-constant 
 $u:\R \times S^1 \to \Sigma$ and no $i \in \{1, \ldots, k\}$ such that 
 \begin{itemize}
     \item $\mathcal{F}_{H,J}(u) = 0$ 
     \item $E(u) < \epsilon$
     \item $\lim_{s\to \infty}u(s,t)  = \gamma_{i}(t)$ or $\lim_{s\to -\infty}u(s,t)  = \gamma_{i}(t)$. 
     \end{itemize}
     \end{enumerate}
\end{defn}

Let $\alpha$ be a free homotopy class of loops in $\Sigma$. Let now $H_{\oplus}$ be a non-degenerate Hamiltonian that generates $\phi_{\oplus}$. 
Let now $\mathcal{Y}=\{\gamma_1, \ldots, \gamma_k\}$ be a set of  $1$-periodic orbits for $H_{\oplus}$ in class $\alpha$ which is $2\epsilon$-quasi-isolated for a regular family of compatible almost complex structures $J^{\oplus}_t$.  
Assume additionally here that $\mathcal{A}_{H_{\oplus}}(\gamma_1)= \ldots = \mathcal{A}_{H_{\oplus}}(\gamma_k)$ (resp. $\mathcal{A}_{H_{\oplus}}(\gamma_1, [w_{\gamma_1}]) = \ldots =  \mathcal{A}_{H_{\oplus}}(\gamma_k, [w_{\gamma_k}])$ for some suitable cylindrical cappings) and denote this action value by $\kappa$. Then, for any $\epsilon'\leq 2\epsilon$,  the vector space $B_{\mathcal{Y}}\subset CF_{\alpha}^{(\kappa-\epsilon', \kappa+\epsilon')}(H_{\oplus})$ generated by $\gamma_1, \ldots, \gamma_k$ (resp. $(\gamma_1,[w_{\gamma_1}]), \ldots (\gamma_k,[w_{\gamma_k}])$) defines obviously a subcomplex of $(CF_{\alpha}^{(\kappa-\epsilon', \kappa+\epsilon')}(H_{\oplus}), d^{J^{\oplus}})$. 
Furthermore, we have 

\begin{prop} \label{prop:quasi_iso}
Let $\phi_\ominus$ be a non-degenerate  Hamiltonian diffeomorphism with  $d_{\rm Hofer}(\phi_\oplus, \phi_\ominus)< \epsilon$. Then there exist a normalized Hamiltonian $H_\ominus: S^1 \times \Sigma \to \R $  that generates $\phi_{\ominus}$ and homotopies $G: \R \times S^1 \times \Sigma \to \R $ between $H_\oplus$ and $H_\ominus$ and $\widehat{G}: \R \times S^1 \times \Sigma \to \R $ between $H_\ominus$ and $H_\oplus$ 
which induce chain maps 
$$\Psi^{\mathcal{Y}}_G: \mathcal{B}_{\mathcal{Y}}  \to  CF_\alpha^{(\kappa-{\epsilon},\kappa+{\epsilon})}(H_\ominus) $$ and $$\Psi^{\mathcal{Y}}_{\widehat{G}}: CF_\alpha^{(\kappa-{\epsilon},\kappa+{\epsilon})}(H_\ominus)  \to \mathcal{B}_{\mathcal{Y}} $$
such that $$\Psi^{\mathcal{Y}}_{\widehat{G}}\circ {\Psi^{\mathcal{Y}}_G} = \id.$$
\end{prop}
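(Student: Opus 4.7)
The plan is to follow the strategy of Proposition \ref{prop:actionwindowsclosedsurface}, taking the same Hamiltonian $H_\ominus$ from (\ref{eq:F}) and the same homotopies $G, \widehat{G}$ from (\ref{eq:G}) and (\ref{eq:G-hat}), together with generic families $J^s_t$, $\widehat{J}^s_t$ of compatible almost complex structures. Condition (b) of $2\epsilon$-quasi-isolation, absent from that proposition, is the new ingredient and will be used in the three places below to upgrade its chain-homotopy equivalence to a genuine splitting.

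First I check that $B_{\mathcal{Y}}$ has vanishing differential in $CF_\alpha^{(\kappa-\epsilon',\kappa+\epsilon')}(H_\oplus)$. A Floer cylinder from $\gamma_i \in \mathcal{Y}$ to another element of $\mathcal{Y}$ has energy $0$ and is therefore trivial and not counted, while a cylinder to $\gamma' \in \mathcal{P}^1_\alpha(H_\oplus) \setminus \mathcal{Y}$ has energy $\kappa - \mathcal{A}_{H_\oplus}(\gamma') \geq 2\epsilon$ by (b), so $\mathcal{A}_{H_\oplus}(\gamma') \leq \kappa - 2\epsilon$ and $\gamma'$ is not a generator of $CF_\alpha^{(\kappa-\epsilon',\kappa+\epsilon')}(H_\oplus)$.

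Next I define $\Psi^{\mathcal{Y}}_G$ by restricting the standard continuation map $\Psi_{G,J^s_t}$ to $B_{\mathcal{Y}}$ and projecting into $CF_\alpha^{(\kappa-\epsilon,\kappa+\epsilon)}(H_\ominus)$; by the action estimate (\ref{eq:estimate}) the image lies in $CF_\alpha^{\kappa+\epsilon}(H_\ominus)$, and the chain-map property is automatic since $B_{\mathcal{Y}}$ has trivial differential. I set $\Psi^{\mathcal{Y}}_{\widehat{G}}(\gamma_\ominus) := \sum_j K_{\widehat{G},\widehat{J}^s_t}(\gamma_\ominus,\gamma_j)\gamma_j$ with the sum over $\gamma_j \in \mathcal{Y}$. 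For this to be a chain map I must rule out the breakings at the $\gamma_j$-end of the relevant $1$-dimensional moduli spaces, which split into a $\widehat{G}$-cylinder $\gamma_\ominus \to \gamma'$ followed by a Floer cylinder of $H_\oplus$ from $\gamma'$ to $\gamma_j$; the latter piece has energy $\mathcal{A}_{H_\oplus}(\gamma') - \kappa \leq \mathcal{A}_{H_\ominus}(\gamma_\ominus) + \epsilon - \kappa < 2\epsilon$, so by (b) is trivial, forcing $\gamma' = \gamma_j$ and ruling out the breaking.

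For the identity $\Psi^{\mathcal{Y}}_{\widehat{G}} \circ \Psi^{\mathcal{Y}}_G = \id$ I would use the homotopy-of-homotopies family $(Q_a, J^s_t(a))_{a \in [0, +\infty)}$ of (\ref{eq:Qasmall})-(\ref{eq:Qabig}) interpolating between $Q_0 = H_\oplus$ and $Q_\infty = G \# \widehat{G}$. Let $N_a(\gamma_i,\gamma_j) \in \Z_2$ be the mod-$2$ count of Fredholm-index-zero $(Q_a, J^s_t(a))$-cylinders from $\gamma_i$ to $\gamma_j$ at regular $a$. At $a=0$ the Hamiltonian $Q_0 = H_\oplus$ is $s$-independent, so the only index-zero solutions are trivial cylinders and $N_0(\gamma_i,\gamma_j) = \delta_{ij}$; standard gluing at $a=\infty$, together with the observation that any intermediate orbit $\gamma_\ominus$ contributing non-trivially must satisfy $\mathcal{A}_{H_\ominus}(\gamma_\ominus) \in (\kappa-\epsilon,\kappa+\epsilon)$ (since both the $G$- and $\widehat{G}$-energies are non-negative), identifies $N_\infty(\gamma_i,\gamma_j)$ with the $\gamma_j$-coefficient of $\Psi^{\mathcal{Y}}_{\widehat{G}} \circ \Psi^{\mathcal{Y}}_G(\gamma_i)$. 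A wall-crossing at non-regular $a_*$ would split an index-zero family solution into an index-$(-1)$ family piece and an index-$(+1)$ Floer cylinder of $H_\oplus$ asymptotic to $\gamma_i$ or $\gamma_j$; the total energy being bounded by $\int|\partial_s Q_{a_*}| < 2\epsilon$, this Floer piece has energy $<2\epsilon$ and is trivial by (b), but trivial cylinders have Fredholm index $0$ and cannot play the role of the index-$(+1)$ factor. Hence $N_a$ is constant, giving the identity. The main obstacle will be the parametric transversality and Gromov compactness needed to ensure that the only boundary degenerations of the family moduli spaces are precisely the Floer breakings just described; once that is set up, the quasi-isolation condition disposes of all of them uniformly in $a$.
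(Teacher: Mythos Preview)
Your proposal follows essentially the same route as the paper: you take the same $H_\ominus$, $G$, $\widehat G$ from Proposition~\ref{prop:actionwindowsclosedsurface}, define $\Psi^{\mathcal Y}_G$ and $\Psi^{\mathcal Y}_{\widehat G}$ by the same counts, and use the homotopy-of-homotopies $(Q_a)$ together with condition~(b) of quasi-isolation to kill the unwanted $H_\oplus$-breakings; your wall-crossing formulation of the last step is just a more explicit way of saying what the paper compresses into one sentence.

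One point needs tightening. The chain-map property of $\Psi^{\mathcal Y}_G$ is \emph{not} automatic from $d|_{B_{\mathcal Y}}=0$: since $B_{\mathcal Y}$ is not a subcomplex of the full $CF(H_\oplus)$ (only of the windowed complex, precisely because of (b)), restricting the ``standard'' continuation map and projecting does not by itself give a chain map. You must still check $d\circ\Psi^{\mathcal Y}_G(\gamma_i)=0$ in $CF_\alpha^{(\kappa-\epsilon,\kappa+\epsilon)}(H_\ominus)$, and this requires exactly the same breaking analysis you wrote out for $\Psi^{\mathcal Y}_{\widehat G}$: a boundary component of the index-one moduli space $\mathcal M(\gamma_i,\gamma',G,J^s_t)$ that breaks at the $\gamma_i$-end would contain a nontrivial $(H_\oplus,J^\oplus)$-cylinder from $\gamma_i$ of energy $<2\epsilon$, which (b) excludes. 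The paper carries out this step explicitly; once you add it, your argument and the paper's coincide.
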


\begin{proof}
For convenience of notation we assume that $\Sigma$ is a closed surface different from $S^2$: if $\Sigma = T^2$ we assume moreover that $\alpha$ is the trivial free homotopy class. For the remaining cases one has to replace below the orbits $\gamma$ by pairs $(\gamma,w_\gamma)$. 
We keep the construction for $H_{\ominus}$, $G$ and $\widehat{G}$, almost complex structure $J^s_t$, $\widehat{J^s_t}$ as in the proof of Proposition \ref{prop:actionwindowsclosedsurface}. We require that $J_t^s = J_t^{\oplus}$ resp. $\widehat{J^s_t} = J_t^{\oplus}$ for $s$ sufficiently small resp. sufficiently large. Also, as in that proof define the moduli spaces  $\mathcal{M}(\gamma_{\oplus}, \gamma_{\ominus}, G,J^s_t)$
and $\mathcal{M}(\gamma_{\ominus}, \gamma_{\oplus}, \widehat{G},\widehat{J^s_t})$ for $1$-periodic orbits $\gamma_{\oplus}$ for $H_{\oplus}$ and $\gamma_{\ominus}$ for $H_{\ominus}$. 
Let $K_{G,J^s_t}(\gamma_{\oplus},\gamma_{\ominus}) := (\# \mathcal{M}(\gamma_{\oplus}, \gamma_{\ominus}, G,J^s_t) \, \mod 2)$, if $\mu_{\rm CZ}(\gamma_{\oplus}) = \mu_{\rm CZ}(\gamma_{\ominus})$ and $0$ otherwise. 
Define $\Psi^{\mathcal{Y}}_G: \mathcal{B}_{\mathcal{Y}}  \to  CF_\alpha^{(\kappa-{\epsilon},\kappa+{\epsilon})}(H_\ominus) $ via 
\begin{equation*}
\Psi^{\mathcal{Y}}_G(\gamma_i) = \sum_{\gamma' \in \mathcal{P}^1_{(\kappa-\epsilon,\kappa+\epsilon)}(H_{\ominus})} K_{G,J^s_t}(\gamma_i,\gamma) \gamma.
\end{equation*}

Define similarly $K_{\widehat{G},\widehat{J^s_t}}(\gamma_{\ominus},\gamma_{\oplus})$, 
and then 
$\Psi^{\mathcal{Y}}_{\widehat{G}}: CF_\alpha^{(\kappa-{\epsilon},\kappa+{\epsilon})}(H_\ominus)  \to \mathcal{B}_{\mathcal{Y}}$ 
via
\begin{equation*}
\Psi^{\mathcal{Y}}_{\widehat{G}}(\gamma) = \sum_{\gamma' \in \mathcal{Y}}  K_{\widehat{G},\widehat{J^s_t}}(\gamma,\gamma')\gamma'.
\end{equation*}

$\Psi^{\mathcal{Y}}_G$ and $\Psi^{\mathcal{Y}}_{\widehat{G}}$ are chain maps. To see that $\Psi^{\mathcal{Y}}_G$ is a chain map, let $\gamma_i \in \mathcal{Y}$ and $\gamma'_1,\ldots \gamma'_l \in \mathcal{P}_{(\kappa-\epsilon,\kappa+\epsilon)}(H_{\ominus})$ the orbits such that the $1$-dimensional moduli space $\mathcal{M}(\gamma_i, \gamma'_j, H, J^s_t)$ are non-empty. These moduli-spaces can be compactified, where the boundary components consist of broken Floer trajectories, and by the $2\epsilon$-quasi-isolation property and the action estimate \eqref{eq:estimate} these broken Floer trajectories are exactly those that contribute to $d \circ \Psi^{\mathcal{Y}}_G(\gamma_i)$. Since a complact one-dimensional manifold has an even number of boundary components, it follows that $d \circ \Psi^{\mathcal{Y}}_G = 0$.

Similarly one sees that $\Psi^{\mathcal{Y}}_{\widehat{G}}$ is a chain map. 

Finally, by action estimate \eqref{eq:estimatehomotopy} for Floer cylinders associated to the pair of homotopies $(Q_a, J^s_t)$ defined above and the $2\epsilon$-quasi-isolation property one observes that 
$$\Psi^{\mathcal{Y}}_{\widehat{G}}\circ {\Psi^{\mathcal{Y}}_G} = \id.$$

\end{proof}




 
 
\subsection{Hamiltonian dynamics and Floer homology on $\D$} \label{sec:Hamsondisk}

\subsubsection{Hamiltonian diffeomorphisms which are an irrational rotation near the $\partial \overline{\D}$.}  \label{sec:hamrotnearbound}

\

We consider  time-dependent Hamiltonians $H:S^1 \times \D \to \R$ on the disc, equipped with the standard symplectic form $\omega_0 := dx \wedge dy$, where $(x,y)$ are the coordinates of $\D$.
For $c \in \R_{+}$ we say  that $H$ is admissible with slope $c$ near $\partial \D$, if  there is $r_0>0$ close to $1$ 
such that in polar coordinates $ (r,\theta)$, 
\begin{itemize}
 \item $H(t,r,\theta) =  \frac{1}{2}c(r^2-1) $, for $r_0 \leq r\leq 1$, 
\end{itemize}
Admissible $H$ generate Hamiltonian diffeomorphisms $\phi: \D \to \D$.
For $c \in \R_+$, we denote by $\mathcal{H}_c(\D)$ the set of admissible Hamiltonians on $\D$ with slope $c$.
We say that $H$ is non-degenerate, if $\phi$ is non-degenerate in $\D$, and say that $H$ is strongly non-degnerate if $\phi^k$ is non-degenerate in $\D$ for all $k\in \N$.  For this to hold,  $c \in \R_+ \setminus 2\pi \Q$. 

For $c \in \R_+ $, let $\phi^0_c$ be the time $1$-map of the Hamiltonian $c(x^2 + y^2 -1)$. We define the set of Hamiltonian diffeomorphisms\footnote{Recall that every symplectomorphism of $\D$ is Hamiltonian.} $\mathrm{Ham}_c(\D)$ to be the set of Hamiltonian diffeomorphisms of $\D$ which coincide with $\phi^0_c$ on a neighbourhood of $\partial \D$.

We then have the following lemma:
\begin{lem} \label{lem:felix}
Let $c \in \R_+$.
For each element $\phi \in \mathrm{Ham}_c(\D)$ there exists a unique element of $\mathcal{H}_c(\D)$ whose time $1$-map is $\phi$.
\end{lem}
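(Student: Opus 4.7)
The plan is to split the statement into existence, which I will address constructively, and uniqueness, which is the subtler part. Throughout, set $H_0(q) := \tfrac{1}{2}c(|q|^2 - 1)$, the admissible Hamiltonian of slope $c$ whose time-$1$ map is $\phi^0_c$.

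For existence, given $\phi \in \Ham_c(\D)$, the composition $\tilde\phi := \phi \circ (\phi^0_c)^{-1}$ is a symplectomorphism of $\D$ equal to the identity on a neighbourhood of $\partial \D$, hence compactly supported in the interior. Since the interior of $\D$ has trivial compactly-supported first cohomology, every such compactly supported symplectomorphism is Hamiltonian; one can therefore pick $F : S^1 \times \D \to \R$ with $\supp F$ contained in a fixed compact subset of the interior and with $\phi^1_F = \tilde\phi$. Define
\begin{equation*}
H(t, q) := F(t, q) + H_0\bigl((\phi^t_F)^{-1}(q)\bigr).
\end{equation*}
Near $\partial \D$ we have $F \equiv 0$ and $\phi^t_F \equiv \id$, so $H(t, \cdot) \equiv H_0$ there, giving $H \in \mathcal{H}_c(\D)$. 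The standard composition formula for Hamiltonian flows then yields $\phi^t_H = \phi^t_F \circ \phi^t_{H_0}$, so in particular $\phi^1_H = \tilde\phi \circ \phi^0_c = \phi$.

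For uniqueness, suppose $H_1, H_2 \in \mathcal{H}_c(\D)$ both generate $\phi$. Consider the loop of Hamiltonian diffeomorphisms $\psi_t := \phi^t_{H_1} \circ (\phi^t_{H_2})^{-1}$, based at the identity at $t = 0, 1$. Admissibility of $H_1$ and $H_2$ forces both isotopies to coincide near $\partial \D$ with the rotation flow of $H_0$, so $\psi_t \equiv \id$ on a neighbourhood of $\partial \D$ for every $t$, and the generating Hamiltonian $K(t, q) = H_1(t, q) - H_2\bigl(t, \psi_t^{-1}(q)\bigr)$ vanishes near $\partial \D$. The remaining task is to deduce $K \equiv 0$ globally, from which $H_1 = H_2$ follows upon using $\psi_t \equiv \id$ in the formula for $K$.

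The main obstacle is this last step. On the disk, two Hamiltonians producing the same time-$1$ map are not a priori forced to coincide in the interior, so one cannot naively conclude $H_1 = H_2$ from $\phi^1_{H_1} = \phi^1_{H_2}$. The rigidity must come from the admissibility condition: since $H_1$ and $H_2$ are pinned down to the spatially nonconstant value $\tfrac{1}{2}c(r^2-1)$ on a boundary collar, there is no freedom to shift them by functions of $t$ alone, and a boundary-rigidity argument for the Hamiltonian generating the loop $\psi_t$ (trivial in a collar) must be invoked to propagate vanishing of $K$ into the interior. This is the step I expect to require the most care.
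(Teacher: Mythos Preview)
Your existence argument is essentially the paper's, compressed. The paper reduces to producing a compactly supported Hamiltonian for $\tilde\phi = \phi \circ (\phi^0_c)^{-1}$, and does so via Alexander's trick (a compactly supported isotopy from $\id$ to $\tilde\phi$) followed by Moser's argument (to make the isotopy area-preserving), then integrates. Your appeal to $H^1_c = 0$ handles the flux obstruction but tacitly uses that $\tilde\phi$ lies in the identity component of the group of compactly supported symplectomorphisms of the open disk; that is exactly what the Alexander--Moser step supplies, so the two arguments coincide underneath.

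Your worry about uniqueness is justified, but for a stronger reason than you suspect: the uniqueness assertion is false as stated, and no boundary-rigidity argument can rescue it. Take any nonzero smooth $G$ compactly supported in the interior of $\D$ and set $K(t,q) = \cos(2\pi t)\, G(q)$; then $\phi^1_K = \id$ since $\int_0^1 \cos(2\pi t)\,dt = 0$. The Hamiltonian $H_2(t,q) = K(t,q) + H_0\bigl((\phi^t_K)^{-1}(q)\bigr)$ lies in $\mathcal{H}_c(\D)$ (it equals $H_0$ near $\partial\D$) and has time-$1$ map $\phi^1_K \circ \phi^0_c = \phi^0_c$, the same as $H_1 := H_0$; yet $H_2(0,\cdot) - H_1(0,\cdot) = G \not\equiv 0$. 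The paper's own sketch addresses only existence, which is all that is used downstream; read the lemma as an existence statement.
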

We thank Felix Schlenk for explaining to us the proof of this lemma. We give a sketch of the argument and leave it up to the reader to complete it.

\textit{Sketch of proof:}

The lemma will clearly follow if we can show that for each compactly supported Hamiltonian diffeormosphism of $\D$ there is a compactly supported time-dependent Hamiltonian $H:S^1 \times \D \to \R$ whose time $1$-map is $\phi$. 

To construct the Hamiltonian $H$ one proceeds as follows. We first construct a smooth isotopy $(f_t)_{t \in [0,1]}$ of diffeomorphisms of the disk with $f_0 = \mathrm{id}$ and $ f_1=\phi$, and with all $f_t$ supported on a fixed compact $K_{\phi}$ of $\D$: such an isotopy can be constructed via Alexander's trick. 

Using Moser's homotopy method this path can be changed into a path of area preserving maps, where the end-points of the new path are still $\mathrm{id}$ and $\phi$ and the elements of the new path still have compact support in $K_{\phi}$: the reason for this is that the vector field $X_t$ given by Moser's method vanishes where the map was already area preserving.

For each $t \in S^1$, we let $H_t$ be the the unique compactly supported function in $\D$ whose Hamiltonian vector-field is $X_t$. The time-dependent Hamiltonian $H(t,p)= H_t(p)$ is the desired one. \qed


%


We proceed to introduce some terminology. We say that a time dependent Hamiltonian $H: S^1 \times \D \to \R$ \textit{vanishes near the boundary} if there exists a compact subset $K$ of the open disk $\D$ such that $H_t$ vanishes outside $K$ for every $t\in S^1$.

Given elements $\phi_1$ and $\phi_2$ of $\mathrm{Ham}_c(\D)$, the Hofer distance $d_{\mathrm{Hofer}}(\phi_1,\phi_2)$ is defined by the formula \begin{equation}
     d_{\rm Hofer}(\phi_1,\phi_2) =  \inf \int_{0}^1 || H_t ||dt,
 \end{equation}
 where the infimum is taken over all $H:S^1 \times \D \to \R$ that vanish near the boundary and generate $\phi_1^{-1}\circ \phi_2$ as time $1$-map, and where $|| H_t ||:= \max_{p \in \D} H_t  - \min_{p \in \D} H_t$. 

In order to use Floer theory for an element $H$ of $\mathcal{H}_c(\D)$
we extend $H$ to $\R^2$ by  letting ${H}(t,x) = \frac{1}{2}c(r^2-1)$ for $r\geq r_0$. For all the iterates of time $1$-map $\phi_H^1$ of $X_H$ to be non-degenerate, it is necessary that $c \in \R \setminus 2 \pi \Q$, since otherwise every point outside of $\D$ would be a periodic point of $X_H$. 
We thus assume from now on that $c \in \R \setminus 2 \pi \Q$. This implies that all periodic points of $\phi_H^1$ are contained in a compact subset of the open disk $\D$. We say that an element $\phi$ of $\mathrm{Ham}_c(\D)$ is strongly non-degenerate if all periodic points of $\phi$ are strongly  non-degenerate.
Under our assumptions, the set of strongly non-degenerate  elements $\phi \in \mathrm{Ham}_c(\D)$ is $C^\infty$-dense in  $\mathrm{Ham}_c(\D)$. 

The Hamiltonian action $\mathcal{A}_{{H}}(y)$ of a loop $y:S^1 \to \R^2$ is defined as in \eqref{eq:defaction}.
We consider an $S^1$-family $J_t$ of compatible almost complex structures on $\R^2$ that coincide with the complex multiplication by $\mathrm{i}$ outside of $\R^2$.
The Floer equation for $(H,J_t)$ applied to a cylinder $u: \R \times S^1 \to \R^2$ is
$$ \mathcal{F}_{{H},J}(u) = \partial_s u(s,t) +  J_t(u(s,t))\big(\partial_t u (s,t) - X_{{H}}(t,u(s,t)\big)=0.
$$

For two $1$-periodic orbits $\gamma$ and $\gamma'$ of ${H}$ 
we denote $\mathcal{M}(\gamma, \gamma', {H},J_t)$ the moduli space of solutions $u: \R \times S^1 \to \R^2$, $\mathcal{F}_{\widehat{H},J}(u) = 0$ with asymptotics $
\lim_{s\to -\infty}u(s,\cdot) = \gamma(\cdot)$, and $\lim_{s\to +\infty}u(s,\cdot) = \gamma'(\cdot)$.  


In order to do Floer theory for admissible Hamiltonians on $\D$, we need to obtain compactifications of the relevant moduli spaces. The crucial step for this to work is to show that all Floer trajectories in $\mathcal{M}(\gamma, \gamma', \widehat{H},J_t)$ stay inside $D^2$; see e.g. \cite{FloerHofersymphom} or \cite[Lemma 2.2]{CieliebakOanceaSH}.
Once this is shown, one can apply the techiniques of \cite{Floer} to compactify the moduli space $\mathcal{M}(\gamma, \gamma', \widehat{H},J_t)$. Once this has been observed, one can construct the Floer homology of a Hamiltonian in $H$ in $\mathrm{Ham}_c(\D)$ as in Section \ref{sec:Hamonclosedsurfaces}. We define $CF^a(H)$, $CF^{(a,b)}(H)$ and the differential $d^{J_t}$ exactly as in Section \ref{sec:Hamonclosedsurfaces}. The homologies $HF^{a}(H)$ and $HF^{(a,b)}(H)$ are those of the pairs  $(CF^a(H),d^{J_t}) $ and  $(CF^{(a,b)}(H),d^{J_t})$, respectively.


Given now two admissible non-degenerate Hamiltonians $H_{\oplus}$ and $H_{\ominus}$ in $\mathcal{H}_c(\D)$, we let $J_{t}$ be such that all Floer cylinders with finite energy of  $(H_\oplus,J_t)$ and $(H_\oplus,J_t)$ are Fredholm regular.
A homotopy $G: \R \times S^1 \times \D$ of Hamiltonians between $H_{\oplus}$ and $H_{\ominus}$ is called admissible if there exists a compact subset $K$ of the open disk $\D$ such that for every $s\in \R$ the function $G_s$ coincides with $c(r^2 - 1)$ on the complement of $K$.
 
Given an admissible homotopy $Q: \R \times S^1 \times \D$ between $H_\oplus$ and $H_\ominus$ and regular compatible $\R \times S^1$-dependent family $J^s_t$,  we consider the moduli spaces composed of Floer cylinders from $1$-periodic orbits of $H_\oplus$ to $1$-periodic orbits of $H_\ominus$. Again, the almost complex structures $J^s_t$ are assumed to coincide with complex multiplication outside $\D$ for all $s$ and $t$. This forces the images of all relevant Floer cylinders to be contained in $\D$ and is the crucial step that allows us to compactify these moduli spaces. For $C^\infty$-generic pairs $(Q,J^s_t)$ one obtains a continuation map $\Psi_{Q,J^s_t}: CF(H_\oplus) \to CF(H_\ominus)$ which passes to a homology map $\Psi_{Q,J^s_t}: HF(H_\oplus) \to HF(H_\ominus)$.
The proof of the following proposition is identical to the one of Proposition \ref{prop:actionwindowsclosedsurface}.

\begin{prop} \label{prop:chainisodiskc}
Let $\phi_\oplus$ be a non-degenerate Hamiltonian diffeomorphism in $\mathcal{H}_c(\D)$ and $H_\oplus:S^1 \times \Sigma \to \R$ be a Hamiltonian in $\mathrm{Ham}_c(\D)$ generating $\phi_\oplus$. We take real numbers $a<b$ which do not belong to $\mathrm{Spec}^1(H_\oplus)$ and let $\epsilon>0$ be such that all elements of $\mathrm{Spec}^1(H_\oplus)$  in the interval $(a-2\epsilon, b+2\epsilon)$ are contained in $(a,b)$. Let $\phi_\ominus$ be a non-degnerate Hamiltonian diffeomorphism with $d_{\rm Hofer}(\phi_\oplus, \phi_\ominus)< \epsilon$. Then, there exist a normalized Hamiltonian $H_\ominus: S^1 \times \D \to \R $ and homotopies $G: \R \times S^1 \times \D \to \R $ between $H_\oplus$ and $H_\ominus$ and $\widehat{G}: \R \times S^1 \times \D \to \R $ between $H_\ominus$ and $H_\oplus$ 
which induce continuation maps 
\begin{align*} 
& \Psi_G^b: CF^{b}(H_\oplus) \to CF^{b+\epsilon}(H_\ominus), \\ & \Psi_G^a: CF^{a - 2\epsilon}(H_\oplus) \simeq CF^{a }(H_\oplus) \to CF^{a-\epsilon}(H_\ominus),
\end{align*}
and 
\begin{align*}
& \Psi_{\widehat{G}}^b: CF^{b+ \epsilon}(H_\ominus) \to CF^{b+2\epsilon}(H_\oplus) \simeq CF^{b}(H_\oplus), \\ & \Psi_{\widehat{G}}^a: CF^{a-\epsilon  }(H_\ominus) \to CF^{a}(H_\oplus),
\end{align*}
whose compositions $$\Psi_{\widehat{G}}^b\circ \Psi_G^b: CF^{b}(H_\oplus)   \to CF^{b+ 2\epsilon}(H_\oplus) \simeq CF^{b }(H_\oplus) ,$$ and $$\Psi_{\widehat{G}}^a\circ \Psi_G^a: CF^{a}(H_\oplus) \simeq CF^{a - 2\epsilon}(H_\oplus)  \to CF^{a}(H_\oplus)$$
are both chain homotopic to the identities ${\rm id}: CF^{a}(H_\oplus)   \to CF^{a}(H_\oplus) $ and ${\rm id}: CF^{b}(H_\oplus)   \to CF^{b}(H_\oplus) $, respectively.

It follows that $G$ induces a map $$ \Psi_G:  CF^{(a,b)}(H_\oplus) \to CF^{(a-\epsilon, b+\epsilon)}(H_\ominus) $$ and $\widehat{G}$ induces a map $$\Psi_{\widehat{G}}:  CF^{(a-\epsilon, b+\epsilon)}(H_\ominus) \to CF^{(a-2\epsilon,b+2\epsilon)}(H_\oplus) \simeq CF^{(a,b)}(H_\oplus),$$ such that the composition $\Psi_{\widehat{G}}\circ \Psi_G$ is chain homotopic to the identity map ${\rm id}:  CF^{(a,b)}(H_\oplus) \to  CF^{(a,b)}(H_\oplus) $.

\end{prop}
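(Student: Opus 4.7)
The plan is to mirror the proof of Proposition \ref{prop:actionwindowsclosedsurface} essentially verbatim, replacing the normalization construction on a closed surface by the admissibility condition near $\partial \D$. First, from the hypothesis $d_{\rm Hofer}(\phi_\oplus,\phi_\ominus)<\epsilon$ one selects a Hamiltonian $F:S^1\times \D\to\R$ that \emph{vanishes near the boundary}, generates $\phi_\oplus^{-1}\circ \phi_\ominus$, and satisfies $\int_0^1\|F_t\|\,dt<\epsilon$. Setting
\begin{equation*}
H_\ominus(t,p) := H_\oplus(t,p) + F_t((\phi_\oplus^t)^{-1}(p)),
\end{equation*}
one checks that $H_\ominus$ is again admissible with slope $c$ near $\partial \D$ (since $F_t$ is compactly supported in the open disk and $(\phi_\oplus^t)^{-1}$ preserves a neighbourhood of $\partial \D$), so $H_\ominus\in \mathcal{H}_c(\D)$, and its time $1$-map is $\phi_\ominus$. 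With the same cutoff $\beta$ as in \eqref{eq:beta}, define $G$ and $\widehat G$ by \eqref{eq:G} and \eqref{eq:G-hat}; by the same compact support argument these are admissible homotopies in the sense of Section \ref{sec:Hamsondisk}.

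The second step is the key analytic input particular to $\D$: confinement of Floer trajectories. For any compatible $S^1$- or $\R\times S^1$-family of almost complex structures that agrees with the standard complex structure outside $\D$, the maximum principle of \cite{FloerHofersymphom} (see also \cite[Lemma 2.2]{CieliebakOanceaSH}) applied to the radial coordinate ensures that every finite-energy Floer cylinder for $(H_\oplus,J_t)$, $(H_\ominus,J_t)$, or $(G,J^s_t)$ is entirely contained in $\D$. This makes Gromov--Floer compactification applicable, so one obtains the full package of Floer complexes $CF^{(a,b)}(H_\oplus)$, $CF^{(a,b)}(H_\ominus)$ and continuation chain maps $\Psi_{G,J^s_t}:CF(H_\oplus)\to CF(H_\ominus)$, $\Psi_{\widehat G,\widehat J^s_t}:CF(H_\ominus)\to CF(H_\oplus)$.

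The third step is the action bookkeeping, which is formally identical to the closed-surface case. For any Floer cylinder $u$ of $(G,J^s_t)$ asymptotic to $\gamma_\oplus$ at $-\infty$ and $\gamma_\ominus$ at $+\infty$, the computation leading to \eqref{eq:estimate} gives
\begin{equation*}
\bigl|\mathcal{A}_{H_\oplus}(\gamma_\oplus)-\mathcal{A}_{H_\ominus}(\gamma_\ominus)-E(u)\bigr|
\leq \int_{\R\times S^1}\left|\tfrac{\partial G}{\partial s}\right|ds\,dt <\epsilon,
\end{equation*}
using $E(u)\geq 0$ to conclude $\mathcal{A}_{H_\ominus}(\gamma_\ominus)\leq \mathcal{A}_{H_\oplus}(\gamma_\oplus)+\epsilon$. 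The analogous estimate for $\widehat G$ shifts actions by at most $\epsilon$ in the opposite direction. Since by hypothesis $\mathrm{Spec}^1(H_\oplus)\cap (a-2\epsilon,b+2\epsilon)\subset (a,b)$, the identifications $CF^{a}(H_\oplus)\simeq CF^{a-2\epsilon}(H_\oplus)$ and $CF^{b}(H_\oplus)\simeq CF^{b+2\epsilon}(H_\oplus)$ hold, so $\Psi_{G,J^s_t}$ and $\Psi_{\widehat G,\widehat J^s_t}$ restrict to the four maps $\Psi^a_G,\Psi^b_G,\Psi^a_{\widehat G},\Psi^b_{\widehat G}$ listed in the statement and descend to a map on the quotient complexes $CF^{(a,b)}$.

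Finally, to show $\Psi_{\widehat G}\circ \Psi_G\simeq \mathrm{id}$ on the action windows, one repeats the homotopy-of-homotopies argument from the proof of Proposition \ref{prop:actionwindowsclosedsurface}: one builds the family $(Q_a)_{a\in[0,\infty)}$ using \eqref{eq:Qasmall}–\eqref{eq:Qabig} (each $Q_a$ remains admissible for the same compact support reason as $G$), chooses an accompanying family $J^s_t(a)$ for which the parametrized Floer moduli spaces are transverse off a discrete set of $a$-values, and obtains a chain map $S:CF(H_\oplus)\to CF(H_\oplus)$ satisfying $\Psi_{\widehat G}\Psi_G=\mathrm{id}+S\circ d+d\circ S$. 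The same action estimate \eqref{eq:estimatehomotopy} applies to the cylinders in the definition of $S$, hence $S$ preserves the relevant sublevels and passes to the quotient $CF^{(a,b)}(H_\oplus)$. The only potential obstacle is verifying that the confinement/maximum principle remains valid uniformly along the entire family $(Q_a,J^s_t(a))$; this is automatic because every $Q_a$ still equals $c(r^2-1)$ near and outside $\partial\D$ and each $J^s_t(a)$ is standard there, so the radial maximum principle applies uniformly in $a$. All other steps are algebraic and identical to those in the closed case.
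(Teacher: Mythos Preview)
Your proposal is correct and follows exactly the paper's approach: the paper simply states that the proof is identical to that of Proposition \ref{prop:actionwindowsclosedsurface}, and you have carried this out faithfully, including the one genuine adaptation (using an $F$ that vanishes near $\partial\D$ so that $H_\ominus$, $G$, $\widehat G$, and the $Q_a$ remain admissible, and invoking the maximum principle from \cite{FloerHofersymphom,CieliebakOanceaSH} to confine all Floer cylinders to $\D$).
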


Moreover, one obtains analogously to Proposition \ref{prop:quasi_iso} with the same proof 
\begin{prop}\label{prop:quasi-isoD}
Let $\phi_{\oplus}$, $H_{\oplus}$ as above. Let $\epsilon>0$ and let $\mathcal{Y}_{\oplus}=\{\gamma_1,\ldots, \gamma_k\}$ be a set of $1$-periodic orbits for $H_{\oplus}$ with  $\mathcal{A}_{H_{\oplus}}(\gamma_1)=\ldots\mathcal{A}_{H_{\oplus}}(\gamma_k) = \kappa$ and that are $2\epsilon$-quasi-isolated (defined analogously as in section \ref{sec:quasi-iso}). Then $\mathcal{Y}_{\oplus}$ generate a subcomplex $\mathcal{B}_{\mathcal{Y}}$ in $CF^{(\kappa-2\epsilon, \kappa+2\epsilon)}(H_{\oplus})$. 
Moreover, if $\phi_{\ominus} \in \mathcal{H}_c(\D)$ is non-degenerate with $d_{\hofer}(\phi_{\ominus}, \phi_{\oplus}) < \epsilon$, then there exist a normalized Hamiltonian that generates $\phi_{\ominus}$ and homotopies $G:\R \times S^1 \times \D \to \R$ between $H_{\oplus}$ and $H_{\ominus}$ and $G:\R \times S^1 \times \D \to \R$ between $H_{\ominus}$ and $H_{\oplus}$ which induce chain maps 
$$\Psi^{\mathcal{Y}}_G: \mathcal{B}_{\mathcal{Y}} \to CF_{\alpha}^{(\kappa-\epsilon,\kappa+\epsilon)}(H_{\ominus})$$ and 
$$\Psi^{\mathcal{Y}}_{\widehat{G}}:  CF_{\alpha}^{(\kappa-\epsilon,\kappa+\epsilon)}(H_{\ominus})\to \mathcal{B}_{\mathcal{Y}} $$ such that 
$$\Psi^{\mathcal{Y}}_{\widehat{G}}\circ \Psi^{\mathcal{Y}}_{{G}} = \id.$$ 
\end{prop}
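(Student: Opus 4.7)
The plan is to mirror the proof of Proposition \ref{prop:quasi_iso} line by line, substituting the Floer-theoretic framework for closed surfaces $\Sigma\neq S^2$ by the disk setup developed earlier in this subsection, exactly in the manner in which Proposition \ref{prop:chainisodiskc} was derived from Proposition \ref{prop:actionwindowsclosedsurface}. Throughout, all almost complex structures on $\R^2$ are chosen to be admissible (equal to multiplication by $\mathrm{i}$ outside $\D$), so that the $C^0$-confinement of \cite{FloerHofersymphom} keeps every finite-energy Floer cylinder that appears below inside $\D$, and the usual Floer compactness machinery applies.

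First, the energy identity $E(u)=\mathcal{A}_{H_\oplus}(\gamma_-)-\mathcal{A}_{H_\oplus}(\gamma_+)$, combined with the disk-analogue of condition (b) in Definition \ref{defn:quasi-isolation}, shows that no non-constant $(H_\oplus,J^\oplus_t)$-Floer cylinder asymptotic to some $\gamma_i\in\mathcal{Y}$ can have its other asymptotic orbit inside the action window $(\kappa-2\epsilon,\kappa+2\epsilon)$; hence $\mathcal{B}_{\mathcal{Y}}$ is indeed a subcomplex of $CF^{(\kappa-2\epsilon,\kappa+2\epsilon)}(H_\oplus)$, and the Floer differential vanishes identically on it. Next, following the proof of Proposition \ref{prop:chainisodiskc}, I would choose a normalized Hamiltonian $H_\ominus$ generating $\phi_\ominus$, the linear-interpolation homotopies $G$ and $\widehat G$ defined as in \eqref{eq:G} and \eqref{eq:G-hat}, and generic admissible $s$-families $J^s_t$, $\widehat J^s_t$ of compatible almost complex structures. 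The maps $\Psi^{\mathcal{Y}}_G$ and $\Psi^{\mathcal{Y}}_{\widehat G}$ are then defined by the usual continuation formulas counting index-$0$ continuation cylinders, restricted to (respectively, projected onto) $\mathcal{B}_{\mathcal{Y}}$; the action estimate \eqref{eq:estimate} ensures that their images land in the claimed action windows, and the standard analysis of the boundary of the $1$-dimensional moduli spaces, together with the vanishing of $d$ on $\mathcal{B}_{\mathcal{Y}}$, verifies the chain-map property.

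The decisive step, and the one where the full strength of quasi-isolation is required, is the identity $\Psi^{\mathcal{Y}}_{\widehat G}\circ \Psi^{\mathcal{Y}}_G=\id$. I would introduce the homotopy-of-homotopies $(Q_a)_{a\in[0,+\infty)}$ from the proof of Proposition \ref{prop:actionwindowsclosedsurface}, adapted to admissible homotopies on $\D$, together with the corresponding chain-homotopy operator $S$ satisfying $\Psi_{\widehat G}\circ \Psi_G-\id = d\circ S + S\circ d$ on all of $CF(H_\oplus)$. The action estimate \eqref{eq:estimatehomotopy} bounds the action change induced by $S$ by $\epsilon$, so that $S(\gamma_i)$ is supported in a narrow window around $\kappa$. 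The hard part is to refine this to $S(\gamma_i)=0$ for every $\gamma_i\in\mathcal{Y}$: each bifurcation cylinder contributing to $S(\gamma_i)$ is a finite-energy $Q_a$-cylinder of energy strictly less than $\epsilon$ asymptotic to $\gamma_i$, and a Gromov--Floer compactness argument in the family parameter $a$, combined with condition (b) of quasi-isolation applied to any $(H_\oplus,J^\oplus_t)$-cylinder appearing in a broken limit, rules out such configurations. Once $S|_{\mathcal{B}_{\mathcal{Y}}}=0$, and since $d$ vanishes on $\mathcal{B}_{\mathcal{Y}}$ and on the image of $S|_{\mathcal{B}_{\mathcal{Y}}}$, the chain-homotopy identity collapses to the desired equality.
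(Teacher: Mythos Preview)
Your overall plan is right—the paper itself simply says Proposition~\ref{prop:quasi-isoD} is obtained ``analogously to Proposition~\ref{prop:quasi_iso} with the same proof''—and your first two paragraphs correctly transplant the subcomplex statement, the definitions of $\Psi^{\mathcal Y}_G$ and $\Psi^{\mathcal Y}_{\widehat G}$, and the chain-map verification to the disk setting.

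The gap is in your third paragraph. The claim $S(\gamma_i)=0$ for the \emph{full} chain-homotopy operator $S$ is neither true in general nor needed, and your argument for it breaks in two places. First, the energy of a $Q_a$-cylinder from $\gamma_i$ to $\gamma'$ is $\kappa-\mathcal A_{H_\oplus}(\gamma')+\int\partial_sQ_a$, and the action estimate \eqref{eq:estimatehomotopy} only bounds the \emph{increase} of action, so $\mathcal A_{H_\oplus}(\gamma')$ can be arbitrarily small and the energy arbitrarily large; there is no bound $E(u)<\epsilon$. Second, even for small-energy $Q_a$-cylinders, condition (b) of quasi-isolation concerns $(H_\oplus,J^\oplus)$-cylinders only, not $(Q_a,J^s_t(a))$-cylinders; a family-compactness argument does not convert one into the other.

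What you actually need—and what the paper's proof of Proposition~\ref{prop:quasi_iso} uses—is the much weaker statement that the $\mathcal B_{\mathcal Y}$-component of $(d\circ S+S\circ d)(\gamma_i)$ vanishes. For $S\circ d$: by quasi-isolation every $(H_\oplus,J^\oplus)$-cylinder out of $\gamma_i$ has energy $\ge 2\epsilon$, so $d(\gamma_i)$ is supported on actions $\le\kappa-2\epsilon$, and after applying $S$ the support stays below $\kappa-\epsilon$, hence misses $\mathcal Y$. For $d\circ S$: any $\gamma'$ appearing in $S(\gamma_i)$ has action $<\kappa+\epsilon$, so an $(H_\oplus,J^\oplus)$-cylinder from $\gamma'$ to some $\gamma_j\in\mathcal Y$ would have energy $<\epsilon<2\epsilon$, which quasi-isolation (applied at the positive end $\gamma_j$) forbids. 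In both cases quasi-isolation is applied to the genuine $(H_\oplus,J^\oplus)$-piece, where it is legitimate. Replace your $S|_{\mathcal B_{\mathcal Y}}=0$ claim with this two-line computation and the proof goes through.
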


\subsubsection{Floer homology for compactly supported Hamiltonians in $\D$}

We consider the group $\mathcal{H}_0(\D)$ of compactly supported Hamiltonian diffeomorphisms of $(\D,dx \wedge dy)$; i.e. area preserving diffeomorphisms which coincide with the identity in some neighbourhood of $\partial \D$. As we showed in the proof of Lemma \ref{lem:felix}, for each element $\phi\in \mathcal{H}_0(\D)$ there exists a Hamiltonian $H$ which vanishes near the boundary and that generates $\phi$, in the sense that the time $1$-map of $H$ is $\phi$. We let $\mathrm{Ham}_0(\D)$ be the set of Hamiltonians on $\D$ which vanish near the boundary.

We consider $S^1$-dependent almost complex structures $J_t$  which admit extensions to $\R^2$ which coincide with the complex multiplication by $\mathrm{i}$ outside of $\R^2$.

The Hamiltonian action of a loop is defined as in \eqref{eq:defaction}.
One defines the Hofer distance of two elements $\phi_1$ and $\phi_2$ of $\mathrm{Ham}_c(\D)$ as in the previous section. 
 
In order to define the Floer homology of $H$ we cannot follow the steps in Section \ref{sec:Hamonclosedsurfaces}. The reason for this is that $H$ has degenerate periodic orbit: all $1$-periodic orbits of $\phi_H^1$ contained in the neighbourhood of $\partial \overline{\D}$ where $H$ vanishes are degenerate. We thus follow a more geometric approach, which is explained in \cite[Section 3]{Ginzburg2010}.

Before presenting the construction we introduce some terminology. Let ${u}_n:\R \times S^1 \to W $ be a sequence of Floer cylinders of some pair $(\overline{H},J_t)$, where $(W,\overline{\omega})$ is a symplectic manifold. A $1$-periodic orbit $\gamma$ of $\phi_{\overline{H}}$ is called a breaking orbit, if there exists a sequence $s_n$ such that $u_n(s_n,\cdot) : S^1 \to W$ converges in $C^0$ to $\gamma: S^1 \to W$: because of elliptic regularity, the convergence is actually in $C^\infty$. 

We say that a Hamiltonian diffeomorphism in $\mathcal{H}_0(\D)$ is non-degenerate, if every $1$-periodic orbit whose action is $\neq 0$ is non-degenerate. This set is $C^\infty$-dense in $\mathcal{H}_0(\D)$. If $H$ is a Hamiltonian generating a non-degenerate Hamiltonian diffeomorphism $\phi$, then the only accumulation point of  $\mathrm{Spec}^1(H)$ is $0$, and for any $c>0$ the number of elements of $\mathcal{P}^1(H)$ with action in $\R \setminus [-c,c]$ is finite. 

Let now $\phi_H$ be a non-degenerate Hamiltonian diffeomorphism in $\mathcal{H}_0(\D)$ which is generated by a Hamiltonian $H$, and let $a<b$  be real numbers which are not in $\mathrm{Spec}^1(H)$ and such that every $1$-periodic in $\mathcal{P}^1_{(a,b)}(H)$ is non-degenerate. It follows that $0$ is not contained in $[a,b]$, and that $\mathcal{P}^1_{(a,b)}(H)$  is a finite set. We define $CF^{(a,b)}(H)$ to be the $\Z_2$-vector space generated by $\mathcal{P}^1_{(a,b)}(H)$, i.e.
\begin{equation}
    CF^{(a,b)}(H) \bigoplus_{\gamma \in \mathcal{P}^1_{(a,b)}(H)} \Z_2 \cdot \gamma.
\end{equation}

 
We first observe that   given any choice of smooth $S^1$-family of compatible almost complex structures $J_t$ on $(\D,dx\wedge dy)$ which coincides with the complex multiplication by $\mathrm{i}$ outside of $\D$, then the maximum principle implies that a Floer cylinder whose asymptotic limits are in the interior of $\D$ must be contained inside $\D$.
 
We then notice that given any choice of smooth $S^1$-family of compatible almost complex structures $J_t$ on $(\D,dx\wedge dy)$, if $u_n$ is a sequence of Floer cylinders of $(H,J_t)$ whose negative and positive asymptotic limits are in $\mathcal{P}^1_{(a,b)}(H)$ then any breaking orbit of $u_n$ has action in $(a,b)$. Using the fact that all orbits in $\mathcal{P}^1_{(a,b)}(H)$ are  non-degenerate and the techniques of \cite{Floer}, it is possible to compactify all moduli spaces $\mathcal{M}(\gamma,\gamma',H,J_t)$,  where $\gamma$ and $\gamma'$ are in $\mathcal{P}^1_{(a,b)}(H)$. The compactified moduli space $\overline{\mathcal{M}}(\gamma,\gamma',H,J_t)$ is formed by the union of $\mathcal{M}(\gamma,\gamma',H,J_t)$ and of broken Floer cylinders of $(H,{J}_t)$ from $\gamma$ to $ \gamma'$. For any broken Floer cylinder $\mathbf{u}$ which is negatively and positively asymptotic to orbits  $\mathcal{P}^1_{(a,b)}(H) $, its breaking orbits must also be in $\mathcal{P}^1_{(a,b)}(H) $. The reason is that the action of these breaking orbits must be smaller than that of the  negative limit of $\mathbf{u}$ and bigger than that of the positive limit of $\mathbf{u}$.

We then invoke the results of \cite{FHS} and choose a generic $S^1$-family $J_t$ so that for any $\gamma$ and $\gamma'$ in $\mathcal{P}^1_{(a,b)}(H)$ the moduli space $\mathcal{M}(\gamma,\gamma',H,J_t)$ is a manifold of dimension $\mu_{\rm CZ}(\gamma) - \mu_{\rm CZ}(\gamma')-1$.
In this situation $\mathcal{M}(\gamma,\gamma',H,J_t)$ is a finite set of points if $\mu_{\rm CZ}(\gamma) - 1 = \mu_{\rm CZ}(\gamma')$.

Let $\gamma \in \mathcal{P}^1_{(a,b)}(H)$. We define for $\gamma' \in \mathcal{P}^1_{(a,b)}(H)$ with  $\mu_{\rm CZ}(\gamma) - 1 = \mu_{\rm CZ}(\gamma')$ the number $$C(\gamma,\gamma') := \#( \mathcal{M}(\gamma,\gamma',H,J_t)) \mod 2. $$
If $\mu_{\rm CZ}(\gamma) - 1 \neq \mu_{\rm CZ}(\gamma')$ we let $C(\gamma,\gamma') =0$. 
With these preliminaries we define the differential $d^{J_t}: CF^{(a,b)}(H) \to CF^{(a,b)}(H)$ by letting for each  $\gamma \in \mathcal{P}^1_{(a,b)}(H) $
\begin{equation*}
d^{J_t} (\gamma) = \sum_{\gamma' \in \mathcal{P}^1_{(a,b)}(H) } C(\gamma,\gamma')\gamma'. 
\end{equation*}
The differential $ d^{J_t}$ is extended to all of $CF^{(a,b)}(H)$ linearly.

Using regularity of $(H,J_t)$  and the fact that for broken Floer cylinder $\mathbf{u}$ which is negatively and positively asymptotic to orbits  $\mathcal{P}^1_{(a,b)}(H) $, its breaking orbits must also be in $\mathcal{P}^1_{(a,b)}(H) $, the proof that $(d^{J_t})^2=0$ is the same as the one for the analogous statement for the case of closed surfaces of positive genus.

We are ready to state the following proposition, analogous to Proposition \ref{prop:actionwindowsclosedsurface}, for Hamiltonian diffeomorphisms in $\mathcal{H}_0(\D)$.

\begin{prop} \label{prop:chainisodisk0}
Let $\phi_\oplus$ be a non-degenerate Hamiltonian diffeomorphism in $\mathcal{H}_0(\D)$ and $H_\oplus:S^1 \times \D \to \R$ be a Hamiltonian in $\mathrm{Ham}_0(\D)$ generating $\phi_\oplus$. We take real numbers $a<b$ which do not belong to $\mathrm{Spec}^1(H_\oplus)$ and $0 \notin [a,b]$. Let $\epsilon>0$ be such that all elements of $\mathrm{Spec}^1(H_\oplus)$  in the interval $(a-2\epsilon, b+2\epsilon)$ are contained in $(a,b)$. Let $\phi_\ominus$ be a non-degenerate Hamiltonian diffeomorphism with $d_{\rm Hofer}(\phi_\oplus, \phi_\ominus)< \epsilon$. Then, there exist a normalized Hamiltonian $H_\ominus: S^1 \times \D \to \R $ and homotopies $G: \R \times S^1 \times \D \to \R $ between $H_\oplus$ and $H_\ominus$ and $\widehat{G}: \R \times S^1 \times \D \to \R $ between $H_\ominus$ and $H_\oplus$ 
which induce continuation maps 
$$ \Psi_G:  CF^{(a,b)}(H_\oplus) \to CF^{(a-\epsilon, b+\epsilon)}(H_\ominus) $$ and $$\Psi_{\widehat{G}}:  CF^{(a-\epsilon, b+\epsilon)}(H_\ominus) \to CF^{(a-2\epsilon,b+2\epsilon)}(H_\oplus) \simeq CF^{(a,b)}(H_\oplus),$$ such that the composition $\Psi_{\widehat{G}}\circ \Psi_G$ is chain homotopic to the identity map ${\rm id}:  CF^{(a,b)}(H_\oplus) \to  CF^{(a,b)}(H_\oplus) $.

\end{prop}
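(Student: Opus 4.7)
The plan is to follow the template of the proof of Proposition~\ref{prop:actionwindowsclosedsurface}, with two modifications: (i) compactness of all relevant moduli spaces must be ensured by a maximum-principle argument (since we now work on $\D$ with Hamiltonians that are rotations only near $\partial\D$ being replaced by compactly supported ones), and (ii) since $CF^{(a,b)}(H)$ is defined directly, and not as a quotient, the continuation and homotopy-of-homotopies arguments must be phrased directly on generators rather than passed through sub/quotient complexes.

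First, using $d_{\rm Hofer}(\phi_{\oplus},\phi_{\ominus})<\epsilon$, one picks a Hamiltonian $F \in \mathrm{Ham}_0(\D)$ generating $\phi_{\oplus}^{-1}\circ\phi_{\ominus}$ with $\int_0^1(\max F_t-\min F_t)\,dt<\epsilon$, and sets $H_{\ominus}(t,p):=H_{\oplus}(t,p)+F_t((\phi_{\oplus}^t)^{-1}(p))$ (which still vanishes near $\partial\D$). With the same cut-off $\beta$ as in \eqref{eq:beta} one defines the linear interpolation homotopies $G$ and $\widehat{G}=G(-s,\cdot,\cdot)$ as in \eqref{eq:G} and \eqref{eq:G-hat}. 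One then picks a regular $\R\times S^1$-family $J^s_t$ of compatible almost complex structures on $\D$ that extend to $\R^2$ as complex multiplication by $\mathrm{i}$ outside $\D$ for every $(s,t)$; the maximum principle (as in the definition of $HF$ above) forces the image of every finite-energy solution of $\mathcal{F}_{G,J^s_t}(u)=0$ to remain in $\D$, so the usual Floer compactness of \cite{Floer} applies.

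Second, one defines the chain maps directly on generators. For $\gamma_{\oplus}\in \mathcal{P}^1_{(a,b)}(H_{\oplus})$, set
\[
\Psi_G(\gamma_{\oplus})\ :=\ \sum_{\gamma_{\ominus}\in \mathcal{P}^1_{(a-\epsilon,b+\epsilon)}(H_{\ominus})} K_{G,J^s_t}(\gamma_{\oplus},\gamma_{\ominus})\,\gamma_{\ominus},
\]
where $K$ is the mod~$2$ count of index-$0$ Floer cylinders. The action identity and the energy estimate \eqref{eq:estimate} (which hold verbatim because $\int|\partial_s G|\,dsdt<\epsilon$ by construction of $G$) together with the spectral-gap hypothesis imply that every $\gamma_{\ominus}$ contributing to $\Psi_G(\gamma_{\oplus})$ satisfies $\mathcal{A}_{H_{\ominus}}(\gamma_{\ominus})\in(a-\epsilon,b+\epsilon)$, so $\Psi_G$ lands in $CF^{(a-\epsilon,b+\epsilon)}(H_{\ominus})$; moreover all breaking orbits in any one-parameter family are again forced by the same estimate into this same action interval, which, because $0\notin[a,b]$ and the gap assumption, is disjoint from a neighborhood of $0$ and hence consists entirely of non-degenerate orbits, giving a well-defined chain map. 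The definition of $\Psi_{\widehat G}$ and the verification that it is a chain map are entirely analogous.

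Third, for the chain-homotopy, one constructs the homotopy of homotopies $(Q_a)_{a\in[0,\infty)}$ exactly as in \eqref{eq:Qasmall}--\eqref{eq:Qabig}, with $Q_0$ the constant homotopy at $H_{\oplus}$ and $Q_a$ approximating the concatenation of $G$ and $\widehat{G}$ as $a\to\infty$, together with a corresponding interpolating homotopy of $\R\times S^1$-families of standard-at-infinity compatible almost complex structures. Counting index $-1$ Floer cylinders for these pairs $(Q_a,J^s_t(a))$ in the usual way gives a map $S$ satisfying
\[
\Psi_{\widehat G}\circ\Psi_G\ =\ \mathrm{id}+S\circ d+d\circ S.
\]
The crucial action estimate \eqref{eq:estimatehomotopy} shows $|\mathcal{A}_{H_{\oplus}}(\gamma')-\mathcal{A}_{H_{\oplus}}(\gamma)|<2\epsilon$ for any cylinder contributing to $S$, so by the spectral-gap hypothesis $S$ preserves $\mathcal{P}^1_{(a,b)}(H_{\oplus})$ and restricts to a well-defined chain homotopy on $CF^{(a,b)}(H_{\oplus})$.

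The main technical obstacle is the compactness of the various moduli spaces: one must simultaneously control the image of every cylinder (by the maximum principle, using the standard complex structure outside $\D$) and prevent breaking at the degenerate constant orbits of action $0$. The latter is exactly what the two numerical hypotheses, $0\notin[a,b]$ and $\mathrm{Spec}^1(H_{\oplus})\cap(a-2\epsilon,b+2\epsilon)\subset(a,b)$, are designed to guarantee, since they keep all asymptotic and breaking orbits in a finite, non-degenerate subset of the spectrum. Once compactness is in hand, the rest of the argument is a direct transcription of the closed-surface case.
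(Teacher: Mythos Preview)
Your proposal is correct and follows essentially the same approach as the paper's own sketch of proof: the construction of $H_\ominus$, $G$, $\widehat G$ via the small Hamiltonian $F$, the maximum principle to confine Floer cylinders to $\D$, the direct definition of $\Psi_G$ and $\Psi_{\widehat G}$ on generators (rather than via sub/quotient complexes), the action estimate \eqref{eq:estimate} to control where asymptotic and breaking orbits can land, and the homotopy-of-homotopies $(Q_a)_{a\in[0,\infty)}$ to produce the chain homotopy $S$ are all exactly what the paper does. Your explicit emphasis on why $0\notin[a,b]$ together with the spectral-gap hypothesis keeps all breaking orbits non-degenerate is precisely the point the paper stresses as the key difference from the closed-surface case.
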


The proof is a variation of the proof of Proposition \ref{prop:actionwindowsclosedsurface}. We provide a sketch of the proof and explain the necessary adjustments. 

\textit{Sketch of proof.} 

We start by explaining the construction of $H_\ominus$, $G$ and $\widehat{G}$. 
Because $d_{\rm Hofer}(\phi_\ominus,\phi_\oplus)<\epsilon $ there exists a Hamiltonian
\begin{equation} \label{eq:H-minus}
  F:S^1\times \D \to \R   
\end{equation}
which vanishes near the boundary whose time $1$-map is $\phi^{-1}_\oplus \circ \phi_\ominus$ and that satisfies 
\begin{equation}\label{eq:Hofer-distance}
    \int_0^1 (\max F_t - \min F_t) dt< \epsilon,
\end{equation}
 where for $t\in S^1$ we define $F_t:=F(t,\cdot): \Sigma \to \R$. We then define $H_\ominus$, $G$ and $\widehat{G}$ as in equations \eqref{eq:F}, \eqref{eq:G} and \eqref{eq:G-hat}. 
 
We first explain why $G$ induces a chain map $$\Psi_G:  CF^{(a,b)}(H_\oplus) \to CF^{(a-\epsilon, b+\epsilon)}(H_\ominus).$$ For this we first choose a  homotopy of almost complex structures $J^s_t$ between $J_\oplus$ and $J_\ominus$. We assume that for each fixed $s \in \R$ the almost complex structures $J^s_t$ coincide with the complex multiplication by $\mathrm{i}$ outside of $\D$. This guarantees that a Floer cylinder of $(G,J^s_t)$ whose negative asymptotic limit is an orbit $\gamma_\oplus \in \mathcal{P}^1_{(a,b)}(H_\oplus)$ and positive asymptotic limit is an orbit in $\gamma_\ominus \in \mathcal{P}^1_{(a-\epsilon,b+\epsilon)}(H_\ominus)$ must have its image contained in $\D$.

Then we notice that if $u$ is a Floer cylinder of $(G,J^s_t)$ whose negative asymptotic limit is an orbit $\gamma_\oplus \in \mathcal{P}^1_{(a,b)}(H_\oplus)$ and negatively asymptotic to a periodic orbit $\gamma_\ominus  $ of $H_\ominus$ then we must have
\begin{equation} \label{eq:estimateactionimportandisk}
   \mathcal{A}_{H_\ominus}(\gamma_\ominus) \leq \mathcal{A}_{H_\oplus}(\gamma_\oplus) +\epsilon . 
\end{equation}
This implies that $\gamma_\ominus \in \mathcal{P}^1_{(a-\epsilon,b+\epsilon)}(H_\ominus)$.
Moreover, using this same inequality one shows that if $u_n$ is a sequence of Floer cylinders of $(G,J^s_t)$ whose negative asymptotic limit is in $\mathcal{P}^1_{(a,b)}(H_\oplus)$ and whose positive asymptotic limit is in  $\mathcal{P}^1_{(a-\epsilon,b+\epsilon)}(H_\ominus)$, then any breaking orbit of this sequence is either in $\mathcal{P}^1_{(a,b)}(H_\oplus)$ or in  $\mathcal{P}^1_{(a-\epsilon,b+\epsilon)}(H_\ominus)$. 
This observations allow us to apply Floer compactness to compactify moduli spaces $\mathcal{M}(\gamma_\oplus,\gamma_\ominus,G,J^s_t)$ where $\gamma_\oplus \in \mathcal{P}^1_{(a,b)}(H_\oplus)$ and $\gamma_\ominus \in \mathcal{P}^1_{(a-\epsilon,b+\epsilon)}(H_\ominus)$. The compactification will be formed by elements of $\mathcal{M}(\gamma_\oplus,\gamma_\ominus,G,J^s_t)$ and broken Floer cylinders of $(G,J^s_t)$ from $\gamma_\oplus$ to $\gamma_\ominus$. The breaking orbits of these broken Floer cylinders must be either in $\mathcal{P}^1_{(a,b)}(H_\oplus)$ or in  $\mathcal{P}^1_{(a-\epsilon,b+\epsilon)}(H_\ominus)$. This follows from the estimate \eqref{eq:estimateactionimportandisk}.

We then choose $J^s_t$ in a generic way so that the moduli space $\mathcal{M}(\gamma_\oplus,\gamma_\ominus,G,J^s_t) $ considered in the previous paragraph are manifolds of dimension $\mu_{\rm CZ}(\gamma_\oplus) - \mu_{\rm CZ}(\gamma_\ominus)$. We then define $K_{G,J^s_t}(\gamma_\oplus, \gamma_\ominus):= (\# \mathcal{M}(\gamma_\oplus, \gamma_\ominus, G,J^s_t))\mod 2$, if  $\mu_{\rm CZ}(\gamma_\oplus) = \mu_{\rm CZ}(\gamma_\ominus)$ and $K_{Q,J^s_t}(\gamma_\oplus, \gamma_\ominus)=0$ otherwise. The map $\Psi_G: CF^{(a,b)}(H_\oplus) \to CF^{(a-\epsilon, b+\epsilon)}(H_\ominus)$ is given by
\begin{equation} \label{eq:defiGdisk}
    \Psi_G(\gamma_\oplus):= \sum_{\gamma \in \mathcal{P}^1_{(a-\epsilon,b+\epsilon)}(H_\ominus)}  K_{G,J^s_t}(\gamma_\oplus, \gamma) \gamma
\end{equation}

With this, and using the fact the Floer cylinders in these spaces must converge to broken cylinders whose breaking orbits are in $\mathcal{P}^1_{(a,b)}(H_\oplus)$ or in  $\mathcal{P}^1_{(a-\epsilon,b+\epsilon)}(H_\ominus)$, one proves that the map $\Psi_G$ induces a map on homology.

A similar construction is done to define the map $\Psi_{\widehat{G}}: CF^{(a-\epsilon, b+\epsilon)}(H_\ominus) \to  CF^{(a-2\epsilon,b+ 2 \epsilon)}(H_\oplus) \simeq CF^{(a,b)}(H_\oplus) $. For this we first choose a  homotopy of almost complex structures $\widehat{J}^s_t$ between $J_\ominus$ and $J_\oplus$. We assume that for each fixed $s \in \R$ the almost complex structures $\widehat{J}^s_t$ coincide with the complex multiplication by $\mathrm{i}$ outside of $\D$. To define $\Psi_{\widehat{G}}$ we study moduli spaces $\mathcal{M}(\gamma_\ominus, \gamma_\oplus, \widehat{G},\widehat{J}^s_t))$, where $\gamma_\oplus \in \mathcal{P}^1_{(a,b)}(H_\oplus)$ and  $\gamma_\ominus \in \mathcal{P}^1_{(a-\epsilon,b+\epsilon)}(H_\ominus)$. Using an estimate analogous to \eqref{eq:estimateactionimportandisk} one shows that breaking orbits of sequences of elements in $\mathcal{M}(\gamma_\ominus, \gamma_\oplus, \widehat{G},\widehat{J}^s_t))$ must be in $\mathcal{P}^1_{(a-2\epsilon,b+2\epsilon)}(H_\oplus)$ or in  $\mathcal{P}^1_{(a-\epsilon,b+\epsilon)}(H_\ominus)$. However, by our assumption $\mathcal{P}^1_{(a-2\epsilon,b+2\epsilon)}(H_\oplus) = \mathcal{P}^1_{(a,b)}(H_\oplus)$: this shows why this assumption is crucial for us to be able to define the map $\Psi_{\widehat{G}}$ from $CF^{(a-\epsilon, b+\epsilon)}(H_\ominus)$ to $ CF^{(a,b)}(H_\oplus) $.

The proof of the fact that $\Psi_{\widehat{G}} \circ \Psi_{{G}} : CF^{(a,b)}(H_\oplus) \to CF^{(a,b)}(H_\oplus)  $ is chain-homotopic to the identity, follows the same scheme of the analogous statement in Proposition \ref{prop:actionwindowsclosedsurface}. That is, we define the homotopy of homotopies $(Q_a,J^s_t(a))_{a \in [0,+\infty)}$ and study the relevant $1$-dimensional moduli spaces of Floer cylinders of the homotopy of homotopies $(Q_a,J^s_t(a))_{a \in [0,+\infty)}$ with asymptotic limits in $\mathcal{P}^1_{(a,b)}(H_\oplus)$. Again, in order to show that we can define the chain-homotopy map in the appropriate action windows one shows that all breaking orbits for sequences elements in these moduli spaces are in $\mathcal{P}^1_{(a,b)}(H_\oplus)$ or in  $\mathcal{P}^1_{(a-\epsilon,b+\epsilon)}(H_\ominus)$. This follows from estimates similar to \eqref{eq:estimateactionimportandisk} for the carefully constructed homotopy $(Q_a)_{a \in [0,+\infty)}$.

 \qed

\subsection{Hamiltonian dynamics and Floer homology on $S^2$} \label{sec:Hamonsphere}
 
Let $\omega$ be a symplectic form on $S^2$ and assume that $\int_{S^2} \omega = 8$. In order to define Floer homology for Hamiltonians on $(S^2,\omega)$ we need to make certain adaptations. In particular, because $\pi_2(S^2) \neq 0$ there are constraints on the action windows for which Floer homology can be defined, because of the possibility of bubbling off of holomorphic spheres. Notice, that because $\int_{S^2} \omega = 8$ it follows that any for any sphere $S$ in $(S^2,\omega)$ its $\omega$-integral $\int_S \omega$ is a multiple of $8$: this integral clearly only depends on the free homotopy class of $S$.

Let $H: S^1 \times S^2 \to \R$ be a normalized Hamiltonian and $\phi_H$ its time $1$-map.  
Because  $\pi_2(S^2) \neq 0$ the $H$-action of a closed curve $y$ of $\phi^t_H$ is not well-defined but depends of the choice of a capping $\mathcal{D}_y$ of $y$.  We thus the define for a pair $(y,\mathcal{D}_y)$
\begin{equation}
    \mathcal{A}_H(y,\mathcal{D}_y) := -\int_{\mathcal{D}_y} \omega + \int_0^1 H(t,y(t))dt.
\end{equation}

If $y$ is a closed curve and $\mathcal{D}_y$ and $\mathcal{D}'_y$ are two cappings of $y$, then $$\mathcal{A}_H(y,\mathcal{D}'_y) - \mathcal{A}_H(y,\mathcal{D}_y)= \int_{\mathcal{D}_y \# -\mathcal{D}'_y} \omega,$$
where $\mathcal{D}'_y \# -\mathcal{D}_y$ is the sphere obtained by gluing $\mathcal{D}'_y$ and $-\mathcal{D}'_y$. It follows that the difference $\mathcal{A}_H(y,\mathcal{D}_y) - \mathcal{A}_H(y,\mathcal{D}_y)$ is always a multiple of $8$. Since $\mathcal{A}_H(y,\mathcal{D}_y)$ only depend on the homotopy class of $\mathcal{D}_y$ we will define for a pair $(y,[\mathcal{D}_y])$ where $[\mathcal{D}_y])$ denotes the homotopy class of $\mathcal{D}_y$ the action
$\mathcal{A}_H(y,[\mathcal{D}_y]) $ by 
\begin{equation} \label{eq:actionS2}
    \mathcal{A}_H(y,[\mathcal{D}_y]) := -\int_{\mathcal{D}_y} \omega + \int_0^1 H(t,y(t))dt.
\end{equation}
Given real numbers $a<b$ we let $\mathcal{P}^1_{(a,b)}(H)$ be the set of pairs $(\gamma,[\mathcal{D}_\gamma])$ where $\gamma$ is a $1$-periodic orbit of $\phi^t_H$ and $[\mathcal{D}_\gamma]$ is a homotopy class of cappings of $\gamma$ such that $\mathcal{A}_H(\gamma,[\mathcal{D}_\gamma])\in (a,b) $.

The $1$-periodic spectrum $\mathrm{Spec}^1(H)$ is the set of all possible actions $\mathcal{A}_H(\gamma,[\mathcal{D}_\gamma])$ of pairs  $(\gamma,[\mathcal{D}_\gamma])$ where $\gamma$ is a $1$-periodic orbit of $\phi_H$ and $[\mathcal{D}_\gamma]$ is a homotopy class of cappings of $\gamma$.

Similarly, the Conley-Zehnder index of $1$-periodic orbit $\gamma$ of $\phi_H$ is not well-defined but if we fix a homotopy class $[\mathcal{D}_\gamma]$ of cappings of $\gamma$ the Conley-Zehnder index $\mu_{\rm CZ}(\gamma, [\mathcal{D}_\gamma])$ is well-defined. 

Suppose now that $\phi_H$ is a non-degenerate Hamiltonian diffeomorphism of $(S^2,\omega)$.
We now fix real numbers $a<b$ such that $|b-a| \leq \frac{1}{4}$. Once this is done we define $$ CF^{(a,b)}(H):= \bigoplus_{(\gamma, [\mathcal{D}_\gamma]) \in \mathcal{P}^1_{(a,b)}(H)}\Z_2 \cdot (\gamma, [\mathcal{D}_\gamma]).$$

We now choose a smooth $S^1$-family $J_t$ of compatible almost complex structures on $(S^2,\omega)$. The Floer equation of $(H,J_t)$ is defined as \eqref{eq:defFloereq}.  Given  $(\gamma, [\mathcal{D}_\gamma])$ and $(\gamma', [\mathcal{D}_{\gamma'}])$ in $\mathcal{P}^1_{(a,b)}(H)$  we let $$\mathcal{M}(\gamma, [\mathcal{D}_\gamma],\gamma', [\mathcal{D}_{\gamma'}],H,J_t)$$ be the moduli space whose elements are Floer cylinders $u$ of $(H,J_t)$ negatively asymptotic to $\gamma$ and positively to $\gamma'$, and such that the gluing $u \# - \mathcal{D}_{\gamma'}$ is homotopic to $\mathcal{D}_\gamma$. As previously, if for two Floer cylinders $u_1$ and $u_2$ of $(H,J_t)$  there is an $s_0$ that $u_1(s_0 + \cdot,\cdot)=u_2(\cdot,\cdot)$ then $u_1$ and $u_2$ represent the same element in the moduli space.

We first explain why for any sequence $u_n$ of elements of the moduli space $\mathcal{M}(\gamma, [\mathcal{D}_\gamma],\gamma', [\mathcal{D}_{\gamma'}],H,J_t)$, the gradients of $u_n$ are uniformly bounded. If this was not the case, then bubbling analysis would imply that a non-constant holomorphic sphere $v: (S^2,i) \to (S^2,J_{t_0})$ bubbles off of the sequence $u_n$ for some $t_0 \in S^1$. But, reasoning as in the proof of \cite[Lemma 6.6.2]{AD} one obtains that the energy of $v$ is bounded from above by $4\sup_{k \in \N}E(u_k) \leq 4|b-a| \leq 1$. Since any non constant holomorphic sphere in $(S^2,\omega)$ must have energy $ \geq 8$ we conclude that $v$ is constant, which is a contradiction. It follows from Floer's techniques \cite{Floer} that since the gradients of any sequence $u_n$ of elements of the moduli space are uniformly bounded, any such sequence must converge to a broken Floer cylinder.  
Moreover, since the action decreases along Floer cylinders, we know that any breaking pair (see Definition \ref{def:breakingpair}) $(\widehat \gamma,[\mathcal{D}_{\widehat \gamma}])$ of the sequence $u_n$ in $\mathcal{P}^1_{(a,b)}(H)$. We choose now $J_t$ generically, so that $\mathcal{M}(\gamma, [\mathcal{D}_\gamma],\gamma', [\mathcal{D}_{\gamma'}],H,J_t)$ are manifolds whose dimension is $\mu_{\rm CZ}(\gamma, [\mathcal{D}_\gamma])- \mu_{\rm CZ}(\gamma', [\mathcal{D}_{\gamma'}]) -1$. 

\begin{defn} \label{def:breakingpair}
If $u_n$ is a sequence of elements of the moduli space \linebreak $\mathcal{M}(\gamma, [\mathcal{D}_\gamma],\gamma', [\mathcal{D}_{\gamma'}],H,J_t)$, then a pair $(\widehat \gamma,[\mathcal{D}_{\widehat \gamma}])$ is called a breaking pair for $u_n$ if:
\begin{itemize}
    \item there exists a sequence $s_n$ such that $u_n(s_n,\cdot)$ converges in $C^\infty$ to $\widehat \gamma$,
    \item and for sufficiently large $n$ such that $u_n(s_n,\cdot)$ is contained in a small tubular neighbourhood $U_{\widehat \gamma}$ of $\widehat \gamma$, the capping of $\widehat \gamma$ obtained by gluing  $\mathrm{Cyl}_n \#  u_n([s_n,+\infty) \times S^1) \# - \mathcal{D}_{\gamma'} $ is in the homotopy class $[\mathcal{D}_{\widehat \gamma}]$, where $\mathrm{Cyl}_n$ is any cylinder from $\widehat \gamma$ to $u_n(s_n,\cdot)$ which is contained in the tubular neighbourhood $U_{\widehat \gamma}$.
\end{itemize}
\end{defn}

We let  $$C(\gamma, [\mathcal{D}_\gamma],\gamma', [\mathcal{D}_{\gamma'}] )= \# \mathcal{M}(\gamma, [\mathcal{D}_\gamma],\gamma', [\mathcal{D}_{\gamma'}],H,J_t) \mod 2$$ if $\mu_{\rm CZ}(\gamma, [\mathcal{D}_\gamma])- 1 = \mu_{\rm CZ}(\gamma', [\mathcal{D}_{\gamma'}])$, and $C(\gamma, [\mathcal{D}_\gamma],\gamma', [\mathcal{D}_{\gamma'}] )=0$ otherwise. With this we are ready to define $d^{J_t}: CF^{(a,b)}(H) \to CF^{(a,b)}(H)$ by letting $$d^{J_t}(\gamma, [\mathcal{D}_\gamma]) = \sum_{(\gamma', [\mathcal{D}_{\gamma'}]) \in \mathcal{P}^1_{(a,b)}(H)} C(\gamma, [\mathcal{D}_\gamma],\gamma', [\mathcal{D}_{\gamma'}] ) \cdot (\gamma', [\mathcal{D}_{\gamma'}] )$$ for the generators and extending it algebraically to all of $CF^{(a,b)}(H)$.
Once we know that breaking pairs of the relevant moduli spaces are contained in $\mathcal{P}^1_{(a,b)}(H)$ the proof that $(d^{J_t})^2 = 0$ is as the one of the analogous statement for Floer homology on closed surfaces of positive genus. We then let $HF^{(a,b)}(H)$ be the homology of the chain-complex $(CF^{(a,b)}(H),d^{J_t})$.
 
We are now ready to state the following propostion, analogous to Proposition \ref{prop:actionwindowsclosedsurface},  for Hamiltonian diffeomorphisms on $(S^2,\omega)$.

\begin{prop} \label{prop:chainisosphere}
Let $\phi_\oplus$ be a non-degenerate Hamiltonian diffeomorphism in $\mathcal{H}(S^2,\omega)$ and $H_\oplus:S^1 \times S^2 \to \R$ be a Hamiltonian in $\mathrm{Ham}(S^2,\omega)$ generating $\phi_\oplus$. We take real numbers $a<b$ that do not belong to $\mathrm{Spec}^1(H_\oplus)$ and such that and $b-a< \frac{1}{4}$. Let $\epsilon>0$ be such that $b-a + 2\epsilon < \frac{1}{4}$ and that all elements of $\mathrm{Spec}^1(H_\oplus)$  in the interval $(a-2\epsilon, b+2\epsilon)$ are contained in $(a,b)$. Let $\phi_\ominus$ be a non-degenerate Hamiltonian diffeomorphism with $d_{\rm Hofer}(\phi_\oplus, \phi_\ominus)< \epsilon$. Then, there exist a normalized Hamiltonian $H_\ominus: S^1 \times S^2 \to \R $ and homotopies $G: \R \times S^1 \times S^2 \to \R $ between $H_\oplus$ and $H_\ominus$ and $\widehat{G}: \R \times S^1 \times S^2 \to \R $ between $H_\ominus$ and $H_\oplus$ 
which induce continuation maps 
$$ \Psi_G:  CF^{(a,b)}(H_\oplus) \to CF^{(a-\epsilon, b+\epsilon)}(H_\ominus) $$ and $$\Psi_{\widehat{G}}:  CF^{(a-\epsilon, b+\epsilon)}(H_\ominus) \to CF^{(a-2\epsilon,b+2\epsilon)}(H_\oplus) \simeq CF^{(a,b)}(H_\oplus),$$ such that the composition $\Psi_{\widehat{G}}\circ \Psi_G$ is chain homotopic to the identity map ${\rm id}:  CF^{(a,b)}(H_\oplus) \to  CF^{(a,b)}(H_\oplus) $.

\end{prop}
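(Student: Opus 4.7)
The plan is to run the same strategy as for Proposition \ref{prop:actionwindowsclosedsurface}, with two modifications forced by the sphere. First, every chain generator now carries a homotopy class of capping disc, so all Floer and continuation moduli spaces must keep track of cappings. Second, non-constant $J$-holomorphic spheres in $(S^2,\omega)$ have energy at least $8$, so the hypothesis $b-a+2\epsilon<\tfrac{1}{4}$ has to be invoked at every compactness argument in order to rule out bubbling.

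Start by constructing $H_\ominus$, $G$, and $\widehat G$ verbatim as in \eqref{eq:F}, \eqref{eq:G}, \eqref{eq:G-hat}: choose a normalized $F$ generating $\phi_\oplus^{-1}\circ\phi_\ominus$ with $\int_0^1\|F_t\|\,dt<\epsilon$, set $H_\ominus(t,p)=H_\oplus(t,p)+F_t((\phi_\oplus^t)^{-1}(p))$, let $G=\beta H_\ominus+(1-\beta)H_\oplus$ with $\beta$ from \eqref{eq:beta}, and let $\widehat G(s,\cdot)=G(-s,\cdot)$. Pick $C^\infty$-generic smooth interpolating families $J^s_t$ between regular $J^\oplus_t,J^\ominus_t$ and its reverse $\widehat J^s_t$. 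For a Floer cylinder $u$ of $(G,J^s_t)$ from $\gamma_\oplus$ to $\gamma_\ominus$ a capping $[\mathcal{D}_{\gamma_\oplus}]$ canonically extends to a capping $[\mathcal{D}_{\gamma_\oplus}\#u]$ of $\gamma_\ominus$, and the standard energy identity yields
\begin{equation*}
E(u)=\mathcal{A}_{H_\oplus}(\gamma_\oplus,[\mathcal{D}_{\gamma_\oplus}])-\mathcal{A}_{H_\ominus}(\gamma_\ominus,[\mathcal{D}_{\gamma_\oplus}\#u])+\int_{\R\times S^1}\partial_s G\,ds\,dt,
\end{equation*}
whose last term has modulus bounded by $\int_0^1\|F_t\|\,dt<\epsilon$, cf. \eqref{eq:estimate}. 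Combined with the mirror estimate for the reverse cylinder, this forces any continuation cylinder asymptotic at $-\infty$ to a pair in $\mathcal{P}^1_{(a,b)}(H_\oplus)$ to have its $+\infty$ end in $\mathcal{P}^1_{(a-\epsilon,b+\epsilon)}(H_\ominus)$.

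The main obstacle is compactness of these moduli spaces, and this is where the sphere-specific hypothesis is used. Any such continuation cylinder satisfies $E(u)\leq(b-a)+2\epsilon<\tfrac{1}{4}$; were a $J$-holomorphic bubble to form along a sequence $u_n$, the bubbling lemma used in the construction of $CF^{(a,b)}(H)$ (see the paragraph preceding Definition \ref{def:breakingpair}) would produce a non-constant sphere of energy at most $4\sup_n E(u_n)<1<8$, contradicting that every non-constant $J$-holomorphic sphere in $(S^2,\omega)$ has energy at least $8$. This yields uniform gradient bounds; Floer compactness then gives convergence to broken continuation trajectories whose breaking pairs, by the same action estimate, still lie in the enlarged window. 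For generic $J^s_t$ the index-zero moduli spaces are finite, and one defines
\begin{equation*}
\Psi_G(\gamma_\oplus,[\mathcal{D}_{\gamma_\oplus}]):=\sum K_{G,J^s_t}\bigl((\gamma_\oplus,[\mathcal{D}_{\gamma_\oplus}]),(\gamma_\ominus,[\mathcal{D}_{\gamma_\ominus}])\bigr)\cdot(\gamma_\ominus,[\mathcal{D}_{\gamma_\ominus}]),
\end{equation*}
and $\Psi_{\widehat G}$ symmetrically, where for $\Psi_{\widehat G}$ the cylinders have energy at most $(b-a)+4\epsilon<\tfrac{1}{2}<8$, so bubbling is again excluded. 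Boundary counts of one-dimensional moduli spaces, whose breaking configurations by the same window-preservation remain inside the allowed action range, give $\Psi_G\circ d^\oplus=d^\ominus\circ\Psi_G$ and the analogous identity for $\Psi_{\widehat G}$.

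Finally, to produce the chain homotopy $\Psi_{\widehat G}\circ\Psi_G\simeq\mathrm{id}$ on $CF^{(a,b)}(H_\oplus)$, construct the homotopy of homotopies $(Q_a,J^s_t(a))_{a\in[0,+\infty)}$ exactly as in \eqref{eq:Qasmall}--\eqref{eq:Qabig}, so that $Q_0\equiv H_\oplus$ and $Q_a$ for $a\to\infty$ converges to the concatenation of $G$ and $\widehat G$ in the sense of \eqref{eq:convergenceatinfty}--\eqref{eq:convergenceatinfty2}. The $s$-derivative $\partial_s Q_a$ is uniformly controlled by $F$, so the energy of every cylinder contributing to the parametric chain homotopy is bounded by $(b-a)+2\epsilon<\tfrac{1}{4}$; this simultaneously supplies the action bound $|\mathcal{A}_{H_\oplus}(\gamma',[\mathcal{D}_{\gamma'}])-\mathcal{A}_{H_\oplus}(\gamma,[\mathcal{D}_\gamma])|<\epsilon$ of \eqref{eq:estimatehomotopy}, forcing the homotopy to preserve the action window $(a-2\epsilon,b+2\epsilon)\simeq(a,b)$, and precludes bubbling uniformly along the one-parameter family. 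The remaining algebra that assembles the count of index $-1$ cylinders in the family into a chain homotopy to the identity is then identical to the argument at the end of the proof of Proposition \ref{prop:actionwindowsclosedsurface}.
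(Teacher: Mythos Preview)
Your proposal is correct and follows exactly the route the paper takes: the paper's own proof is only a two-sentence sketch pointing back to Propositions \ref{prop:actionwindowsclosedsurface} and \ref{prop:chainisodisk0}, and singling out precisely the two points you elaborate on, namely that the small action window $b-a+2\epsilon<\tfrac14$ combined with the factor-$4$ bubbling estimate rules out sphere bubbling, and that breaking pairs remain in the relevant windows. One small cosmetic point: in your bound for the chain-homotopy cylinders you write the action shift as an absolute value $<\epsilon$, whereas the estimate \eqref{eq:estimatehomotopy} in the paper is one-sided (and for the concatenated homotopy the integral of $|\partial_sQ_a|$ is really bounded by $2\epsilon$); this makes no difference since the hypothesis already forces $\mathcal{P}^1_{(a-2\epsilon,b+2\epsilon)}(H_\oplus)=\mathcal{P}^1_{(a,b)}(H_\oplus)$.
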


The proof is similar to the one of Proposition \ref{prop:chainisodisk0}.
There are two main things to be observed. Firstly, because of our choice of the size of the action windows $(a,b)$, $(a-\epsilon,b+\epsilon)$ and $(a-2\epsilon,b+2\epsilon)$ the relevant Floer cylinders have small energy and this precludes bubbling. Secondly, all breaking pairs that appear for sequences of elements for the relevant moduli spaces are in $\mathcal{P}^1_{(a,b)}(H_\oplus)$ or $\mathcal{P}^1_{(a-\epsilon,b+\epsilon)}(H)$.

Finally, analogous to Proposition \ref{prop:quasi_iso} one obtains
\begin{prop}\label{prop:quasi-isoS}
Let $\phi_{\oplus} \in \Ham(S^2, \omega)$ be non-degenerate, and $H_{\oplus}\in \mathrm{Ham}(S^2,\omega)$ generating $\phi_{\oplus}$. Let $\epsilon>0$ and let $\mathcal{Y}=\{\gamma_1, \ldots, \gamma_k\}$ be a set of $1$-periodic orbits for $H_{\oplus}$ which is $2\epsilon$-quasi-isolated (analogously defined as in section \ref{sec:quasi-iso}), and for which there are disc cappings $D_{\gamma_1}, \ldots, D_{\gamma_k}$ such that $\mathcal{A}_{H_{\oplus}}(\gamma_1, D_{\gamma_1}) = \ldots \mathcal{A}_{H_{\oplus}}(\gamma_k, D_{\gamma_k})= \kappa$.  Then the pairs $(\gamma_1, D_{\gamma_1})$, $\ldots$, $(\gamma_k, D_{\gamma_k})$ generate a subcomplex $\mathcal{B}_{\mathcal{Y}}$ in $CF^{(\kappa-2\epsilon,\kappa+2\epsilon)}$. Moreover, if $\phi_{\ominus} \in \Ham(S^2, \omega)$ is non-degenerate with $d_{\hofer}(\phi_{\ominus}, \phi_{\oplus})< \epsilon$ there are $H_{\ominus} \in \mathrm{Ham}(S^2,\omega)$ generating $\phi_{\ominus}$ and homotopies $G$ from $H_{\oplus}$ to $H_{\ominus}$ and $\widehat{G}$ from $H_{\ominus}$ to $H_{\oplus}$ 
which induce chain maps 
$$\Psi^{\mathcal{Y}}_G: \mathcal{B}_{\mathcal{Y}} \to CF_{\alpha}^{(\kappa-\epsilon,\kappa+\epsilon)}(H_{\ominus})$$ and 
$$\Psi^{\mathcal{Y}}_{\widehat{G}}:  CF_{\alpha}^{(\kappa-\epsilon,\kappa+\epsilon)}(H_{\ominus})\to \mathcal{B}_{\mathcal{Y}} $$ such that 
$$\Psi^{\mathcal{Y}}_{\widehat{G}}\circ \Psi^{\mathcal{Y}}_{{G}} = \id.$$ 
\end{prop}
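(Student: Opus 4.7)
The strategy is to carry over the proof of Proposition \ref{prop:quasi_iso} to the setting of $S^2$, working with capped orbits $(\gamma, D_\gamma)$ as in Section \ref{sec:Hamonsphere} and Proposition \ref{prop:chainisosphere}, and controlling holomorphic-sphere bubbling by keeping all relevant Floer energies small. First I would verify that $\mathcal{B}_{\mathcal{Y}}$ is a subcomplex of $CF^{(\kappa-2\epsilon, \kappa+2\epsilon)}(H_{\oplus})$ on which the Floer differential vanishes. Indeed, a contribution to $d^{J_t^{\oplus}}(\gamma_i, D_{\gamma_i})$ landing in a generator $(\gamma', D_{\gamma'})\in \mathcal{P}^1_{(\kappa-2\epsilon,\kappa+2\epsilon)}(H_{\oplus})$ comes from a rigid Floer cylinder with energy $\kappa - \mathcal{A}_{H_{\oplus}}(\gamma', D_{\gamma'}) \in [0, 2\epsilon)$. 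If $(\gamma', D_{\gamma'})$ is one of the $(\gamma_j, D_{\gamma_j})$, the energy is zero, forcing a constant cylinder and hence no contribution; if $\gamma'=\gamma_j$ with a different capping, the action difference is a nonzero multiple of $8$, incompatible with $|\mathcal{A}_{H_\oplus}(\gamma',D_{\gamma'}) - \kappa| < 2\epsilon$ once $\epsilon$ is small. Any remaining option would produce a non-constant Floer cylinder of energy $<2\epsilon$ asymptotic to some $\gamma_i$, directly contradicting the $2\epsilon$-quasi-isolation hypothesis.

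Next I would construct $H_{\ominus}$, the homotopies $G$ and $\widehat{G}$, and compatible almost complex structures $J_t^s$, $\widehat{J}_t^s$ exactly as in the proofs of Propositions \ref{prop:actionwindowsclosedsurface} and \ref{prop:quasi_iso}, and define
\[
\Psi^{\mathcal{Y}}_G(\gamma_i, D_{\gamma_i}) := \sum_{(\gamma', D_{\gamma'}) \in \mathcal{P}^1_{(\kappa-\epsilon, \kappa+\epsilon)}(H_{\ominus})} K_{G, J_t^s}\big((\gamma_i, D_{\gamma_i}),(\gamma', D_{\gamma'})\big)\,(\gamma', D_{\gamma'}),
\]
extended linearly, with $\Psi^{\mathcal{Y}}_{\widehat{G}}$ defined symmetrically, where the matrix coefficients count rigid Floer trajectories for the homotopy whose endpoints are matched to the correct capping class. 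The action estimate \eqref{eq:estimate} places the targets in the prescribed action windows and bounds continuation-cylinder energies by $2\epsilon$, so the sphere-bubbling argument used in Proposition \ref{prop:chainisosphere} applies, since any non-constant holomorphic sphere has energy at least $8$. The chain-map property then follows from the usual compactification of one-dimensional moduli spaces: every breaking pair appearing in the compactification lies, by \eqref{eq:estimate} and the $2\epsilon$-quasi-isolation of $\mathcal{Y}$, either among the generators of $\mathcal{B}_{\mathcal{Y}}$ or in $\mathcal{P}^1_{(\kappa-\epsilon, \kappa+\epsilon)}(H_{\ominus})$, so the counts assemble into $d\circ \Psi^{\mathcal{Y}}_G = \Psi^{\mathcal{Y}}_G\circ d|_{\mathcal{B}_{\mathcal{Y}}}$ (and symmetrically for $\widehat{G}$).

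Finally, to show $\Psi^{\mathcal{Y}}_{\widehat{G}} \circ \Psi^{\mathcal{Y}}_G = \id$ I would invoke the homotopy-of-homotopies family $(Q_a, J_t^s(a))_{a \in [0,+\infty)}$ introduced in the proof of Proposition \ref{prop:actionwindowsclosedsurface}. The action estimate \eqref{eq:estimatehomotopy} together with $2\epsilon$-quasi-isolation guarantees that the induced chain-homotopy operator $S^{\mathcal{Y}}: \mathcal{B}_{\mathcal{Y}}\to \mathcal{B}_{\mathcal{Y}}$ is well-defined, and since the differential on $\mathcal{B}_{\mathcal{Y}}$ vanishes, the relation $\Psi^{\mathcal{Y}}_{\widehat{G}} \circ \Psi^{\mathcal{Y}}_G = \id + d \circ S^{\mathcal{Y}} + S^{\mathcal{Y}} \circ d$ collapses to the identity. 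The main obstacle will be the simultaneous control of cappings and of sphere bubbling throughout the continuation and homotopy-of-homotopies constructions: one needs $\epsilon$ small enough that (i) energies of all relevant Floer cylinders and of the $Q_a$-cylinders stay well below $8$, ruling out holomorphic spheres, and (ii) the action windows of width $O(\epsilon)$ cannot be crossed by recapping, since cappings differ by integer multiples of $8$. Both requirements are compatible with the standing constraints under which $CF^{(a,b)}$ is defined on $S^2$ in Section \ref{sec:Hamonsphere}, so no new analytic difficulty arises beyond what is already handled in Propositions \ref{prop:chainisosphere} and \ref{prop:quasi_iso}.
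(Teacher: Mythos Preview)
Your proposal is correct and follows essentially the same approach as the paper, which treats Proposition \ref{prop:quasi-isoS} as a direct analogue of Proposition \ref{prop:quasi_iso} with the only modifications being the use of capped orbits and the bubbling control from Section \ref{sec:Hamonsphere}. Your write-up is in fact more explicit than the paper's, spelling out why recapping cannot interfere (action shifts are multiples of $8$) and why sphere bubbling is excluded (all relevant energies are $O(\epsilon)$, well below $8$ under the standing constraint on action-window width), points the paper leaves implicit.
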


\section{Floer cylinders and holomorphic curves} \label{sec:holcylinders}

In this section we recall a construction due to Gromov which shows that Floer cylinders on a symplectic surface $(\Sigma, \omega)$ for a pair $(H,J^s_t)$ are in bijective correspondence with holomorphic cylinders on the $4$-manifold $\R \times S^1 \times \Sigma$ endowed with a almost complex structure which is constructed out of $(H,J^s_t)$. The construction can be performed for symplectic manifolds of any dimension, but we restrict our attention to surfaces because this is the case on which we are interested. 
We consider coordinates $(s,t,p) $ on $\R \times S^1 \times \Sigma$.

On a compact symplectic surface $(\Sigma,\omega)$ we consider a $C^\infty$-smooth Hamiltonian $H:\R \times S^1 \times \Sigma \to \R$ which is normalized and a $C^\infty$-smooth family of $\R \times S^1$-dependent almost complex structures $J^s_t$ which are compatible with $(\Sigma,\omega)$. We assume that there exists $s_H>0$ such that
\begin{equation} \label{eq:Hasymp}
    H \mbox{ and  } J^s_t \mbox{ does not depend on } s \mbox{ if } |s| \geq s_H.
\end{equation}
Recall that 
\begin{itemize}
    \item  if $\Sigma$ is closed normalized means that $\int_\Sigma H^s_t\omega=0$ for each $(s,t) \in \R \times S^1$, where $H^s_t(\cdot):= H(t,\cdot)$,
    \item if $\partial \Sigma \neq 0$ normalized means that $H^s_t$ vanishes in a neighbourhood of $\partial \Sigma $ for all $(s,t) \in \R \times S^1$.
\end{itemize}
  We denote by $H_s:S^1 \times \Sigma \to \R$ the function $H(s,\cdot,\cdot)$.

For each $(s,t)$ fixed the function $H$ defines a $\R \times S^1$ family of  vector fields $X_H$ on $\Sigma$ by requesting $\iota_{X_H(s,t,\cdot)}= d_\Sigma H(s,t,\cdot)$, where $d_\Sigma H(s,t,\cdot)$ is the differential of $H(s,t, \cdot) : \Sigma \to \R$ in $\Sigma$. When there is no danger of confusion we write $d$ for $d_\Sigma$. 

The Floer operator $\mathcal{F}_{H,J}$ for the pair $(H,J^s_t)$ applied to a cylinder $u: \R \times S^1 \to \Sigma$ is 
\begin{equation} \label{eq:Floereq1}
    \mathcal{F}_{H,J}(u) = \partial_s u(s,t) +  J^s_t(u(s,t))\big(\partial_t u (s,t) - X_H(s,t,u(s,t)\big).
\end{equation}
A Floer cylinder is a map $u: \R \times S^1 \to \Sigma$ such that 
\begin{equation}\label{eq:Floereq2}
    \mathcal{F}_{H,J}(u) = 0.
\end{equation}

Associated to the pair $(H,J^s_t)$ we construct an almost complex structure $\widetilde{J}$ on $\R \times S^1 \times \Sigma$. In order to define $\widetilde{J}$ we first introduce a natural decomposition of the tangent bundle $T(\R \times S^1 \times \Sigma)$. 
\begin{itemize}
    \item Associated to the coordinates $s$ and $t$ we have tangent vectors $\partial_s$ and $\partial_t$ at the tangent space of every point  $(s_0,t_0,p_0) \in \R \times S^1 \times \Sigma$.
    \item Consider the family of surfaces $\{s\}\times \{t\} \times \Sigma$ which foliates $\R \times S^1 \times \Sigma$. A vector $v \in T_{(s_0,t_0,p_0)}(\R \times S^1 \times \Sigma)$ is called horizontal if it belongs to $\{s_0\}\times \{t_0\} \times T_{p_0}\Sigma$. Let $H_{(s_0,t_0,p_0)}$ be the sub-space of horizontal vectors in $T_{(s_0,t_0,p_0)}(\R \times S^1 \times \Sigma)$.
\end{itemize}
It is clear that $T_{(s_0,t_0,p_0)}(\R \times S^1 \times \Sigma) = \R \partial_s \oplus \R \partial_t \oplus H_{(s_0,t_0,p_0)}$.

Let $\Pi_\Sigma: \R \times S^1 \times \Sigma \to \Sigma$ be the projection on the third coordinate. Then, the restriction $L_{(s_0,t_0,p_0)}:= (D\Pi_\Sigma)_{(s_0,t_0,p_0)}|_{H_{(s_0,t_0,p_0)}}$ of $(D\Pi_\Sigma)_{(s_0,t_0,p_0)}$ to $H_{(s_0,t_0,p_0)}$ is an isomorphism between $H_{(s_0,t_0,p_0)}$ and $T_{p_0}\Sigma$. We denote its inverse by $L_{(s_0,t_0,p_0)}^{-1}$. 

At a point $(s_0,t_0,p_0)$  $\widetilde{J}(s_0,t_0,p_0)$ is defined by the following formulas:
\begin{align*}
    \widetilde{J}(s_0,t_0,p_0) v := L_{(s_0,t_0,p_0)}^{-1} \circ J^{s_0}_{t_0}(p_0) \circ L_{(s_0,t_0,p_0)}(v) \mbox{ for } v \in H_{(s_0,t_0,p_0)} , \\
   \widetilde{J}(s_0,t_0,p_0) \partial_s := \partial_t + L_{(s_0,t_0,p_0)}^{-1}(X_H(s_0,t_0,p_0)).
\end{align*}
These two equations completely determine $\widetilde{J}(s_0,t_0,p_0)$ and one deduces from them that
\begin{equation*}
    \widetilde{J}(s_0,t_0,p_0) \partial_t = -\partial_s - L_{(s_0,t_0,p_0)}^{-1} \circ J^{s_0}_{t_0}(p_0) (X_H(s_0,t_0,p_0)).
\end{equation*}

Notice that $\widetilde{J}(s_0,t_0,p_0)$ leaves the horizontal sub-space $H_{(s_0,t_0,p_0)}$ invariant and that, $\widetilde{J}(s_0,t_0,p_0)$ restricted to $H_{(s_0,t_0,p_0)}$ is the pullback of $J^{s_0}_{t_0}(p_0)$ by the map $L_{(s_0,t_0,p_0)}$. Because $\widetilde{J}$ leaves invariant the horizontal sub-spaces, the surfaces $\{s_0\}\times \{t_0\} \times \Sigma$ are holomorphic for $\widetilde{J}$.

The next proposition gives the promised relation between Floer cylinders for $(H,J^s_t)$ and holomorphic cylinders on $(\R \times S^1 \times \Sigma,\widetilde{J})$. For this we let $(s,t)$ be coordinates on $\R \times S^1$ and let $j$ be the complex structure on $\R \times S^1$ that satisfies $j\partial_s= \partial_t$.

\begin{prop} \label{prop:Floertoholmorphic}
A map $u:\R \times S^1 \to \Sigma$ is a Floer cylinder for $(H,J^s_t)$, if and only if, its lift $\widetilde{u}: (\R \times S^1,j) \to (\R \times S^1 \times \Sigma,\widetilde{J})$ defined as $\widetilde{u}(s,t) := (s,t,u(s,t))$ is a holomorphic curve. If the $1$-periodic orbits $\gamma$ of $H_{s_H}$ and $\gamma'$ of $H_{-s_H}$  are respectively the negative and  positive limits of the Floer cylinders $u$, then the lift $\widetilde{u}$ is negatively asymptotic to the suspension $\{(t,\gamma(t)) \ | \ t \in S^1 \}$ of $\gamma$ and positively asymptotic to the suspension $\{(t,\gamma'(t)) \ | \ t \in S^1 \}$ of $\gamma'$; for $s_H$ as in \eqref{eq:Hasymp}.
\end{prop}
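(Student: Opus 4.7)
The plan is to directly compute $D\widetilde{u}$, apply $\widetilde{J}$ to it using the defining formulas of $\widetilde{J}$, and verify that the Cauchy--Riemann equation $D\widetilde{u}\circ j = \widetilde{J}\circ D\widetilde{u}$ rearranges exactly into the Floer equation \eqref{eq:Floereq2}.

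First, I would fix a point $(s_0,t_0)\in \R\times S^1$ and decompose the derivatives of $\widetilde{u}(s,t) = (s,t,u(s,t))$ with respect to the splitting $T_{(s_0,t_0,u(s_0,t_0))}(\R\times S^1\times \Sigma) = \R\partial_s\oplus \R\partial_t \oplus H_{(s_0,t_0,u(s_0,t_0))}$. Since $D\Pi_{\Sigma}$ restricted to the horizontal subspace is the isomorphism $L$, we have
\begin{align*}
\partial_s \widetilde{u}(s_0,t_0) &= \partial_s + L^{-1}_{(s_0,t_0,u(s_0,t_0))}(\partial_s u(s_0,t_0)),\\
\partial_t \widetilde{u}(s_0,t_0) &= \partial_t + L^{-1}_{(s_0,t_0,u(s_0,t_0))}(\partial_t u(s_0,t_0)).
\end{align*}
Since $j\partial_s = \partial_t$, the holomorphic curve equation for $\widetilde{u}$ is equivalent to $\widetilde{J}(\partial_s\widetilde{u}) = \partial_t \widetilde{u}$.

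Next, I would apply $\widetilde{J}$ to $\partial_s\widetilde{u}$ using linearity and the two defining relations for $\widetilde{J}$: on the horizontal component $\widetilde{J}$ is the pullback of $J^{s_0}_{t_0}$ via $L$, while $\widetilde{J}\partial_s = \partial_t + L^{-1}(X_H)$. Dropping the base point in the notation, this yields
\begin{equation*}
\widetilde{J}(\partial_s\widetilde{u}) = \partial_t + L^{-1}\bigl(X_H(s_0,t_0,u(s_0,t_0))\bigr) + L^{-1}\bigl(J^{s_0}_{t_0}(u(s_0,t_0))\,\partial_s u(s_0,t_0)\bigr).
\end{equation*}
Setting this equal to $\partial_t \widetilde{u}$ and using that $L^{-1}$ is injective, the $\partial_t$-components match automatically, and the horizontal components give
\begin{equation*}
\partial_t u(s_0,t_0) - X_H(s_0,t_0,u(s_0,t_0)) = J^{s_0}_{t_0}(u(s_0,t_0))\,\partial_s u(s_0,t_0).
\end{equation*}
Multiplying by $J^{s_0}_{t_0}$ and using $(J^{s_0}_{t_0})^2 = -\id$, this is exactly $\mathcal{F}_{H,J}(u)(s_0,t_0) = 0$. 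Since $(s_0,t_0)$ was arbitrary, the two equations are equivalent pointwise, which establishes the stated bijective correspondence. Conversely, any holomorphic curve of the form $(s,t) \mapsto (s,t,v(s,t))$ automatically projects to a Floer cylinder by the same calculation.

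Finally, the claim about asymptotics is essentially tautological once the correspondence is established. If $\lim_{s\to +\infty} u(s,\cdot) = \gamma'$ (where $\gamma'$ is a $1$-periodic orbit of $H_{+\infty}$, since $H$ is $s$-independent for $|s|\geq s_H$), then $\widetilde{u}(s,t) = (s,t,u(s,t))$ is asymptotic, as $s\to +\infty$, to the suspension $\{(t,\gamma'(t))\mid t\in S^1\}$ shifted by $s$; in the standard compactification of the target (or simply after translating in the $s$-direction), this is the suspended loop. The symmetric statement holds at $s\to -\infty$. The only subtle point worth noting is that one must ensure that $\widetilde{J}$ is genuinely an almost complex structure, i.e.\ $\widetilde{J}^2 = -\id$ on all of $T(\R\times S^1\times \Sigma)$; this is a direct verification on each summand of the splitting, using the relation $\widetilde{J}\partial_t = -\partial_s - L^{-1}(J^s_t X_H)$ stated in the construction. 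No real obstacle is expected: the proof is a direct matching of two equations under a change of viewpoint, originally due to Gromov.
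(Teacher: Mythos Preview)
Your proof is correct and is exactly the ``direct computation'' that the paper alludes to (the paper's own proof consists solely of the phrase ``The proof is a direct computation'' together with a reference to \cite{EKP}). You have carried out explicitly the pointwise matching of the Cauchy--Riemann equation for $\widetilde{u}$ with the Floer equation for $u$, which is precisely what is intended.
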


\proof: The proof is a direct computation. See for example section 4.12 of \cite{EKP}. \qed

\begin{rem}
One can also show that any holomorphic cylinder in $(\R \times S^1 \times \Sigma,\widetilde{J})$ that has one positive puncture asymptotic to the suspension $\{(t,\gamma(t)) \ | \ t \in S^1 \}$ of a $1$-periodic orbit $\gamma$ of $H_{s_H}:= H(s_H,\cdot, \cdot)$ is the lift of a Floer cylinder of $(H,J^{s}_t)$; see again section 4.12 of  \cite{EKP}. However, we do not need this result and Proposition \ref{prop:Floertoholmorphic} is enough for all our arguments. 
\end{rem}

\section{Proof of theorems \ref{thm:disk} and \ref{thm:disk0}} \label{sec:proofthmdisk}

In this section we prove theorems \ref{thm:disk} and \ref{thm:disk0}.

\textit{Proof of Theorem \ref{thm:disk0}:}

\textit{Step 1.}

We first define $\mathrm{Spec}(\mathcal{Y}_\oplus):= \cup_{1 \leq i \leq k} \mathcal{A}_{H_\oplus}(\gamma_i)$. 
For each number $\kappa \in \mathrm{Spec}(\mathcal{Y}_\oplus)$ we denote by $\mathcal{Y}^\kappa_\oplus \subset \mathcal{Y}_\oplus$ the subset of elements of $\mathcal{Y}_\oplus$ whose action is $\kappa$. We denote by $n_\kappa$ the cardinality of $\mathcal{Y}^\kappa_\oplus $. We denote by $\{\gamma^\kappa_1,...,\gamma^\kappa_{n_\kappa}\}$ the elements of $\mathcal{Y}^\kappa_\oplus $.\footnote{Since $\mathcal{Y}^\kappa_\oplus  \subset \mathcal{Y}_\oplus = \{\gamma_1,...,\gamma_k\} $, we are actually renaming the elements of $\mathcal{Y}_\oplus$.}  

Fix $\epsilon>0$ as in the statement of the theorem and then consider for every $\kappa \in \mathrm{Spec}(\mathcal{Y}_\oplus)$ the Floer homology $HF^{(\kappa - \epsilon, \kappa+ \epsilon)}(H_\oplus)$. We choose the smooth $S^1$-family $J_t$ to be regular for all chain-complexes $CF^{(\kappa - \epsilon, \kappa+ \epsilon)}(H_\oplus)$ and $CF^{(\kappa - 2\epsilon, \kappa+ 2\epsilon)}(H_\ominus)$, so that we can use always the same pairs $(H_\oplus,J_t)$ to the define the Floer differential $d^{J_t}$ on $CF^{(\kappa - \epsilon, \kappa+ \epsilon)}(H_\oplus)$ and $(H_\ominus,J_t)$ to define the Floer differential $d^{J_t}$ on $CF^{(\kappa - 2\epsilon, \kappa+ 2\epsilon)}(H_\ominus)$.  We notice that since all the elements of $CF^{(\kappa - \epsilon, \kappa+ \epsilon)}(H_\oplus)$ have the same action the differential vanishes on $CF^{(\kappa - \epsilon, \kappa+ \epsilon)}(H_\oplus)$. Moreover, since $\mathcal{Y}_\oplus$ is $100\epsilon$-isolated, we conclude that the rank of $CF^{(\kappa - \epsilon, \kappa+ \epsilon)}(H_\oplus)$ and $HF^{(\kappa - \epsilon, \kappa+ \epsilon)}(H_\oplus)$ coincide with $\# (\mathcal{Y}^\kappa_\oplus )$.

We now apply Proposition \ref{prop:chainisodisk0} for the chain-complexes $CF^{(\kappa - \epsilon, \kappa+ \epsilon)}(H_\oplus)$ and $CF^{(\kappa - 2\epsilon, \kappa+ 2\epsilon)}(H_\ominus)$ for all $\kappa \in \mathrm{Spec}(\mathcal{Y}_\oplus)$. Notice that the construction of homotopies $G$ and $\widehat{G}$ does not depend on $\kappa$, but only on $\epsilon$ and $H_\ominus$. We can thus choose generically the same pairs $(G,J^s_t)$ and $(\widehat{G},\widehat{J}^s_t)$ to induce maps from $CF^{(\kappa - \epsilon, \kappa+ \epsilon)}(H_\oplus)$ to $CF^{(\kappa - 2\epsilon, \kappa+ 2\epsilon)}(H_\ominus)$, and from  $CF^{(\kappa - 2\epsilon, \kappa+ 2\epsilon)}(H_\ominus)$ to $CF^{(\kappa - \epsilon, \kappa+ \epsilon)}(H_\oplus)$, for all $\kappa \in \mathrm{Spec}(\mathcal{Y}_\oplus)$. 

We then obtain chain maps $\Psi_G : CF^{(\kappa - \epsilon, \kappa+ \epsilon)}(H_\oplus) \to CF^{(\kappa - 2\epsilon, \kappa+ 2\epsilon)}(H_\ominus) $ and $\Psi_{\widehat{G}}:CF^{(\kappa - 2\epsilon, \kappa+ 2\epsilon)}(H_\ominus) \to CF^{(\kappa - \epsilon, \kappa+ \epsilon)}(H_\oplus)$. Since $\Psi_{\widehat{G}} \circ \Psi_G: CF^{(\kappa - \epsilon, \kappa+ \epsilon)}(H_\oplus) \to CF^{(\kappa - \epsilon, \kappa+ \epsilon)}(H_\oplus) $ is chain isotopic to the identity and the differential $d^{J_t}$ vanishes, we conclude that the chain map   $\Psi_{\widehat{G}} \circ \Psi_G: CF^{(\kappa - \epsilon, \kappa+ \epsilon)}(H_\oplus) \to CF^{(\kappa - \epsilon, \kappa+ \epsilon)}(H_\oplus) $ is the identity.
It follows that $\Psi_G: CF^{(\kappa - \epsilon, \kappa+ \epsilon)}(H_\oplus) \to CF^{(\kappa - 2\epsilon, \kappa+ 2\epsilon)}(H_\ominus) $ is injective and that \linebreak $\Psi_{\widehat{G}}: CF^{(\kappa - 2\epsilon, \kappa+ 2\epsilon)}(H_\ominus) \to CF^{(\kappa - \epsilon, \kappa+ \epsilon)}(H_\oplus)$ is surjective.

Since $\Psi_g$ is injective we know that the dimension of $CF^{(\kappa - 2\epsilon, \kappa+ 2\epsilon)}(H_\ominus)$ is $\geq n_\kappa$. We let the $m_\kappa \geq n_\kappa$ be the dimension of $CF^{(\kappa - 2\epsilon, \kappa+ 2\epsilon)}(H_\ominus)$, and since $CF^{(\kappa - 2\epsilon, \kappa+ 2\epsilon)}(H_\ominus)$ is the $\Z_2$-vector space over $\mathcal{P}^1_{(\kappa - 2\epsilon, \kappa+ 2\epsilon)}(H_\ominus)$, we conclude that $ \# \mathcal{P}^1_{(\kappa - 2\epsilon, \kappa+ 2\epsilon)}(H_\ominus) = m_\kappa$. We denote by  $\{\sigma^\kappa_1,...,\sigma^\kappa_{m_\kappa} \}$ the elements of  $\mathcal{P}^1_{(\kappa - 2\epsilon, \kappa+ 2\epsilon)}(H_\ominus)$. 

We now introduce some terminology. For each $j\in \{1,...,n_\kappa\}$, the image $\Psi_G(\gamma^\kappa_j)$ can be written in a unique way as a sum of orbits in $\mathcal{P}^1_{(\kappa - 2\epsilon, \kappa+ 2\epsilon)}(H_\ominus)$: we use here that we are working with $\Z_2$-coefficients. We say that an orbit $\sigma^\kappa_l$ appears in $\Psi_G(\gamma^\kappa_j)$, if the orbit $\sigma^\kappa_l$ is one of the orbits which appear in the expression of $\Psi_G(\gamma^\kappa_j)$ written in the base $\mathcal{P}^1_{(\kappa - 2\epsilon, \kappa+ 2\epsilon)}(H_\ominus)$ of $CF^{(\kappa - 2\epsilon, \kappa+ 2\epsilon)}(H_\ominus)$. 

We define in a similar way orbits of $\mathcal{P}^1_{(\kappa - \epsilon, \kappa+ \epsilon)}(H_\oplus)$ which appear in $\Psi_{\widehat{G}}(\sigma^\kappa_l)$ for each $\sigma^\kappa_l \in \mathcal{P}^1_{(\kappa - 2\epsilon, \kappa+ 2\epsilon)}(H_\ominus)$. 
We make the following claim:
\begin{claim} \label{claim1}
It is possible to choose an injective map $\mathfrak{f}_\kappa: \{1,...,n_\kappa\} \to \{1,...,m_\kappa\}$ and a bijective map $\mathfrak{g}_\kappa: \{1,...,n_\kappa\} \to \{1,...,n_\kappa\}$ such that for each $i \in \{1,...,n_\kappa\}$:
\begin{itemize}
    \item  the orbit $\sigma^\kappa_{\mathfrak{f}_\kappa(i)}$ appears in $\Psi_G(\gamma^\kappa_i) $, and the orbit $\gamma^\kappa_{\mathfrak{g}_\kappa(i)}$ appears in $\Psi_{\widehat{G}}(\sigma^\kappa_{\mathfrak{f}_\kappa(i)}) $.
\end{itemize}
\end{claim}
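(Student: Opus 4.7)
The plan is to translate the claim into a statement about $\Z_2$-matrices and extract the two required combinatorial structures simultaneously via the Cauchy-Binet formula. Encode the chain maps as matrices by setting $A \in \mathrm{Mat}_{m_\kappa \times n_\kappa}(\Z_2)$ with $A_{j,i}=1$ iff $\sigma^\kappa_j$ appears in $\Psi_G(\gamma^\kappa_i)$, and $B\in \mathrm{Mat}_{n_\kappa \times m_\kappa}(\Z_2)$ with $B_{i',j}=1$ iff $\gamma^\kappa_{i'}$ appears in $\Psi_{\widehat{G}}(\sigma^\kappa_j)$. The identity $\Psi_{\widehat{G}} \circ \Psi_G = \mathrm{id}$ established just above then reads $BA = I_{n_\kappa}$.

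Next I will apply the Cauchy-Binet formula, which is a polynomial identity and so valid over the commutative ring $\Z_2$:
\begin{equation*}
1 = \det(BA) = \sum_{\substack{S \subset \{1,\ldots,m_\kappa\} \\ |S| = n_\kappa}} \det(B_{*,S})\,\det(A_{S,*}),
\end{equation*}
where $A_{S,*}$ is the $n_\kappa \times n_\kappa$ submatrix of $A$ obtained by keeping only the rows indexed by $S$, and $B_{*,S}$ the analogous submatrix of $B$ keeping only the columns indexed by $S$. Since the right-hand side equals $1$ in $\Z_2$, there must exist at least one subset $S \subset \{1,\ldots,m_\kappa\}$ of cardinality $n_\kappa$ for which $\det(A_{S,*}) = \det(B_{*,S}) = 1$.

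Fix such an $S$. Expanding these determinants via the Leibniz formula, and using that in characteristic $2$ every permutation has sign $1$, the equality $\det(A_{S,*}) = 1$ produces at least one bijection $\tau\colon \{1,\ldots,n_\kappa\} \to S$ with $A_{\tau(i),i}=1$ for all $i$, while $\det(B_{*,S}) = 1$ produces a bijection $\varsigma\colon S \to \{1,\ldots,n_\kappa\}$ with $B_{\varsigma(j),j}=1$ for all $j \in S$. Setting $\mathfrak{f}_\kappa := \tau$ and $\mathfrak{g}_\kappa := \varsigma \circ \tau$, the map $\mathfrak{f}_\kappa$ is injective (being a bijection onto $S$), $\mathfrak{g}_\kappa$ is a bijection of $\{1,\ldots,n_\kappa\}$, and the identities $A_{\mathfrak{f}_\kappa(i),i}=1$ and $B_{\mathfrak{g}_\kappa(i),\mathfrak{f}_\kappa(i)}=1$ translate back into exactly the two assertions of the claim.

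The main substantive input is really just $BA = I_{n_\kappa}$, which has already been obtained from the chain-homotopy part of Proposition \ref{prop:chainisodisk0} together with the $100\epsilon$-isolation assumption; Cauchy-Binet, combined with the characteristic-$2$ coincidence between the determinant and the permanent, then does all the bookkeeping. I do not anticipate any real difficulty; the only point worth emphasizing is that the simultaneity of $\mathfrak{f}_\kappa$ and $\mathfrak{g}_\kappa$ is exactly what Cauchy-Binet delivers, whereas applying Hall's marriage theorem to either $A$ or $B$ in isolation would only yield one of the two maps at a time.
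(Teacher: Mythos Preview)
Your argument is correct and is a genuinely different route from the paper's. The paper reduces Claim~\ref{claim1} to a combinatorial lemma (Lemma~\ref{lem:combinatorial}) whose proof, given in Appendix~\ref{appendix:comblemma}, proceeds by an induction (Lemma~\ref{lem:prelim}) that first produces $\mathfrak{f}$ so that the vectors $r_{\mathfrak{f}(1)},\ldots,r_{\mathfrak{f}(n)}$ span a subspace transverse to $\ker \mathfrak{G}$, and then applies the same inductive lemma a second time to the restricted isomorphism to produce $\mathfrak{g}$. Your approach instead packages both steps into one application of Cauchy--Binet: the subset $S$ it produces plays exactly the role of the transversal subspace in the paper's argument, and the two Leibniz expansions replace the two invocations of the inductive lemma. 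Your proof is shorter and uses only polynomial identities; the paper's proof is more hands-on but perhaps makes the geometric meaning (transversality to the kernel) more visible. Note also that your argument extends verbatim to the slightly more general hypothesis of Lemma~\ref{lem:combinatorial} (where $\mathfrak{G}\circ\mathfrak{F}$ is only assumed to be an isomorphism rather than the identity), since over $\Z_2$ an invertible matrix still has determinant $1$.
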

It is clear that the claim will follow from the following combinatorial lemma, whose proof is presented in Appendix \ref{appendix:comblemma}.

\begin{lem} \label{lem:combinatorial}
Let $V$ and $R$ be finite dimensional $\Z_2$-vector spaces whose dimensions we denote by $n$ and $m$, respectively. Let $\{v_1,...,v_{n}\}$ and $\{r_1,...,r_{m}\}$ be basis of $V$ and $R$, respectively, and $\mathfrak{F}: V \to R$ and $\mathfrak{G}: R \to V $ be linear maps such that $\mathfrak{G} \circ \mathfrak{F}$ is an isomorphism. Then, it is possible to find an injective map $\mathfrak{f}: \{1,...,n\} \to \{1,...,m\}$ and a bijective map $\mathfrak{g}: \{1,...,n\} \to \{1,...,n\}$ such that for each $i\in \{1,...,n\}$:
\begin{itemize}
    \item  the element $r_{\mathfrak{f}(i)}$ appears in $\mathfrak{F}(v_i) $, and the element $v_{\mathfrak{g}(i)}$ appears in $\mathfrak{G}(r_{\mathfrak{f}(i)}) $.
\end{itemize}

\end{lem}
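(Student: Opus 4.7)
The plan is to translate the statement into a matrix statement over $\mathbb{F}_2 = \Z_2$ and then extract the desired injection and bijection from a suitable expansion of a non-vanishing determinant.

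First I would set up notation: write $\mathfrak{F}$ and $\mathfrak{G}$ as matrices $F = (F_{ji}) \in \mathbb{F}_2^{m \times n}$ and $G = (G_{ij}) \in \mathbb{F}_2^{n \times m}$, where $F_{ji} = 1$ precisely when $r_j$ appears in $\mathfrak{F}(v_i)$, and $G_{ij} = 1$ precisely when $v_i$ appears in $\mathfrak{G}(r_j)$. A direct computation shows that the matrix of $\mathfrak{G} \circ \mathfrak{F}$ in the basis $\{v_1, \ldots, v_n\}$ is the product $GF \in \mathbb{F}_2^{n \times n}$. Since $\mathfrak{G} \circ \mathfrak{F}$ is assumed to be an isomorphism, $\det(GF) = 1$ in $\mathbb{F}_2$.

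Next I would invoke the Cauchy--Binet formula
\begin{equation*}
\det(GF) = \sum_{\substack{S \subseteq \{1,\ldots,m\} \\ |S|=n}} \det(G_S)\,\det(F^S),
\end{equation*}
where $G_S$ denotes the $n \times n$ submatrix of $G$ consisting of the columns indexed by $S$, and $F^S$ the $n \times n$ submatrix of $F$ consisting of the rows indexed by $S$. Because the left-hand side equals $1 \in \mathbb{F}_2$, at least one index set $S$ must satisfy $\det(G_S) = \det(F^S) = 1$. Fix such an $S$.

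Finally I would extract $\mathfrak{f}$ and $\mathfrak{g}$ from the Leibniz expansions of these two determinants. Writing $\det(F^S) = \sum_{\tau} \prod_{i=1}^n F_{\tau(i),\,i}$, where $\tau$ ranges over all bijections $\{1,\ldots,n\} \to S$, the fact that this sum equals $1$ in $\mathbb{F}_2$ forces at least one $\tau$ with $F_{\tau(i),i} = 1$ for every $i$; we set $\mathfrak{f} := \tau$, which is automatically injective since $\tau$ is a bijection onto $S$. By the same argument applied to $\det(G_S) = 1$, there is a bijection $\rho: S \to \{1,\ldots,n\}$ with $G_{\rho(s),\,s} = 1$ for every $s \in S$; we set $\mathfrak{g} := \rho \circ \mathfrak{f}$, which is a bijection $\{1,\ldots,n\} \to \{1,\ldots,n\}$. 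Then $F_{\mathfrak{f}(i),i}=1$ and $G_{\mathfrak{g}(i),\mathfrak{f}(i)} = 1$ for all $i$, which is exactly the desired ``appearing'' conditions.

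The only point that requires care is injectivity of $\mathfrak{f}$: a naive expansion of $\det(GF)$ as a single sum over $S_n$ would only produce indices $k_1, \ldots, k_n$ that need not be distinct, which is why Cauchy--Binet is essential -- it forces the common column-set $S$ of size exactly $n$ and hence gives $\tau$ as a bijection. Otherwise the argument is a straightforward unwinding of the two determinant expansions.
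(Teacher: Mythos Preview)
Your proof is correct and takes a genuinely different route from the paper's. The paper proves an auxiliary lemma by induction on $n$: given linearly independent vectors $w_1,\ldots,w_n$ in a space with basis $e_1,\ldots,e_{n+k}$ and a transverse complement $Z$, one can find an injective $\iota$ so that $e_{\iota(j)}$ appears in $w_j$ and the span $\langle e_{\iota(1)},\ldots,e_{\iota(n)}\rangle$ remains transverse to $Z$. It then applies this lemma twice, first with $w_i=\mathfrak{F}(v_i)$ and $Z=\ker\mathfrak{G}$ to produce $\mathfrak{f}$, and again with $w_i=\mathfrak{G}(r_{\mathfrak{f}(i)})$ and $Z=\{0\}$ to produce $\mathfrak{g}$.

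Your Cauchy--Binet argument is shorter and more conceptual: the identity picks out a single size-$n$ column set $S$ simultaneously witnessing that both $F^S$ and $G_S$ are invertible, and then two separate Leibniz expansions give the two bijections. This neatly sidesteps the inductive bookkeeping. The paper's approach, by contrast, is more constructive and yields a slightly stronger intermediate statement (the transversality of the chosen coordinate subspace to a prescribed $Z$), though that extra strength is not used elsewhere. Both arguments ultimately exploit the same underlying fact---that an invertible $\Z_2$-matrix has a nonzero permanent-type term---but you package it via a standard determinant identity rather than building it by hand.
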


\textit{Step 2.}

For each $\gamma^\kappa_i \in \mathcal{Y}^\kappa_\oplus$ we consider the $1$-periodic orbit $\sigma^\kappa_{\mathfrak{f}_\kappa(i)}$. Since $\sigma^\kappa_{\mathfrak{f}_\kappa(i)}$ appears in $\Psi_{G}(\gamma^\kappa_i)$, there exists a Floer cylinder $u^1_{\kappa,i}$ of $(G,J^s_t)$  which is negatively asymptotic to $\gamma^\kappa_i$ and positively asymptotic to $\sigma^\kappa_{\mathfrak{f}_\kappa(i)}$. 

Similarly, since $\gamma^\kappa_{\mathfrak{g}_\kappa(i)}$ appears in $\Psi_{\widehat{G}}(\sigma^\kappa_{\mathfrak{f}_\kappa(i)})$, there exists a Floer cylinder $u^2_{\kappa,i}$ of $(\widehat{G},\widehat{J}^s_t)$ which is negatively asymptotic to $\sigma^\kappa_{\mathfrak{f}_\kappa(i)}$ and positively asymptotic to $\gamma^\kappa_{\mathfrak{g}_\kappa(i)}$.

Since the maps $\mathfrak{g}_{\kappa}$ are permutations, there exists a natural number $M\geq 1$ such that for every $\kappa \in \mathrm{Spec}^1(\mathcal{Y}_\oplus)$ the $M$-th iterate $\mathfrak{g}^M_{\kappa} $  equals the identity.
For each pair $\kappa \in \mathrm{Spec}^1(\mathcal{Y}_\oplus)$ and $i \in {1,...,n_\kappa}$, we let $\widetilde{u}^1_{\kappa,i}: \R \times S^1 \to \R \times S^1 \times \D$ be the lift of the Floer cylinder ${u}^1_{\kappa,i}$. The cylinder $\widetilde{u}^1_{\kappa,i}$ is $\widetilde{J}_{G,J^s_t}$-holomorphic, where $\widetilde{J}_{G,J^s_t}$ is the almost complex structure on $\R \times S^1 \times \D$ constructed in Section \ref{sec:holcylinders}. 

\begin{claim} \label{claim2}
We claim that the cylinders $\widetilde{u}^1_{\kappa,i}$ have no intersections. More precisely, we have that
\begin{itemize}
    \item if $\kappa \neq \kappa'$ are elements of $\mathrm{Spec}^1(\mathcal{Y}_\oplus)$, then for every $i \in \{1,...,n_\kappa\}$ and $j \in \{1,...,n_{\kappa'}\}$, the cylinders $\widetilde{u}^1_{\kappa,i}$ and $\widetilde{u}^1_{\kappa',j}$ have no intersections,
    \item if $\kappa \in \mathrm{Spec}^1(\mathcal{Y}_\oplus)$ and $i\neq j$ are elements of $\{1,...,n_{\kappa'}\}$, then $\widetilde{u}^1_{\kappa,i}$ and $\widetilde{u}^1_{\kappa,j}$ have no intersections. 
\end{itemize}

\end{claim}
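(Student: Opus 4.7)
By Proposition \ref{prop:Floertoholmorphic}, each lift $\widetilde{u}^1_{\kappa,i}$ is a $\widetilde{J}_{G,J^s_t}$-holomorphic cylinder in the $4$-manifold $\R \times S^1 \times \D$, asymptotic at $-\infty$ to the suspension of $\gamma^\kappa_i$ and at $+\infty$ to that of $\sigma^\kappa_{\mathfrak{f}_\kappa(i)}$. For any two distinct pairs $(\kappa,i)\neq(\kappa',j)$ I will first verify that the corresponding lifts are distinct pseudoholomorphic curves with pairwise disjoint asymptotic orbits at both ends: the negative asymptotic orbits are distinct elements of $\mathcal{Y}_\oplus$, and the positive asymptotic orbits are distinct because either $\kappa=\kappa'$ and $\mathfrak{f}_\kappa$ is injective, or $\kappa\neq\kappa'$ and the $100\epsilon$-isolation of $\mathcal{Y}_\oplus$ forces the action windows $(\kappa-2\epsilon,\kappa+2\epsilon)$ and $(\kappa'-2\epsilon,\kappa'+2\epsilon)$ to be disjoint.

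Next, I will invoke positivity of intersections in dimension four (McDuff, Micallef--White): any two distinct pseudoholomorphic curves in a $4$-dimensional almost complex manifold with pairwise disjoint asymptotic ends meet in a discrete finite set of points, each contributing strictly positively to the algebraic intersection number $\widetilde{u}^1_{\kappa,i}\cdot\widetilde{u}^1_{\kappa',j}$. Hence the absence of geometric intersections reduces to showing that this algebraic number is zero. Since the lifts are graphs over $\R\times S^1$, intersections of the lifts correspond exactly to coincidences of the underlying Floer cylinders in $\D$; in local coordinates on $\D$ the difference $w:=u^1_{\kappa,i}-u^1_{\kappa',j}$ satisfies a linear Cauchy--Riemann-type equation obtained by subtracting the two Floer equations, so by the similarity principle its zero set is discrete with positive local indices, whose sum equals the winding number of $w$ along the boundary of a large compact cylinder $[-R,R]\times S^1$. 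By standard Floer-theoretic asymptotic analysis this winding number is determined by the relative linking data of the asymptotic braids at $\pm\infty$.

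The final step is to show that this algebraic intersection vanishes. I will do this by a deformation argument: since $d_{\rm Hofer}(\phi_\oplus,\phi_\ominus)<\epsilon$, the homotopy $G$ can be deformed through a one-parameter family $G_\tau$, $\tau\in[0,1]$, of homotopies with $G_1=G$ and $G_0$ equal to the constant homotopy at $H_\oplus$. The resulting continuation cylinders vary continuously, and their algebraic intersection number is an integer-valued invariant that is constant along the family provided no bubbling or asymptotic degeneration occurs. The uniform energy estimate $E(u^1_{\kappa,i})\leq 3\epsilon$, which follows from the action window for $\sigma^\kappa_{\mathfrak{f}_\kappa(i)}$ together with the homotopy estimate \eqref{eq:estimate}, combined with the $100\epsilon$-isolation of $\mathcal{Y}_\oplus$, prevents such degenerations from occurring in the relevant action range. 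At $\tau=0$ the cylinders are trivial cylinders over distinct orbits in $\mathcal{Y}_\oplus$ and so have zero algebraic intersection. Therefore the algebraic intersection is zero for all $\tau$, and positivity of intersections yields the desired absence of geometric intersections. The principal obstacle in this plan is the control of compactness along the deformation, which is precisely where the hypothesis of $100\epsilon$-isolation (rather than a weaker $\epsilon$-isolation) becomes indispensable.
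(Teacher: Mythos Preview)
Your proposal has a genuine gap in the deformation argument. You write that ``the homotopy $G$ can be deformed through a one-parameter family $G_\tau$ with $G_1=G$ and $G_0$ equal to the constant homotopy at $H_\oplus$,'' and that ``the resulting continuation cylinders vary continuously.'' Neither statement can be made to work here. First, $G$ is a homotopy from $H_\oplus$ to $H_\ominus$, so the family $G_\tau$ you describe does not have fixed endpoints: at $\tau=0$ the positive asymptote must be an orbit of $H_\oplus$, not $\sigma^\kappa_{\mathfrak{f}_\kappa(i)}$. Second, even if you deform $H_\ominus$ back to $H_\oplus$ and track orbits, an individual Floer cylinder in a moduli space does not vary continuously with parameters: cylinders can be created or destroyed in cancelling pairs, so there is no canonical family to follow. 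What is homotopy-invariant is the algebraic count, not a specific curve. In particular there is no reason the fixed cylinder $u^1_{\kappa,i}$ should limit to the trivial cylinder over $\gamma^\kappa_i$.

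Phrased topologically, the algebraic intersection number of $\widetilde u^1_{\kappa,i}$ and $\widetilde u^1_{\kappa',j}$ is the difference between the linking number of $\gamma^\kappa_i$ with $\gamma^{\kappa'}_j$ (at $-\infty$) and that of $\sigma^\kappa_{\mathfrak{f}_\kappa(i)}$ with $\sigma^{\kappa'}_{\mathfrak{f}_{\kappa'}(j)}$ (at $+\infty$). Nothing in your argument forces these to agree. The paper resolves this by bringing in the second family of cylinders $u^2_{\kappa,i}$ for the reverse homotopy $\widehat G$ and concatenating, using that $\mathfrak{g}_\kappa$ has finite order $M$, to build long cylinders $\overline U^\kappa_i$ whose asymptotic orbits at \emph{both} ends are $\gamma^\kappa_i$. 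Since $\D$ is contractible, $\overline U^\kappa_i$ is then homotopic rel boundary to the trivial cylinder over $\gamma^\kappa_i$, so $\overline U^\kappa_i\cdot\overline U^{\kappa'}_j=0$; positivity of intersections then forces all geometric intersections of the constituent pieces, in particular of $\widetilde u^1_{\kappa,i}$ and $\widetilde u^1_{\kappa',j}$, to vanish. The essential idea you are missing is precisely this ``closing up'' via the $\widehat G$-cylinders.
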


Before proving the claim we explain why it implies Theorem \ref{thm:disk0}.
Denote by $\mathcal{Y}_{\ominus}$ the collection of orbits $\sigma^\kappa_{\mathfrak{f}_\kappa(i)}$, $\kappa \in \mathrm{Spec}^1(\mathcal{Y}_\oplus), i \in \{1,...,n_\kappa\}$. 
For each $\kappa \in \mathrm{Spec}^1(\mathcal{Y}_\oplus)$, $i \in \{1,...,n_\kappa\}$ and $s\in \R$ we let $\xi^s_{\kappa,i}: S^1 \to S^1 \times \D$ be defined by $$ \xi^s_{\kappa,i}(\cdot) := \widetilde{u}^1_{\kappa,i}(s,\cdot). $$
It is easy to see that $\xi^s_{\kappa,i}$ is a knot embedded in $S^1 \times \D$ for each $s\in \R$ which intersect each of disks $\{t\} \times \D$ transversely and only once. 
We also define $\xi^\oplus_{\kappa,i}: S^1 \to S^1 \times \Sigma$ by
$$ \xi^\oplus_{\kappa,i}(t):= (t,\gamma^\kappa_i(t)),$$
and $\xi^\ominus_{\kappa,i}: S^1 \to S^1 \times \Sigma$ by 
$$ \xi^\ominus_{\kappa,i}(t):= (t,\sigma^\kappa_{\mathfrak{f}_\kappa(i)}(t)).$$
The braid $\mathcal{B}(\mathcal{Y}_\oplus) $ equals the disjoint union $ \cup_{\kappa \in \mathrm{Spec}^1(\mathcal{Y}_\oplus)} \cup_{i \in \{1,...,n_\kappa\}}\xi^\oplus_{\kappa,i}$. 

Since $\mathfrak{g}_\kappa$ is always a bijection for each $\kappa$, the knots $\xi^\ominus_{\kappa,i}$ are also disjoint. It follows that $\mathcal{B}(\mathcal{Y}_\ominus) = \cup_{\kappa \in \mathrm{Spec}^1(\mathcal{Y}_\oplus)} \cup_{i \in \{1,...,n_\kappa\}}\xi^\ominus_{\kappa,i}$ is a braid with the same number of strands as $\mathcal{B}(\mathcal{Y}_\oplus)$.

The asymptotic behaviour of the Floer cylinders $\widetilde{u}^1_{\kappa,i}$ tells us that 
\begin{itemize}
    \item $\xi^s_{\kappa,i}$ converges in $C^\infty$ to $\xi^\oplus_{\kappa,i}$,
    \item $\xi^s_{\kappa,i}$ converges in $C^\infty$ to $\xi^\ominus_{\kappa,i}$.
\end{itemize}
It follows from this that letting $\xi^{-\infty}_{\kappa,i}:=\xi^\oplus_{\kappa,i}$ and $\xi^{+\infty}_{\kappa,i}:=\xi^\ominus_{\kappa,i}$, the families $(\xi^s_{\kappa,i})_{s \in \overline{\R}}$ define isotopies between  $\xi^\oplus_{\kappa,i}$ and $\xi^\ominus_{\kappa,i}$. 

For each $s \in \overline{\R} $, let  $\mathcal{B}^s = \cup_{\kappa \in \mathrm{Spec}^1(\mathcal{Y}_\oplus)} \cup_{i \in \{1,...,n_\kappa\}}\xi^s_{\kappa,i}$. To prove that $(\mathcal{B}^s)_{s \in \overline{R}}$ gives a braid isotopy between $\mathcal{B}(\mathcal{Y}_\oplus) $ and $\mathcal{B}(\mathcal{Y}_\ominus) $, it suffices then to show that 
\begin{itemize}
\item if $\kappa \neq \kappa'$ are elements of $\mathrm{Spec}^1(\mathcal{Y}_\oplus)$, then for every $i \in \{1,...,n_\kappa\}$ and $j \in \{1,...,n_{\kappa'}\}$, the knots $\xi^s_{\kappa,i}$ and $\xi^s_{\kappa',j}$ are disjoint for each $s \in \R$,
\item if $\kappa \in \mathrm{Spec}^1(\mathcal{Y}_\oplus)$ and $i\neq j$ are elements of $\{1,...,n_{\kappa'}\}$, then the knots $\xi^s_{\kappa,i}$ and $\xi^s_{\kappa,j}$ are disjoint for each $s \in \R$. 
\end{itemize}
But from the definition of the knots $\xi^s_{\kappa_i}$, it is clear that these two conditions are equivalent to Claim \ref{claim2}.

\textit{Step 3.}

We thus proceed to prove Claim \ref{claim2}, which will imply the theorem.

We consider now for each choice of $\kappa \in \mathrm{Spec}^1(\mathcal{Y}_\oplus)$ and $i \in \{1,...,n_\kappa\}$ the ordered $(2M-2)$-tuple $$(u^1_{\kappa,i},u^2_{\kappa,i}, u^1_{\kappa,\mathfrak{g}(i)},u^2_{\kappa,\mathfrak{g}(i)},u^1_{\kappa,\mathfrak{g}^2(i)},u^2_{\kappa,\mathfrak{g}^2(i)},...,u^2_{\kappa,\mathfrak{g}(i)},u^1_{\kappa,\mathfrak{g}^{M-1}(i)},u^2_{\kappa,\mathfrak{g}^{M-1}(i)}).$$

Because of the asymptotic behaviour of the maps $\widetilde{u}^\nu_{\kappa,i}$ they can be compactified to cylinders $\overline{u}^\nu_{\kappa,i}: \overline{\R} \times S^1 \to \overline{\R} \times S^1 \times \D $.

We now fix, once and for all, a homeomorphism $\mathcal{L}: [0,{1}] \to  \overline{\R}$ which is smooth in the interior of $[0,1]$ and satisfies $\mathcal{L}(0)= -\infty$ and $\mathcal{L}(1)= +\infty$. For $l \in \{0,...,M-1\}$ and $\nu \in \{1,2\}$ we let $\mathcal{L}^\nu_l: [2l + (\nu-1) ,2l + \nu] \to  \overline{\R}$ the homeomorphisms given by  $\mathcal{L}^\nu_l(u)=\mathcal{L}(u-(2l + (\nu -1)))$.

Using the map $\mathcal{L}^\nu_l$ we obtain homeomorphisms $${\mathfrak{L}^\nu_l}: [2l + (\nu-1) ,2l + \nu] \times S^1  \to \overline{\R} \times S^1$$ given by ${\mathfrak{L}^\nu_l}(s,t) = ({\mathcal{L}^\nu_l}(s),t)$ and $${\mathfrak{N}^\nu_l}: \overline{\R} \times S^1 \times \D \to  [2l + (\nu-1) ,2l + \nu] \times S^1  \times \D $$ given by $\mathfrak{N}^\nu_l(s,t,p) = ((\mathcal{L}^\nu_l)^{-1}(s),t,p)$.
We then define $$\overline{v}^{\nu,l}_{\kappa,i}:  [2l + (\nu-1) ,2l + \nu] \times S^1  \to [2l + (\nu-1) ,2l + \nu] \times S^1  \times \D$$ by $$ \overline{v}^{\nu,l}_{\kappa,i} = \mathfrak{N}^\nu_l \circ \widetilde{u}^\nu_{\kappa,\mathfrak{g}^l_\kappa(i)} \circ {\mathcal{L}^\nu_l}.  $$  

We notice that if $i\neq i'$, intersections between two cylinders $\overline{v}^{\nu,l}_{\kappa,i}$ and $\overline{v}^{\nu',l'}_{\kappa',i'}$ can only occur if $l=l'$. In this case, any such intersection is positive, because of positivity of intersection for holomorphic curves and the method we used to construct these cylinders from the holomorphic cylinders $\widetilde{u}^\nu_{\kappa,\mathfrak{g}^l_\kappa(i)} $.

We are ready to define $$ \overline{U}^\kappa_i: [0,2M-2] \times S^1 \to [0,2M-2] \times S^1 \times \D$$ by the formula $\overline{U}^\kappa_i(s,t)= \overline{v}^{\nu,l}_{\kappa,i}$ if $s \in [2l + (\nu-1) ,2l + \nu]$. The cylinders $\overline{U}^\kappa_i$ should be thought of as the concatenation of the cylinders forming the ordered tuple $2M-2$-tuple $$(\overline{u}^1_{\kappa,i},\overline{u}^2_{\kappa,i}, \overline{u}^1_{\kappa,\mathfrak{g}(i)},\overline{u}^2_{\kappa,\mathfrak{g}(i)},\overline{u}^1_{\kappa,\mathfrak{g}^2(i)},\overline{u}^2_{\kappa,\mathfrak{g}^2(i)},...,\overline{u}^2_{\kappa,\mathfrak{g}(i)},\overline{u}^1_{\kappa,\mathfrak{g}^{M-1}(i)},\overline{u}^2_{\kappa,\mathfrak{g}^{M-1}(i)}).$$ Because the positive asymptotic limit of an element of the tuple coincides with the negative asymptotic limit of the next element, the map 
$\overline{U}^\kappa_i$ is indeed continuous, and smooth when $s$ is in the interior of the intervals $[2l + (\nu-1) ,2l + \nu]$.

\textit{Step 4.}

We finish the proof of Claim \ref{claim2}. 

We argue by contradiction. We assume that there exist $\kappa,\kappa'$ and $i,j$ such that either $\kappa \neq \kappa'$ or $i \neq j$ and that $\widetilde{u}^1_{\kappa,i}$ and $\widetilde{u}^1_{\kappa',j}$ intersect. In this case the long cylinders $\overline{U}^\kappa_i$ and $\overline{U}^{\kappa'}_j$ must also intersect. 

We claim that all intersections of these cylinders count positively. Indeed, these intersections must occur in the open sets $(2l + (\nu-1) ,2l + \nu ) \times S^1  \times \D$ where the cylinders are smooth and holomorphic, positivity of intersections for holomorphic curves imply that these intersections count positively. The reason why the intersections can only occur in these open sets is that the $l$-th element of the tuples $$(\overline{u}^1_{\kappa,i},\overline{u}^2_{\kappa,i}, \overline{u}^1_{\kappa,\mathfrak{g}(i)},\overline{u}^2_{\kappa,\mathfrak{g}(i)},\overline{u}^1_{\kappa,\mathfrak{g}^2(i)},\overline{u}^2_{\kappa,\mathfrak{g}^2(i)},...,\overline{u}^2_{\kappa,\mathfrak{g}(i)},\overline{u}^1_{\kappa,\mathfrak{g}^{M-1}(i)},\overline{u}^2_{\kappa,\mathfrak{g}^{M-1}(i)})$$ and $$(\overline{u}^1_{\kappa',j},\overline{u}^2_{\kappa',j}, \overline{u}^1_{\kappa',\mathfrak{g}(j)},\overline{u}^2_{\kappa,\mathfrak{g}(j)},\overline{u}^1_{\kappa',\mathfrak{g}^2(j)},\overline{u}^2_{\kappa',\mathfrak{g}^2(j)},...,\overline{u}^2_{\kappa',\mathfrak{g}^2(j)},\overline{u}^1_{\kappa',\mathfrak{g}^{M-1}(j)},\overline{u}^2_{\kappa',\mathfrak{g}^{M-1}(j)})$$ have different positive and negative asymptotic limits.

Thus, the assumption that $\widetilde{u}^1_{\kappa,i}$ and $\widetilde{u}^1_{\kappa',j}$ intersect implies that $\overline{U}^\kappa_i$ and $\overline{U}^{\kappa'}_j$ have positive intersection number. Observe that $\overline{U}^\kappa_i(2M-2,\cdot)= \gamma^\kappa_i(\cdot)$ and $\overline{U}^\kappa_i(0,\cdot)= \gamma^\kappa_i(\cdot)$. Similarly, $\overline{U}^{\kappa'}_j(2M-2,\cdot)= \gamma^{\kappa'}_j(\cdot)$ and $\overline{U}^{\kappa'}_j(0,\cdot)= \gamma^{\kappa'}_j(\cdot)$.

Let $\overline{V}^\kappa_i: [0,2M-2] \times S^1 \to [0,2M-2] \times S^1 \times \D$ be the trivial cylinder over $\xi^\oplus_{\kappa,i}$, given by $\overline{V}^\kappa_i(s,t)= (s,t,\gamma^\kappa_i(t))$.
Because $\D$ is contractible, there exist a homotopy $H^\kappa_i: [0,1] \times [0,2M-2] \times S^1 \to  [0,2M-2] \times S^1 \times \D $ satisfying:
\begin{itemize}
    \item $H^\kappa_i(a,2M-2,\cdot) = (2M-2, \xi^\oplus_{\kappa,i}(\cdot))$ for every $a\in [0,1]$,
    \item $H^\kappa_i(a,0,\cdot) = (0,\xi^\oplus_{\kappa,i}(\cdot))$ for every $a\in [0,1]$,
    \item $H^\kappa_i(0,\cdot,\cdot) = \overline{V}^\kappa_i(\cdot,\cdot)$ and $H^\kappa_i(1,\cdot,\cdot)= \overline{U}^\kappa_i(\cdot,\cdot)$.
\end{itemize}
We refer to $H^\kappa_i$ as a homotopy of cylinders with the same boundary between $\overline{V}^\kappa_i$ and $\overline{U}^\kappa_i$.
We consider similarly a homotopy of cylinders with the same boundary between $\overline{V}^{\kappa'}_j$ and $\overline{U}^{\kappa'}_j$.

Since either $\kappa \neq \kappa'$ or $i \neq j$, the knots $\xi^\oplus_{\kappa,i}$ and $\xi^\oplus_{\kappa',j}$ are disjoint: it follows that the cylinders $H^\kappa_i(a,\cdot,\cdot)$ and $H^{\kappa'}_j(a,\cdot,\cdot)$ have disjoint boundaries. The intersection number of the cylinders $H^\kappa_i(a,\cdot,\cdot)$ and $H^{\kappa'}_j(a,\cdot,\cdot)$ does not depend  on $a$. But, this intersection number is clearly $0$ for $a=0$. But this is in contradiction with the positivity of the intersection number of  $\overline{U}^{\kappa'}_j$ and $\overline{U}^\kappa_i$, which had followed from the assumption that $\widetilde{u}^1_{\kappa,i}$ and $\widetilde{u}^1_{\kappa',j}$ intersect. 

This contradiction shows that $\widetilde{u}^1_{\kappa,i}$ and $\widetilde{u}^1_{\kappa',j}$ do not intersect, finishing the proof of Claim \ref{claim2} and completing the proof of Theorem \ref{thm:disk0}. \qed

\textit{Proof of Theorem \ref{thm:disk}.} 

The proof of Theorem \ref{thm:disk} is identical to the proof of Theorem \ref{thm:disk0}, with the only modification that one applies Proposition \ref{prop:chainisodiskc} where in the previous proof one applied Proposition \ref{prop:chainisodisk0}. \qed

\section{Proof of Theorem \ref{thm:main} }

To prove  Theorem \ref{thm:main} we must consider several different cases.
During the proof we will refer to the proof steps and arguments used in the proof of Theorem \ref{thm:disk}, so the reader should first read the proof of that theorem. 

We start treating the case in which $\Sigma=S^2$.

\subsection{Proof of Theorem \ref{thm:main} in case $\Sigma = S^2$.} \label{sec:proofsphere}

\

\textit{Step 1}.

Note that one can choose capping discs $\mathcal{D}_{\gamma_i}, i=1, \ldots k$ of the orbits in $\mathcal{Y}_{\oplus}$ such that $\mathcal{A}_{H_\oplus}(\gamma_i, \mathcal{D}_{\gamma_i}) - \mathcal{A}_{H_\oplus}(\gamma_j, \mathcal{D}_{\gamma_j}) = \Delta(\gamma_i,\gamma_j)$, for all $i,j \in \{1, \ldots, k\}$. 
Denote by  $\widetilde{\mathcal{Y_{\oplus}}} = \{(\gamma_1, \mathcal{D}_{\gamma_1}), \ldots, (\gamma_k, \mathcal{D}_{\gamma_k})\}$ the set of the resulting pairs, and by $\mathrm{Spec}(\widetilde{\mathcal{Y}_\oplus}):= \cup_{1 \leq i \leq k} \mathcal{A}_{H_\oplus}(\gamma_i, \mathcal{D}_{\gamma_i})$ their action values. 
For each number $\kappa \in \mathrm{Spec}(\widetilde{\mathcal{Y}_\oplus})$ we denote by $\widetilde{\mathcal{Y}^\kappa_\oplus} \subset \widetilde{\mathcal{Y}_\oplus}$ the subset of elements of $\widetilde{\mathcal{Y}_\oplus}$ whose action is $\kappa$. We denote by $n_\kappa$ the cardinality of $\widetilde{\mathcal{Y}^\kappa_\oplus} $, and let $\{(\gamma^\kappa_1,\mathcal{D}_{\gamma^\kappa_1}),...,(\gamma^\kappa_{n_\kappa},\mathcal{D}_{\gamma^\kappa_{n_\kappa}})\}$ be the elements of $\widetilde{\mathcal{Y}^\kappa_\oplus} $.

Fix $\epsilon>0$ as in the statement of the theorem and then consider for every $\kappa \in \mathrm{Spec}(\widetilde{\mathcal{Y}_\oplus})$ the Floer homology $HF^{(\kappa - \epsilon, \kappa+ \epsilon)}(H_\oplus)$. We choose the smooth $S^1$-family $J_t$ to be regular for all chain-complexes $CF^{(\kappa - \epsilon, \kappa+ \epsilon)}(H_\oplus)$ and $CF^{(\kappa - 2\epsilon, \kappa+ 2\epsilon)}(H_\ominus)$, so that we can use always the same pairs $(H_\oplus,J_t)$ to define the Floer differential $d^{J_t}$ on $CF^{(\kappa - \epsilon, \kappa+ \epsilon)}(H_\oplus)$ and $(H_\ominus,J_t)$ to define the Floer differential $d^{J_t}$ on $CF^{(\kappa - 2\epsilon, \kappa+ 2\epsilon)}(H_\ominus)$.  We notice that since all the elements of $CF^{(\kappa - \epsilon, \kappa+ \epsilon)}(H_\oplus)$ have the same action the differential vanishes on $CF^{(\kappa - \epsilon, \kappa+ \epsilon)}(H_\oplus)$. Moreover, since $\mathcal{Y}_\oplus$ is $100\epsilon$-isolated, we conclude that the rank of $CF^{(\kappa - \epsilon, \kappa+ \epsilon)}(H_\oplus)$ and $HF^{(\kappa - \epsilon, \kappa+ \epsilon)}(H_\oplus)$ coincide with $\# (\widetilde{\mathcal{Y}^\kappa_\oplus} )$.

We now apply Proposition \ref{prop:chainisosphere} for the chain-complexes $CF^{(\kappa - \epsilon, \kappa+ \epsilon)}(H_\oplus)$ and $CF^{(\kappa - 2\epsilon, \kappa+ 2\epsilon)}(H_\ominus)$ for all $\kappa \in \mathrm{Spec}(\widetilde{\mathcal{Y}_\oplus})$. Notice that the construction of homotopies $G$ and $\widehat{G}$ does not depend on $\kappa$, but only on $\epsilon$ and $H_\ominus$. We can thus choose generically the same pairs $(G,J^s_t)$ and $(\widehat{G},\widehat{J}^s_t)$ to induce maps from $CF^{(\kappa - \epsilon, \kappa+ \epsilon)}(H_\oplus)$ to $CF^{(\kappa - 2\epsilon, \kappa+ 2\epsilon)}(H_\ominus)$, and from  $CF^{(\kappa - 2\epsilon, \kappa+ 2\epsilon)}(H_\ominus)$ to $CF^{(\kappa - \epsilon, \kappa+ \epsilon)}(H_\oplus)$, for all $\kappa \in \mathrm{Spec}(\widetilde{\mathcal{Y}_\oplus})$. 

We then obtain chain maps $\Psi_G : CF^{(\kappa - \epsilon, \kappa+ \epsilon)}(H_\oplus) \to CF^{(\kappa - 2\epsilon, \kappa+ 2\epsilon)}(H_\ominus) $ and $\Psi_{\widehat{G}}:CF^{(\kappa - 2\epsilon, \kappa+ 2\epsilon)}(H_\ominus) \to CF^{(\kappa - \epsilon, \kappa+ \epsilon)}(H_\oplus)$. Since $\Psi_{\widehat{G}} \circ \Psi_G: CF^{(\kappa - \epsilon, \kappa+ \epsilon)}(H_\oplus) \to CF^{(\kappa - \epsilon, \kappa+ \epsilon)}(H_\oplus) $ is chain isotopic to the identity and the differential $d^{J_t}$ vanishes, we conclude that the chain map   $\Psi_{\widehat{G}} \circ \Psi_G: CF^{(\kappa - \epsilon, \kappa+ \epsilon)}(H_\oplus) \to CF^{(\kappa - \epsilon, \kappa+ \epsilon)}(H_\oplus) $ is the identity.
It follows that $\Psi_G: CF^{(\kappa - \epsilon, \kappa+ \epsilon)}(H_\oplus) \to CF^{(\kappa - 2\epsilon, \kappa+ 2\epsilon)}(H_\ominus) $ is injective and that \linebreak $\Psi_{\widehat{G}}: CF^{(\kappa - 2\epsilon, \kappa+ 2\epsilon)}(H_\ominus) \to CF^{(\kappa - \epsilon, \kappa+ \epsilon)}(H_\oplus)$ is surjective.

Since $\Psi_g$ is injective we know that the dimension of $CF^{(\kappa - 2\epsilon, \kappa+ 2\epsilon)}(H_\ominus)$ is $\geq n_\kappa$. We let the $m_\kappa \geq n_\kappa$ be the dimension of $CF^{(\kappa - 2\epsilon, \kappa+ 2\epsilon)}(H_\ominus)$, and since $CF^{(\kappa - 2\epsilon, \kappa+ 2\epsilon)}(H_\ominus)$ is the $\Z_2$-vector space over $\mathcal{P}^1_{(\kappa - 2\epsilon, \kappa+ 2\epsilon)}(H_\ominus)$, we conclude that $ \# \mathcal{P}^1_{(\kappa - 2\epsilon, \kappa+ 2\epsilon)}(H_\ominus) = m_{\kappa}$. We write   $\{(\sigma^\kappa_1,\mathcal{D}_{\sigma^\kappa_1}),...,(\sigma^\kappa_{m_\kappa},\mathcal{D}_{\sigma^\kappa_{m_\kappa}}) \}$ the elements of  $\mathcal{P}^1_{(\kappa - 2\epsilon, \kappa+ 2\epsilon)}(H_\ominus)$. 

 For each $j\in \{1,...,n_\kappa\}$, the image $\Psi_G((\gamma^\kappa_j,\mathcal{D}_{\gamma^\kappa_j}))$ can be written in a unique way as a sum of pairs in $\mathcal{P}^1_{(\kappa - 2\epsilon, \kappa+ 2\epsilon)}(H_\ominus)$. We say that a pair $(\sigma^\kappa_l,\mathcal{D}_{\sigma^\kappa_l})$ appears in $\Psi_G((\gamma^\kappa_j,\mathcal{D}_{\gamma^\kappa_j}))$, if the pair $(\sigma^\kappa_l,\mathcal{D}_{\sigma^\kappa_l})$ is one of the pairs which appear in the expression of $\Psi_G((\gamma^\kappa_j,\mathcal{D}_{\gamma^\kappa_j}))$ written in the base $\mathcal{P}^1_{(\kappa - 2\epsilon, \kappa+ 2\epsilon)}(H_\ominus)$ of $CF^{(\kappa - 2\epsilon, \kappa+ 2\epsilon)}(H_\ominus)$. 

We define in a similar way pairs of $\mathcal{P}^1_{(\kappa - \epsilon, \kappa+ \epsilon)}(H_\oplus)$ which appear in $\Psi_{\widehat{G}}((\sigma^\kappa_l,\mathcal{D}_{\sigma^\kappa_l})$ for each $(\sigma^\kappa_l,\mathcal{D}_{\sigma^\kappa_l})\in \mathcal{P}^1_{(\kappa - 2\epsilon, \kappa+ 2\epsilon)}(H_\ominus)$. Using Lemma \ref{lem:combinatorial} we know that:

\begin{fact} \label{fact1}
It is possible to choose an injective map $\mathfrak{f}_\kappa: \{1,...,n_\kappa\} \to \{1,...,m_\kappa\}$ and a bijective map $\mathfrak{g}_\kappa: \{1,...,n_\kappa\} \to \{1,...,n_\kappa\}$ such that for each $i \in \{1,...,n_\kappa\}$:
\begin{itemize}
    \item  the pair $(\sigma^\kappa_{\mathfrak{f}_\kappa(i)},\mathcal{D}_{\sigma^\kappa_{\mathfrak{f}_\kappa(i)}})$ appears in $\Psi_G((\gamma^\kappa_i,\mathcal{D}_{\gamma^\kappa_i})) $, and the orbit \linebreak $(\gamma^\kappa_{\mathfrak{g}_\kappa(i)},\mathcal{D}_{\gamma^\kappa_{\mathfrak{g}_\kappa(i)}})$ appears in $\Psi_{\widehat{G}}((\sigma^\kappa_{\mathfrak{f}_\kappa(i)},\mathcal{D}_{\sigma^\kappa_{\mathfrak{f}_\kappa(i)}})) $.
\end{itemize}
\end{fact}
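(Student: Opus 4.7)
The plan is to derive Fact \ref{fact1} as a direct application of the combinatorial Lemma \ref{lem:combinatorial} to the linear-algebraic data assembled in Step 1. Concretely, I would set $V := CF^{(\kappa-\epsilon,\kappa+\epsilon)}(H_\oplus)$, equipped with its preferred $\Z_2$-basis $\{(\gamma^\kappa_i,\mathcal{D}_{\gamma^\kappa_i})\}_{i=1}^{n_\kappa}$, and $R := CF^{(\kappa-2\epsilon,\kappa+2\epsilon)}(H_\ominus)$, with its basis $\{(\sigma^\kappa_l,\mathcal{D}_{\sigma^\kappa_l})\}_{l=1}^{m_\kappa}$. Taking $\mathfrak{F} := \Psi_G$ and $\mathfrak{G} := \Psi_{\widehat{G}}$, the two predicates ``$r_{\mathfrak{f}(i)}$ appears in $\mathfrak{F}(v_i)$'' and ``$v_{\mathfrak{g}(i)}$ appears in $\mathfrak{G}(r_{\mathfrak{f}(i)})$'' of the lemma translate verbatim into the two bullet points of Fact \ref{fact1}, after relabelling $\mathfrak{f}$ and $\mathfrak{g}$ as $\mathfrak{f}_\kappa$ and $\mathfrak{g}_\kappa$.

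The only hypothesis of Lemma \ref{lem:combinatorial} that is not automatic from this set-up is that the composition $\mathfrak{G}\circ\mathfrak{F}:V\to V$ be an isomorphism. To verify it I would argue as follows. The $100\epsilon$-isolation of $\mathcal{Y}_\oplus$, together with the choice of cappings in Step 1, ensures that every generator of $V$ has exactly the common action value $\kappa$, hence the Floer differential $d^{J_t}$ vanishes on $V$ for action reasons. Step 1 already exhibits (via Proposition \ref{prop:chainisosphere}) a chain homotopy $S:V\to V$ with
\begin{equation*}
\Psi_{\widehat{G}}\circ\Psi_G - \id_V \;=\; S\circ d^{J_t} + d^{J_t}\circ S,
\end{equation*}
and both terms on the right-hand side vanish since $d^{J_t}\equiv 0$ on $V$. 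Thus $\mathfrak{G}\circ\mathfrak{F}=\id_V$, which is certainly an isomorphism.

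Lemma \ref{lem:combinatorial} then produces the injective map $\mathfrak{f}_\kappa$ and the bijection $\mathfrak{g}_\kappa$ with the desired appearance properties, and Fact \ref{fact1} follows at once. In this plan, the genuinely subtle step is the purely algebraic combinatorial Lemma \ref{lem:combinatorial} itself, and in particular the \emph{bijectivity} of $\mathfrak{g}_\kappa$ (as opposed to mere injectivity or surjectivity), which requires a Hall-type matching argument on the $\Z_2$-matrices of $\mathfrak{F}$ and $\mathfrak{G}$; this is the main obstacle and is handled separately in Appendix \ref{appendix:comblemma}. All of the Floer-theoretic content has by this point been absorbed into the identity $\Psi_{\widehat{G}}\circ\Psi_G=\id_V$ established above.
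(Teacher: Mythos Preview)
Your proposal is correct and matches the paper's own argument essentially line for line: the paper also observes that all generators of $CF^{(\kappa-\epsilon,\kappa+\epsilon)}(H_\oplus)$ share the action value $\kappa$, so the differential vanishes and the chain homotopy forces $\Psi_{\widehat{G}}\circ\Psi_G=\id$, after which Fact~\ref{fact1} is obtained by a direct invocation of Lemma~\ref{lem:combinatorial} with exactly the identifications $V$, $R$, $\mathfrak{F}$, $\mathfrak{G}$ you describe.
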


\textit{Step 2.}

For each $(\gamma^\kappa_i,\mathcal{D}_{\gamma^\kappa_i}) \in \mathcal{Y}^\kappa_\oplus$ we consider the pair  $(\sigma^\kappa_{\mathfrak{f}_\kappa(i)},\mathcal{D}_{\sigma^\kappa_{\mathfrak{f}_\kappa(i)}})$. Since \linebreak $(\sigma^\kappa_{\mathfrak{f}_\kappa(i)},\mathcal{D}_{\sigma^\kappa_{\mathfrak{f}_\kappa(i)}})$ appears in $\Psi_{G}((\gamma^\kappa_i,\mathcal{D}_{\gamma^\kappa_i}))$, there exists a Floer cylinder $u^1_{\kappa,i}$ of $(G,J^s_t)$  which is negatively asymptotic to $(\gamma^\kappa_i,\mathcal{D}_{\gamma^\kappa_i})$ and positively asymptotic to $(\sigma^\kappa_{\mathfrak{f}_\kappa(i)},\mathcal{D}_{\sigma^\kappa_{\mathfrak{f}_\kappa(i)}})$. 

Similarly, there exists a Floer cylinder $u^2_{\kappa,i}$ of $(\widehat{G},\widehat{J}^s_t)$ which is negatively asymptotic to $(\sigma^\kappa_{\mathfrak{f}_\kappa(i)},\mathcal{D}_{\sigma^\kappa_{\mathfrak{f}_\kappa(i)}})$ and positively asymptotic to $(\gamma^\kappa_{\mathfrak{g}_\kappa(i)},\mathcal{D}_{\gamma^\kappa_{\mathfrak{g}_\kappa(i)}})$.

For each pair $\kappa \in \mathrm{Spec}^1(\mathcal{Y}_\oplus)$ and $i \in {1,...,n_\kappa}$, we let $\widetilde{u}^1_{\kappa,i}: \R \times S^1 \to \R \times S^1 \times \D$ be the lift of the Floer cylinder ${u}^1_{\kappa,i}$. The cylinder $\widetilde{u}^1_{\kappa,i}$ is $\widetilde{J}_{G,J^s_t}$-holomorphic, where $\widetilde{J}_{G,J^s_t}$ is the almost complex structure on $\R \times S^1 \times \D$ constructed in Section \ref{sec:holcylinders}. 
Reasoning as in Step 2 of the proof of Theorem \ref{thm:disk0} (see Section \ref{sec:proofthmdisk}), we conclude that Theorem \ref{thm:main} in the case $\Sigma=S^2$ will follow if we can show that the cylinders $u^1_{\kappa,i}$ and $u^1_{\kappa',j}$ have no intersections if either $\kappa\neq \kappa'$ or $i\neq j$.

\ 

\textit{Step 3.}

Since the maps $\mathfrak{g}_{\kappa}$ are permutations, there exists a natural number $M\geq 1$ such that for every $\kappa \in \mathrm{Spec}^1(\mathcal{Y}_\oplus)$ the $M$-th iterate $\mathfrak{g}^M_{\kappa} $  equals the identity. 


We consider now for each choice of $\kappa \in \mathrm{Spec}^1(\mathcal{Y}_\oplus)$ and $i \in \{1,...,n_\kappa\}$ the ordered $(2M-2)$-tuple $$(u^1_{\kappa,i},u^2_{\kappa,i}, u^1_{\kappa,\mathfrak{g}(i)},u^2_{\kappa,\mathfrak{g}(i)},u^1_{\kappa,\mathfrak{g}^2(i)},u^2_{\kappa,\mathfrak{g}^2(i)},...,u^2_{\kappa,\mathfrak{g}(i)},u^1_{\kappa,\mathfrak{g}^{M-1}(i)},u^2_{\kappa,\mathfrak{g}^{M-1}(i)}).$$

Because of the asymptotic behaviour of the maps $\widetilde{u}^\nu_{\kappa,i}$ they can be compactified to cylinders $\overline{u}^\nu_{\kappa,i}: \overline{\R} \times S^1 \to \overline{\R} \times S^1 \times \D $.

We now fix, once and for all, a homeomorphism $\mathcal{L}: [0,{1}] \to  \overline{\R}$ which is smooth in the interior of $[0,1]$ and satisfies $\mathcal{L}(0)= -\infty$ and $\mathcal{L}(1)= +\infty$. For $l \in \{0,...,M-1\}$ and $\nu \in \{1,2\}$ we let $\mathcal{L}^\nu_l: [2l + (\nu -1)  ,2l + \nu] \to  \overline{\R}, u \mapsto \mathcal{L}(u - (2l + (\nu-1)))$.

Using the map $\mathcal{L}^\nu_l$ we obtain homeomorphisms between $${\mathfrak{L}^\nu_l}: [2l + (\nu-1) ,2l + \nu ] \times S^1  \to \overline{\R} \times S^1$$ given by ${\mathfrak{L}^\nu_l}(s,t) = ({\mathcal{L}^\nu_l}(s),t)$ and $${\mathfrak{N}^\nu_l}: \overline{\R} \times S^1 \times S^2 \to  [2l + (\nu-1) ,2l + \nu ] \times S^1  \times S^2 $$ given by $\mathfrak{N}^\nu_l(s,t,p) = ((\mathcal{L}^\nu_l)^{-1}(s),t,p)$.
We then define $$\overline{v}^{\nu,l}_{\kappa,i}:  [2l + (\nu-1) ,2l + \nu ] \times S^1  \to [2l + (\nu-1) ,2l + \nu] \times S^1  \times S^2$$ by $$ \overline{v}^{\nu,l}_{\kappa,i} = \mathfrak{N}^\nu_l \circ \widetilde{u}^\nu_{\kappa,\mathfrak{g}^l_\kappa(i)} \circ {\mathcal{L}^\nu_l}.  $$  

We are ready to define $$ \overline{U}^\kappa_i: [0,2M-2] \times S^1 \to [0,2M-2] \times S^1 \times \D$$ by the formula $\overline{U}^\kappa_i(s,t)= \overline{v}^{\nu,l}_{\kappa,i}$ if $s \in [2l + \nu ,2l + \nu +1]$. One should think of $\overline{U}^\kappa_i$ as the concatenation of the cylinders  $\overline{v}^{\nu,l}_{\kappa,i}$.

We consider $$ {W}^{\kappa}_i: [0,2M-2] \times S^1 \to S^2 $$ be the projection of $\overline{U}^\kappa_i$ to $S^2$, i.e. $ {W}^{\kappa}_i := \Pi_{S^2} \circ \overline{U}^\kappa_i$ where $\Pi_{S^2}: [0,2M-2] \times S^1 \times S^2 \to S^2 $ is the projection onto the third coordinate. Let $${v}^{\nu,l}_{\kappa,i} := {u}^\nu_{\kappa,\mathfrak{g}^l_\kappa(i)} \circ {\mathcal{L}^\nu_l}.  $$ The cylinder ${v}^{\nu,l}_{\kappa,i}$ is obtained by compactifying the Floer cylinder $u^\kappa_{i}$ and taking its domain to be $[2l + (\nu-1) ,2l + \nu ]$. It is then clear that $${W}^{\kappa}_i := (\overline{v}^{2,M}_{\kappa,i} \# \overline{v}^{1,M}_{\kappa,i} ) \# (\overline{v}^{2,M-1}_{\kappa,i} \# \overline{v}^{1,M-1}_{\kappa,i}) \# ... \# (\overline{v}^{2,2}_{\kappa,i} \# \overline{v}^{1,2}_{\kappa,i} ) \# (\overline{v}^{2,1}_{\kappa,i} \# \overline{v}^{1,1}_{\kappa,i}). $$ 

We need the following definition. 
\begin{defn}
Let $(\gamma,[D_\gamma]) \in  \mathcal{P}^1(H)$ and $(\gamma',[D_{\gamma'}]) \in \mathcal{P}^1(H')$ for Hamiltonians $H:S^1 \times S^2 \to \R$ and $H':S^1 \times S^2 \to \R$, and let $V:[0,K] \times S^1 \to S^2  $ be a cylinder\footnote{Here $K>0$ is some positive real number.} such that $V: \{0\} \times S^1 = \gamma$ and $V: \{K\} \times S^1 = \gamma'$. We say that $V$ is a cylinder from $(\gamma,[D_\gamma])$ and $(\gamma',[D_{\gamma'}])$ if the disk $D_\gamma \# V $ obtained by gluing $D_\gamma$  and $V $ is homotopic to $D_{\gamma'}$ among disks filling $\gamma'$.
\end{defn}

\begin{claim} \label{claim:cylinder}
The cylinder ${W}^{\kappa}_i: [0,2M-2] \times S^1 \to S^2$ is a cylinder from $(\gamma^\kappa_i,[\mathcal{D}_{\gamma^\kappa_i}])$ to $(\gamma^\kappa_i,[\mathcal{D}_{\gamma^\kappa_i}])$.

\end{claim}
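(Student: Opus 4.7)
The proof plan is essentially to unwind the definitions: the Floer cylinders $u^1_{\kappa,i}$ and $u^2_{\kappa,i}$ were produced as elements of moduli spaces whose generators (in the $S^2$--Floer complex) are pairs $(\gamma,[\mathcal{D}_\gamma])$, so each of these cylinders, by the very definition of these moduli spaces on $S^2$, carries a compatibility of the capping disc at the two ends. Concatenating $2M$ such cylinders will then automatically produce a cylinder that, as an element in the capping-disc torsor over $\gamma^\kappa_i$, returns to the starting capping class.

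More precisely, the plan is as follows. First I would recall (or verify in a short remark) that when the moduli spaces used to define the chain maps $\Psi_G$ and $\Psi_{\widehat G}$ on $S^2$ are formulated so that they contribute $K_{G,J^s_t}((\gamma,[\mathcal D_\gamma]),(\gamma',[\mathcal D_{\gamma'}]))$ to the matrix coefficient, the condition on $u$ is precisely that
\[
\mathcal{D}_{\gamma}\,\#\,u\simeq\mathcal{D}_{\gamma'}
\]
among cappings of $\gamma'$. This is the continuation-map analogue of the definition of $\mathcal{M}(\gamma,[\mathcal{D}_\gamma],\gamma',[\mathcal{D}_{\gamma'}],H,J_t)$ given in Section \ref{sec:Hamonsphere}, and it is exactly what makes the image of a generator $(\gamma_\oplus,[\mathcal D_{\gamma_\oplus}])$ land inside the chain complex with generators being pairs $(\gamma_\ominus,[\mathcal D_{\gamma_\ominus}])$ of a specific action. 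Thus, by construction, for each $l$ and each $\kappa,i$
\[
\mathcal{D}_{\gamma^\kappa_{\mathfrak{g}^l_\kappa(i)}}\,\#\,u^1_{\kappa,\mathfrak{g}^l_\kappa(i)}\simeq \mathcal{D}_{\sigma^\kappa_{\mathfrak{f}_\kappa(\mathfrak{g}^l_\kappa(i))}}
\]
and
\[
\mathcal{D}_{\sigma^\kappa_{\mathfrak{f}_\kappa(\mathfrak{g}^l_\kappa(i))}}\,\#\,u^2_{\kappa,\mathfrak{g}^l_\kappa(i)}\simeq \mathcal{D}_{\gamma^\kappa_{\mathfrak{g}^{l+1}_\kappa(i)}}.
\]

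Next I would iterate: starting from $\mathcal{D}_{\gamma^\kappa_{j_0}}$ with $j_0=i$ and successively gluing the pieces $u^1_{\kappa,j_0}, u^2_{\kappa,j_0}, u^1_{\kappa,j_1}, u^2_{\kappa,j_1},\dots, u^2_{\kappa,j_{M-1}}$ that make up $W^\kappa_i$ (with $j_l=\mathfrak{g}^l_\kappa(i)$), the two displayed homotopies above let me pass, at each junction between two consecutive pieces, from one capping to the next. Chaining these $2M$ homotopies and using $j_M=\mathfrak{g}^M_\kappa(i)=i$ gives
\[
\mathcal{D}_{\gamma^\kappa_i}\,\#\,W^\kappa_i\;\simeq\;\mathcal{D}_{\gamma^\kappa_{j_M}}\;=\;\mathcal{D}_{\gamma^\kappa_i},
\]
which is exactly the statement that $W^\kappa_i$ is a cylinder from $(\gamma^\kappa_i,[\mathcal{D}_{\gamma^\kappa_i}])$ to $(\gamma^\kappa_i,[\mathcal{D}_{\gamma^\kappa_i}])$.

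The only delicate point, and really the only thing that needs to be checked carefully, is that the capping-compatibility in the definition of the continuation moduli spaces is actually set up in this form, so that the homotopy $\mathcal{D}_\gamma\#u\simeq \mathcal{D}_{\gamma'}$ truly holds for each individual cylinder $u^1_{\kappa,i}$ and $u^2_{\kappa,i}$. Once this is in place, the chain of homotopies above is a purely bookkeeping exercise; no further geometric input is required. I would therefore devote a brief paragraph before the iteration to making this compatibility explicit in the continuation-map setting on $S^2$, either by referring back to the construction of $\Psi_G,\Psi_{\widehat G}$ in Proposition \ref{prop:chainisosphere} or by recalling the standard gluing formula for the capping carried along a continuation cylinder.
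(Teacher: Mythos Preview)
Your proposal is correct and follows essentially the same approach as the paper. The paper isolates the concatenation step as a separate ``Fact'' (that if $V_1$ is a cylinder from $(\gamma,[D_\gamma])$ to $(\gamma',[D_{\gamma'}])$ and $V_2$ from $(\gamma',[D_{\gamma'}])$ to $(\gamma'',[D_{\gamma''}])$, then $V_2\#V_1$ is a cylinder from $(\gamma,[D_\gamma])$ to $(\gamma'',[D_{\gamma''}])$) and then applies it repeatedly, whereas you chain the capping homotopies directly; this is only a stylistic difference, and your identification of the one nontrivial point---that the continuation moduli spaces on $S^2$ are set up so that each individual $u^\nu_{\kappa,j}$ satisfies $\mathcal{D}_\gamma\#u\simeq\mathcal{D}_{\gamma'}$---is exactly what the paper also invokes.
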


To prove this claim we need the following straightforward fact. 
\begin{fact} \label{fact2}
Let $a<b<c$ be real numbers,  $V_1:[a,b] \to  S^2$ be a cylinder from $(\gamma,[D_\gamma])$ to $(\gamma',[D_{\gamma'}])$, and $V_2:[b,c] \to S^2$ be a cylinder from $(\gamma',[D_{\gamma'}])$ to $(\gamma'',[D_{\gamma''}])$. Then, the concatenated cylinder $ V_2\# V_1 : [a,c] \to S^2 $ is a cylinder from $(\gamma,[D_\gamma])$ to $(\gamma'',[D_{\gamma''}])$.
\end{fact}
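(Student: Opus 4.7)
The plan is to unwind the definition of ``cylinder from $(\gamma,[D_\gamma])$ to $(\gamma',[D_{\gamma'}])$'' at each of the two gluing steps, and then invoke associativity of the disk-cylinder gluing operation $\#$ together with the obvious compatibility of homotopies with concatenation. Since the statement is purely about homotopy classes of capping disks on $S^2$, no analysis is involved; the content is entirely combinatorial/topological.

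More precisely, I would argue as follows. By hypothesis the disk $D_\gamma \# V_1$, obtained by gluing $V_1$ to the capping disk $D_\gamma$ along $\gamma$, is homotopic (rel.\ $\gamma'$, among disks filling $\gamma'$) to $D_{\gamma'}$; similarly $D_{\gamma'} \# V_2$ is homotopic (among disks filling $\gamma''$) to $D_{\gamma''}$. Now the concatenated cylinder $V_2 \# V_1 : [a,c] \times S^1 \to S^2$ agrees with $V_1$ on $[a,b]$ and with $V_2$ on $[b,c]$, so by the obvious associativity identity
\begin{equation*}
D_\gamma \,\#\, (V_2 \# V_1) \;=\; (D_\gamma \,\#\, V_1) \,\#\, V_2
\end{equation*}
of the gluing operation. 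Applying the first homotopy on the right (with $V_2$ held fixed), this disk is homotopic to $D_{\gamma'} \# V_2$ among disks filling $\gamma''$, and applying the second homotopy this is in turn homotopic to $D_{\gamma''}$. Chaining the two homotopies shows that $D_\gamma \,\#\, (V_2 \# V_1)$ is homotopic to $D_{\gamma''}$ among disks filling $\gamma''$, which is exactly the definition of $V_2 \# V_1$ being a cylinder from $(\gamma,[D_\gamma])$ to $(\gamma'',[D_{\gamma''}])$.

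The only point worth being a bit careful about is that the homotopies produced in the hypothesis keep the boundary circle ($\gamma'$, respectively $\gamma''$) pointwise fixed, so that gluing them with $V_2$ (respectively doing nothing) indeed produces a well-defined homotopy of capping disks of $\gamma''$; this is automatic from the definition of the relation ``homotopic among disks filling $\gamma$'' used throughout section \ref{sec:Hamonsphere}. No other subtlety arises, so the fact is immediate.
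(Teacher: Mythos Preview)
Your argument is correct and is exactly the routine unwinding of the definition that the paper has in mind; indeed, the paper does not give a proof at all, simply calling Fact~\ref{fact2} a ``straightforward fact'' and using it immediately. There is nothing to add.
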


By construction, the Floer cylinders $u^1_{\kappa,\mathfrak{g}^l(i)}$ belong to the moduli space $\mathcal{M}((\gamma^\kappa_{\mathfrak{g}^{l}(i)}, [\mathcal{D}_{\gamma^\kappa_{\mathfrak{g}^{l}(i)}}]),(\sigma^\kappa_{\mathfrak{f}_\kappa(\mathfrak{g}^{l}(i))},[\mathcal{D}_{\sigma^\kappa_{\mathfrak{f}_\kappa(\mathfrak{g}^{l}(i))}}]), G,J^s_t)$. It follows that its compactification ${v}^{1,l}_{\kappa,i}$ is a cylinder from $(\gamma^\kappa_{\mathfrak{g}^{l}(i)}, [\mathcal{D}_{\gamma^\kappa_{\mathfrak{g}^{l}(i)}}])$ to $(\sigma^\kappa_{\mathfrak{f}_\kappa(\mathfrak{g}^{l}(i))},[\mathcal{D}_{\sigma^\kappa_{\mathfrak{f}_\kappa(\mathfrak{g}^{l}(i))}}])$. Similarly, one obtains that ${v}^{2,l}_{\kappa,i}$ is a cylinder from $(\sigma^\kappa_{\mathfrak{f}_\kappa(\mathfrak{g}^{l}(i))},[\mathcal{D}_{\sigma^\kappa_{\mathfrak{f}_\kappa(\mathfrak{g}^{l}(i))}}])$ to $(\gamma^\kappa_{\mathfrak{g}^{l+1}(i)}, [\mathcal{D}_{\gamma^\kappa_{\mathfrak{g}^{l+1}(i)}}])$.

Applying Fact \ref{fact2} multiple times we obtain that the concatenation $${W}^{\kappa}_i := (\overline{v}^{2,M}_{\kappa,i} \# \overline{v}^{1,M}_{\kappa,i} ) \# (\overline{v}^{2,M-1}_{\kappa,i} \# \overline{v}^{1,M-1}_{\kappa,i}) \# ... \# (\overline{v}^{2,2}_{\kappa,i} \# \overline{v}^{1,2}_{\kappa,i} ) \# (\overline{v}^{2,1}_{\kappa,i} \# \overline{v}^{1,1}_{\kappa,i}) $$ is a cylinder from $(\gamma^\kappa_{\mathfrak(i)}, [\mathcal{D}_{\gamma^\kappa_{\mathfrak(i)}}])$ to $(\gamma^\kappa_{\mathfrak{g}^{M}(i)}, [\mathcal{D}_{\gamma^\kappa_{\mathfrak{g}^{M}(i)}}]) = (\gamma^\kappa_{\mathfrak(i)}, [\mathcal{D}_{\gamma^\kappa_{\mathfrak(i)}}])$.

This proves Claim \ref{claim:cylinder}.

\ 

\textit{Step 4:}

Because  ${W}^{\kappa}_i$ is a cylinder from $(\gamma^\kappa_i,[\mathcal{D}_{\gamma^\kappa_i}])$ to $(\gamma^\kappa_i,[\mathcal{D}_{\gamma^\kappa_i}])$, it is homotopic to the trivial cylinder ${W}^{\kappa,i}_{\rm triv}: [0,2M-2] \times S^1 \to S^2$ over $\gamma^\kappa_i$ given by the formula $${W}^{\kappa,i}_{\rm triv}(s,t) = \gamma^\kappa_i(t), $$
among cylinders which map $\{0\} \times S^1$ and $\{2M-2\} \times S^1$ to $\gamma^\kappa_i$. 

We explain one way to prove the existence of such a homotopy. The fact that ${W}^{\kappa}_i$ is a cylinder from $(\gamma^\kappa_i,[\mathcal{D}_{\gamma^\kappa_i}])$ to $(\gamma^\kappa_i,[\mathcal{D}_{\gamma^\kappa_i}])$ implies that the sphere $S := \mathcal{D}_{\gamma^\kappa_i} \# {W}^{\kappa}_i \# (-\mathcal{D}_{\gamma^\kappa_i})$ is contractible.  
Let $p_0:= \gamma_i^\kappa(0)$. The cylinder ${W}^{\kappa}_i$ can be thought of as a loop in the free loop space $\Lambda(S^2)$ starting and ending at the $\gamma_i^\kappa$, and we denote by $[{W}^{\kappa}_i]$ the element of $\pi_1(\Lambda(S^2),\gamma_i^\kappa )$ represented by ${W}^{\kappa}_i$. Let $\Omega_{p_0}(S^2)$ be the based loop space of $S^2$ with basepoint $p_0$.  From the fibration exact sequence for homotopy groups associated to the fibration $\Omega_{p_0}(S^2) \xhookrightarrow{} \Lambda(S^2) \to S^2 $ we know that the inclusion $\Omega_{p_0}(S^2) \xhookrightarrow{} \Lambda(S^2)$ induces an isomorphism of fundamental groups. It follows that we can homotopy ${W}^{\kappa}_i$ among loops in $\Lambda(S^2)$ starting and ending at $\gamma_i^\kappa$, to a loop $\breve{W}^{\kappa}_i$ completely contained in $\Omega_{p_0}(S^2)$. It is clear that the sphere $\breve{S} := \mathcal{D}_{\gamma^\kappa_i} \# \breve{W}^{\kappa}_i \# (-\mathcal{D}_{\gamma^\kappa_i})$ is contractible since it is homotopic to $S$. 

We will now use the isomorphism between $\pi_2(S^2 )$ and $\pi_1(\Omega(S^2))$, where $\Omega({S^2})$ is the based loop space of ${S^2}$. This isomorphism is not canonical, as it depends on the choice of loop in $S^2$ which will be the base point of $\pi_1(\Omega({S^2}))$ together with a disk in $S^2$ capping this loop. We choose the base point of $\pi_1(\Omega(S^2))$ to be the loop $\gamma^\kappa_i$ and the capping disk to be $\mathcal{D}_{\gamma^\kappa_i}$. The isomorphism between $\pi_1(\Omega({S^2}),\gamma^\kappa_i )$ and $\pi_2(S^2,p_0 )$ then identifies $[\breve{W}^\kappa_i] \in \pi_1(\Omega(S^2),\gamma^\kappa_i)$ with $[\breve{S}] \in \pi_2(S^2,p_0 )$. 
Since $[\breve{S}] = 0 \in \pi_2(S^2,p_0 ) $ we obtain that  $[\breve{W}^{\kappa}_i] =0  \in \pi_1(\Omega(S^2))$, which implies that $\breve{W}^{\kappa}_i$ is homotopic to the trivial cylinder over $\gamma^\kappa_i$ among cylinders which are positively and negatively asymptotic to $\gamma^\kappa_i$. Since, as we saw in the previous paragraph, ${W}^{\kappa}_i$ is homotopic to $\breve{W}^{\kappa}_i$ among cylinders which are positively and negatively asymptotic to $\gamma^\kappa_i$, it is also homotopic to the trivial cylinder over $\gamma^\kappa_i$ among cylinders positively and negatively asymptotic to $\gamma^\kappa_i$. This gives us the promised homotopy.


The graph lift of the homotopy between ${W}^{\kappa}_i$ and ${W}^{\kappa,i}_{\rm triv}$ gives a homotopy between $\overline{U}^\kappa_i$ and the trivial trivial cylinder $\overline{U}^{\kappa,i}_{\rm triv}:[0,2M-2] \times S^1 \to [0,2M-2] \times S^1 \times S^2 $ defined by $$\overline{U}^{\kappa,i}_{\rm triv}(s,t) = (s,t, \gamma^\kappa_i(t)).$$

The cylinders $\overline{U}^{\kappa,i}_{\rm triv}$ and $\overline{U}^{\kappa',j}_{\rm triv}$ are disjoint if either $\kappa\neq \kappa'$ or $i\neq j$, which implies that their algebraic intersection number is $0$. The homotopy between 
$\overline{U}^{\kappa}_i$ and $\overline{U}^{\kappa,i}_{\rm triv}$ and the homotopy between $\overline{U}^{\kappa'}_j$ and $\overline{U}^{\kappa',j}_{\rm triv}$ then imply that the algebraic intersection number of $\overline{U}^{\kappa}_i$ and $\overline{U}^{\kappa'}_j$ is $0$, if either  $\kappa\neq \kappa'$ or $i\neq j$.

Reasoning as in Step 2 of the proof of Theorem \ref{thm:disk0} we conclude that any intersection point of $\overline{U}^{\kappa}_i$ and $\overline{U}^{\kappa'}_j$ counts positively, from where we obtain that $\overline{U}^{\kappa}_i$ and $\overline{U}^{\kappa'}_j$ are disjoint. This implies that $\widetilde{u}^1_{\kappa,i}$ and  $\widetilde{u}^1_{\kappa',j}$ have no intersection, if either  $\kappa\neq \kappa'$ or $i\neq j$. 

Reasoning as in step 2 of the proof of Theorem \ref{thm:disk0} (see Section~\ref{sec:proofthmdisk} ) one obtains an isotopy between the braid $\mathcal{B}(\mathcal{Y}_\oplus)$ associated to $\mathcal{Y}_\oplus$, and the braid  $\mathcal{B}(\mathcal{Y}_\ominus)$ associated to $\mathcal{Y}_\ominus= \cup_{\kappa \in \mathrm{Spec}(\widetilde{\mathcal{Y}_\oplus)}} \cup^{n_\kappa}_{i=1} \{\sigma^\kappa_{\mathfrak{f}(i)}\}$. \qed

\subsection{Proof of Theorem \ref{thm:main} in case $\Sigma$ has positive genus and the elements of $\mathcal{Y}_\oplus$ are contractible.}

\ 

\textit{Step 1:}

We define $\mathrm{Spec}(\mathcal{Y}_\oplus):= \cup_{1 \leq i \leq k} \mathcal{A}_{H_\oplus}(\gamma_i)$. 
For each number $\kappa \in \mathrm{Spec}(\mathcal{Y}_\oplus)$ we denote by $\mathcal{Y}^\kappa_\oplus \subset \mathcal{Y}_\oplus$ the subset of elements of $\mathcal{Y}_\oplus$ whose action is $\kappa$. We denote by $n_\kappa$ the cardinality of $\mathcal{Y}^\kappa_\oplus $. We denote by $\{\gamma^\kappa_1,...,\gamma^\kappa_{n_\kappa}\}$ the elements of $\mathcal{Y}^\kappa_\oplus $.

We find the maps $\mathfrak{f}_\kappa: \{1,...,n_\kappa\} \to \{1,...,m_\kappa\}$ and $\mathfrak{g}_\kappa:\{1,...,n_\kappa\} \to \{1,...,n_\kappa\}$ as in Step 1 of the proof of Theorem \ref{thm:disk0}. We then find the periodic orbits $\sigma^\kappa_{\mathfrak{f}(i)}$ of $H_{\ominus}$.

Following the same reasoning in Step 3 of Section \ref{sec:proofsphere} we construct the cylinders $$\overline{U}^\kappa_i: [0,2M-2] \times S^1 \to [0,2M-2] \times S^1 \times \Sigma, $$ and $$W^\kappa_i: [0,2M-2] \times S^1 \to \Sigma.$$

We will show that the cylinders $\overline{U}^\kappa_i $ and $\overline{U}^{\kappa'}_j$ are disjoint if either $\kappa \neq \kappa'$ or $i \neq j$. Once we establish this, the Theorem will follow by a reasoning identical to the one in Step 2 of the proof of Theorem \ref{thm:disk0} (presented in Section~\ref{sec:proofthmdisk}).

\ 

\textit{Step 2:}

We fix natural numbers $\kappa, \kappa',i,j$, and let either $\kappa \neq \kappa'$ or $i \neq j$.
As explained in Section \ref{sec:proofsphere} the intersection points of $\overline{U}^\kappa_i $ and $\overline{U}^{\kappa'}_j$ always count positively. Therefore $\overline{U}^\kappa_i $ and $\overline{U}^{\kappa'}_j$ are disjoint if, and only if, their algebraic intersection number is $0$.

We reason  by contradiction assuming that $\overline{U}^\kappa_i $ and $\overline{U}^{\kappa'}_j$ intersect. Let $\widetilde{\Sigma}$ be the universal covering of $\Sigma$: this induces an obvious covering $[0,2M-2] \times S^1 \times \widetilde{\Sigma}$ of $ [0,2M-2] \times S^1 \times {\Sigma}$. It also induces an obvious covering $ S^1 \times \widetilde{\Sigma}$ of $S^1 \times {\Sigma}$.

We can thus find lifts $\widetilde{U}^\kappa_i(0) : [0,2M-2] \times S^1 \to [0,2M-2] \times S^1 \times \widetilde{\Sigma} $ and $\widetilde{U}^{\kappa'}_j(0) :[0,2M-2] \times S^1 \to [0,2M-2] \times S^1 \times \widetilde{\Sigma}$ of these cylinders to the covering $[0,2M-2] \times S^1 \to [0,2M-2] \times S^1 \times \widetilde{\Sigma}$ of $[0,2M-2] \times S^1 \to [0,2M-2] \times S^1 \times {\Sigma}$, which intersect each other. The existence of these lifts uses the fact that  $\overline{U}^\kappa_i $ and $\overline{U}^{\kappa'}_j$ are asymptotic to contractible periodic orbits of $H_\oplus$: if the orbits were non-contractible the lifts would not exist.

Associated to the covering $\widetilde{\Sigma}$ of $\Sigma$ there is the group $\Gamma$ of deck transformations on $\widetilde{\Sigma}$. The group   $\Gamma$  is also the group of deck transformations of the covering $[0,2M-2] \times S^1 \times \widetilde{\Sigma}$ of $[0,2M-2] \times S^1 \times {\Sigma}$, and of the covering $S^1 \times \widetilde{\Sigma}$ of $ S^1 \times {\Sigma}$. 

We denote by $\xi^{\oplus}_{\kappa,i}$ the knot in $S^1 \times \Sigma$ given by $$\xi^{\oplus}_{\kappa,i}:= \{(t,\gamma^\kappa_i(t) \ | \ t \in S^1\}.$$ In an identical way we define $\xi^{\oplus}_{\kappa',j}$.

The cylinder $\widetilde{U}^\kappa_i(0)$ is negatively asymptotic to a lift $\xi^{\oplus}_{\kappa,i}(0)$ of $\xi^{\oplus}_{\kappa,i}$ in $S^1 \times \widetilde{\Sigma}$. It is positively asymptotic to a lift $\xi^{\oplus}_{\kappa,i}(1)$ of $\xi^{\oplus}_{\kappa,i}$ in $S^1 \times \widetilde{\Sigma}$. 

Let $\mathcal{T}^\kappa_i \in \Gamma$ be the deck transformation such that  $$\mathcal{T}^\kappa_i(\xi^{\oplus}_{\kappa,i}(0))= \xi^{\oplus}_{\kappa,i}(1).$$
We now define for each $l\in \Z$ 
\begin{equation}
    \xi^{\oplus}_{\kappa,i}(l) = (\mathcal{T}^\kappa_i)^l(\xi^{\oplus}_{\kappa,i}(0)),
\end{equation}
and 
\begin{equation}
\widetilde{U}^\kappa_i(l)= (\mathcal{T}^\kappa_i)^l(\widetilde{U}^\kappa_i(0)).
\end{equation}
It is clear that $\widetilde{U}^\kappa_i(l)$ is negatively asymptotic to $\xi^{\oplus}_{\kappa,i}(l-1)$ and positively asymptotic to $\xi^{\oplus}_{\kappa,i}(l)$.

Using $\widetilde{U}^{\kappa'}_j(0)$ we define in a similar way $\mathcal{T}^{\kappa'}_j$ and the sequence of lifts $\xi^{\oplus}_{\kappa',j'}(l)$ and $\widetilde{U}^{\kappa'}_j(l)$ for $l\in \Z$.

\

\textit{Step 3:}

Since $\Sigma$ is a surface of positive genus its universal cover $\widetilde{\Sigma}$ is diffeomorphic to the plane. We can thus consider the lifts of $\xi^{\oplus}_{\kappa,i}$ and of $\xi^{\oplus}_{\kappa',j}$ to $S^1 \times \widetilde{\Sigma}$ as knots on the plane, and define their crossing number of a pair of knots $\mathrm{Cross}$ as in Section 5 of \cite{BraidFloer}. Using positivity of intersections the authors showed in Lemma 5.4 of \cite{BraidFloer} that because $\widetilde{U}^\kappa_i(0)$ and $\widetilde{U}^{\kappa'}_j(0)$ have positive intersection number,  the crossing numbers of $\mathrm{Cross}(\xi^{\oplus}_{\kappa,i}(0),\xi^{\oplus}_{\kappa',j}(0))$ and $\mathrm{Cross}(\xi^{\oplus}_{\kappa,i}(1),\xi^{\oplus}_{\kappa',j}(1))$ satisfy 
\begin{equation} \label{eq:crossing1}
    \mathrm{Cross}(\xi^{\oplus}_{\kappa,i}(0),\xi^{\oplus}_{\kappa',j}(0)) > \mathrm{Cross}(\xi^{\oplus}_{\kappa,i}(1),\xi^{\oplus}_{\kappa',j}(1)).
\end{equation}

Moreover using the fundamental Lemma 5.4 of \cite{BraidFloer} we obtain the following fact:
\begin{fact}  \label{factCross}
If $\widetilde{U}^\kappa_i(0)$ and $\widetilde{U}^{\kappa'}_j(0)$ intersect, then $$\mathrm{Cross}(\xi^{\oplus}_{\kappa,i}(l-1),\xi^{\oplus}_{\kappa',j}(l-1)) > \mathrm{Cross}(\xi^{\oplus}_{\kappa,i}(l),\xi^{\oplus}_{\kappa',j}(l)).$$

If $\widetilde{U}^\kappa_i(0)$ and $\widetilde{U}^{\kappa'}_j(0)$ do not intersect, then 
$$\mathrm{Cross}(\xi^{\oplus}_{\kappa,i}(l-1),\xi^{\oplus}_{\kappa',j}(l-1)) = \mathrm{Cross}(\xi^{\oplus}_{\kappa,i}(l),\xi^{\oplus}_{\kappa',j}(l)).$$
\end{fact}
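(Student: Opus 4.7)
The plan is to apply Lemma 5.4 of \cite{BraidFloer} to the pair $(\widetilde{U}^\kappa_i(l),\widetilde{U}^{\kappa'}_j(l))$ at each level $l$, combined with positivity of intersections. Lemma 5.4 expresses the algebraic intersection number of two cylinders in the universal cover of $S^1\times \Sigma$ with prescribed knot asymptotics as the difference of the crossing numbers of the asymptotic pairs of knots; applied here it yields
\[
[\widetilde{U}^\kappa_i(l)] \cdot [\widetilde{U}^{\kappa'}_j(l)] \;=\; \mathrm{Cross}(\xi^{\oplus}_{\kappa,i}(l-1),\xi^{\oplus}_{\kappa',j}(l-1)) - \mathrm{Cross}(\xi^{\oplus}_{\kappa,i}(l),\xi^{\oplus}_{\kappa',j}(l)).
\]

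First I would verify that each $\widetilde{U}^\kappa_i(l)$ is still pseudo-holomorphic for the lifted almost complex structure on $[0,2M-2]\times S^1\times \widetilde{\Sigma}$ coming from the $\widetilde{J}$ of Section \ref{sec:holcylinders}. Since deck transformations act biholomorphically on $\widetilde{\Sigma}$, applying $(\mathcal{T}^\kappa_i)^l$ to $\widetilde{U}^\kappa_i(0)$ preserves pseudo-holomorphicity, and likewise for $\widetilde{U}^{\kappa'}_j(l)$. Consequently every intersection point between $\widetilde{U}^\kappa_i(l)$ and $\widetilde{U}^{\kappa'}_j(l)$ contributes positively, so the algebraic and geometric intersection numbers coincide. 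This already yields both alternatives of the fact at the level $l$: when the two cylinders are disjoint, the crossing-difference vanishes; when they intersect, the difference is a strictly positive integer counting the actual intersection points.

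Finally, I would relate the intersection behaviour at level $l$ back to the level $0$ hypothesis. The composite cylinders $\overline{U}^\kappa_i$ and $\overline{U}^{\kappa'}_j$ in the base $[0,2M-2]\times S^1\times \Sigma$ have a fixed intersection set, and for each intersection point in that set the two lifts prescribed uniquely by the negative asymptote sitting at level $l-1$ are exactly $\widetilde{U}^\kappa_i(l)$ and $\widetilde{U}^{\kappa'}_j(l)$; this sets up a bijection between level-$l$ intersections of these lifts and level-$0$ intersections of $\widetilde{U}^\kappa_i(0)$ with $\widetilde{U}^{\kappa'}_j(0)$. The main obstacle is making this bijection rigorous: since $\mathcal{T}^\kappa_i$ and $\mathcal{T}^{\kappa'}_j$ are generically distinct, non-commuting elements of the deck group $\Gamma$ when $\Sigma$ has genus $\geq 2$, a naive ``translate by a common deck element'' argument fails, and one really has to exploit the concatenation structure of $\overline{U}^\kappa_i$ and $\overline{U}^{\kappa'}_j$ together with the uniqueness of lifts that is forced by fixing the asymptotic data.
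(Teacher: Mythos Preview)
Your first two paragraphs are correct and match the paper's approach exactly: Lemma~5.4 of \cite{BraidFloer} applied to the pair of holomorphic cylinders at level $l$ gives the crossing-difference formula, and positivity of intersections converts this into the inequality/equality alternative at that level. This is literally all the paper offers as justification for the Fact.

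Your third paragraph, however, is chasing a stronger statement than the application needs, and the obstacle you identify is real. The Fact as literally stated requires that intersection (or non-intersection) at level~$0$ propagates to every level~$l$. When $\mathcal{T}^\kappa_i \neq \mathcal{T}^{\kappa'}_j$ there is no deck element carrying the pair $(\widetilde U^\kappa_i(0),\widetilde U^{\kappa'}_j(0))$ to $(\widetilde U^\kappa_i(l),\widetilde U^{\kappa'}_j(l))$, and your proposed bijection via the concatenation structure would have to track lifts through arbitrary words in two non-commuting deck elements; I do not see how to carry that out, and the paper does not attempt it either.

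The point is that the subsequent argument uses only two weaker consequences, both of which your first two paragraphs already establish. First, the sequence $(\mathrm{Cross}(\xi^{\oplus}_{\kappa,i}(l),\xi^{\oplus}_{\kappa',j}(l)))_{l\in\Z}$ is non-increasing: this follows from Lemma~5.4 plus non-negativity of the intersection number at each level, with no reference to level~$0$. Second, it is strictly decreasing from $l=0$ to $l=1$: this is exactly \eqref{eq:crossing1}, coming directly from the level-$0$ hypothesis. These two facts are all that Case~(A) uses. In Case~(B) one has $\mathcal{T}^\kappa_i = \mathcal{T}^{\kappa'}_j$, so the full Fact does follow by equivariance, and that is what is invoked there. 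You should therefore drop the bijection attempt and record the weaker conclusion, which is what is actually proved and used.
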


We conclude from this fact that the sequence $(\mathrm{Cross}(\xi^{\oplus}_{\kappa,i}(l),\xi^{\oplus}_{\kappa',j}(l)))_{l\in \Z}$ is a non-increasing sequence of integers.

Endow $\Sigma$ with an auxiliary Riemannian metric $g$ and let $\widetilde{g}$ be the induced metric on the universal cover $\widetilde{\Sigma}$. Take a Riemannian metric $g_0$ on $S^1$ and consider $g'$ to be the product metric of $g_0$ and $\widetilde{g}$ on $S^1 \times \widetilde{\Sigma}$. This Riemannian metric induces a distance function $d$ on $S^1 \times \widetilde{\Sigma}$ which we will fix from now on. Remark that $\Gamma$ acts on $S^1 \times \widetilde{\Sigma}$ by isometries.

Fix a lift $\widetilde{\xi}^{\oplus}_{\kappa,i}$. There are only finitely many lifts of $\widetilde{\xi}^{\oplus}_{\kappa',j}$ to $S^1 \times \widetilde{\Sigma}$ which have non-zero crossing number with $\widetilde{\xi}^{\oplus}_{\kappa,i}$. The reason for this is that for a lift of $\widetilde{\xi}^{\oplus}_{\kappa',j}$ to have non-zero crossing number with $\widetilde{\xi}^{\oplus}_{\kappa,i}$, its projection to the the plane $\widetilde{\Sigma}$ must intersect the projection of $\widetilde{\xi}^{\oplus}_{\kappa,i}$ to $\widetilde{\Sigma}$: clearly, there are only finitely many lifts of $\widetilde{\xi}^{\oplus}_{\kappa',j}$ with this property.
It follows that there exists a constant $\mathrm{D}>0$ such that any lift $\widetilde{\xi}$ of ${\xi}^{\oplus}_{\kappa',j}$ such that $d(\widetilde{\xi}^{\oplus}_{\kappa,i},\widetilde{\xi}) > D$ satisfies $$\mathrm{Cross}(\widetilde{\xi}^{\oplus}_{\kappa,i},\widetilde{\xi})=0.$$

Because $\Gamma$ acts by isometry we obtain that this constant $D$ does not depend on the lift of ${\xi}^{\oplus}_{\kappa,i}$. This gives us the following fact:
\begin{fact} \label{factD}
There exists a constant $D>0$ such that if $\widehat{\xi}_{\kappa,i}$ is a lift of ${\xi}^{\oplus}_{\kappa,i}$ and $\widehat{\xi}_{\kappa',j}$ is a lift of ${\xi}^{\oplus}_{\kappa',j}$ such that $d({\xi}^{\oplus}_{\kappa',j},\widehat{\xi}_{\kappa',j})>D$ then $$ \mathrm{Cross}(\widehat{\xi}_{\kappa,i},\widehat{\xi}_{\kappa',j})=0.$$ 
\end{fact}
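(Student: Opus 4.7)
Fact \ref{factD} is essentially an unwinding of the observation recorded in the paragraph immediately preceding its statement, together with the $\Gamma$-invariance of the Riemannian distance $d$. The plan is to first establish the uniform bound for one chosen reference lift of $\xi^{\oplus}_{\kappa,i}$, and then upgrade to arbitrary lifts using the isometric $\Gamma$-action.

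First I would fix an arbitrary lift $\widetilde\xi_{\kappa,i}$ of $\xi^{\oplus}_{\kappa,i}$ and an arbitrary lift $\widetilde\xi_{\kappa',j}$ of $\xi^{\oplus}_{\kappa',j}$. The projections of their images to $\widetilde\Sigma \simeq \R^2$ are two compact subsets; since $\Gamma$ acts properly discontinuously on $\widetilde\Sigma$, only finitely many $\Gamma$-translates of the second compact set meet the first. For every $g\in\Gamma$ outside this finite bad set, the projections to $\widetilde\Sigma$ of $\widetilde\xi_{\kappa,i}$ and $g\cdot\widetilde\xi_{\kappa',j}$ are disjoint, so the two knots themselves are disjoint in $S^1\times\widetilde\Sigma$, and in particular their crossing number vanishes. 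I would then take $D_0$ to be the maximum of $d(\widetilde\xi_{\kappa,i},g\cdot\widetilde\xi_{\kappa',j})$ as $g$ ranges over the finite bad set, so that any lift of $\xi^\oplus_{\kappa',j}$ at distance strictly greater than $D_0$ from $\widetilde\xi_{\kappa,i}$ is automatically outside the bad set and therefore has vanishing crossing number with $\widetilde\xi_{\kappa,i}$.

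For uniformity in the choice of lift of $\xi^{\oplus}_{\kappa,i}$, I would use that any other lift has the form $h\cdot\widetilde\xi_{\kappa,i}$ for some $h\in\Gamma$. Since $\Gamma$ acts by isometries on $(S^1\times\widetilde\Sigma,g')$ and the crossing number is invariant under simultaneous deck translation, a distance bound $d(h\cdot\widetilde\xi_{\kappa,i},\widehat\xi_{\kappa',j})>D_0$ rewrites as $d(\widetilde\xi_{\kappa,i},h^{-1}\cdot\widehat\xi_{\kappa',j})>D_0$, the crossing number $\mathrm{Cross}(\widetilde\xi_{\kappa,i},h^{-1}\cdot\widehat\xi_{\kappa',j})$ vanishes by the previous step, and then $\mathrm{Cross}(h\cdot\widetilde\xi_{\kappa,i},\widehat\xi_{\kappa',j})=0$ as well. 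The constant $D:=D_0$ therefore works uniformly.

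There is no real obstacle: everything rests on properness of the $\Gamma$-action on the plane $\widetilde\Sigma$ and the $\Gamma$-invariance of the metric $g'$, both of which are built into the setup. The one point of caution is simply to interpret the distance condition $d(\cdot,\cdot)>D$ appearing in the statement consistently as a distance between the two lifts that enter the crossing-number evaluation, after which the argument above applies verbatim.
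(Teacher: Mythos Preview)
Your proof is correct and follows essentially the same two-step approach as the paper: first establish the constant for one fixed lift using that only finitely many $\Gamma$-translates of a compact set can meet a given compact set (the paper phrases this as ``finitely many lifts have intersecting projections to $\widetilde\Sigma$''), then upgrade to all lifts via the isometric $\Gamma$-action. Your remark about interpreting the distance in the statement as the distance between the two lifts that actually enter the crossing number is well taken---the paper's surrounding text confirms this is the intended reading.
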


The next fact follows easily from the observation above that there are only finitely many lifts of $\widetilde{\xi}^{\oplus}_{\kappa',j}$ to $S^1 \times \widetilde{\Sigma}$ which have non-zero crossing number with $\widetilde{\xi}^{\oplus}_{\kappa,i}$.
\begin{fact} \label{factbound}
Given any lift $\widetilde{\xi}^\oplus_{\kappa,i}$ of ${\xi}^{\oplus}_{\kappa,i}$ we have $$\max_{\widetilde{\xi} \mbox{ is a lift of } {\xi}^{\oplus}_{\kappa',j} }\{ |\mathrm{Cross}(\widetilde{\xi}^\oplus_{\kappa,i},\widetilde{\xi})|  \}< +\infty. $$ 
\end{fact}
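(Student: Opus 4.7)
The plan is to reduce the question to a finite computation by combining Fact \ref{factD} with the proper discontinuity of the action of the deck transformation group $\Gamma$ on $S^1 \times \widetilde{\Sigma}$. Fix a lift $\widetilde{\xi}^\oplus_{\kappa,i}$ once and for all, and also fix an auxiliary lift $\widehat{\xi}_0$ of $\xi^\oplus_{\kappa',j}$. Since any other lift of $\xi^\oplus_{\kappa',j}$ has the form $T(\widehat{\xi}_0)$ for a unique $T \in \Gamma$, it suffices to bound $|\mathrm{Cross}(\widetilde{\xi}^\oplus_{\kappa,i}, T(\widehat{\xi}_0))|$ uniformly in $T$.

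The first step is to apply Fact \ref{factD}: the crossing number $\mathrm{Cross}(\widetilde{\xi}^\oplus_{\kappa,i}, T(\widehat{\xi}_0))$ vanishes whenever $d(\widetilde{\xi}^\oplus_{\kappa,i}, T(\widehat{\xi}_0)) > D$. Therefore only those $T \in \Gamma$ for which $T(\widehat{\xi}_0)$ meets the closed $D$-neighbourhood $N_D(\widetilde{\xi}^\oplus_{\kappa,i})$ of $\widetilde{\xi}^\oplus_{\kappa,i}$ can contribute a nonzero term. The second step is to observe that the set of such $T$ is finite. Indeed, $\widetilde{\xi}^\oplus_{\kappa,i}$ and $\widehat{\xi}_0$ are compact subsets of $S^1 \times \widetilde{\Sigma}$, so $K_1 := N_D(\widetilde{\xi}^\oplus_{\kappa,i})$ and $K_2 := \widehat{\xi}_0$ are both contained in compact sets; since $\Gamma$ acts by isometries and properly discontinuously on $S^1 \times \widetilde{\Sigma}$, the set $\{T \in \Gamma \mid T(K_2) \cap K_1 \neq \emptyset\}$ is finite.

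The final step is simply to note that for each of these finitely many $T$, the crossing number $\mathrm{Cross}(\widetilde{\xi}^\oplus_{\kappa,i}, T(\widehat{\xi}_0))$ is a well-defined integer (the two knots being compact and, after a small generic perturbation if necessary, meeting in the requisite sense only finitely often). Taking the maximum over a finite collection of integers produces a finite number, which proves the fact.

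The main obstacle, if any, is conceptually minor: it is the verification that the $\Gamma$-action on $S^1 \times \widetilde{\Sigma}$ is properly discontinuous with respect to the chosen product metric, which then implies that only finitely many $\Gamma$-translates of the compact set $\widehat{\xi}_0$ can meet the bounded neighbourhood $N_D(\widetilde{\xi}^\oplus_{\kappa,i})$. This is immediate from the fact that $\Gamma$ acts properly discontinuously on $\widetilde{\Sigma}$ (as the deck transformation group of a covering of a closed surface) and that the product action on $S^1 \times \widetilde{\Sigma}$ inherits this property.
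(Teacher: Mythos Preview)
Your proof is correct and follows essentially the same approach as the paper. The paper's argument is a one-liner: it cites the observation (made just before Fact~\ref{factD}) that only finitely many lifts of $\xi^{\oplus}_{\kappa',j}$ have nonzero crossing number with a fixed lift of $\xi^{\oplus}_{\kappa,i}$, and then the maximum over finitely many integers is finite. You re-derive that same finiteness via Fact~\ref{factD} together with proper discontinuity of the $\Gamma$-action, which is just a repackaging of the paper's projection argument; the content is the same.
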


\ 

\textit{Step 4:}

We now have to deal with two cases:
\begin{itemize}
    \item[(A)] The deck transformations $\mathcal{T}^{\kappa}_i$ and $\mathcal{T}^{\kappa'}_j$ do not coincide.
    \item[(B)] The deck transfomations $\mathcal{T}^{\kappa}_i$ and $\mathcal{T}^{\kappa'}_j$ coincide.
\end{itemize}
We start treating the case (A).

\textit{Case (A):}

Since $\mathcal{T}^{\kappa}_i$ and $\mathcal{T}^{\kappa'}_j$ do not coincide, we can find a positive integer $N>0$ such that $$d((\mathcal{T}^{\kappa}_i)^l(\xi^{\oplus}_{\kappa,i}(0)),(\mathcal{T}^{\kappa'}_j)^l(\xi^{\oplus}_{\kappa',j}(0))) > D \mbox{ for all } l\geq N,$$
and 
$$d((\mathcal{T}^{\kappa}_i)^l(\xi^{\oplus}_{\kappa,i}(0)),(\mathcal{T}^{\kappa'}_j)^l(\xi^{\oplus}_{\kappa',j}(0))) > D \mbox{ for all } l\leq -N,$$
where $D>0$ is the constant given by Fact \ref{factD}.

In particular, we conclude that 
$$ \mathrm{Cross}(\xi^{\oplus}_{\kappa,i}(N),\xi^{\oplus}_{\kappa',j}(N))= \mathrm{Cross}(\xi^{\oplus}_{\kappa,i}(-N),\xi^{\oplus}_{\kappa',j}(-N))=0.$$ 

But, as observed above, Fact \ref{factCross} implies that the sequence \linebreak $(\mathrm{Cross}(\xi^{\oplus}_{\kappa,i}(l),\xi^{\oplus}_{\kappa',j}(l)))_{l\in \Z}$ is a non-increasing sequence of integers. We conclude that for every integer $-N \leq p \leq N$ we have $$ \mathrm{Cross}(\xi^{\oplus}_{\kappa,i}(p),\xi^{\oplus}_{\kappa',j}(p))=0. $$
But this is in contradiction with \eqref{eq:crossing1} which we derived from the fact that  $\widetilde{U}^\kappa_i(0)$ and $\widetilde{U}^{\kappa'}_j(0)$ have positive intersection number. This concludes the proof in case (A).

\textit{Case (B):}

In this case we let $\mathcal{T}:= \mathcal{T}^{\kappa}_i = \mathcal{T}^{\kappa'}_j$. Since $\widetilde{U}^\kappa_i(0)$ and $\widetilde{U}^{\kappa'}_j(0)$ have positive intersection the same is true for $\widetilde{U}^\kappa_i(l) = \mathcal{T}^l(\widetilde{U}^\kappa_i(0))$ and $\mathcal{T}^l(\widetilde{U}^{\kappa'}_j(l))$ for all $l \in \Z$.
Lemma 5.4 of \cite{BraidFloer} then implies that for all $l \in \Z$
\begin{equation} \label{eq:crossing}
    \mathrm{Cross}(\xi^{\oplus}_{\kappa',j}(l-1),\xi^{\oplus}_{\kappa',j}(l-1)) > \mathrm{Cross}(\xi^{\oplus}_{\kappa',j}(l),\xi^{\oplus}_{\kappa',j}(l)).
\end{equation}

It is then clear that the sequence $(\mathrm{Cross}(\xi^{\oplus}_{\kappa,i}(l),\xi^{\oplus}_{\kappa',j}(l)))_{l\in \Z}$ is unbounded. But this is in contradiction with Fact \ref{factbound}. This establishes the theorem in case (B), and concludes the proof of Theorem \ref{thm:main} under the hypothesis the $\Sigma$ has positive genus and $\mathcal{Y}_\oplus$ is formed by contractible $1$-periodic orbits of $H_\oplus$.  \qed

\subsection{Proof of Theorem \ref{thm:main} in case $\Sigma$ has genus $\geq 2$ and the elements of $\mathcal{Y}_\oplus$ are non-contractible.}

\

\textit{Outline of the proof:}

The proof in this case is identical to the proof of Theorem \ref{thm:disk0}. The reason for this is that if $\gamma$ is a non-contractible loop in $\Sigma$ then any cylinder in $\Sigma$ which is both positively and negatively asymptotic to $\gamma$ is homotopic to the trivial cylinder over $\gamma$: this is the case because $\Sigma$ is atoroidal since it has genus $\geq 2$. 

We use the same notation as in the proof of Theorem \ref{thm:disk0}.
We then construct for each $\kappa \in \mathrm{Spec}(\mathcal{Y}_\oplus)$ and $i\in \{1,...,n_\kappa\}$ the cylinder $\overline{U}^\kappa_i$ as in the proof of Theorem \ref{thm:disk0} and obtain from the previous observation that it is homotopic to the trivial cylinder $\overline{U}^{\kappa,i}_{\rm triv}:[0,2M-2] \times S^1 \to [0,2M-2] \times S^1 \times S^2 $ defined by $$\overline{U}^{\kappa,i}_{\rm triv}(s,t) = (s,t, \gamma^\kappa_i(t)).$$ 
Once this is obtained the proof of the present case of Theorem \ref{thm:main} follows the arguments presented in Step 2 of the the proof of Theorem \ref{thm:disk0} (presented in Section \ref{sec:proofthmdisk}). \qed

\subsection{Proof of Theorem \ref{thm:main} in case $\Sigma = T^2$ and the elements of $\mathcal{Y}_\oplus$ are non-contractible.}

\

\textit{Outline of the proof:} 

The proof in this case is similar to the one presented in Section \ref{sec:proofsphere} for the case $\Sigma= S^2$. Namely, we cannot guarantee by purely topological reasons that the cylinder $\overline{U}^\kappa_i$ is homotopic to the trivial cylinder $\overline{U}^{\kappa,i}_{\rm triv}$ among cylinders which are positively and negatively asymptotic to $\gamma^\kappa_i$, since $T^2$ is toroidal.

However, using an argument identical to the one presented in Step 3 of Section \ref{sec:proofsphere}, we can show that if $[\mathrm{Cyl}_{\gamma^\kappa_i}]$ is a choice of capping for $\gamma^\kappa_i$, then the cylinder ${W}^\kappa_i$ (which is the projection of $\overline{U}^\kappa_i$ to $T^2$) is a cylinder from $(\gamma_i^\kappa,[\mathrm{Cyl}_{\gamma^\kappa_i}])$ to itself. It follows from this that ${W}^\kappa_i$ represents the trivial element in $\pi_1(\Omega_{[\gamma^\kappa_i]}(T^2,\gamma^\kappa_i))$, where $\Omega_{[\gamma^\kappa_i]}(T^2,\gamma^\kappa_i)$ is the connected component of the loop space of $T^2$ that contains $\gamma^\kappa_i$. We conclude that ${W}^\kappa_i$ is homotopic to the trivial cylinder ${W}^{\kappa,i}_{\rm triv}$ over $\gamma^\kappa_i$. Lifting this homotopy we obtain the desired homotopy between $\overline{U}^\kappa_i$  and the trivial cylinder $\overline{U}^{\kappa,i}_{\rm triv}$ among cylinders which are positively and negatively asymptotic to $\gamma^\kappa_i$. Once this is obtained the proof is completed by using the argument of Step 2 of the proof of Theorem \ref{sec:proofthmdisk} (presented in Section \ref{sec:proofthmdisk}). \qed

\section{A variant of Theorems \ref{thm:main} and \ref{thm:disk}\label{sec:replacing_assumption}}

We explain in this section why the assumption of the orbits to be $100\epsilon$-separated in the main theorems can be replaced by a slightly weaker but more technical assumption of being $100\epsilon$-quasi-isolated. 

\begin{mainthma}{\ref{thm:main}*}\label{thm:maina}
The statement in Theorem \ref{thm:main} holds true if the assumption that $\mathcal{Y}_{\oplus}$ is $100\epsilon$-isolated is replaced by the assumption that $\mathcal{Y}_{\oplus}$ is $100\epsilon$-quasi-isolated for some pair $(H_{\oplus}, J_{\oplus})$, where $J_\oplus$ is a $S^1$ family of compatible almost complex structures. 
\end{mainthma}

\begin{mainthma}{\ref{thm:disk}*}\label{thm:diska}
The statement in Theorem \ref{thm:disk} holds true if the assumption that $\mathcal{Y}_{\oplus}$ is $100\epsilon$-isolated is replaced by the assumption that $\mathcal{Y}_{\oplus}$ is $100\epsilon$-quasi-isolated for some pair $(H_{\oplus}, J_{\oplus})$, where $J_\oplus$ is a $S^1$ family of compatible almost complex structures. 
\end{mainthma}

The proofs of Theorems \ref{thm:main} and $\ref{thm:disk}$ also apply here with small modifications. 
\begin{proof}[Proof of Theorem \ref{thm:diska}]
Let $\phi_{\oplus}$, $H_{\oplus}$, $\mathcal{Y}_{\oplus}$ and $J_{\oplus}$ as in the theorem for some $\epsilon>0$, and let $\phi_{\ominus}$ be non-degenerate Hamiltonian diffeomorphism with $d_{\hofer}(\phi_{\ominus}, \phi_{\oplus}) < \epsilon$.  
As in the proof of Theorem \ref{thm:disk} one writes  $\mathcal{Y}_{\oplus}$ as a disjoint union of sets $\mathcal{Y}_{\oplus}^{\kappa}$ of orbits $\gamma\in \mathcal{Y}_{\oplus}$ with  $\mathcal{A}_{H_{\oplus}}(\gamma) = \kappa$. 
One then considers for each $\kappa$ with $\mathcal{Y}_{\oplus}^{\kappa} \neq \emptyset$ the $\Z_2$ vector space  $\mathcal{B}_{\mathcal{Y}^{\kappa}_{\oplus}}$ generated by $\mathcal{Y}_{\oplus}^{\kappa}$ as subcomplexes of $CF^{(\kappa-100\epsilon, \kappa + 100\epsilon)}(H_{\oplus})$, as done in Section \ref{sec:quasi-iso}. 
Proposition \ref{prop:quasi-isoD} gives then $H_{\ominus}$ generating $\phi_{\ominus}$ and  homomorphisms $\Psi^{\mathcal{Y}}_G: \mathcal{B}_{\mathcal{Y}^{\kappa}_{\oplus}} \to CF^{(\kappa - 99\epsilon, \kappa + 99 \epsilon)}(H_{\ominus})$ and 
$\Psi^{\mathcal{Y}}_{\widehat{G}}:CF^{(\kappa - 99\epsilon, \kappa + 99 \epsilon)}(H_{\ominus}) \to \mathcal{B}_{\mathcal{Y}^{\kappa}_{\oplus}}$, with $\Psi^{\mathcal{Y}}_{\widehat{G}}\circ \Psi^{\mathcal{Y}}_G = \id$.
One then can proceed in the same way as in the proof of Theorem \ref{thm:disk}, using the maps
$\Psi^{\mathcal{Y}}_G$ and $\Psi^{\mathcal{Y}}_{\widehat{G}}$ instead of the maps $\Psi_G$ and $\Psi_{\widehat{G}}$. 
Note that the condition that $|\mathcal{A}_{H_{\oplus}}(\gamma) - \mathcal{A}_{H_{\oplus}}(\gamma')|\notin(0,100\epsilon)$ for $\gamma, \gamma'\in \mathcal{Y}_{\oplus}$  guarantees that the first part of Claim 2 in the proof of Theorem \ref{thm:disk} will again hold. 

\end{proof}

\begin{proof}[Proof of Theorem \ref{thm:maina}]
Here again, the proof of Theorem \ref{thm:main} applies with the same  small modification. Instead of the homomorphisms $\Psi_{G}$ and $\Psi_{\widehat{G}}$ that are guaranteed, for an $100\epsilon$-isolated set of orbits with the same action, by Proposition \ref{prop:actionwindowsclosedsurface} ($\Sigma$ is a closed surface) resp.  
Proposition \ref{prop:chainisosphere} ($\Sigma = S^2$), one uses the homomorphims $\Psi^{\mathcal{Y}}_{G}$ and $\Psi^{\mathcal{Y}}_{\widehat{G}}$ that are given in Proposition \ref{prop:quasi_iso} resp. Proposition \ref{prop:quasi-isoS}, for an $100\epsilon$-quasi-isolated set of orbits of the same action.

\end{proof}


\section{Proof of Theorem \ref{thm:nondeg_stable}}\label{sec:nondeg_stable}

In this section we deduce Theorem \ref{thm:nondeg_stable} from Theorems \ref{thm:maina} and \ref{thm:diska}. 

We start the with the following proposition. Consider first $\Sigma$ to be a closed surface. And let  $H:\Sigma \times S^1  \to \R$ be a normalized Hamiltonian, and $\phi = \phi^1_H$ the diffeomorphism generated by $H$. 
Let $\mathcal{Y}= \{\gamma_1, \ldots, \gamma_k\}$ be a collection of pairwise freely homotopic, non-degenerate $1$-periodic orbits for $H$. 

\begin{prop}\label{prop:first_perturb}
There are open disks $U_i\subset \Sigma$, $i=1, \ldots,n$ with $\gamma_i(0)\in U_i$, $i=1, \ldots, k$,  $\varepsilon'>0$, a non-degenerate Hamiltonian $H'$ that generates a diffeomorphism $\phi'= \phi^1_{H'}$, and a $S^1$-family of compatible almost complex structures $J'=J'_t$ with the property that  
\begin{itemize}
\item $d_{\hofer}(\phi', \phi) < {\varepsilon'}$,
\item $\phi'|_{U_i} = \phi|_{U_i}$, $i=1, \ldots k$, 
\item $\mathcal{Y}$ is $200\varepsilon'$- quasi-isolated for $(H', J')$ 
\end{itemize}
\end{prop}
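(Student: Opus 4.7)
The strategy is to choose $H'$ as a $C^\infty$-small perturbation of $H$ that coincides with $H$ on fixed tubular neighbourhoods of the orbit graphs in $S^1 \times \Sigma$, and then to verify $200\varepsilon'$-quasi-isolation by combining the discreteness of actions within $\mathcal{Y}$ with a Floer compactness estimate. Since distinct $1$-periodic orbits of $X_H$ give pairwise disjoint embedded circles $\Gamma_i := \{(t, \gamma_i(t)) : t \in S^1\} \subset S^1 \times \Sigma$, one first picks mutually disjoint tubular neighbourhoods $V_i$ of $\Gamma_i$ so thin that the flow of $X_H$ starting in $U_i := V_i \cap (\{0\}\times\Sigma)$ remains inside $V_i$ up to time $1$ and that $\gamma_i$ is the only $1$-periodic orbit of $H$ whose graph meets $V_i$ (the latter using non-degeneracy of $\gamma_i$). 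Considering normalized perturbations $H' = H + K$ with $K$ supported in the complement of $\bigcup_i V_i$, one has $H' = H$ on each $V_i$, so each $\gamma_i$ is still a $1$-periodic orbit of $H'$ with unchanged action and capping data, and $\phi_{H'}^1|_{U_i} = \phi_H^1|_{U_i}$. The standard Floer--Hofer--Salamon transversality argument, in its relative version with $K$ constrained to vanish on $\bigcup_i V_i$, produces such $K$ with $\|K\|_{\hofer}$ arbitrarily small for which $H'$ is non-degenerate; I then pick a generic $S^1$-family $J' = J'_t$ of compatible almost complex structures making $\mathcal{F}_{H',J'}$ regular.

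The technical heart of the proof is a uniform positive lower bound on the energies of non-trivial Floer cylinders asymptotic to some $\gamma_i$. The claim is that there is a constant $c_* > 0$, depending only on $H$ and $\mathcal{Y}$ (and a fixed reference almost complex structure), such that for every admissible $K$ in a suitable $C^\infty$-neighbourhood of $0$ and every regular admissible $J'$ nearby, every non-constant Floer cylinder of $(H + K, J')$ asymptotic to some $\gamma_i$ has energy at least $c_*$. I would argue this by contradiction via Floer--Gromov compactness: a sequence $u_n$ of non-trivial cylinders for $(H + K_n, J'_n)$ with $E(u_n) \to 0$ and $K_n \to 0$ in $C^\infty$ would subsequentially converge to a broken Floer cylinder of total energy zero, whose components are all trivial cylinders over $1$-periodic orbits. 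Non-degeneracy of each $\gamma_i$ for $H$, together with $H + K_n = H$ on $V_i$, yields a $C^0$-neighbourhood $\mathcal{W}_i$ of $\gamma_i$ in the loop space (independent of $n$ large) inside which $\gamma_i$ is the unique $1$-periodic orbit of every $H + K_n$; the implicit function theorem applied to $\mathcal{F}_{H + K_n, J'_n}$ near the trivial cylinder over $\gamma_i$ then forces $u_n$ itself to be trivial for $n$ large, contradicting the choice of $u_n$. Making this compactness argument uniform in the perturbation, rather than only at a single fixed $(H',J')$, will be the main obstacle.

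Granting the uniform lower bound, set $\delta_0 := \min\{\Delta_H(\gamma_i, \gamma_j) : \Delta_H(\gamma_i, \gamma_j) > 0\}$, with the convention $\delta_0 := +\infty$ when all $\Delta_H(\gamma_i, \gamma_j)$ vanish; in either case $\delta_0 > 0$. Shrink $K$ further, still within the regime of validity of $c_*$, so that $d_{\hofer}(\phi, \phi') \leq \|K\|_{\hofer} < \min(\delta_0, c_*)/200$, and pick any $\varepsilon' \in (d_{\hofer}(\phi, \phi'), \min(\delta_0, c_*)/200]$. The first two conclusions of the proposition then hold by construction, and $\mathcal{Y}$ is $200\varepsilon'$-quasi-isolated for $(H', J')$ because condition (a) of Definition~\ref{defn:quasi-isolation} reads $\Delta_{H'}(\gamma_i, \gamma_j) = \Delta_H(\gamma_i, \gamma_j) \in \{0\} \cup [\delta_0, \infty) \subset \{0\} \cup [200\varepsilon', \infty)$, while condition (b) holds because any non-constant Floer cylinder of $(H', J')$ asymptotic to some $\gamma_i$ has energy at least $c_* \geq 200\varepsilon'$.
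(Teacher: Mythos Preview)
Your overall strategy coincides with the paper's: perturb $H$ to a non-degenerate $H'$ while keeping $H'=H$ on tubular neighbourhoods $V_i$ of the orbit graphs, then establish condition (b) of quasi-isolation by a compactness argument showing a uniform positive lower bound on the energy of non-trivial Floer cylinders asymptotic to some $\gamma_i$. Condition (a) and the first two bullets are handled exactly as you say.

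The gap is in the compactness step. You invoke Floer--Gromov convergence to a \emph{broken Floer cylinder} for the limiting pair $(H,J)$, and then the implicit function theorem near the trivial cylinder over $\gamma_{i_0}$. Both of these are problematic here. First, standard SFT-type compactness producing a broken limit requires non-degeneracy of the limiting Hamiltonian, but $H$ is only assumed non-degenerate along $\mathcal{Y}$; the positive asymptotes of the $u_n$ may accumulate on degenerate $1$-periodic orbits of $H$, and then there is no clean broken-cylinder limit to speak of. Second, even granting $C^\infty_{\mathrm{loc}}$-closeness of $u_n$ to the trivial cylinder over $\gamma_{i_0}$, the IFT statement you cite controls solutions with the \emph{same} asymptotics in a weighted Sobolev space; it does not by itself force a cylinder whose positive end is some other orbit of $H+K_n$ to be globally trivial. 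You flag this step as ``the main obstacle'', but the proposed resolution does not close it.

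The paper's argument avoids both issues by replacing global compactness and IFT with a purely local argument. One chooses $U_i$ small enough that $\gamma_i(0)$ is the only fixed point of $\phi$ in $\overline{U_i}$. Given a hypothetical sequence $u_j$ for $(H_j,J_j)\to(H,J)$ with $E(u_j)\to 0$ and $u_j$ negatively asymptotic to $\gamma_{i_0}$, the positive asymptote is a different orbit, so the path $s\mapsto u_j(s,0)$ must cross $\partial U_{i_0}$ at some $s_j$; after translating $s_j=0$ one may assume $u_j(0,0)\to x_0\in\partial U_{i_0}$. A bubbling argument gives uniform $C^1$-bounds (zero-energy bubbles are excluded), so a subsequence converges in $C^\infty_{\mathrm{loc}}$ to some $u$ with $\mathcal{F}_{H,J}(u)=0$ and $E(u)=0$. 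Such a $u$ is necessarily a trivial cylinder over a $1$-periodic orbit of $H$, and it passes through $x_0$, so $\phi(x_0)=x_0$; this contradicts the choice of $U_{i_0}$. The point is that only $C^\infty_{\mathrm{loc}}$-compactness is needed, and non-degeneracy of $H$ away from $\mathcal{Y}$ is never used. If you replace your broken-cylinder/IFT paragraph by this exit-point argument, your proof goes through.
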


\begin{proof}
Note that the orbits in $\mathcal{Y}$ are topologically isolated among all the $1$-periodic orbits for $H$. 
It is a variant of the standard argument for the genericity of non-degenerate Hamiltonian diffeomorphisms that for sufficiently small neigbourhoods $V_i$ of $\{(t,\gamma_i(t))\, |\,  t \in S^1 \}$ in $S^1 \times \Sigma$, there is a sequence of non-degenerate Hamiltonians $H_j:S^1 \times \Sigma \to \R$, $j\in \N$, with  $H_j|_{V_i} = H|_{V_i}$, for all $i=1, \ldots, k$, $j\in \N$, and such that $H_j$ converge to $H$ in $C^{\infty}$. We may choose the $V_i$ such that $U_i := {\rm pr}_2(V_i \cap (\{0\} \times \Sigma ))\subset \Sigma$ are discs, where ${\rm pr}_2$ is the projection to the second coordinate, and such that $\gamma_i(0)$ is the only point $z\in U_i$ with $\phi(z)= z$.
We can choose a sequence $J_j$, $j\in \N$,  of $S^1$ families of compatible almost complex structures on $\Sigma$ such that the pairs $(H_j,J_j)$ are regular, and such that $J_j$ converges with all derivatives to a $S^1$-family of compatible almost complex structures $J$.  

We claim that there is $\varepsilon'>0$ and  $j\in \N$, such that $H':=H_j$ and $J'=J_j$  satisfy the asserted properties from the proposition.
The condition $a)$ in the definition of $\varepsilon'$-quasi-isolation is clearly satisfied for any sufficiently small $\varepsilon'$, hence if the above does not hold, then one can pass to a subsequence of $H_j$ for which there are non-constant $u_{j}: \R \times S^1\to M$ that satisfy $\mathcal{F}_{H_{j},J_{j}} (u_j)= 0$ such that $u_j$ are positive or negative asymptotic to some $\gamma_i$, and for which $E(u_j) \to 0$. Here the energies of $u_j$ are defined with respect to $J_j$.  W.l.o.g and by passing further to a subsequence we may assume that $u_{j}$ are all negatively  asymptotic to $\gamma_{i_0}$ for some fixed $i_0\in  \{1, \ldots, k\}$. For any non-constant Floer trajectory, the periodic orbits that appear as its negative and positive asymptotics do not coincide, so for any $j\in \N$, ${u_j}$ is positively asympotic to some $1$-periodic orbit for $H_j$ which is different from $\gamma_{i_0}$.  
It follows that there is $s_j$ such that $u_j(s_j,0)$ lies in the boundary $\partial U_{i_0}$ of $U_{i_0}$. We may assume that $s_j = 0$ for all $j \in \N$,  and, by passing to a further subsequence, that $u_j(0,0)$ converges to some $x_0 \in \partial U_{i_0}$. 
The first derivatives of $u_j$ are uniformely bounded. Otherwise,  by a bubbling-off argument, since $E(u_j) \to 0$, one will find a non-constant holomorphic sphere with zero energy, a contradiction. With this, one proves, as it is standard in Floer theory,  that up to a further subsequence, $(u_j)_{j\in \N}$ uniformely converges with all their derivatives on compact subsets. Any limit curve will satisfy $\mathcal{F}_{H,J}(u) = 0, E(u) = 0$. In particular for such $u$ we have that $\lim_{s\to \infty}u(s,t) = \lim_{s\to -\infty}u(s,t)$ is a periodic orbit for $H$.  But by the above we in particular find such $u$ with $u(0,0) = x_0\in \partial U_{i_0}$, which contradicts the fact that $\phi(z) \neq z$ for all $z \in \partial U_{i_0}$.
\end{proof}

We have the following variant of the above proposition. For that consider  $\Sigma = \D$. Let $H \in \mathcal{H}_0(\D)$ and $\phi \in \Ham_0(\D)$ generated by $H$. 
Let $\mathcal{Y}= \{\gamma_1, \ldots, \gamma_k\}$ be a collection of non-degenerate $1$-periodic orbits for $H$. For $0< \rho< 1$ we denote by $\D_{1-\rho}$ the set $\{ (r,\theta) \in \D \, |\, 1-\rho> r \geq 0\},$ where $(r,\theta)$ denote the polar coordinates. 
\begin{prop}\label{prop:first_perturbD}
There are open disks $U_i\subset \D$, $i=1, \ldots,n$ with $\gamma_i(0)\in U_i$, $i=1, \ldots, k$,  and $\varepsilon'>0$, $c>0$, $\rho_0>0$ such that for each $0 < \rho< \rho_0$ there is a non-degenerate Hamiltonian $H' \in \mathcal{H}_c(\D)$  that generates a diffeomorphism $\phi'= \phi^1_{H'}\in \Ham_c(\D)$, and there is a $S^1$-family of compatible almost complex structures $J'=J'_t$ with the property that  
\begin{itemize}
\item $d_{C^2}(H|_{\D_{1-\rho}}, H'|_{\D_{1-\rho}}) < {\varepsilon'}$,
\item $\phi'|_{U_i} = \phi|_{U_i}$, $i=1, \ldots k$, 
\item there are no periodic orbits of $\phi'$ in $\D \setminus \D_{1-\rho}$,
\item $\mathcal{Y}$ is $200\varepsilon'$- quasi-isolated for $(H', J')$ 
\end{itemize}
\end{prop}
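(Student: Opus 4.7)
The plan is to reduce to the setting of Proposition \ref{prop:first_perturb} by first modifying $H$ in a collar of $\partial \D$ to make it admissible with an irrational slope, and then applying the same perturbation scheme used on closed surfaces. More precisely, since $H \in \mathrm{Ham}_0(\D)$ vanishes near the boundary, there is $\rho_1>0$ such that $H \equiv 0$ on an open neighbourhood of $\D \setminus \D_{1-\rho_1}$. Pick some irrational $c \in \R_+ \setminus 2\pi \Q$ small enough so that the autonomous Hamiltonian $\frac{c}{2}(r^2-1)$ has no $1$-periodic orbits in its relevant domain other than the fixed point at the origin (which in any case will not interfere with what follows), and choose $\rho_0 < \rho_1$. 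For $0< \rho <\rho_0$, let $\beta:[0,1]\to [0,1]$ be smooth, equal to $0$ on $[0,1-\rho_1]$ and equal to $1$ on $[1-\rho,1]$, and define a time-independent admissible modification $H_0$ of $H$ by replacing $H$ on the collar with $\beta(r)\cdot \frac{c}{2}(r^2-1)$; then $H_0 \in \mathcal{H}_c(\D)$, $H_0 = H$ on $\D_{1-\rho}$, and all $1$-periodic orbits of $H_0$ contained in $\D \setminus \D_{1-\rho}$ come from the rotation piece, so by irrationality of $c$ there are none in the open collar up to $r=1$.

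Next I would carry out the perturbation argument of Proposition \ref{prop:first_perturb} directly on $H_0$, working inside the interior of $\D$. Namely, the orbits $\gamma_1,\dots,\gamma_k \subset \D_{1-\rho}$ are topologically isolated among all $1$-periodic orbits of $H_0$; choose small tubular neighbourhoods $V_i$ of their graphs in $S^1 \times \D_{1-\rho}$ and open disks $U_i := \mathrm{pr}_2(V_i \cap (\{0\}\times \D))$ on which $\gamma_i(0)$ is the unique fixed point of $\phi$. Standard transversality for compactly supported perturbations (cf.\ \cite{FHS}) produces a sequence $H_j \in \mathcal{H}_c(\D)$ converging to $H_0$ in $C^\infty$ with the following properties: each $H_j$ agrees with $H_0$ on each $V_i$ and on the collar $\D \setminus \D_{1-\rho}$, and each $H_j$ is non-degenerate (we can ensure the perturbations are supported in a compact set of $\D_{1-\rho}$ away from the $V_i$). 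Likewise pick regular $S^1$-families $J_j$ of compatible almost complex structures converging smoothly to some $J$.

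The main step is to show that $\mathcal{Y}$ is $200\varepsilon'$-quasi-isolated for $(H_j,J_j)$ for some $j$ and some $\varepsilon' >0$. Condition (a) of Definition \ref{defn:quasi-isolation} follows by choosing $\varepsilon'$ small enough, so the issue is condition (b): otherwise there would exist non-constant Floer cylinders $u_j$ for $(H_j,J_j)$ asymptotic to some $\gamma_{i_0}\in \mathcal{Y}$ with $E(u_j)\to 0$. Reparametrising in $s$ so that $u_j(0,0)\in \partial U_{i_0}$, the maximum principle (which holds here because $J_j$ is complex multiplication near $\partial \D$ and $\gamma_{i_0}$ lies in $\D_{1-\rho}$) confines the image of $u_j$ to $\D$; uniform energy bounds preclude bubbling; then standard Gromov--Floer compactness yields, after passing to a subsequence, a solution $u$ of $\mathcal{F}_{H_0,J}(u)=0$ with $E(u) = 0$ and $u(0,0)\in \partial U_{i_0}$. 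But $E(u)=0$ forces $u(s,t) = \eta(t)$ for some $1$-periodic orbit $\eta$ of $H_0$ with $\eta(0)\in \partial U_{i_0}$, contradicting the fact that $\gamma_{i_0}(0)$ is the only fixed point of $\phi$ in $\overline{U_{i_0}}$. Hence for large $j$ the triple $(H':=H_j,J':=J_j,\varepsilon'>0)$ works; since the perturbation can be taken $C^2$-small and supported in $\D_{1-\rho}$, the first two bullets are automatic, and the third follows from the admissible structure on the collar. The main obstacle is the compactness argument in the quasi-isolation step, which requires carefully ensuring that the Floer cylinders for the perturbed pairs stay in $\D$ and do not escape to the boundary collar; this is handled by the maximum principle combined with the asymptotic constraint.
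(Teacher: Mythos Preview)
Your approach is essentially the same as the paper's: pass from $H\in\mathcal{H}_0(\D)$ to an admissible Hamiltonian in $\mathcal{H}_c(\D)$ with $c\notin 2\pi\Q$, then approximate by non-degenerate Hamiltonians in $\mathcal{H}_c(\D)$ that agree with $H$ near the suspensions of the $\gamma_i$, and derive quasi-isolation from the same compactness/contradiction argument as in Proposition~\ref{prop:first_perturb}.

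Two small points to tighten. First, your claim ``$H_0=H$ on $\D_{1-\rho}$'' is not quite what your construction gives: with $\beta$ transitioning on $[1-\rho_1,1-\rho]$ you only get $H_0=H$ on $\D_{1-\rho_1}$, and $H_0=\beta(r)\tfrac{c}{2}(r^2-1)$ on the annulus $\{1-\rho_1<r<1-\rho\}\subset\D_{1-\rho}$. This is harmless for the $C^2$-closeness bullet provided you choose $c$ small and keep the transition width $\ge \rho_1-\rho_0>0$ bounded below, but it should be stated that way. Second, and more substantively, the statement asks for a single $\varepsilon'$ valid for \emph{all} $\rho<\rho_0$, while your argument, as written, fixes $\rho$ first and then produces $\varepsilon'$ and the perturbation index $j$; a priori these depend on $\rho$. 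The paper avoids this by running the whole approximation as one sequence $H_j\in\mathcal{H}_c(\D)$ with collar width $1/j$ and showing the quasi-isolation constant stabilises for $j>j_0$. In your setup the fix is equally easy: the limiting zero-energy cylinder in your contradiction argument lands in $\D_{1-\rho_1}$, where $H_0\equiv H$ independently of $\rho$, so a diagonal argument over $\rho$ and $j$ yields a uniform $\varepsilon'$.
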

\begin{proof}
Since the orbits in $\mathcal{Y}$ are non-degenerate, they are contained in the interior of the support of $H$. Hence we can find for sufficiently small $c>0$,  sets $U_i$, $i=1,\ldots, k$ and a sequence $H_j\in \mathcal{H}_c(\D)$ with  $$d_{C^2}(H_j|_{\D_{1-\frac{1}{j}}}, H|_{\D_{1-\frac{1}{j}}}) \to 0$$ as $j\to \infty$, and that coincide with $H$ on a sufficiently small neighbourhood of the suspension of $\gamma_1, \ldots, \gamma_k$,  and we can find a sequence of compatible almost complex structures $J_j$ such that the pairs $(H_j,J_j)$ are regular, and such that $J_j$ converges to a compatible almost complex structure $J$. We can additionally assume that there are no $1$-periodic orbits for $H_j$ in the complement of $\D_{1-\frac{1}{j}}$.  

Now, similarly as above, we claim that there is $\varepsilon'>0$ and $j_0 \in \N$, such that for all $j>j_0$, 
$\mathcal{Y}$ is $200\varepsilon'$- quasi-isolated for $(H_j,J_j)$. 
If this is not the case then there is subsequence of  $H_j$ and a sequence of $u_{j}: \R \times S^1\to \R^2$ such that  $\mathcal{F}_{H_{j},J_{j}} (u_j)= 0$, $E(u_j) \to 0$, and w.l.o.g $u_{j}$ are all negative asymptotic to $\gamma_{i_0}$ for some fixed $i_0\in  \{1, \ldots, k\}$. As before, this gives rise to a contradiction. 
\end{proof}

\begin{proof}[Proof of Theorem \ref{thm:nondeg_stable}]

We start with the case that $\Sigma$ is a closed surface. 
Let $H_{\oplus}, \phi_{\oplus}, \mathcal{Y}_{\oplus} = \{\gamma_1, \ldots, \gamma_k\}$ given as in the theorem. 

We apply Proposition \ref{prop:first_perturb} with $H=H_{\oplus}$, $\mathcal{Y} = \mathcal{Y}_{\oplus}$, and obtain $\varepsilon'$ as well as a Hamiltonian  $H'$ and a $S^1$ family of compatible almost complex structures $J'$, such that the conclusions of the proposition hold for $\varepsilon', J', H'$ and $\mathcal{Y}$. 

Apply now Theorem \ref{thm:main}* with now $
H_{\oplus} = H'$, $J_{\oplus} = J'$, $\mathcal{Y}_{\oplus} = \mathcal{Y}$  $\epsilon = 2\varepsilon'$. Since any ball $B\in \Ham(\Sigma, \omega)$ of radius $2\varepsilon'$ with a center that lies in the ball $B'$ of radius $\varepsilon'$ contains $B'$, the  conclusion of Theorem   \ref{thm:main}* imply now Theorem \ref{thm:nondeg_stable} in the case of  closed surfaces.

 Let now $\Sigma = \D$. Let $H_{\oplus} \in \mathcal{H}_0(\D)$ non-degenerate and $\phi_{\oplus}$ generated by $H_{\oplus}$.  Let $\mathcal{Y}_{\oplus}$ be a collection of non-degenerate $1$-period orbits for $H_{\oplus}$. 
 We apply Proposition \ref{prop:first_perturbD} with $H= H_{\oplus}$, $\mathcal{Y}= \mathcal{Y}_{\oplus}$ and obtain $\varepsilon'>0$, $c>0$ $\rho_0>0$, such that for each $0<\rho<\rho_0$ there is a pair $(H', J')$ such that the conclusions of the proposition hold. 
 
 Let $\epsilon= 2\varepsilon'$. Let $\phi \in \Ham_0(\D)$ be non-degenerate in its support with 
 $d_{\hofer}(\phi, \phi_{\oplus}) < \epsilon$. 
 Then there is $\rho_1$ such that $\supp \phi\in \D_{1-\rho_1}$. 
 Choose $0< \rho_2< \min\{\rho_0, \rho_1\}$. 
 By a sufficiently $C^2$ small approximation of $H'$ we can  now find a non-degenerate Hamiltonian $H_{\ominus} \in \mathcal{H}_{c}(\D^2)$ that generates a diffeomorphism $\phi_{\ominus}\in \Ham_c(\D^2)$ such that 
 $F(t,p) := H_\ominus(t,(\phi_\oplus^t)(p)) - H_{\oplus}(t, (\phi_\oplus^t)(p))$ that generates $\phi_{\oplus}^{-1} \circ \phi_\ominus$ satisfies 
 $\int_{0}^1 (\max F_t - \min F_t) \, dt < \epsilon$, and that all the $1$-periodic orbits for $H_{\ominus}$ are also $1$-periodic orbits of $H'$.   In particular we have $d_{\hofer}(\phi_{\ominus}, \phi_{\oplus})< \epsilon$.
 
 Now apply Theorem \ref{thm:diska} with $\epsilon$ from above, and now $H_{\oplus} = H'$, $J_{\oplus}= J'$, $\mathcal{Y}_{\oplus} = \mathcal{Y}$, and note that one can choose in the conclusions of the theorem  $H_{\ominus}$ as above. Hence we obtain a collection of non-degenerate $1$-periodic orbits $\mathcal{Y}_{\ominus}$ such that  $$\mathcal{B}(\mathcal{Y}_\ominus) \mbox{ is freely isotopic as a braid to } \mathcal{B}(\mathcal{Y}_\oplus).$$  Since all $1$-periodic orbits for $H_{\ominus}$ are already $1$-periodic orbits for $H'$, the conclusion follows. 
 

\end{proof}

\appendix


\section{Combinatorial lemma} \label{appendix:comblemma}
In this appendix we prove Lemma \ref{lem:combinatorial}. For the convenience of the reader we restate it here. 
\begin{lem*} 
Let $V$ and $R$ be finite dimensional $\Z_2$-vector spaces whose dimensions we denote by $n$ and $m$, respectively. Let $\{v_1,...,v_{n}\}$ and $\{r_1,...,r_{m}\}$ be basis of respectively $V$ and $W$, and $\mathfrak{F}: V \to W$ and $\mathfrak{G}: W \to V $ be linear maps such that $\mathfrak{G} \circ \mathfrak{F}$ is an isomorphism. Then, it is possible to find an injective map $\mathfrak{f}: \{1,...,n\} \to \{1,...,m\}$ and a bijective map $\mathfrak{g}: \{1,...,n\} \to \{1,...,n\}$ such that for each $i\in \{1,...,n\}$:
\begin{itemize}
    \item  the element $r_{\mathfrak{f}(i)}$ appears in $\mathfrak{F}(v_i) $, and the element $v_{\mathfrak{g}(i)}$ appears in $\mathfrak{G}(r_{\mathfrak{f}(i)}) $.
\end{itemize}
\end{lem*}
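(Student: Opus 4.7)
The plan is to translate the statement into a condition on the determinant of $\mathfrak{G} \circ \mathfrak{F}$ over $\Z_2$ and apply the Cauchy--Binet formula. Write $F = (F_{ji})$ for the matrix of $\mathfrak{F}$ and $G = (G_{ij})$ for the matrix of $\mathfrak{G}$ in the given bases, so that $\mathfrak{F}(v_i) = \sum_j F_{ji}\, r_j$ and $\mathfrak{G}(r_j) = \sum_i G_{ij}\, v_i$. Then the condition ``$r_{\mathfrak{f}(i)}$ appears in $\mathfrak{F}(v_i)$'' is equivalent to $F_{\mathfrak{f}(i),i} = 1$, and ``$v_{\mathfrak{g}(i)}$ appears in $\mathfrak{G}(r_{\mathfrak{f}(i)})$'' to $G_{\mathfrak{g}(i),\mathfrak{f}(i)} = 1$. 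The hypothesis that $\mathfrak{G} \circ \mathfrak{F}$ is an isomorphism says $\det(GF) = 1 \in \Z_2$.

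By the Cauchy--Binet formula,
$$\det(GF) = \sum_{\substack{S \subset \{1,\ldots,m\} \\ |S| = n}} \det(G_S)\,\det(F_S),$$
where $G_S$ denotes the $n \times n$ submatrix of $G$ with columns indexed by $S$, and $F_S$ the $n \times n$ submatrix of $F$ with rows indexed by $S$. Since the left-hand side equals $1$, at least one term $\det(G_S)\det(F_S)$ equals $1$ in $\Z_2$, and I would fix such an $S$ with both $\det(F_S) = 1$ and $\det(G_S) = 1$.

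Next, I would apply the Leibniz expansion of the determinant over $\Z_2$ to $F_S$ and $G_S$: the equality $\det(F_S) = 1$ forces the existence of a bijection $\tau \colon \{1,\ldots,n\} \to S$ with $F_{\tau(i),i} = 1$ for every $i$, and $\det(G_S) = 1$ yields a bijection $\sigma \colon \{1,\ldots,n\} \to S$ with $G_{i,\sigma(i)} = 1$ for every $i$. Setting $\mathfrak{f} := \tau$, viewed as an injection $\{1,\ldots,n\} \to \{1,\ldots,m\}$ with image $S$, and $\mathfrak{g} := \sigma^{-1} \circ \tau$, which is a bijection $\{1,\ldots,n\} \to \{1,\ldots,n\}$, one checks directly that $F_{\mathfrak{f}(i),i} = 1$ and $G_{\mathfrak{g}(i),\mathfrak{f}(i)} = G_{\sigma^{-1}(\tau(i)),\tau(i)} = 1$ for every $i$, completing the argument.

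The only real obstacle is finding the correct reformulation; once the problem is cast in terms of the Cauchy--Binet expansion of $\det(GF)$, everything reduces to the permutation expansion of the determinant and no further combinatorial input (such as Hall's marriage theorem) is required.
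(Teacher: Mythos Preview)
Your proof is correct and takes a genuinely different route from the paper's.

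The paper proves the lemma via an auxiliary inductive statement (Lemma~\ref{lem:prelim}): given linearly independent vectors $w_1,\dots,w_n$ in a space $A$ with basis $e_1,\dots,e_{n+k}$ and a transverse complement $Z$, one can choose an injection $\iota$ so that $e_{\iota(j)}$ appears in $w_j$ and the coordinate subspace $\langle e_{\iota(1)},\dots,e_{\iota(n)}\rangle$ stays transverse to $Z$. This is established by induction on $n$ using successive projections onto coordinate hyperplanes. The lemma is then applied twice: first with $A=R$, $w_i=\mathfrak{F}(v_i)$, $Z=\ker\mathfrak{G}$ to produce $\mathfrak{f}$ together with the transversality that makes $\mathfrak{G}|_{\langle r_{\mathfrak{f}(1)},\dots,r_{\mathfrak{f}(n)}\rangle}$ an isomorphism, and then again with $A=V$, $w_i=\mathfrak{G}(r_{\mathfrak{f}(i)})$, $Z=\{0\}$ to produce $\mathfrak{g}$.

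Your Cauchy--Binet argument is both shorter and more conceptual: it extracts the subset $S$ and both bijections $\tau,\sigma$ in one stroke from the nonvanishing of a single term in the double expansion of $\det(GF)$. What the paper's approach buys is that it is entirely self-contained and avoids invoking a named identity; what yours buys is brevity and a clearer reason \emph{why} the result holds, namely that the combinatorics is already encoded in the permutation expansion of the determinant. Both arguments work over any field, not just $\Z_2$, since ``appears'' simply means the corresponding matrix entry is nonzero.
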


We start introducing some terminology. Given a $\Z_2$vector space $B$ of dimension $n$ and a basis $\{b_1,...,b_n\}$ of $B$, we say  we say that an element $b_i$ of the base $\{b_1,...,b_{n}\}$ appears in an element $b \in B$, if this element appears when we write $b$ in an unique way as a sum of elements of $\{b_1,...,b_{n}\}$. For each $i \in \{1, \ldots, n\}$, we let $H^B_i = \langle b_1, \ldots, b_{i-1}, b_{i+1}, \ldots, b_{n} \rangle$ the subspace generated by all but the $i$-th basis vector and $pr_i : B \to  H^B_i$ be the projection onto $H^B_i$ which satisfies $pr_i(b_i)=0$. The element $b_i$ appears in $b$, if and only if, $b \notin H^B_i$. 

To prove Lemma \ref{lem:combinatorial} we need the following preliminary lemma.

\begin{lem} \label{lem:prelim}
Let $n\geq 1$, $k\geq 0$. Let $A$ be a $n+k$-dimensional vector space with basis $e_1, \ldots e_{n+k}$. 

Let $w_1, \ldots w_n \subset V$ be linearly independent vectors of $A$ and denote $W =\langle w_1, \ldots, w_n\rangle$. Let $Z \subset V$ be a $k$-dimensional linear subspace that is transverse to $W$, i.e.  $W\cap Z = \{0\}$ and $Z \oplus W = A$.
 
Then there is an injective function $\iota:\{1, \ldots, n\} \to \{1, \ldots, n+k\}$  such that 
for $L= \langle w_{\iota(1)}, \ldots, w_{\iota(n)}\rangle$
\begin{enumerate}
\item $e_{\iota(j)}$ appears in $w_j$, 
\item $Z$ is transverse to the $n$-dimensional vector space $L$. 
\end{enumerate}

\end{lem}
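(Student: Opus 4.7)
The plan is to translate both requirements into conditions on two complementary minors of a single invertible matrix, and then use Laplace expansion to see that they can be satisfied simultaneously. First a small remark about the statement: the formula for $L$ must contain a typo, since $\iota$ takes values up to $n+k$ while only $n$ vectors $w_j$ are given; I read it throughout as $L = \langle e_{\iota(1)}, \ldots, e_{\iota(n)}\rangle$, which is the only interpretation that can interact nontrivially with transversality.

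Next, I would encode the data as matrices over the underlying field (e.g.\ $\mathbbm{Z}_2$): let $M$ be the $(n+k)\times n$ matrix whose $j$-th column is the vector $w_j$ expressed in the basis $\{e_i\}$, and let $N$ be an $(n+k)\times k$ matrix whose columns form a basis of $Z$. The hypothesis $W\oplus Z = A$ is precisely the assertion that the concatenated square matrix $C := [M \mid N]$ is invertible. Applying Laplace expansion of $\det C$ along the first $n$ columns yields
$$ \det C \;=\; \sum_{S} \varepsilon_S \, \det(M_S)\,\det(N_{S^c}), $$
where $S$ ranges over $n$-element subsets of $\{1,\ldots,n+k\}$, $M_S$ is the $n\times n$ submatrix of $M$ with rows indexed by $S$, and $N_{S^c}$ the complementary $k\times k$ submatrix of $N$. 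Since $\det C \neq 0$, at least one summand is nonzero, so there exists an $n$-subset $S\subset\{1,\ldots,n+k\}$ with $\det(M_S) \neq 0$ and $\det(N_{S^c}) \neq 0$.

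These two non-vanishings deliver the two required properties. Unpacking $\det(M_S)\neq 0$ by its Leibniz expansion produces a bijection $\iota:\{1,\ldots,n\}\to S$, which we view as an injection into $\{1,\ldots,n+k\}$, with every entry $M_{\iota(j),j}$ nonzero --- this is precisely condition (1), that $e_{\iota(j)}$ appears in the expansion of $w_j$. Meanwhile $\det(N_{S^c})\neq 0$ says exactly that the projection of $A$ onto the coordinate subspace indexed by $S^c$ restricts to an isomorphism on $Z$; the kernel of that projection is $L=\langle e_i : i\in S\rangle = \langle e_{\iota(1)},\ldots,e_{\iota(n)}\rangle$, so $L\cap Z = \{0\}$, and the dimension count $\dim L + \dim Z = n+k$ upgrades this to $L\oplus Z = A$, which is condition (2). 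The proof is essentially an observation; the only point worth verifying is that a nonzero determinant always has at least one nonzero monomial in its Leibniz expansion, which is immediate over any field.
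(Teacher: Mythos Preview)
Your reading of the typo is right: the paper's own argument and its application to Lemma~\ref{lem:combinatorial} both make clear that $L$ is meant to be $\langle e_{\iota(1)},\ldots,e_{\iota(n)}\rangle$.

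Your proof is correct and takes a genuinely different route from the paper's. The paper argues by induction on $n$: in the inductive step it isolates a basis vector $e_{i_n}$ lying outside the hyperplane $Q=\langle w_1,\ldots,w_{n-1}\rangle\oplus Z$ and appearing in $w_n$, then projects everything to $H_{i_n}$ and invokes the hypothesis. Your argument instead packages both conditions into the non-vanishing of a single determinant and reads off $\iota$ from one nonzero term of its Laplace/Leibniz expansion. This is cleaner and shorter, and it makes transparent why the two conditions can be met \emph{simultaneously}: they correspond to complementary minors in one invertible matrix. The paper's inductive proof, by contrast, is more hands-on and constructive --- it actually builds $\iota$ one coordinate at a time --- which could in principle be mined for an algorithm, but at the cost of more bookkeeping. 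Both arguments work over any field; yours also makes the role of the transversality hypothesis $W\oplus Z=A$ completely explicit as $\det C\neq 0$.
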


\begin{proof}

For each $i \in \{1, \ldots, k+n\}$, let $H_i = \langle e_1, \ldots, e_{i-1}, e_{i+1}, \ldots, e_{n+k} \rangle$ be the subspace generated by all but the $i$-th basis vector and $pr_i : A \to  H_i$ the canonical projection. 

We give a proof by induction on $n$. 
For that start with the case $n=1$, $k\geq 0$. 
Take a maximal set of basis vectors $E= \{e_{j_1}, \cdots, e_{j_l}\}$ such that $w=w_1\in \cap_{i=1}^l H_{j_l}=:\widehat{H}$. 
Since $w \notin Z$, we have $\widehat{H} \notin Z$, hence there is a basis vector $e_{i_1} \notin E$ with $e_{i_1} \notin Z$. 
By the construction of $E$, $w \notin H_{i_1}$, hence we can define $\iota$ by  $\iota(w) = i_1$. 

Assume now the statement holds for some $n-1\geq 1$, $k\geq 0$. 
Let $w_1, \ldots w_{n}$ and $Z$ satisfy the assumptions of the lemma. 
Consider the $n+k-1$ dimensional subspace  $Q = \langle w_1, \ldots,w_{n-1} \rangle  \oplus Z\subset V$. 
Then $\langle w_n\rangle \cap Q = \{0\}$, and as above there is a basis vector $e_{i_n}\notin Q$ with 
$w_n \notin H_{i_n}$. 
Consider $Z'= pr_{i_n}(Z)$ and $w'_i = pr_{i_n}(w_i)$ for $i=1, \ldots, n-1$. Let $W':= \langle w'_1, \ldots, w'_{n-1}\rangle$. 
Since $e_{i_n}\notin Q$ we have that $Z'$ is $k$-dimensional,  $W'$ is $n-1$ dimensional, 
and $Z' \cap  W' = \{0\}$.  By the induction hypothesis there is an injective function 
$\iota':\{1, \ldots {n-1}\} \to \{1, \ldots, n+k\}\setminus\{i_n\}$  such that 
for $L':= \langle w'_{\iota'(1)}, \ldots, w_{\iota'(n)}\rangle$, 
\begin{itemize}
\item $w'_j \notin \langle \{e_1, \ldots e_{k+n}\} \setminus \{e_{i_n}, e_{\iota'(j)}\} \rangle$, $j=1, \ldots n-1$,\footnote{This is equivalent to the fact that $e_{\iota'(j)}$ appears in $w'_j$, for the space $H_{i_n}$ with the basis $ \{e_1, \ldots e_{k+n}\} \setminus \{e_{i_n} \}$.} 
\item $L'\cap Z' = \{0\}$. 
\end{itemize}
Note, that by definition also $w'_j \notin \langle \{e_1, \ldots e_{k+n}\} \setminus \{ e_{\iota'(j)}\} \rangle = H_{\iota'(j)}$. 
Define now $\iota:\{1, \ldots, n\} \to \{1, \ldots, n+k\}$ by 
\[
\iota(i) =\begin{cases}  \iota'(i), &\text{ if } i = 1, \ldots n-1 \\ i_n, &\text{ if } i=n  \end{cases}
\]
Since for $i=1, \cdots, n-1$, , $w'_i = pr_{i_n}(w_i)$, we also have $w_i \notin H_{\iota(i)}$. 
Also $L\cap Z = \{0\}$, with $L = L'\oplus \langle e_{i_n}\rangle$. 
Indeed, let  $v\in L \cap Z$, then  $pr_{i_n}(v)=0 $ since it belongs to the intersection $ L' \cap Z' = \{0\}$. It follows that $v \in \langle e_{i_n}\rangle$, and since $e_{i_n} \notin Z$, one has $v = 0$. 

\end{proof}
We now apply Lemma \ref{lem:prelim} to prove Lemma \ref{lem:combinatorial}. 

\ 

\textit{Proof of Lemma \ref{lem:combinatorial}:} \\
By the hypothesis of the lemma, it is clear that $\mathfrak{F}$ is injective and thus $m \geq n$.
Apply first Lemma \ref{lem:prelim} with $ A = R$, $w_1 = \mathfrak{F}(v_1), \ldots, w_n = f(v_n)$ and $Z = \ker(\mathfrak{G})$. 
This gives an injective map $\mathfrak{f}: \{1, \cdots, n \} \to \{1, \cdots, m\}$  with $r_{\mathfrak{f}(i)}$ appearing in $\mathfrak{F}(v_i)$ and  such that in particular $r_{\mathfrak{f}(1)}, \cdots, r_{\mathfrak{f}(n)}$ generate a subspace $C$ that is transverse to the kernel of $\mathfrak{G}$, and hence 
$\mathfrak{G}|_C: C \to V$ is an isomorphism.

Apply Lemma \ref{lem:prelim} again, now with $A=V$ and $w_1 = \mathfrak{G}(r_{\mathfrak{f}(1)}), \ldots, w_n = \mathfrak{G}(r_{\mathfrak{f}(n)})$ and $Z = \{0\}$. 
This gives $\mathfrak{g}:\{1, \cdots, n\} \to \{1\cdots, n\}$ bijective, such that $v_{\mathfrak{g}(i)}$ appears in $\mathfrak{G}(r_{\mathfrak{f}(i)})$. 
\qed

\section{Approximation of entropy by the entropy of braid types} \label{appendix:approx}

Recall that if $\varphi: \Sigma \to \Sigma$ is a diffeomorphism on a compact surface $\Sigma$ and $\mathcal{P}$ a periodic orbit and $\overline{P}\subset \Sigma$ the associated set of periodic points. Then  $\Gamma_{\pi_1}([\varphi, \mathcal{P}])$ denotes the growth rate of the induced action of $\varphi$ on the fundamental group of $\Sigma \setminus \overline{\mathcal{P}}$, see section \ref{sec:braids}. 
Also recall that $\Gamma_{\pi_1}([\varphi, \mathcal{P}]) \leq h_{\topo}(\varphi)$. 

The aim of this section is to give a proof of the following result.  

\begin{thm}\label{thm:approximation}
Let $\Sigma$ be a compact surface, and let $\varphi: \Sigma\to \Sigma$ be a diffeomorphism such that $h:=h_{\topo}(\varphi)>0$. 
Then, for any $\epsilon>0$ there is a hyperbolic periodic orbit $\mathcal{P}$ of $\varphi$ such that
$\Gamma_{\pi_1}([\varphi,\mathcal{P}])> h - \epsilon$. 
\end{thm}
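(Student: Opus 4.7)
The plan is to combine Katok's horseshoe approximation of topological entropy with Thurston--Nielsen theory to extract a single hyperbolic periodic orbit whose braid type captures most of the entropy.

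First I would apply Katok's theorem on the approximation of topological entropy of $C^{1+\alpha}$ surface diffeomorphisms by hyperbolic horseshoes. Since $\varphi$ is smooth and $h_{\topo}(\varphi)=h>0$, for any $\delta>0$ there exists a compact $\varphi$-invariant hyperbolic (basic) set $\Lambda \subset \Sigma$ such that, for some iterate $N$, the restriction $\varphi^N|_\Lambda$ is topologically conjugate to a subshift of finite type with
\[
h_{\topo}(\varphi|_\Lambda) > h - \delta/2.
\]
In particular $\Lambda$ carries a Markov partition and contains an abundance of hyperbolic periodic orbits, whose exponential growth rate equals $h_{\topo}(\varphi|_{\Lambda})$.

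Next, I would invoke the Katok--Mendoza technology (developed in the supplement to Katok's paper and the appendix of Hasselblatt--Katok) together with the Franks--Handel extraction procedure to pick a hyperbolic periodic orbit $\mathcal{P} \subset \Lambda$, of suitably large period, whose symbolic itinerary visits the Markov rectangles in a combinatorially ``rich'' pattern. The purpose of this choice is to force the Thurston--Nielsen canonical form of the mapping class $[\varphi] \in \mathcal{M}(\Sigma, \overline{\mathcal{P}})$ to contain a pseudo-Anosov component whose dilatation $\lambda$ satisfies
\[
\log \lambda \, > \, h_{\topo}(\varphi|_\Lambda) - \delta/2.
\]
Concretely, one chooses $\mathcal{P}$ so that the shadowing / specification properties of $\Lambda$ force any isotopy of $\varphi$ rel $\overline{\mathcal{P}}$ to still exhibit the full shift-type complexity detected on $\Lambda$; this prevents the Thurston--Nielsen reduction from collapsing all the complexity into reducible components.

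Finally, for any pseudo-Anosov (or Thurston--Nielsen canonical form with a pseudo-Anosov piece) the topological entropy equals $\log \lambda$ and, by a classical computation of Fathi--Laudenbach--Poénaru, coincides with the growth rate $\Gamma_{\pi_1}$ of the induced automorphism on $\pi_1(\Sigma \setminus \overline{\mathcal{P}})$. Thus
\[
\Gamma_{\pi_1}([\varphi,\mathcal{P}]) \, \geq \, \log \lambda \, > \, h_{\topo}(\varphi|_\Lambda) - \delta/2 \, > \, h - \delta,
\]
and setting $\delta = \epsilon$ proves the theorem.

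The main obstacle is the second paragraph: one must rule out that the Thurston--Nielsen canonical form of $[\varphi,\mathcal{P}]$ is too reducible, which would cause the entropy of the braid type to drop strictly below $h_{\topo}(\varphi|_\Lambda)$. Handling this requires selecting $\mathcal{P}$ carefully relative to the Markov partition of $\Lambda$ (so that the orbit ``fills'' the horseshoe in a sufficiently transitive way), and then using the shadowing and isotopy-stability results of Franks--Handel to transfer the symbolic entropy of $\Lambda$ into a lower bound for the pseudo-Anosov dilatation of the canonical form rel $\overline{\mathcal{P}}$.
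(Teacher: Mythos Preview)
Your high-level strategy matches the paper's: apply Katok's horseshoe theorem to obtain a basic set with entropy close to $h$, then extract a single periodic orbit using Franks--Handel-type arguments. But the second paragraph is where the entire proof lives, and your proposal does not actually carry it out.

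The paper does \emph{not} pass through Thurston--Nielsen theory or pseudo-Anosov dilatations at all; it explicitly notes that the identity $\Gamma_{\pi_1}([f,\mathcal{Q}]) = h_{\topo}([f,\mathcal{Q}])$ is not used. Instead, it constructs an explicit periodic orbit $\mathcal{Q}$ in the Katok--Mendoza horseshoe with a carefully engineered symbolic itinerary (each of the ``middle'' Markov rectangles $V_j$, $2\le j\le m-1$, is visited in a specific pattern sandwiched between the two extreme rectangles $V_1$ and $V_m$), and then bounds $\Gamma_{\pi_1}([\varphi^n,\mathcal{Q}])$ \emph{directly} via an intersection-number argument in the universal cover of $\Sigma\setminus\overline{\mathcal{Q}}$. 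One builds geodesic rectangles $\hat L_j$ in the punctured surface from the four chosen points of $\mathcal{Q}$ in $H_j$, shows that the image under $\varphi^n$ of each lifted rectangle crosses horizontally all $m-2$ of the others, and deduces that a suitable arc $\tau$ satisfies $I(\varphi^{kn}(\tau),\mathcal{E})\ge (m-2)^k$. This yields $\Gamma_{\pi_1}([\varphi^n,\mathcal{Q}])\ge\log(m-2)$, which after unwinding the iterate gives $\Gamma_{\pi_1}([\varphi,\mathcal{P}])>h-\epsilon$.

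Your gap is exactly the one you flag as the ``main obstacle'': you invoke ``the Franks--Handel extraction procedure'' as if it already delivered an orbit whose braid-type entropy is within $\delta/2$ of $h_{\topo}(\varphi|_\Lambda)$. But Franks--Handel only produce an orbit with \emph{positive} $\Gamma_{\pi_1}$; the quantitative refinement matching the horseshoe entropy is precisely the new content here, and it requires the explicit choice of symbolic word and the intersection-count estimate above. Without specifying the orbit and running that argument (or an honest alternative, e.g.\ a direct comparison of the transition matrix of the induced train-track map with that of the subshift), your proposal is an outline of what must be done rather than a proof.
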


This is variant of a celebrated result of Katok first announced in \cite{Katok1984}, see also  \cite[Supplement]{Hasselblatt-Katok},  about the approximation of the topological entropy by the entropies on locally maximal hyperbolic invariant sets on which the dynamics is conjugated to a subshift of finite type, and in fact the orbits $\mathcal{P}$ of Theorem $\ref{thm:approximation}$ can be found inside those sets.
To our knowledge there is no proof of Theorem \ref{thm:approximation} in the literature, so we include it here. In \cite{FranksHandel1988} it was proved that there is a collection of orbits for which the exponential growth of $\varphi$ relative to it is positive. We apply the strategy in \cite{FranksHandel1988} to the hyperbolic horseshoes that arise from Katok and Mendoza's results in \cite[Supplement]{Hasselblatt-Katok}.  

We recall relevant results from \cite[Suppl.]{Hasselblatt-Katok} which are applications of the theory of non-uniformly hyperbolic dynamics, or  Pesin theory, and we refer also to  \cite{BarreiraPesin} and references therein. Consider a $\varphi$-invariant ergodic Borel probability measure $\mu$ on $\Sigma$ and assume it is hyperbolic,  i.e. its Lyapunov exponents satisfy $\chi_1< 0< \chi_2$. See section 2 in \cite[Suppl.]{Hasselblatt-Katok} for the definition and existence of Lyapunov exponents of the measure $\mu$, and important properties. It is shown that for almost every $x \in \Sigma$ there are so-called Lyapunov charts $\psi_x: B(0,r(x)) \subset \R^2 \to M$ for $x$, $r(x)>0$, with respect to which the dynamics has uniformly hyperbolic behaviour. This can be expressed with the notion of admissible stable and unstable manifolds, defined locally in these charts, and which are locally preserved by $\varphi$. In order to do that one restricts $\psi_x$ to  $[-\kappa,\kappa]^2$ for sufficiently small $\kappa>0$ and  considers the \textit{regular rectangles} $R(x,\kappa) = \psi_x([-\kappa,\kappa]^2)$ in $M$. We denote the left boundary of $R$ by $\partial_l R$ given by $\partial_l R = \{ \psi_x(-\kappa, s) \, | \,  s \in [-\kappa, \kappa]\}$ and denote the  right, bottom and top boundaries of $R= R(x,\kappa)$  by $\partial_r R, \partial_b R$, and $\partial_t R$, respectively, which are defined analogously.  
Let $\gamma \in (0,1)$. A submanifold $W \subset R(x,\kappa)$ is called an \textit{admissible stable $(\gamma,\kappa)$-manifold near x} if  $W= \psi_x(\{(\phi(v),v)\, |\, v\in [-\kappa,\kappa]\})$, where $\phi:[-\kappa,\kappa] \to [-\kappa,\kappa]$ is a $C^1$-map with $\phi(0) \leq \kappa/4$ and $|D\phi| \leq \gamma$. Analogously   $W \subset R(x,\kappa)$ is called an \textit{admissible unstable  $(\gamma,\kappa)$-manifold near x} if 
$W= \psi_x(\{(u,\phi(u))\, |\, v\in [-\kappa,\kappa]\}$, where $\phi:[-\kappa,\kappa] \to [-\kappa,\kappa]$ is a $C^1$-map with $\phi(0) \leq \kappa/4$ and $|D\phi| \leq \gamma$.
Admissible stable and unstable $(\gamma,\kappa)$-manifolds near $x$  intersects in exactly one point in $R(x,\kappa)$ with angle bounded away from zero and so they endow $R(x,\kappa)$ with a product structure. Moreover one defines admissible unstable (stable)  rectangles  as sets bounded by two admissible unstable (stable) manifolds. That is, an admissible stable $(\gamma,\kappa)$-rectangle in $R(x,\kappa)$ is a set of the form $V=\psi_x(V')$, $V'= \{(u,v) \in [-\kappa,\kappa]^2 \, | \, u = (1-\tau) \phi_1(v) + \tau \phi_2(v), \,  \tau \in [0,1]\}$, where the left boundary $\partial_l V  := \psi_x(\{(\phi_1(v),v)\, |\, v\in [-\kappa,\kappa]\}$ and the right boundary $\partial_r V:=$ $\psi_x(\{(\phi_2(v),v)\, |\, v\in [-\kappa,\kappa]\}$ are two admissible unstable $(\gamma,\kappa)$ manifolds near $x$ for which  $\phi_1(v) < \phi_2(v)$ for all $v\in [-\kappa, \kappa]$.
Analogously define admissible stable $(\gamma,\kappa)$-rectangles $H$ in $R(x,\kappa)$ with bottom (top) boundaries $\partial_b H$  ($\partial_t H)$.  We define $\partial_b V = \partial_b R(x,\kappa) \cap V$, $\partial_t V = \partial_t R(x,\kappa) \cap V$, and similarly $\partial_l H$ and $\partial_r H$. 
The following statement asserts the existence of rectangular covers and  hyperbolic properties  of $\phi$ on those rectangles.

\begin{prop}\cite[Theorem S.14]{Hasselblatt-Katok}\label{prop:KatokRectangles}
For every $\delta>0$ and  $\rho>0$ there is a compact set $\Lambda_{\delta}$ with $\mu(\Lambda_{\delta})>1-\delta$, and constants $\beta>0$, $\kappa>0$, $\gamma\in (0,1)$  and regular rectangles $R(x_1) = R(x_1,\kappa), R(x_2) = R(x_2,\delta), \ldots, R(x_t)=  R(x_t,\kappa)$, $t\in \N$, for some $x_1, \ldots, x_t \in \Lambda_{\delta}$, such that
\begin{enumerate}
    \item $\Lambda_{\delta} \subset \bigcup_{i=1}^t B(x_i, \beta)$ with $B(x_i, \beta) \subset {\rm int}R(x_i)$ 
    \item ${\rm diam}R(x_i) \leq \rho/3$ for $i=1, \ldots, t$
    \item If $y\in \Lambda_{\delta}, \varphi^n(y) \in \Lambda_{\delta}$ for some $n>0, y \in B(x_i, \beta)$, and $f^n(x) \in B(x_j, \beta)$, then the connected component $V$ of $R(x_i)\cap f^{-n}(R(x_j))$ containing $y$ is an admissible stable $(\gamma, \kappa)$ rectangle near $x_i$  and $H= f^n(V)$ is an admissible unstable $(\gamma, \kappa)$ rectangle near $x_j$. 
    \item {\rm diam} $f^k(V) < \rho$ for all $0\leq k \leq n$. 
\end{enumerate}
\end{prop}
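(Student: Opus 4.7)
The plan is to invoke Pesin's theory of non-uniformly hyperbolic systems applied to the ergodic hyperbolic measure $\mu$. Pesin's theorem guarantees, for $\mu$-a.e. $x$, the existence of Lyapunov charts $\psi_x: B(0,r(x)) \to \Sigma$ in which the derivative cocycle acquires uniformly hyperbolic behavior: $D\varphi$ preserves cones of fixed opening around the coordinate axes, and it expands (resp.\ contracts) the unstable (resp.\ stable) cone by factors close to $e^{\chi_2 - \varepsilon}$ (resp.\ $e^{\chi_1 + \varepsilon}$). The chart radius $r(x)$ and the distortion constants depend measurably on $x$, and they vary in an $\varepsilon$-tempered fashion along orbits, meaning $r(\varphi^n(x)) \geq e^{-|n|\varepsilon} r(x)$.

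First I would extract, via a Lusin-type argument, a Pesin block $\Lambda_{\delta}$ with $\mu(\Lambda_{\delta}) > 1 - \delta$ on which $x \mapsto \psi_x$ is continuous, the radii $r(x)$ are bounded below by some $r_0 > 0$, and the tempered distortion constants are uniform. Then I would choose $\kappa < r_0$ small enough that $\mathrm{diam}\,R(x,\kappa) < \rho/3$ for all $x \in \Lambda_{\delta}$, and $\beta > 0$ small enough that $B(x,\beta) \subset \mathrm{int}\,R(x,\kappa)$ for every $x \in \Lambda_{\delta}$. Compactness of $\Lambda_{\delta}$ yields a finite subcover $\{B(x_i,\beta)\}_{i=1}^t$, delivering assertions (1)-(2).

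The heart of the statement is assertion (3), which is the classical graph-transform argument adapted to the Pesin setting. If $y \in B(x_i,\beta) \cap \Lambda_{\delta}$ and $\varphi^n(y) \in B(x_j,\beta) \cap \Lambda_{\delta}$, then along the orbit segment $y, \varphi(y), \ldots, \varphi^n(y)$ the chart-to-chart transition maps $\psi_{\varphi^{k+1}(y)}^{-1} \circ \varphi \circ \psi_{\varphi^k(y)}$ preserve stable and unstable cones with a uniform contraction constant $\gamma \in (0,1)$. Iterating this cone preservation, the connected component $V$ of $R(x_i,\kappa) \cap \varphi^{-n}(R(x_j,\kappa))$ containing $y$ is forced to be the graph of a $\gamma$-Lipschitz map over the stable coordinate in $\psi_{x_i}$-coordinates — that is, an admissible stable $(\gamma,\kappa)$-rectangle near $x_i$ — and symmetrically $\varphi^n(V)$ is an admissible unstable $(\gamma,\kappa)$-rectangle near $x_j$.

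Finally, assertion (4) follows from the tempered distortion estimates: the intermediate images $\varphi^k(V)$ for $0 \leq k \leq n$ stay controlled in the Lyapunov chart at $\varphi^k(y)$ by cone expansion/contraction, and pushing back to $\Sigma$ via the uniform bound on $\|D\psi_{\varphi^k(y)}\|$ on $\Lambda_{\delta}$ gives the $\rho$-diameter bound. The main obstacle is the bookkeeping: $\varphi^k(y)$ need not lie in $\Lambda_{\delta}$ for intermediate $k$, so one must absorb the $\varepsilon$-tempered loss of chart size and distortion between successive visits to $\Lambda_{\delta}$ by shrinking $\kappa$ and $\beta$ once and for all at the start. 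This is standard but delicate Pesin bookkeeping, and is the step where the interplay between the choice of $\delta$, the Lyapunov exponent gap, and the tempered rate $\varepsilon$ becomes essential.
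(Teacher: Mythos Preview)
The paper does not prove this proposition at all: it is quoted verbatim as \cite[Theorem S.14]{Hasselblatt-Katok} and used as a black box. Your sketch is a reasonable outline of the standard Pesin-theory argument (Lusin/Pesin blocks, Lyapunov charts with tempered estimates, graph transform for the cone fields) that underlies the cited theorem, so there is nothing to compare against in the present paper.
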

In $(3)$ above we have that the union $(\partial_l V \cup \partial_r V)$ map to the union of  $(\partial_l H \cup \partial_r H)$, etc. 

Assume that $h_{\mu}(\varphi)>0$. 
One can show now the following, see Theorem S.5.9 in \cite{Hasselblatt-Katok} and its proof.  

\begin{prop}\label{prop:KatokHorseshoe}
Let $\delta>0, \rho>0$ and choose the constants $\beta, \kappa, \gamma$, the set $\Lambda_{\delta}$ and rectangles $R(x_1), \ldots, R(x_t)$ as in Proposition \ref{prop:KatokRectangles}.  
Then for any $\epsilon_1>0$ sufficiently small there is an unbounded set $\mathcal{N} \subset \N$ such that for all $n\in \mathcal{N}$ there is  $x \in \{x_1, \ldots, x_t\}$, and $M > e^{n(h_{\mu}(\varphi) - \epsilon_1)}$ disjoint  admissible stable $(\gamma,\kappa)$-rectangles $V_1, \ldots, V_M$ near $x$ that are mapped as in $(3)$ above by $\varphi^n$ to $M$ admissible unstable $(\gamma,\kappa)$-rectangles  $H_1, \ldots, H_M$ near $x$. 

Moreover,  $$\Lambda_M =\bigcap_{k\in \Z} \left(\varphi^{kn}(V_1) \cup \cdots \cup  \varphi^{kn}(V_{M})\right)$$
is a locally maximal hyperbolic invariant set with respect to $\varphi^n$, and $\varphi^n|_{\Lambda_M}$ is topologically conjugate to a full two-sided shift in the symbolic space of $M$ symbols. In particular, $h_{\topo}(\varphi^n|_{\Lambda_M}) > h_{\mu}(\varphi) - \epsilon_1$.  
\end{prop}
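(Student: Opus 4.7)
The plan is to combine the Brin--Katok local entropy formula with the uniform hyperbolic estimates on admissible rectangles provided by Proposition \ref{prop:KatokRectangles} to produce, for infinitely many $n$, a large $(n,\epsilon_0)$-separated set of points lying in a single $B(x_i,\beta)$ whose $\varphi^n$-image also lies in $B(x_i,\beta)$. The connected components of $R(x_i)\cap \varphi^{-n}(R(x_i))$ through these points will be forced, by Proposition \ref{prop:KatokRectangles}\,(3), to be admissible stable rectangles whose $\varphi^n$-images are admissible unstable rectangles in the same $R(x_i)$, which is precisely the Smale horseshoe configuration from which a full shift on $M$ symbols can be read off.

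First I would fix $\epsilon_0\in(\rho,\beta/4)$ and apply the Brin--Katok formula together with Egorov's theorem to the ergodic hyperbolic measure $\mu$ to obtain $n_0\in\N$ and a set $G$ with $\mu(G)>1-\delta$ such that $\mu(B(x,n,\epsilon_0))\leq e^{-n(h_{\mu}(\varphi)-\epsilon_1/2)}$ for every $x\in G$ and every $n\geq n_0$, where $B(x,n,\epsilon_0)$ is the Bowen $(n,\epsilon_0)$-ball. Since $\Lambda_{\delta}\cap G$ is covered by $t$ balls $B(x_j,\beta)$, pigeonhole gives some index $i$ with $\mu(B(x_i,\beta)\cap \Lambda_{\delta}\cap G)\geq c_0>0$. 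Applying the Birkhoff ergodic theorem to $\mathbf{1}_{B(x_i,\beta)\cap \Lambda_{\delta}\cap G}$ shows that the set $\mathcal{N}$ of $n$ for which
$$\mu\bigl(B(x_i,\beta)\cap \Lambda_{\delta}\cap G\cap \varphi^{-n}(B(x_i,\beta)\cap \Lambda_{\delta})\bigr)\geq c_0^2/2$$
has positive lower density and is in particular unbounded. For each $n\in\mathcal{N}$ with $n\geq n_0$, a maximal $(n,\epsilon_0)$-separated subset $E_n$ of this intersection satisfies $|E_n|\geq (c_0^2/4)\,e^{n(h_{\mu}-\epsilon_1/2)}$, by comparing the $\mu$-mass of the intersection with the upper bound on Bowen-ball measure; this exceeds $e^{n(h_{\mu}-\epsilon_1)}$ once $n$ is sufficiently large.

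For each $x\in E_n$, Proposition \ref{prop:KatokRectangles}\,(3) with $x_i=x_j$ applies and says that the connected component $V^{x}$ of $R(x_i)\cap \varphi^{-n}(R(x_i))$ containing $x$ is an admissible stable $(\gamma,\kappa)$-rectangle near $x_i$, with $H^{x}:=\varphi^n(V^x)$ an admissible unstable rectangle near $x_i$. By property (4) of the same proposition, any two points of a common $V^{x}$ are $(n,\rho)$-close and hence, since $\epsilon_0>\rho$, not $(n,\epsilon_0)$-separated; so the $V^{x}$ are pairwise distinct connected components, hence disjoint. Listing them as $V_1,\ldots,V_M$ with $M=|E_n|$, we obtain $M$ disjoint admissible stable rectangles whose images $H_l=\varphi^n(V_l)$ are $M$ disjoint admissible unstable rectangles in $R(x_i)$. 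Transversality of the admissible stable and unstable foliations, together with the bi-Lipschitz product structure on $R(x_i)$ carried by the Lyapunov chart $\psi_{x_i}$, implies that each $H_l$ crosses every $V_m$ in a fully transverse admissible sub-rectangle, which is the topological Markov condition. The classical horseshoe encoding, specialized inside $R(x_i)$, then gives local maximality of $\Lambda_M=\bigcap_{k\in\Z}\varphi^{kn}(V_1\cup\cdots\cup V_M)$ and a bi-Hölder conjugacy between $\varphi^n|_{\Lambda_M}$ and the full two-sided shift on $M$ symbols, whence $h_{\topo}(\varphi^n|_{\Lambda_M})=\log M>n(h_{\mu}(\varphi)-\epsilon_1)$.

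The main technical obstacle is the geometric passage from the abstract counting to the horseshoe: one has to verify both that each connected component $V^{x}$ really is an admissible stable rectangle (so that its image is a full admissible unstable strip that Markov-connects with the family of $V_m$'s) and that the resulting Markov relation integrates to a topological conjugacy with the full shift. Both points are effectively pre-packaged by Proposition \ref{prop:KatokRectangles}\,(3)--(4) and by the Pesin-theoretic product structure on Lyapunov charts, so the remaining work is mainly the counting argument outlined above together with a standard shift-space encoding on a uniformly hyperbolic invariant set.
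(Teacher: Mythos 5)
Your overall route is the right one and is in fact the same as the source the paper relies on: the paper does not prove Proposition \ref{prop:KatokHorseshoe} at all, it quotes it from Theorem S.5.9 of \cite{Hasselblatt-Katok}, and your scheme (an entropy-formula count of $(n,\epsilon_0)$-separated points returning to one ball $B(x_i,\beta)$, identification of the connected components of $R(x_i)\cap\varphi^{-n}(R(x_i))$ through these points as admissible strips via Proposition \ref{prop:KatokRectangles}\,(3), and the standard full-shift encoding of the resulting Markov crossings) is exactly the Katok--Mendoza argument. The Ces\`aro/Birkhoff correlation step producing an unbounded (indeed positive lower density) set $\mathcal{N}$, the covering-by-Bowen-balls count with centers in the Egorov set $G$, and the deferral of hyperbolicity, local maximality and the conjugacy of $\varphi^n|_{\Lambda_M}$ with the full shift to the uniform Lyapunov-chart estimates are all fine at the level of detail the paper itself adopts.

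The genuine gap is in the choice of the separation scale $\epsilon_0$. You need $\epsilon_0>\rho$ so that property (4) of Proposition \ref{prop:KatokRectangles} (namely $\diam\varphi^k(V)<\rho$ for $0\le k\le n$) forces $(n,\epsilon_0)$-separated points into distinct components; but the Brin--Katok/Egorov bound $\mu(B(x,n,\epsilon_0))\le e^{-n(h_{\mu}(\varphi)-\epsilon_1/2)}$ is only available for $\epsilon_0$ sufficiently small depending on $\epsilon_1$, whereas $\rho$ is fixed before $\epsilon_1$ in the statement. These two requirements are incompatible as written, and your concrete choice $\epsilon_0\in(\rho,\beta/4)$ is actually an empty interval: by items (1)--(2) of Proposition \ref{prop:KatokRectangles}, $B(x_i,\beta)\subset R(x_i)$ and $\diam R(x_i)\le\rho/3$, so $\beta\le\rho/6<\rho$. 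Note also that separation at a scale $\epsilon_0\le\rho$ would not do, since two $(n,\epsilon_0)$-separated points could still lie in one component of diameter $<\rho$ along the orbit; so the scale conflict is the crux, not a cosmetic issue. The standard repair, which is how Katok--Mendoza order their constants, is to fix the separation scale $\epsilon_0$ first from the entropy formula and only afterwards run the rectangle construction with $\rho<\epsilon_0$; equivalently, the proposition should be read with $\rho$ small enough in terms of $\epsilon_1$ (harmless for the paper, which leaves $\delta$ and $\rho$ free in the application). As your proof stands, the counting step does not deliver $M>e^{n(h_{\mu}(\varphi)-\epsilon_1)}$ for an arbitrary preassigned $\rho$.
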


Note that it follows from the variational principle, the ergodic decomposition theorem and Ruelle's inequality that one can approximate $h_{\topo}(\varphi)$  by $h_{\mu}(\varphi)$ of such measures $\mu$ as considered above, and conclusions of Proposition \ref{prop:KatokHorseshoe} imply approximation of the topological entropy $h_{\topo}(\varphi)$ by the topological entropy of hyperbolic horseshoes of  $\varphi$. 

The proof of Theorem \ref{thm:approximation} will use the above results and a variant of arguments of Franks and Handel in \cite{FranksHandel1988}. 
\begin{proof}[Proof of Theorem \ref{thm:approximation}]
Let $\varepsilon>0$ and let $\mu$ be an invariant ergodic hyperbolic measure with $h_{\mu}(\varphi) > h - \epsilon/3$. Choose $\varepsilon_1 = \varepsilon/3$. Let now  $\mathcal{N}\subset \N$  as in Proposition \ref{prop:KatokHorseshoe}. 
Choose some $n\in \mathcal{N}$ with  $\frac{ \log(20)}{n} < \varepsilon_1$. Let  $x \in \{x_1, \ldots, x_t\}$ and $M$, $V_1, \ldots, V_M$, $H_1, \ldots H_M \subset R(x)$ as in Proposition \ref{prop:KatokHorseshoe}.
We say that $H_i$ is positively oriented if $\varphi^n(\partial_t V_i) = \partial_t H_i$ and negatively oriented if $\varphi^n(\partial_t V_i) = \partial_b H_i$. We say that $H_i$ lies below $H_j$ in $R(x)$, $i\neq j$, if every path in $R(x)$  from $\partial_b R(x)$ to $\partial_t R(x)$ intersects first $H_i$. 


For convenience we restrict to a subset $V'_1, \ldots, V'_m$ of the admissible stable rectangles $V_1, \ldots, V_M$ as well as a subset $H'_1, \ldots, H'_m$ of the admissible unstable rectangles  $H_1, \ldots H_M$, both of size $m\geq M/10$ such that

\begin{itemize}
\item $\varphi^n(V'_i) = H'_i$ for $i=1, \ldots, m$.  
\item All rectangles $H'_1, \ldots, H'_m$ are either all positively oriented or all negatively oriented. 
\item Either $H'_1$ is below $H'_2, \ldots, H'_{m}$ and $H'_m$ is above $H'_1, \ldots, H'_{m-1}$, or 
 $H'_1$ is above $H'_2, \ldots, H'_{m}$ and $H'_m$ is below $H'_1, \ldots, H'_{m-1}$. 
\end{itemize}

That we can ensure the last condition, follows from the following observation.  
\begin{lem}
Let $x_1, \cdots, x_n$ be a sequence of $n$ pairwise different natural numbers. Then it has a subsequence of length $\geq \frac{n}{5}$ such that all elements of the subsequence lie in the closed interval $I$ whose boundary is given by the first and the last element of that subsequence.
\end{lem}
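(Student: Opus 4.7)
The plan is to exhibit, for any sequence $x_1, \ldots, x_n$ of pairwise distinct natural numbers, a cover of the index set $\{1, \ldots, n\}$ by five subsequences each of which is already \emph{good} in the sense of the lemma (its first and last elements are its minimum and maximum), and then deduce the claim by pigeonhole.

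Let $m$ and $M$ denote the minimum and maximum values of the sequence, attained at positions $p_m$ and $p_M$, and (after reversing the sequence if needed) assume $x_1 \leq x_n$. The five candidates, built around the distinguished elements $x_1, x_n, m, M$, will be
\begin{align*}
S_1 &= \{1\} \cup \{\, i : 1 < i \leq p_M,\ x_i > x_1 \,\}, \\
S_2 &= \{1\} \cup \{\, i : 1 < i \leq p_m,\ x_i < x_1 \,\}, \\
S_3 &= \{n\} \cup \{\, i : p_M \leq i < n,\ x_i > x_n \,\}, \\
S_4 &= \{n\} \cup \{\, i : p_m \leq i < n,\ x_i < x_n \,\}, \\
S_5 &= \{\, i : \min(p_m, p_M) \leq i \leq \max(p_m, p_M) \,\}.
\end{align*}
Goodness of each $S_j$ is forced by its definition: $S_1$ starts at $x_1$, ends at $x_{p_M} = M$, and every intermediate value is strictly larger than $x_1$ and at most $M$, so $x_1$ and $M$ are the extremes; $S_2, S_3, S_4$ are analogous; and in $S_5$ all intermediate values lie in $[m, M]$ by global extremality.

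The key step, and the main source of bookkeeping, will be to verify the covering $\bigcup_{j=1}^5 S_j = \{1, \ldots, n\}$ by a case analysis on the value $x_i$. If $x_i < x_1$, then $i \leq p_m$ puts $i$ in $S_2$ and $i \geq p_m$ puts it in $S_4$ (using $x_i < x_1 \leq x_n$). The case $x_i > x_n$ is symmetric, governed by the pivot $p_M$ and the subsequences $S_1, S_3$. In the middle regime $x_1 < x_i < x_n$, one finds $i \in S_1$ when $i \leq p_M$ and $i \in S_4$ when $i \geq p_m$; the only indices that can escape are those with $p_M < i < p_m$, which can only happen when $p_M < p_m$ and which are precisely the ones caught by $S_5$. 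A handful of degenerate situations (most notably $x_1 \in \{m, M\}$ or $x_n \in \{m, M\}$) simply collapse some $S_j$ to a single element but never break the covering. Once $\bigcup_j S_j = \{1, \ldots, n\}$ is in hand, $\sum_j |S_j| \geq n$ forces $|S_j| \geq n/5$ for some $j$, producing the desired good subsequence.
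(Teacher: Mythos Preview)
Your proof is correct and uses essentially the same five subsequences as the paper: the block of indices between the positions of the global min and max (your $S_5$), and the four subsequences obtained by comparing values to $x_1$ or $x_n$ on the appropriate side (your $S_1$--$S_4$). The paper organises this as a two-stage case split (first check whether the middle block is long, otherwise one end block is long and apply a two-way split there), while you package it as a single five-fold cover plus pigeonhole; the latter is a tidier presentation of the same idea.
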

\begin{proof}
Let $i_0, i_1$ such that $x_{i_0} = \min \{ x_i \, | \, i = 1, \ldots, n \}$ and 
$x_{i_1} = \max \{ x_i \, | \, i = 1, \ldots, n\}$. We assume that  $i_0 < i_1$, the other case is analogous.  
If $i_1 -i_0 + 1 \geq \frac{n}{5}$ the subsequence $x_{i_0}, x_{i_0 + 1}, \ldots, x_{i_1}$ is a subsequence that satisfies the properties claimed in the  lemma. Otherwise,  $i_0>  \frac{2n}{5}$ or $n-i_1 > \frac{2n}{5}$. 
Note that any element of the sequence lies in the interval $[x_1,x_{i_1}]$ or in the interval $[x_{i_0}, x_1]$. So if $i_0>\frac{2n}{5}$, $[x_1,x_{i_1}]$ or $[x_{i_0}, x_1]$ contains $> \frac{n}{5}$ elements from $\{x_1, \ldots, x_{i_0}\}$. Similarly,  if $n-i_1 > \frac{2n}{5}$, the interval $[x_n, x_{i_1}]$ or the interval $[x_{i_0}, x_n]$ contains $> \frac{n}{5}$ elements from $\{x_{i_1}, \ldots, x_{n}\}$. 
In all situations these elements form a subsequence as claimed.
\end{proof}
 
We now proceed with the argument for the case that all $H'_i$ are positively oriented, $H'_1$ is below $H'_2, \ldots, H'_{m}$ and $H'_m$ is above $H'_1, \ldots, H'_{m-1}$. The other cases are treated analogously. By abuse of notation we  drop the symbol  $'$, and denote these rectangles by $V_1, \ldots, V_m$ and $H_1, \ldots H_m$. 
Let $\Lambda_n = \bigcap_{k\in \Z} (\varphi^{kn}(V_1) \cup \ldots \cup \varphi^{kn}(V_{m}))$, and $\Sigma_m$ the set of  bi-infinite sequences of $m$ symbols. 
The map $\theta: \Lambda_n \to \Sigma_m$ defined as $x \mapsto (\theta(x)_k)_{k\in\Z}$, with $\theta_k(x) = j$ if $\varphi^{kn}(x) \in V_j$, provides a conjugation of $\varphi^n$ with the two-sided shift $\sigma_m$ on $\Sigma_m$.  

For any $a_0, \ldots a_k \in \{1, \ldots, m\}$ we denote by $(a_0\cdots a_k)^{\infty}$ the $k$-periodic orbit of $\sigma_n$ in  $\Sigma_m$ that is given by an infinite repetition of $a_0\cdots a_k$ in positive and negative direction. 
If we want to refer to a periodic point of that orbit, we say that $(a_i a_{i+1} \cdots a_k a_1 \cdots a_{i-1})^{\infty}$ is the point that has $a_i$ at the $0$th position. By abuse of notation we denote the  periodic orbits or points of $\varphi^n$ in $\Lambda_n$ that correspond to those in $\Sigma_m$ via  $\theta$ also by the symbols $(a_0\cdots a_k)^{\infty}$ etc. 

Consider the periodic orbit  $\mathcal{Q}$ of $\varphi^n$ given by 
\begin{align*}
\mathcal{Q} = \left(\prod_{j=2}^{m-1}(mj1j1jmj)\right)^{\infty} 
\end{align*}

We write also $\mathcal{Q}$ as the orbit 
\begin{equation}\label{qqqq}
\left(q_1^2, q_2^2, \ldots, q_8^2, q_1^3, q_2^3 \ldots, q_8^3, \ldots, q_8^{m-2}, q_1^{m-1}, q_2^{m-1}, \ldots, q_8^{m-1}\right),
\end{equation}
where 
the periodic point $q_1^2$ is given in the symbolic expression by 
$$q_1^2 = \left(\left(\prod_{j=2}^{m-2}(1j1jmjm(j+1))\right) 1(m-1)1(m-1)m(m-1)m2\right)^{\infty},$$and the other periodic points $q_l^j$, $j\in \{2, \ldots, m-1\}$, $l\in \{1, \ldots, 8\}$, accordingly via cyclic permutations of the symbols.  
We note the following, which will be relevant below: For any $j \in \{2, \ldots, m-1\}$
\begin{align}\label{1jm}
\begin{split} &q_1^j \in V_1 \cap H_j \cap \varphi^n(H_m), \quad 
q_3^j\in V_1 \cap H_j \cap \varphi^n(H_1) \\ &
q_5^j\in V_m \cap H_j \cap \varphi^n(H_1), \quad q_7^j\in V_m \cap H_j \cap \varphi^n(H_m),
\end{split}
\end{align}
and 
\begin{equation}\label{11mm}
q_2^j,q_4^j, q_6^j, q_8^j \in H_1 \cup H_m.
\end{equation}


We show below that
\begin{prop}\label{prop:entropybound}
$\Gamma_{\pi_1}([\varphi^n, \mathcal{Q}]) \geq \log(m-2)$. 
\end{prop}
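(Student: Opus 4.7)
The plan is to follow the Franks--Handel strategy from \cite{FranksHandel1988}: I will exhibit a finite collection of arcs in $\Sigma \setminus \overline{\mathcal{Q}}$ with endpoints on $\overline{\mathcal{Q}}$, together with a substitution rule induced by $\varphi^n$, whose growth rate provides the desired lower bound for $\Gamma_{\pi_1}([\varphi^n,\mathcal{Q}])$. The Markov structure from Proposition \ref{prop:KatokHorseshoe} is the key geometric input: $\varphi^n$ sends each $V_i$ homeomorphically onto $H_i$ while stretching fully in the unstable direction, and each $H_i$ intersects every $V_k$ in an admissible sub-rectangle.

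First, for each $j \in \{2,\ldots,m-1\}$ I will pick an arc $\alpha_j$ contained in $H_j$ along the unstable direction, running from $q_1^j$ (which lies in $V_1 \cap H_j$ by \eqref{1jm}) to $q_7^j$ (which lies in $V_m \cap H_j$). These arcs have endpoints on $\overline{\mathcal{Q}}$, and distinct $\alpha_j, \alpha_k$ are automatically disjoint since they sit in disjoint horizontal rectangles. I will then analyze the image $\varphi^n(\alpha_j)$: by the Markov property it is an unstable-direction arc running from $q_2^j$ to $q_8^j$ that traverses each $H_k$ once. For each $k \in \{2,\ldots,m-1\}$ the intersection $\varphi^n(\alpha_j) \cap H_k$ connects a point on the ``$V_1$-side'' of $H_k$ to a point on the ``$V_m$-side'' of $H_k$, and hence can be isotoped rel endpoints (through a small stable-direction neighborhood disjoint from $\overline{\mathcal{Q}}$, using \eqref{1jm}--\eqref{11mm}) to the arc $\alpha_k$.

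Assembling these local contributions, $\varphi^n(\alpha_j)$ represents, in $\pi_1(\Sigma \setminus \overline{\mathcal{Q}})$, a concatenation of at least one copy of $\alpha_k$ for every $k \in \{2,\ldots,m-1\}$, linked by short connector arcs through $H_1 \cup H_m$ (whose role is purely combinatorial and contributes no exponential growth since they are fixed in number and type). The resulting substitution matrix $A$ on the set of free homotopy classes $\{[\alpha_j]\}_{j=2}^{m-1}$ has every entry $\geq 1$, so its Perron--Frobenius eigenvalue satisfies $\rho(A) \geq m-2$. Iterating and noting that the word-length of $(\varphi^n)^N_*([\alpha_j])$ with respect to any fixed generating set dominates the $N$-fold substitution growth up to a multiplicative constant, we obtain
\[
\Gamma_{\pi_1}([\varphi^n,\mathcal{Q}]) \;\geq\; \log\rho(A) \;\geq\; \log(m-2).
\]

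The main obstacle is to make rigorous the isotopy step in the second paragraph: one must verify that each sub-arc $\varphi^n(\alpha_j) \cap H_k$ is essential relative to $\overline{\mathcal{Q}}$, and that concatenating these sub-arcs across $H_1, \ldots, H_m$ produces no cancellation in $\pi_1(\Sigma \setminus \overline{\mathcal{Q}})$. This is precisely where the baroque definition of $\mathcal{Q}$ enters: the pattern $mj1j1jmj$ around each middle symbol $j$ populates $H_k$ with the four orbit points $q_1^k, q_3^k, q_5^k, q_7^k$ in its corners (by \eqref{1jm}), while the ``buffer'' points in $H_1 \cup H_m$ coming from \eqref{11mm} obstruct the only homotopies in $\Sigma \setminus \overline{\mathcal{Q}}$ that could collapse consecutive $\alpha_k$'s in the substitution. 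Once this non-cancellation is verified, the spectral estimate for $A$ is elementary and completes the proof.
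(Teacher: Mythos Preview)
Your high-level plan is the right one (Franks--Handel), and the intuition that the pattern $mj1j1jmj$ places four ``corner'' orbit points in each $H_j$ so that an unstable arc through $H_j$ becomes essential is exactly what drives the argument. But as written there are two genuine gaps.

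First, the objects $[\alpha_j]$ are homotopy classes of proper arcs, not elements of $\pi_1(\Sigma\setminus\overline{\mathcal{Q}})$, and $\varphi^n$ does not fix their endpoints (it sends $q_1^j,q_7^j$ to $q_2^j,q_8^j\in H_1\cup H_m$). So there is no well-defined iteration $(\varphi^n)^N_*([\alpha_j])$ in $\pi_1$, and your substitution matrix $A$ does not act on any algebraic object on which you can read off $\Gamma_{\pi_1}$. You need either to close up the arcs into loops before iterating, or to replace the $\pi_1$-substitution picture by a quantity that \emph{is} invariant under proper homotopy of arcs and is monotone under $\varphi^n$.

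Second, and more seriously, the non-cancellation step---which you correctly flag as the main obstacle---is not carried out. Saying that the buffer points in $H_1\cup H_m$ ``obstruct the only homotopies that could collapse consecutive $\alpha_k$'s'' is a restatement of what must be proved, not a proof. In a free group there is no obstruction to cancellation other than showing the word is cyclically reduced in some explicit generating set, and you have not exhibited such a set or a reduction argument.

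The paper's proof resolves both issues simultaneously by passing to the universal cover $\Hy$ of $\Sigma\setminus\overline{\mathcal{Q}}$ and replacing your single arcs $\alpha_j$ by small \emph{geodesic rectangles} $\hat L_j\subset H_j$ with corners $q_1^j,q_3^j,q_5^j,q_7^j$. One then counts, for a proper geodesic arc $\sigma$, the number $D(\sigma)$ of lifts of the $\hat L_j$ that a lift of $\sigma$ crosses horizontally. Because the extension $F$ of a lift of $\varphi^n$ to $S_\infty$ preserves the cyclic order of ideal endpoints, one gets the clean inequality $D(\sigma')\ge (m-2)D(\sigma)$ for the geodesic representative $\sigma'$ of $\varphi^n(\sigma)$; no cancellation argument in $\pi_1$ is needed. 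The passage to $\Gamma_{\pi_1}$ is then made at the very end by replacing the arc $\varphi^{kn}(\tau)$ by a closed curve $\gamma_k$ (a thin contour around it), checking via the no-bigon criterion that $\gamma_k$ realises the minimal intersection with the collection $\mathcal{E}=\bigcup_j \hat e_j$, and observing that $\varphi^{kn}(\gamma_0)\simeq\gamma_k$. This is what your sketch is missing: a rigorous invariant (geometric intersection number in $\Hy$) that grows multiplicatively and can be compared to word length at the end.
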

From the Proposition we obtain
\begin{align}
\begin{split}
\Gamma_{\pi_1}([\varphi^n, \mathcal{Q}]) &\geq  \log(m-2) \geq \log(M/20) \\ &\geq \log\left(\frac{e^{n(h-2\epsilon_1)}}{20}\right)  \geq n(h-2\epsilon_1) - \log(20) \\ &> n(h-\varepsilon).
\end{split}
\end{align}
The hyperbolic periodic orbit $\mathcal{Q}$ of $\varphi^n$ of period $8(m-2)$ defines a hyperbolic periodic orbit $\mathcal{P}$ of $\varphi$ of period $8n(m-2)$, with $\overline{\mathcal{Q}} \subset \overline{\mathcal{P}}$ for the associated sets of $\mathcal{Q}$ and $\mathcal{P}$. 
Then
$\Gamma_{\pi_1}([\varphi,\overline{\mathcal{P}}]) =  \frac{1}{n}\Gamma_{\pi_1}([\varphi^n,\overline{\mathcal{P}}]) \geq \frac{1}{n} \Gamma_{\pi_1}([\varphi^n,  \overline{\mathcal{Q}}]) > h-\varepsilon$, which will finish the proof of Theorem \ref{thm:approximation}.
\end{proof}

\begin{proof}[Proof of Proposition \ref{prop:entropybound}]
The proof is a variation of an argument from  \cite{FranksHandel1988}. 

Identify the universal covering of $\Sigma \setminus \overline{\mathcal{Q}}$ with the Poincaré disk $\Hy$. This gives us a choice of hyperbolic metric on $\Sigma \setminus \overline{\mathcal{Q}}$. Every proper arc in $\Sigma \setminus \overline{\mathcal{Q}}$, i.e. an arc $\alpha$ that lies in $\Sigma\setminus \overline{\mathcal{Q}}$ up to its endpoints and whose endpoints lie in $\overline{\mathcal{Q}}$, defines uniquely a geodesic that traces a proper arc homotopic to $\alpha$. Also, any closed curve $\gamma$ in $\Sigma\setminus \overline{\mathcal{Q}}$ defines a closed geodesic in $\Sigma \setminus \overline{\mathcal{Q}}$ freely homotopic to $\gamma$. 

For any $\gamma_1$ and $\gamma_2$ that are either proper arcs or closed curves,  denote by  $I(\gamma_1, \gamma_2)$ their geometric intersection number, i.e. the minimal number of intersections of curves $\gamma'_1$ and $\gamma'_2$, where $\gamma'_1$ and $\gamma'_2$ are  homotopic to $\gamma_1$ and $\gamma_2$, respectively. Similarly, we can define $I(\cdot, \cdot)$ for families of proper arcs resp. closed curves.  We will below consider a proper arc $\tau$ with endpoints in $\overline{\mathcal{Q}}$ and a collection of proper geodesic arcs $\mathcal{E}$ such that for all $k\in \N$,
\begin{align}\label{intersection}
I(\varphi^{kn}(\tau), \mathcal{E}) \geq (m-2)^k.
\end{align}
From this the  Proposition will follow as in \cite{FranksHandel1988}. 



Let us introduce some terminology. 
Let $S_{\infty}$ the boundary at infinity of $\Hy$. Every geodesic in $\Hy$ has two endpoints in $S_{\infty}$.
We say that a rectangle $Q=(\alpha_1, \alpha_2, \alpha_3, \alpha_4)$ in $\Hy \cup S_{\infty}$ with vertices in $S_{\infty}$ and adjacent edges $\alpha_1, \ldots, \alpha_4$  is a geodesic rectangle in $\Hy$ if $\alpha_1, \ldots, \alpha_4$ are geodesics. We keep the information of the ordering of the edges and  say that the left vertical edge of $Q$ is $\alpha_1$, the right vertical edge of $Q$ is $\alpha_3$ and the horizontal edges are $\alpha_2$ and $\alpha_4$. 
We say that a geodesic $\beta$ in $\Hy$ intersects $Q$ horizontally from left to right if $\beta$ intersects each of its vertical edges, first the left and then the right vertical edge. We say that  geodesic  rectangle $Q$  intersects $Q'$ horizontally from left to right if both horizontal edges of $Q$, parametrized from the left vertical edges to the right vertical edges of $Q$, intersect $Q'$ horizontally from left to right.
We say that $Q\prec Q'$, if there is a geodesic in $\Hy$ that first intersects $Q$ and then $Q'$ horizontally from left to right.  

Let $j \in \{2, \ldots, m-1\}$. 
Writing $\mathcal{Q}$ as in \eqref{qqqq}, we let $e_j$ be a simple proper arc from $q_1^j$ to $q_3^j$  in $H_j \cap V_1$, $f_j$ a simple proper arc 
from $q_3^j$ to $q_5^j$ in $H_j \cap \varphi^n(H_1)$, $g_j$ a simple proper arc from  $q_5^j$ to $q^7_j$ in $ H_j\cap V_m$, and $h_j$ a simple proper arc from  $q_7^j$ to $q_1^j$ in $H_j \cap \varphi^n(H_m)$. 
 $e_j, f_j, g_j$, resp.  $h_j$, uniquely define homotopic simple geodesic arcs $\hat{e}_j, \hat{f}_j$, $\hat{g}_j$, resp. $\hat{h}_j$. 
 Let $\hat{L}_j$ be the rectangle formed by $\hat{e}_j, \hat{f}_j$, $\hat{g}_j$, and $\hat{h}_j$. By the choice of $\mathcal{Q}$, see \eqref{1jm} and \eqref{11mm}, the full rectangles $\hat{L}_J$ are up to their vertices completely contained in $\Sigma \setminus \overline{\mathcal{Q}}$, and hence can be lifted to $\Hy$. Any lift $\widetilde{L}_j$ of $\hat{L}_j$ to $\Hy$ is a geodesic rectangle $Q=(\alpha_1, \ldots, \alpha_4)$ in $\Hy$, where $\alpha_1, \alpha_2, \alpha_3, \alpha_4$ are suitable lifts of $\hat{e}_j, \hat{f}_j$, $\hat{g}_j$, $\hat{h}_j$, respectively. 
Note that any two lifts of $\hat{L}_2,\ldots, \hat{L}_{m-1}$ to $\Hy$ are pairwise disjoint.

It is a theorem of Nielsen that any lift of $\varphi^n|_{\Sigma \setminus \mathcal{P}}$ to $\Hy$ extends to a homeomorphism $F$ on $\Hy \cup S_{\infty}$, and let $F$ be such an extension. Let $Q$ be geodesic rectangle in $\Hy$, then the image $F(Q)$ defines uniquely a geodesic rectangle $[F(Q)]$ in $\Hy$. 
Note that the endpoints of  $\varphi^{n}(f_i)$ and $\varphi^{n}(h_i)$ for $i\in \{2, \ldots, m-1\}$  lie in $\overline{\mathcal{Q}} \cap (H_1 \cup H_{m})$. It is now easy to verify that for all $j\in\{2, \ldots, m-1\}$ there is an arc $e'_j$ homotopic to $e_j$ and an arc $g'_j$ homotopic to $g_j$  such that for all $i\in \{2, \ldots, m-1\}$, $\varphi^n(f_i)$ intersects both $e'_j$ and $g'_j$ in exactly one point, and the order of those intersections coincides with the orientation of $f_i$ and $h_i$ if parametrized from $e_i$ to $g_i$.
 It follows that for any lift $Q$  of $\hat{L}_j$ there are lifts $Q_i$ of $\hat{L}_i$, $i=2, \ldots, m-1$, such that the  geodesic rectangle $[F(Q)]$ intersects $Q_2, \ldots, Q_{m-1}$ all horizontally from left to right. 

Let $q, q'$ be distinct points in $\overline{\mathcal{Q}}$ and $\sigma$  a geodesic arc in $\Sigma \setminus \overline{\mathcal{Q}}$ tracing a proper arc from $q$ to $q'$. Take a lift $\widetilde{\sigma}$ of $\sigma$ to $\Hy$. Let $D_j(\sigma)$ be the number of geodesic rectangles  $Q=(\alpha_1, \ldots, \alpha_4)$ that arise as lifts of rectangles $\hat{L}_j$ and which $\sigma$ intersects horizontally from left to right. Let $D(\sigma) = \sum_{j=2}^{m-1} D_j(\sigma)$. These numbers obviously do not depend on the choice of lift of  $\sigma$. Let $\sigma'$ be the geodesic arc in $\Sigma \setminus \mathcal{P}$ that traces a proper arc homotopic to $\varphi^n(\sigma)$. 
We claim that
\begin{align}\label{Destimate}
D(\sigma') \geq (m-2)D(\sigma).
\end{align}
This can be seen as follows. 
Take a lift $\widetilde{\sigma}$ of $\sigma$ to $\Hy$.
We can order the lifts of the $\hat{L}_j$, $j\in \{2, \ldots, m-1\}$ that $\widetilde{\sigma}$ intersects horizontally from left to right by $\prec$, i.e. these are geodesic rectangles $J_1, \ldots J_k$, with $k=D(\sigma)$, and $J_i \prec J_j$ if $i<j$. Since $F|_{S_{\infty}}$ keeps the cyclic order of points at infinity of geodesics, it follows that the geodesic arc in $\Hy$ with the same endpoints as $F(\widetilde\sigma)$ has to intersect the geodesic rectangles $[F(J_1)]$, $[F(J_2)], \ldots [F(J_k)]$ horizontally from left to right, and $[F(J_i)] \prec [F(J_j)]$ if $i< j$. 
Since each of the rectangles $[F(J_i)]$ intersects horizontally from left to right suitable lifts $Q_2, \ldots, Q_{m-1}$ of $\hat{L}_2, \ldots, \hat{L}_{m-1}$, \eqref{Destimate} follows.  

Now choose $q$ and $q'$ two distinct points in $\overline{\mathcal{Q}} \cap (H_1 \cup H_m)$ and $\tau$ an arc from $q$ to $q'$ with $D(\tau) = 1$. Furthermore, let $\mathcal{E} = \bigcup_{j=2}^{m-1} \hat{e}_j$. 
From \eqref{Destimate} the estimate \eqref{intersection} follows, that is 
\begin{align*}
I(\varphi^{kn}(\tau), \mathcal{E}) \geq (m-2)^k.
\end{align*}
Choose now as in \cite{FranksHandel1988} for each $k\in \N$ a closed curve  $\gamma_k$ in $\Sigma\setminus \overline{\mathcal{Q}}$ as the frontier of a small contour of the union of the geodesic representative of $\varphi^{kn}(\tau)$ and its two endpoints. 
For those $k \in \N$ for which $\varphi^{kn}(q), \varphi^{kn}(q') \in H_1 \cup H_m$, the curve $\gamma_k$ and $\mathcal{E}$ have $2I(\varphi^{kn}(\tau), \mathcal{E})$ many intersections. Moreover there is no bigon formed by the union of $\mathcal{E}$ with $\gamma_k$, and hence the number of those intersections is minimal among all curves homotopic to $\gamma_k$. Since $\varphi^{kn}(\gamma_0)$ is freely homotopic to $\gamma_k$ it follows that $\Gamma_{\pi_1}([\varphi^n, \mathcal{Q}]) \geq \log(m-2)$.

\end{proof}

\bibliographystyle{plain}
\bibliography{ReferencesS5}

\end{document}